\documentclass[11pt]{article}
\usepackage{color}
\usepackage{amsfonts}
\usepackage{tabularx}
\usepackage{amssymb}
\usepackage{longtable}
\usepackage{epsfig}
\newcolumntype{C}[1]{>{\centering\arraybackslash}p{#1}}
\usepackage{amsthm}
\usepackage{amsmath}
\usepackage{titlesec}

\allowdisplaybreaks

\newcolumntype{Y}{>{\centering\arraybackslash}X}
\usepackage{graphicx}
\usepackage[citecolor=red]{hyperref}

\newtheorem{thm}{Theorem}[section]

\newtheorem{prop}[thm]{Proposition}
\theoremstyle{plain}

\newtheorem{rem}[thm]{Remark}
\numberwithin{equation}{section}
\def\bR{\mathbb{R}}

\def\bT{\mathbb{T}}

\def\bZ{\mathbb{Z}}

\def\inte#1{
	\displaystyle\mathop{#1\kern0pt}^\circ }

\let\p=\partial
\let\al=\alpha
\let\b=\bar

\let\f=\frac

\let\wt=\widetilde

\def\pa{\partial}

\def\rmd{\mathrm{d}}
\def\rme{\mathrm{e}}
\def\wt{\widetilde}
\def\bY{\bar{Y}}
\usepackage[numbers,sort&compress]{natbib}

\def\virgp{\raise 2pt\hbox{,}}
\def\cdotpv{\raise 2pt\hbox{;}}

\def\C{\mathop{\mathbb C\kern 0pt}\nolimits}
\def\DD{\mathop{\mathbb D\kern 0pt}\nolimits}
\def\EE{\mathop{{\mathbb E \kern 0pt}}\nolimits}
\def\K{\mathop{\mathbb K\kern 0pt}\nolimits}
\def\N{\mathop{\mathbb N\kern 0pt}\nolimits}
\def\Q{\mathop{\mathbb Q\kern 0pt}\nolimits}
\def\R{\mathop{\mathbb R\kern 0pt}\nolimits}
\def\SS{\mathop{\mathbb S\kern 0pt}\nolimits}
\def\ZZ{\mathop{\mathbb Z\kern 0pt}\nolimits}
\def\TT{\mathop{\mathbb T\kern 0pt}\nolimits}
\def\P{\mathop{\mathbb P\kern 0pt}\nolimits}

\def\na{\nabla}

\newtheorem{lemma}{Lemma}[section]

\theoremstyle{definition}

\usepackage{tocloft}

\theoremstyle{remark}

\newcommand{\eps}{\epsilon}

\newcommand{\beq}{\begin{equation}}
	\newcommand{\eeq}{\end{equation}}
\newcommand{\ben}{\begin{eqnarray}}
	\newcommand{\een}{\end{eqnarray}}
\newcommand{\beno}{\begin{eqnarray*}}
	\newcommand{\eeno}{\end{eqnarray*}}

\begin{document}
	\title{Global nonlinear stability of the 2D incompressible viscous non-resistive MHD under sheared magnetic field}
	
	\author{Yuan Cai\footnote{School of Mathematical Sciences; LMNS and Shanghai Key Laboratory for Contemporary Applied Math,
Fudan University, Shanghai 200433, P. R. China.  Email: caiy@fudan.edu.cn}
		\and
		Bin Han\footnote{School of Mathematics and Statistics, Donghua  University,
			Shanghai, 201620, P. R. China. Email: hanbin@dhu.edu.cn}
		\and Na Zhao\footnote{School of Mathematics, Shanghai University of Finance and Economics, Shanghai, 200433, P. R. China. Email: zhaona@shufe.edu.cn}
	}

	\date{}
\maketitle
\begin{abstract}
We study the two-dimensional incompressible viscous non-resistive magnetohydrodynamics in the periodic strip $\mathbb T\times\mathbb R$,
subject to a smooth sheared background magnetic field $(\xi(x_2),0)^{\top}$, where
$\xi(x_2)$ is bounded and away from zero.
For sufficiently smooth perturbations satisfying an even-odd symmetry, we prove global-in-time well-posedness and nonlinear stability in Lagrangian coordinates.
The spatial inhomogeneity of the shear profile generates persistent linear contributions, most critically a nontrivial pressure term that precludes the desired uniform-in-time estimates. We straighten the integral curves of the initial magnetic field and construct a volume-preserving corrector. This geometric reduction transforms the intractable linear pressure into a quadratic nonlinearity.
These structures yield the global energy bounds and   anisotropic algebraic decay rates for the system.
This mechanism appears to provide the first rigorous framework for establishing global nonlinear stability for viscous non-resistive magnetohydrodynamics
near the genuinely nonuniform sheared magnetic profile.

	\end{abstract}
	
{\bf Keywords.}  MHD, sheared magnetic field, global nonlinear stability\\
	
{\bf AMS subject classifications.}  76W05, 35Q30, 76E25,  76D03, 35B35


	
	\tableofcontents

	\section{Introduction}
The equations of magnetohydrodynamics (MHD), first introduced by the 1970 Nobel laureate Hannes Alfv\'en \cite{Al,Al2}, constitute the fundamental system governing the behavior of electrically conducting fluids in the presence of magnetic fields.
	The velocity field obeys the Navier-Stokes equations with the Lorentz force. 
	The magnetic field satisfies  the 
	Maxwell-Faraday equations, which describe Faraday's law of induction.
	We refer to \cite{Bis,Davi,PF} for detailed explanations of this system.
	In this article, we consider 
	the following two-dimensional incompressible viscous and non-resistive magnetohydrodynamic system
	\begin{equation}\label{A1}
		\begin{cases}
			\p_t u+u\cdot \na_x u-\Delta_x u+\nabla_x p=b\cdot \nabla_x b,\\
			\p_t b+u\cdot\nabla_x b=b\cdot\nabla_x u, \\
			\hbox{div}_x\, u=\hbox{div}_x\, b=0, \\
			(u,b)|_{t=0}=(u_0,b_0),
		\end{cases}
         \, x\in \,  \mathbb \bT\times\bR,
	\end{equation}
	where $u=(u^1,u^2)^\top,\, b=(b^1,b^2)^\top$  represent the velocity field and magnetic field, $p$ is the scalar pressure, $\bT=\bR/\bZ$.  The unknowns $u,\, b,\, p$ are assumed to be periodic in $x_1$.
	
	System \eqref{A1} can be applied to plasmas when the particles strongly collide, or the resistivity is extremely small due to these collisions \cite{Caba}. For the nonlinear MHD system, a sufficiently strong magnetic field can reduce nonlinear interactions \cite{Kr} and inhibit the formation of strong gradients. This effect was observed in direct numerical simulations for the ideal MHD system (i.e. inviscid and non-resistive)
	with periodic boundary conditions \cite{FPSM}. When the background magnetic field is $(1,0)^\top$, i.e. in the case that the
	velocity field and magnetic field are sufficiently close to the equilibrium state $\left( (0,0)^\top,(1,0)^\top \right)$, there are many interesting results. 
	Lin, Xu and Zhang \cite {LXZ1} established  the global existence of small solutions around the equilibrium
	under certain a admissible condition.
	For more results in two dimensions, we refer to \cite{RWXZ,ZhangT,DPZ}; for the three-dimensional case, see \cite{AZ,CHZ,LZ14,LZ,PZZ,XZ}.



Sheared magnetic fields constitute a ubiquitous and critically important feature in plasma physics. In magnetically confined fusion devices like tokamaks, a deliberately configured sheared magnetic field is a principal stabilizing mechanism for high-temperature plasmas. It operates by rendering the field lines non-aligned, which suppresses large-scale instabilities by disrupting the coherent propagation of perturbations.
In solar physics, sheared magnetic fields are routinely observed in active regions and are important for the storage and subsequent release of magnetic energy.
For studies of (in)stability under a sheared magnetic field for MHD equations, with or without a sheared velocity field, we refer to  \cite{LMZ, RWZ, RZ, ZZZ} for the two-dimensional linear incompressible ideal MHD.


To the best of our knowledge, if the sheared magnetic field is nontrivial (i.e., non-constant), there are few rigorous mathematical results on the nonlinear stability problem.
In this work, we consider the incompressible viscous and non-resistive MHD with a sheared background magnetic field $(\xi(x_2),0)^\top$ in a two-dimensional space domain.
The profile function $\xi(x_2)$ is
assumed to be bounded and away from zero.
For initial data satisfying the symmetry conditions \eqref{assump-A1} and \eqref{assump-B1},
we prove the global existence of solutions to the system near the equilibrium.

\subsection{Reformulation of the problem}
Let $b=(\xi(x_2),0)+H$, where $H=(H^1, H^2)$. Then
the system \eqref{A1} becomes
\begin{equation}\label{MHD-shear-Euler}
\left\{\begin{array}{l}
	\displaystyle \pa_t  u+u\cdot\nabla_x u -\Delta_x u+\na_x p=H\cdot\na_x H+\xi\p_{x_1}H+H^2(\xi',0)^\top ,\\
	\displaystyle \pa_t H + u\cdot \na_x H+ u^2(\xi',0)^\top= H\cdot\nabla_x u+\xi\p_{x_1} u.
\end{array}\right.
\end{equation}
Taking the divergence operator onto the first equation of  \eqref{MHD-shear-Euler} yields
\begin{align*}
\Delta_x p \;=
2\xi' \p_1 H^2- {\rm{div}}\big( {u}\cdot\nabla {u} - {H}\cdot\na {H} \big)\,. 
\end{align*}
The linear terms $H^2(\xi',0)^\top$, $u^2(\xi',0)^\top$, together with the linear pressure $2\nabla_x\Delta_x^{-1}(\xi'\p_1H^2)$ in \eqref{MHD-shear-Euler}, are the main obstacles to studying the long time existence,
since $\xi'$ is not small and the resulting estimates are difficult to establish. In such a complicated linear system involving nonlocal operators, one might resort to a delicate spectral analysis to study its linear (in)stability. Somewhat surprisingly,  we show that an elementary energy method is sufficient to show both the linear and nonlinear stability of this problem. 

Instead of working on \eqref{MHD-shear-Euler} under Eulerian coordinates,
we switch to the Lagrangian formulation, where the troublesome terms $H^2(\xi',0)^\top$ and $u^2(\xi',0)^\top$ are absent.
Following \cite{AZ}, we define the flow map by
\begin{equation}\label{flow}
	\begin{cases}
		\f{\rmd}{\rmd t} X(t,y)=u(t,X(t,y)),\\
		X(0,y)=y.
	\end{cases}
\end{equation}
By using $\textrm{div}_x\, u=0$, we have
$\p_t \det(\nabla_y X(t,y))=\det(\nabla_y X(t,y))\textrm{div}_x\, u=0$.
Hence
\begin{align*}
	\det(\nabla_y X(t,y))=\det(\nabla_y X(0,y))=1.
\end{align*}
We denote $A=(\nabla_{y}X)^{-\top}$.
As derived in \cite{AZ, CHZ, LXZ1}, we have
\begin{align}\label{eqbd}
	\frac{\p}{\p t}(A_{ij}b^i(t,X(t,y)))=0,\, j=1,2,
\end{align}
which yields
\begin{equation}\label{eqbt}
	b(t,X(t,y))=b_0(y)\cdot\na_y X(t,y):=\p_{b_0} X(t,y),
	\,\,\textrm{for}\,\, t\geq 0,
\end{equation}
with $\p_{b_0}=b_0(y)\cdot\nabla_y$. Then
the MHD system \eqref{A1} can be written as follows:
\begin{equation}\label{A3}
	\begin{cases}
		\p_t^2 X^i - \hbox{div}_y(A^\top A\nabla_y X_t^i ) -  \p_{b_0}^2X^i + (A\nabla_y p)^i=0,\, i=1,2,\\[-4mm]\\
		\det(\nabla_y X)=1,\\
		X(0,y)=y,\, X_t(0,y)=u_0(y).
	\end{cases}
\end{equation}
Consider the solutions near equilibrium $X(t,y)=y+Y(t,y)$.
Then \eqref{A3} reduces to
\begin{equation}\label{A14}
	\begin{cases}
		Y^i_{tt}  - \hbox{div}_y\big( A^\top A\nabla_y Y^i_t \big)  - \p_{b_0}^2 Y^i-\p_{b_0}b_0^i =-(A\nabla_y p)^i,\, i=1,2,\\
		\det(I+\nabla_y Y)=1,\\
		Y(0,y)=\textbf{0},\quad Y_t(0,y)=u_0(y),
	\end{cases}
\end{equation}
where
\begin{align}\label{defA}
	\begin{split}
		A&=
		\left(
		\begin{array}{cc}
			1+\p_{y_2}Y^2&-\p_{y_1}Y^2\\ -\p_{y_2}Y^1
			&	1+\p_{y_1}Y^1
		\end{array}
		\right).
	\end{split}
\end{align}
Under Eulerian coordinates in $\bT\times \bR$, if $u(t,x)$ and $b(t,x)$ are periodic in $x_1$, then under Lagrangian coordinates, $Y(t,y)$ and the pressure $p(t,X(t,y))$ are also periodic in $y_1$. We refer to Appendix \ref{append_opp} for details.
\begin{rem}
	An alternative approach to introducing Lagrangian coordinates,
	widely adopted in the literature, see e.g. \cite{LXZ1,XZ,CHZ}, is to replace $y$ in $\eqref{flow}_2$ by a suitably chosen initial map $X_0(y)$.
	By constructing appropriate $X_0(y)$, it is expected that
	\begin{align}\label{b-X0}
		b(t,X(t,y))=\xi(X_0(y)) \p_{y_1} X(t,y).
	\end{align}
	The  advantage of this formulation is that the derivative $b_0\cdot\nabla_y$ appearing in \eqref{eqbt} becomes a pure $\p_{y_1}$ derivative
	multiplied by a smooth function which is bounded both above and below.
	For $\xi=1$, the existence of $X_0(y)$ was established in \cite{LXZ1, XZ}.
	For a non-trivial background sheared magnetic field, however, we show in Appendix \ref{append_B} that a necessary condition for \eqref{b-X0} is
	$$b_0\cdot\na_x \xi= 0.$$
	To accommodate a broad class of background magnetic fields, \eqref{b-X0} may hold only in the restricted case where the magnetic field perturbation is purely streamwise.
	%
	%
\end{rem}
A direct energy estimate on the unknowns appears infeasible because of the slow temporal decay.
We therefore resort to a frequency-space analysis aligned with the $b_0$ direction
to decompose the unknowns into the high and low frequency parts. Since $b_0^2$ may not vanish,
this curved direction is inconvenient to work with.
We therefore straighten this direction through a change of variables.  To this end, we introduce the map
$y=y(z)$, $z\in \bT\times\bR$ as follows:
\begin{equation}\label{mapyz}
	\begin{cases}
		y_1=z_1,\\
		y_2=z_2+\int_{-\f12}^{z_1} \big( \f{b_0^2}{b_0^1}\big) ( z_1',y_2(z_1',z_2)  )\rmd z_1',
	\end{cases}z\in \bT\times\bR.
\end{equation}	
The periodicity of the map $y(z)$, under the condition \eqref{assump-A1}, is investigated in Appendix \ref{append_opp}.
	
Note that the above equations \eqref{mapyz} in integral form are equivalent to
\begin{equation}\label{ode}
	\begin{cases}
		y_1=z_1,\\
		\f{\rmd y_2(z)}{\rmd z_1 } =\big( \f{b_0^2}{b_0^1}\big) ( z_1,y_2(z_1,z_2) ),\quad y_2(z)\big|_{z_1=-\f12}=z_2,
	\end{cases}z\in \bT\times\bR.
\end{equation}
Applying  $\p_{z_2}$ to $\eqref{ode}_2$, we can solve the equation for $\p_{z_2}  y_2$ to  obtain
\begin{equation}\label{pz2y2}
	\p_{z_2}  y_2= \exp(-h(z)),\,\,
	h(z)=-\int_{-\f12}^{z_1} \p_{y_2}\big( \f{b_0^2}{b_0^1}\big) ( z_1',y_2(z_1',z_2) ) \rmd z_1'.
\end{equation}
Then the Jacobian matrix is given by
\begin{align}\label{nazy}
	\begin{split}
		\na_zy=
		\begin{pmatrix}
			\p_{z_1} y_1 & \p_{z_2}  y_1 \\
			\p_{z_1} y_2 & \p_{z_2}  y_2
		\end{pmatrix}
		&=\begin{pmatrix}
			1 & 0 \\
			\f{b_0^2}{b_0^1}(y(z))  & \exp(-h(z))
		\end{pmatrix}.
	\end{split}
\end{align}	
We will prove in Appendix \ref{append_A} that
	$\det (\na_zy)=\exp(-h(z))\geq \f12$,
and the mapping defined in \eqref{mapyz} is invertible.
%
%
For any smooth function $f$, we have
\begin{align*}
(\p_{b_0}f)(y(z))=(b_0 \cdot \nabla_y f)(y(z))
= b_0^1(y(z)) \p_{z_1} f(y(z)):=\p_{b_0^1}f(y(z)).
\end{align*}
Therefore, we have straightened the directional derivative in the direction of $b_0$ onto that of $e_1$.
Let $B=\big( \nabla_z y \big)^{-\top}$. It follows from \eqref{nazy} that
\begin{align}\label{def-B}
\begin{split}
	B
	=\begin{pmatrix}
		1 & -\f{b_0^2}{b_0^1}(y(z))\exp(h(z)) \\
		0	& \exp(h(z))
	\end{pmatrix}.
\end{split}
\end{align}
By Piola's identity, one gets
\begin{align}\label{B-Piola}
\nabla_z\cdot(\rme^{-h(z)}B)=0.
\end{align}
Denote
\begin{align*}
\nabla_Y=(\nabla_{Y^1},\nabla_{Y^2})=A\nabla_y,\ \nabla_Z=(\nabla_{Z^1},\nabla_{Z^2})=B\nabla_z.
\end{align*}
By the chain rule, for any smooth function $f$, there holds
\begin{align*}
(\nabla_xf)(X(t,y))=\nabla_Yf(X(t,y)),\quad
(\nabla_yf)(y(z))=\nabla_Zf(y(z)).
\end{align*}
For simplicity, let us abuse the notation $Y(t,z)=Y(t,y(z))$, $p(t,z)=p(t,X(t,y(z)))$
and $A(t,z)=A(t,y(z))$. By \eqref{defA}, it is clear that
\begin{align}\label{defA-z}
A(t,z)
=\begin{pmatrix}
	1+\na_{Z^2}Y^2&-\na_{Z^1}Y^2\\ -\na_{Z^2}Y^1
	&	1+\na_{Z^1}Y^1
\end{pmatrix}.
\end{align}
Thus, for $z\in \bT\times\bR$, the system \eqref{A14} becomes
\begin{equation}\label{equ-Y}
\begin{cases}
	Y^i_{tt}  -\nabla_Z\cdot\big( A^\top A \nabla_Z Y^i_t \big)  - \p_{b_0^1}^2 {Y^i}-  \p_{b_0^1} b_0^i(y(z))=-(A\nabla_Z p)^i,\, i=1,2,\\
	\det(I+\nabla_Z Y)=1,\\
	Y(0,z)=\textbf{0},\quad Y_t(0,z)=u_0(y(z)).
\end{cases}
\end{equation}
The periodicity of the unknowns, under the condition \eqref{assump-A1}, is investigated in Appendix \ref{append_opp}.

The change of variables $y=y(z)$ straightens the integral curves of $b_0$,
so that $z_1$ is aligned with the $b_0$-direction and $z_2$ parametrizes the transverse direction.
In this coordinate system, the domain is foliated by one-dimensional fibers  $\{z_2=\mathrm{const.}\}$ corresponding to streamlines of $b_0$.

%
%
%

Noting that the term $-  \p_{b_0^1}b_0(y(z))$ does not decay in time, we introduce the corrector to absorb this term.
The goal is twofold: (i) to uniformize the transport coefficient along the $b_0$ direction, and (ii) to preserve the incompressibility structure.
	
Define
\begin{align}\label{gamma-def}
\gamma(z_2):=\left(\int_{\bT} \f{1}{b_0^1(y(\bar{z}_1,z_2))} \rmd \bar{z}_1\right)^{-1}.
\end{align}
With $\gamma(z_2)$ in hand, we define
\begin{align}\label{def-Phi}
\Phi(z)&:=
b_0(y(z))-\big( \gamma(z_2),0\big)^\top.
\end{align}
Then it is easy to check that there holds
\begin{align}\label{Phi-1}
\p_{b_0^1} b_0(y(z))=\p_{b_0^1} \Phi(z).
\end{align}
Define
\begin{align}\label{DefBaY}
\begin{split}
&\widetilde{Y}^1(z):=-\int_0^{z_1} \f{\Phi^1(\bar{z}_1,z_2)}{b_0^1(y(\bar{z}_1,z_2))} \rmd \bar{z}_1,\\&\widetilde{Y}^2(z):=-\int_0^{z_1} \f{b_0^2}{b_0^1} (y(\bar{z}_1,z_2))\rmd \bar{z}_1
+\psi(z_2),\\&
\bar{Y}(t,z):=Y(t,z)-\widetilde{Y}(z),
\end{split}
\end{align}	
where $\psi(z_2)$ is a function that depends only on $z_2$ and is defined as
\begin{align}\label{def-psi}
\psi(z_2)=\int_0^{z_2}\big(\frac{\rme^{-h(z_1,z_2')}b_0^1(y(z_1,z_2'))}{\gamma(z_2')}-\rme^{-h(0,z_2')}\big)\rmd z_2'.
\end{align}
Note that $\rme^{-h(z)}b_0^1(y(z))$ is a function that depends only on $z_2$ since
\begin{align*}
     \p_{z_1}\big(\rme^{-h(z)}b_0^1(y(z))\big)=\rme^{-h(z)}(\mathrm{div}_yb_0)(y(z))=0.
\end{align*}
Under the conditions of Theorem \ref{th1}, the construction of $\widetilde{Y}$, $\gamma$ and $\psi$ ensures the volume-preserving property of the map $\textrm{Id}_y+\widetilde{Y}(y)$ (see Lemma \ref{lem-p2Y2-1}). Precisely, in Lagrangian coordinates $y$, there holds
\begin{align*}
\det(I+\nabla_y\widetilde{Y})=1\,.
\end{align*}
Here $\gamma(z_2)$ can be regarded as a normalization
in each streamline direction of $b_0$ as it satisfies
\begin{align*}
\int_\bT \frac{\gamma(z_2)}{b_0^1(y(z))}\rmd z_1=1,\quad
\int_\bT \frac{\Phi^1(z_1)}{b_0^1(y(z))}\rmd z_1=0.
\end{align*}
The integral terms in $\widetilde Y$ provide a correction along each streamline.
However, this longitudinal correction does not automatically guarantee
compatibility between different streamlines.
Hence, the function $\psi(z_2)$ depending solely on the transverse variable is introduced. Its role is to compensate for the distortion of area
created by the streamline normalization. While $\gamma$ adjusts the metric density along
each streamline, $\psi$ adjusts the spacing between streamlines. Together, they ensure
that the global volume-preserving structure.

Because the maps $\textrm{Id}_y+Y(y)$ and $\textrm{Id}_y+\widetilde{Y}(y)$ are volume-preserving, all terms
depending only on $\widetilde{Y}$ cancel out in the divergence identity when writing
$Y=\bar{Y}+\widetilde{Y}$.
Thus the incompressibility constraint reduces to a quadratic relation between
$\bar{Y}$ and $Y$.
This reduction transforms the linear pressure contribution into a quadratic nonlinearity.
Combined with \eqref{assump-A1} and \eqref{assump-B1}, it also enables the control of the low-frequency part of $\nabla_z \bar{Y}$  by its high-frequency part.
Therefore, this geometric cancellation is essential for closing the energy estimates.
\begin{rem}
It is noteworthy that the expression for
$\gamma$ is analogous
to the effective coefficients or effective parameters in one-dimensional homogenization theory \cite{BLP}  (cf. Section 1.3).
\end{rem}

By using 
$\bar{Y}(t,z)$, the equation \eqref{equ-Y} reduces to
\begin{equation}\label{equ-bY}
\begin{cases}
	\bar{Y}_{tt}  - \Delta_z \bar{Y}_t - \p_{b_0^1}^2 \bar{Y} =f,\\
	\det(I+\nabla_Z Y)=1,\\
	\bar Y(0,z)=-\widetilde{Y}(z),\quad \bar Y_t(0,z)=u_0(y(z)),
\end{cases}
\end{equation}
with $f=(f^1,f^2)^\top$:
\begin{align*}
\begin{split}
	f^i&=\nabla_Z\cdot((A^TA-I)\nabla_Z\bar Y_t^i)+(B-I)\nabla_z\cdot\nabla_Z\p_t\bar Y^i\\
	&\quad+\nabla_z\cdot((B-I)\nabla_z\p_t\bar Y^i)-(A\nabla_Z p)^i,\,i=1,2.
\end{split}
\end{align*}

Throughout the paper, we make the following assumptions concerning the even-odd symmetry of $u_0$ and $b_0$:
\begin{align}\label{assump-A1}
&
b_0^1(y)\,\text{ is even periodic with respect to\ } y_1,\ b_0^2(y)\, \text{ is odd periodic with respect to\ } y_1,\\
\label{assump-B1}
&	
u_0^1(y)\,\text{is odd periodic with respect to\ } y_1,\,
u_0^2(y)\, \text{is even periodic with respect to\ } y_1.
\end{align}
Then we obtain that
\begin{align*}
\begin{split}
	&Y^1(t,y(z)),\,
	\bar{Y}^1(t,y(z))\text{ is odd periodic with respect to\ } z_1,\\
	&Y^2(t,y(z)),\, \bar{Y}^2(t,y(z))\text{ is even periodic with respect to\ } z_1 .
\end{split}
\end{align*}
The even-odd symmetry and periodicity of the unknowns and the map will be investigated in Appendix \ref{append_opp}.

The even-odd symmetry of $b_0$ is a structural assumption. It ensures that all trajectories of the flow generated by $b_0$ on $\mathbb T \times \mathbb R$ are closed in the periodic direction and no drift accumulates over one period. This property allows one to straighten the integral curves of $b_0$ and reduce the directional derivative $\partial_{b_0}$ to a weighted one-dimensional derivative in the new coordinate system. Without this symmetry, the periodic structure is lost: the flow on the cylinder generically exhibits helical trajectories with a cumulative drift, and the present reduction
breaks down at a structural level.

A direct energy estimate for $\bar{Y}$ appears to be infeasible
due to the slow temporal decay.
Based on the structure of the system, we decompose $\bar{Y}$
into its high-frequency and low-frequency parts along the first spatial variable.
Denote the low frequency of $\bar{Y}$ with respect to $z_1$ variable by $$\bar{Y}_L(t,z_2):=\int_{\bT} \bar{Y}(t,z) \rmd z_1.$$
The corresponding high frequency part is  $$\bar{Y}_H(t,z):=\bar{Y}(t,z) - \bar{Y}_L(t,z_2).$$
Since $\bar Y^1$ is an odd function with respect to $z_1\in \bT$,
we have
$$\bar{Y}_H^1=\bar Y^1, \quad \bar{Y}_H^2=\bar{Y}^2-\bar{Y}_L^2\quad\hbox{and}
\quad \bar{Y}_L^1=0,\,\, \bar{Y}_L^2=\int_{\bT} \bar{Y}^2\, \rmd z_1 .$$
Then we can derive the equations for $\bar{Y}_H$ and $\bar Y_L^2$ as follows:
\begin{equation*}
\p_t^2\bar Y_H  -\Delta_z  \p_t\bar Y_H  - \p_{b_0^1}^2  \bar Y_H =f- (0,\mathfrak f)^\top,
\end{equation*}
and
\begin{equation*}
\p_t^2\bar Y_L^2  -\p_{z_2}^2\p_t\bar Y_L^2  =\mathfrak f,
\end{equation*}
with
\begin{align}\label{mfkf-def1}
\begin{split}
	\mathfrak f=&\int_{\bT} \nabla_Z\cdot((A^\top A-I)\nabla_Z\p_t\bar Y^2)\rmd z_1+\int_{\bT} (B-I)\nabla_z\cdot\nabla_Z\p_t\bar Y^2\rmd z_1\\
	&+\int_{\bT} \p_{z_2}((B-I)_{2k}\p_{z_k}\p_t\bar Y^2)\, \rmd z_1
	-\int_{\bT} A_{2j}B_{jk}\p_{z_k} p\, \rmd z_1
	-\int_{\bT} \p_{z_1} \Phi^1 \p_{b_0^1} \bar Y_H^2\, \rmd z_1.
\end{split}
\end{align}

\subsection{Main results and key ideas}
For any integer $a\geq 3$, we define the high-frequency energy $\mathcal H_a(t)$ for $\bar Y_H$
and the low-frequency energy $\mathcal L_a(t)$ for $\bar Y_L$ as follows:
\begin{align}
\mathcal H_a(t):=&\sup_{0\leq \tau\leq t}\big(\|\p_t\bar Y_H\|_{H^a}^2(\tau)+\|\p_{b_0^1} \bar Y_H\|_{H^a}^2(\tau)
+\|\na_z \bar Y_H\|_{H^a}^2(\tau)\big)\notag\\&+\int_0^t\big(\|\na_z \p_t\bar Y_H\|_{H^a}^2(\tau)+\| \p_{b_0^1} \bar Y_H\|^2_{H^a}(\tau)\big)\rmd\tau,\label{def-Ha}\\
\mathcal L_a(t):=&\sup_{0\leq \tau\leq t}\|\p_t\bar Y^2_L\|^2_{H^a}(\tau)+\int_0^t\|\p_{z_2} \p_t\bar Y^2_L\|_{H^a}^2(\tau)\rmd\tau.\label{def-La}
\end{align}
We will also need a weighted energy $\mathcal{W}_0(t)$ defined as follows:
\begin{align}\label{def-W0}
\begin{split}
	&\mathcal{W}_0(t):=\sup_{0\leq \tau\leq t}\big((4a+\tau)^a\big(\|\p_t\bar Y_H\|_{L^2}^2(\tau)+ \|\p_{b_0^1} \bar Y_H\|_{L^2}^2(\tau)+ \|\na_z \bar Y_H\|_{L^2}^2(\tau) \big)\big)\\
&\,\, +\int_0^t(4a+\tau)^a \big(\|\na_z \p_t\bar Y_H\|_{L^2}^2(\tau)+ \| \p_{b_0^1} \bar Y_H\|^2_{L^2}(\tau)\big)
+a(a-1)(4a+\tau)^{a-2}\|\bar Y_H\|_{L^2}^2 \rmd\tau.	
\end{split}
\end{align}
We impose the following conditions on $\xi$. For the integer $a\geq 3$, we assume that $\xi$ satisfies	
\begin{align}\label{xi-cond0}
0<m\leq \xi\leq M,\quad \|\xi'\|_{H^{a+1}(\mathbb{R})}\leq L,
\end{align}
where $m, M$ and $L$ are positive constants.
For the case that $\xi$ is negative and the condition $\eqref{xi-cond0}_1$ is replaced by
$ 0<m\leq -\xi\leq M,$
the estimate is similar.

Now we state the main result as follows.
\begin{thm}\label{th1}
Let $a\geq 3$ be an integer. Suppose $u_0\in H^a(\bT\times\mathbb R)$ and
$b_0-(\xi,0)^\top\in H^{a+1}(\bT\times\mathbb R)$ with $\mathrm{div}\, u_0=\mathrm{div}\, b_0=0$.
 Assume $\xi$ satisfies \eqref{xi-cond0}, 
 and $u_0,\,b_0$ satisfy \eqref{assump-A1}, \eqref{assump-B1}.
Then there exists a constant $\epsilon_0$ such that, if
\begin{equation}\label{asmp}
	\|u_0\|_{H^a}\leq \epsilon_0,\,\,\|b_0-(\xi,0)^\top\|_{H^{a+1}}\leq \epsilon_0,
\end{equation}
there exists a unique global solution $Y$ solving \eqref{equ-Y}.
Moreover, the solution satisfies
$$	\mathcal H_a(t)+\mathcal L_a(t)+\mathcal{W}_0(t)\leq C\epsilon_0^2,$$
for some $C>0$ and for all $t\geq 0$, where $C$ depends only on $m,\,M,\,L$ and $a$.
Moreover,
\begin{align}\label{asympt}
\begin{split}
\| u(t,X(t,y(z))) - u_L(t,z_2)\|_{L^2}\lesssim \epsilon_0 (4a+t)^{-\f{a}{2}},\\
\| b(t,X(t,y(z))) - \big( \gamma(z_2),0\big)^\top\|_{L^2}\lesssim \epsilon_0 (4a+t)^{-\f{a}{2}},
\end{split}
\end{align}
where $u_L(t,z_2)=\int_{\bT} u(t,X(t,y(z)))\, \rmd z_1$.
\end{thm}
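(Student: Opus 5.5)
The plan is a bootstrap (continuity) argument on $\mathcal E(t):=\mathcal H_a(t)+\mathcal L_a(t)+\mathcal W_0(t)$, combined with standard local well-posedness of \eqref{equ-Y} in Lagrangian variables. First record the sizes of the geometric objects. From \eqref{asmp} and \eqref{xi-cond0}, the straightening map $y(z)$ of \eqref{mapyz}, $h$, $B-I$, $\gamma(z_2)-\xi$, $\Phi$ and $\widetilde Y$ all have size $O(\epsilon_0)$ in $H^{a+1}$-type norms (up to the harmless $\xi$-dependence), by Appendix \ref{append_A}. Hence $\bar Y(0)=-\widetilde Y$ and $\bar Y_t(0)=u_0\circ y$ give $\mathcal E(0)\lesssim\epsilon_0^2$. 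I assume $\mathcal E(t)\le 2C_0\epsilon_0^2$ on $[0,T]$ and aim to show $\mathcal E(t)\le C_0\epsilon_0^2/2+C\mathcal E(t)^{3/2}$, which closes the argument for $\epsilon_0$ small.

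\textbf{High-frequency energy.} Apply $\p_z^\alpha$ with $|\alpha|\le a$ to the $\bar Y_H$ equation and test against $\p_z^\alpha\p_t\bar Y_H$ and against $\p_z^\alpha\bar Y_H$ (with a small multiple of the second). This is the usual damped-wave Lyapunov combination. It gives control of $\|\p_t\bar Y_H\|_{H^a}^2+\|\p_{b_0^1}\bar Y_H\|_{H^a}^2+\|\na_z\bar Y_H\|_{H^a}^2$ and of the dissipation $\int\|\na_z\p_t\bar Y_H\|_{H^a}^2+\|\p_{b_0^1}\bar Y_H\|_{H^a}^2$. Two facts are used here. First, the commutator $[\p_z^\alpha,\p_{b_0^1}]$ has coefficients of size $\|b_0^1-\xi\|$ plus $\xi'$ terms. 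Second, $\|\bar Y_H\|\lesssim\|\p_{z_1}\bar Y_H\|\lesssim\|\p_{b_0^1}\bar Y_H\|$ on zero-mean fibres, since $b_0^1\ge m/2$.

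The $\xi'$ commutator contributions are not small, so they cannot simply be absorbed. I would handle them by weighting derivatives: estimate $\p_{z_1}$-derivatives and $\p_{z_2}$-derivatives with different constants, and use that $\p_{b_0^1}$ commutes with $\p_{z_1}$ up to $\p_{z_1}b_0^1=O(\epsilon_0)$. The genuinely linear $\xi'$ term in $[\p_{z_2},\p_{b_0^1}]=(\p_{z_2}b_0^1)\p_{z_1}$ is then bounded by $\|\p_{z_1}\bar Y_H\|$, which is damped, and absorbed by a hierarchy of weights in the number of $z_2$ derivatives.

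\textbf{Nonlinear terms in $f$.} These are bounded by $\sqrt{\mathcal E}\cdot(\text{dissipation})$, with one exception: the pressure term $A\na_Z p$. This is the key point. Since both $\det(I+\na_y Y)=1$ and $\det(I+\na_y\widetilde Y)=1$ (Lemma \ref{lem-p2Y2-1}), the divergence constraint becomes a quadratic identity $\mathrm{div}\,\bar Y=Q(\na\bar Y,\na Y)$. Taking the divergence of the equation expresses $p$ through an elliptic problem whose sources are quadratic in $(\bar Y,Y)$, because the linear $\p_{b_0^1}^2$ and $\Delta\p_t$ parts act on $\mathrm{div}\,\bar Y$, which is itself quadratic. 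This yields $\|\na p\|_{H^{a-1}}\lesssim\sqrt{\mathcal E}\,(\text{dissipation})^{1/2}$. I would use the same identity, with the parity from \eqref{assump-A1}--\eqref{assump-B1}, to bound $\p_{z_2}\bar Y_L^2$ by high-frequency quantities: averaging $\mathrm{div}\,\bar Y$ in $z_1$ kills $\p_{z_1}\bar Y^1$, since $\bar Y^1$ is odd.

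\textbf{Low frequency and decay.} For $\mathcal L_a$, the heat-type equation $\p_t(\p_t\bar Y^2_L)-\p_{z_2}^2\p_t\bar Y^2_L=\mathfrak f$ gives the bound after estimating $\mathfrak f$ from \eqref{mfkf-def1}. Its last linear term $\int\p_{z_1}\Phi^1\p_{b_0^1}\bar Y_H^2$ is $O(\epsilon_0)$ times the damped quantity, and the rest is quadratic. For $\mathcal W_0$, I multiply the $L^2$ Lyapunov identity by $(4a+t)^a$. The resulting error $a(4a+t)^{a-1}E$ is handled by induction on the power $k=0,\dots,a$: the dissipation is coercive for $E$ on high frequencies (Poincaré along fibres), so the weight $(4a+t)^{k-1}$ is converted into $(4a+t)^k$. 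The top-order $H^a$ bounds pay for the loss through interpolation, which is where the condition $a\ge3$ is used.

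\textbf{Conclusion.} The decay \eqref{asympt} follows from $u\circ X\circ y-u_L=\p_t\bar Y_H$ and $b\circ X\circ y-(\gamma,0)^\top=\p_{b_0}Y-\Phi\cdot(\ldots)$, which reduces to $b_0^1\p_{z_1}\bar Y_H$ via \eqref{Phi-1}. The main obstacle I expect is the non-small linear $\xi'$-commutators in the high-order estimates, together with the low-frequency control in the pressure; I would attack both through the quadratic divergence structure and the anisotropic weighting.
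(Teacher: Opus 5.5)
\textbf{Main gap: the pressure is not quadratic.} Your central claim is false, and this is exactly the difficulty the paper is built around. When you take the divergence of the $\bar Y$ equation, $\partial_{b_0^1}^2=(b_0^1\partial_1)^2$ does not commute with $\nabla\cdot$. Here $b_0^1(y(z))\approx\gamma(z_2)$, and $\partial_2 b_0^1\approx\xi'$ has size $L$, which is not small. The commutator produces the linear source $2\partial_1\big(\gamma'(z_2)\partial_{b_0^1}\bar Y_H^2\big)$, the Lagrangian form of the Eulerian term $2\xi'\partial_{x_1}H^2$. So $p$ contains the linear, $O(1)$ part $q=2\Delta^{-1}\partial_1(\gamma'\partial_{b_0^1}\bar Y_H^2)$. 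Only $\nabla p-\nabla q$ satisfies small/quadratic bounds; $\nabla p$ itself is only bounded by $\|\partial_{b_0^1}\bar Y_H\|+\|\nabla\partial_t\bar Y_H\|$. You use this same commutator in your derivative hierarchy but overlook it in the pressure.

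In the $L^2$ energy identity, $(\nabla q\,|\,\partial_t\bar Y_H+\frac14\bar Y_H)$ is then of size $L\|\partial_{b_0^1}\bar Y_H\|(\|\nabla\partial_t\bar Y_H\|+\|\partial_{b_0^1}\bar Y_H\|)$. It cannot be absorbed, since $L$ is arbitrary. Your hierarchy cannot push it to lower order either, because at the $L^2$ level there is no lower order.

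The missing idea is to apply the volume-preserving identity \eqref{barY-normf} to the \emph{pairing}, not to $p$:
\begin{itemize}
\item Integrate by parts: $(\nabla q\,|\,v)=-(q\,|\,\nabla\cdot v)$.
\item Use $\nabla\cdot\bar Y_H=-\partial_2\bar Y_L^2+\mathcal R_H^1$ and $\nabla\cdot\partial_t\bar Y_H=-\partial_t\partial_2\bar Y_L^2+\mathcal R_H^2$.
\item The $\bar Y_L^2$ contributions vanish because $q$ has zero $z_1$-mean.
\item What remains is $\int_0^t\|\partial_{b_0^1}\bar Y_H\|_{L^2}\|\mathcal R_H\|_{L^2}\,\rmd\tau$, which is cubic or carries a factor $\epsilon_0$.
\end{itemize}
At order $b\geq 1$ the $q$-term is bounded by $\frac18\mathcal H_b+CL^2\mathcal H_{b-1}$ and absorbed by induction on $b$. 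In the low-frequency equation it drops out because $\int_{\mathbb T}\partial_2 q\,\rmd z_1=0$.

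\textbf{Second gap: the other terms of $f$ are not bounded by $\sqrt{\mathcal E}$ times the dissipation.} The dissipation controls $\partial_1\bar Y_H$ and $\nabla\partial_t\bar Y_H$, but not $\partial_2\bar Y_H$. At top order, pair a term such as $\partial_2\big(G\,\partial_2\partial_t\bar Y_H^1\big)$ with $\frac14\partial^\alpha\bar Y_H$, where $G$ is $A^\top A-I$ or one of the non-decaying $O(\epsilon_0)$ coefficients coming from $B-I$ and $\widetilde Y$. This gives $\int_0^t\|G\|_{H^a}\|\nabla\partial_t\bar Y_H\|_{H^a}\|\partial_2\bar Y_H\|_{H^a}\,\rmd\tau$, which has only one square-integrable factor.

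The paper handles this by integrating by parts in time. Then $\partial_t G$ contains $\partial_t\partial_2\bar Y^2$, whose low-frequency part has no decay, and \eqref{exp-ptp2Y2} is used again to trade it for $\partial_1\partial_t\bar Y_H$ terms.

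For the same reason, your claim that the dissipation is coercive for the $L^2$ energy in the $\mathcal W_0$ estimate is wrong: $\|\partial_2\bar Y_H\|_{L^2}$ is not dissipated. The interpolation against the $H^a$ bound that you mention is the actual mechanism. You should also explain why $\mathcal W_0$ is needed at all. The zero-order part of the $\mathcal L_a$ estimate has no $L^2$ dissipation for $\partial_t\bar Y_L^2$, and it closes only with the time-weighted decay of $\bar Y_H$.
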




Combining \eqref{xi-cond0} with \eqref{asmp}, and noting that $\epsilon_0$ is sufficiently small, we obtain
\begin{align}\label{b01-cond}
	&0<\frac{m}{2}\leq b_0^1\leq 2M,\quad	\|\p_{y_1}b_0^1\|_{H^{a}}\leq \epsilon_0, \quad \|\p_{y_2}b_0^1\|_{H^{a}}\leq 2L,\quad \|b_0^2\|_{H^{a+1}}\leq \epsilon_0.
\end{align}
These bounds will be used repeatedly throughout the paper.
\begin{rem}
The regularity of $b_0$ is one order higher than that of the solutions. This stems from the change of variables \eqref{mapyz} since $h(z)$ involves a derivative of $b_0$ in \eqref{pz2y2}.
\end{rem}
\begin{rem}
The temporal decay rate for $\|\partial_2\bar Y_H(t)\|_{L^2}$  encoded  in $\mathcal  W_0$ is sharp. To see this, consider the constant-coefficient linearized equation for the high-frequency part,
$$\partial_t^2 \bar Y_H-\Delta_z\partial_t\bar Y_H-\partial_{z_1}^2\bar Y_H=0.$$
The characteristic roots of the Fourier modes are
$$r_\pm(\lambda)=-\frac{(2\pi)^2|\lambda|^2}{2}\pm\frac12\sqrt{(2\pi)^4|\lambda|^4-4(2\pi)^2\lambda_1^2},$$
where $\lambda_1\neq0$. Since $\lambda_2\in\mathbb R$ is unbounded, the quantity
$$
r_+(\lambda)\sim-\frac{\lambda_1^2}{|\lambda|^2}=-\frac{\lambda_1^2}{\lambda_1^2+\lambda_2^2}
$$
can be arbitrarily small even though $\lambda_1\neq0$.
The worst regime is represented by frequencies $|\lambda_1|=1, |\lambda_2|\sim N\gg1.$
Then
$
|\lambda|\sim N, r_+(\lambda)\sim -\frac{1}{N^2}
$
and hence the slow mode decays like $e^{r_+(\lambda)t}\sim e^{-t/N^2}.$
Taking $N^2\sim t$, the exponential factor remains of order one.
Therefore the decay comes only from the Sobolev regularity of the initial
data.
Assume that the initial energy controls
$$
\|\nabla_z\bar Y_H(0)\|_{H^a}+\|\partial_t\bar Y_H(0)\|_{H^a}
\lesssim\epsilon_0.
$$
At frequency $|\lambda|\sim N$, this gives
$$
\|{\bar Y}_H(0,x)\|_{L^2}\lesssim\epsilon_0 N^{-a-1}.
$$
Since the Fourier multiplier of
$\partial_2$ is $\lambda_2$, in the worst regime one has
$$\|\mathcal P_N\widehat{\partial_2\bar Y_H}(t,\lambda)\|_{L^2}
\sim N\cdot N^{-a-1}=N^{-a}.$$
where $\mathcal P_N$ denotes the Fourier projection onto the frequency region
$
\Omega_N:=\{
(\lambda_1,\lambda_2)\in \mathbb Z\times\mathbb R:
|\lambda_1|=1,\, N\le |\lambda_2|\le 2N
\}.
$
We thus obtain
$$\|\partial_2\bar Y_H(t)\|_{L^2}\lesssim \epsilon_0(1+t)^{-a/2}.$$
%
Therefore
the decay rate $(1+t)^{-a/2}$ for $\|\partial_2\bar Y_H(t)\|_{L^2}$ in $\mathcal  W_0$ is sharp.
The temporal decay rates obtained for
$\|\partial_1\bar Y_H(t)\|_{L^2}$ and $\|\partial_t\bar Y_H(t)\|_{L^2}$ may not be sharp,
since their linear counterparts decay faster.
\end{rem}
\begin{rem}
The asymptotics \eqref{asympt} follow directly from the bound $\mathcal W_0(t)\lesssim\epsilon_0^2.$
\end{rem}
\begin{rem}
Define the following temporal weighted energy:
\begin{align*}
\begin{split}
	\mathcal{W}_k(t):&=\sup_{0\leq \tau\leq t}\big((4a+\tau)^{a-k}\big(\|\p_t\bar Y_H\|_{H^k}^2(\tau)+ \|\p_{b_0^1} \bar Y_H\|_{H^k}^2(\tau)+ \|\na_z \bar Y_H\|_{H^k}^2(\tau) \big)\big)\\
&\,\, +\int_0^t(4a+\tau)^{a-k} \big(\|\na_z \p_t\bar Y_H\|_{H^k}^2(\tau)+ \| \p_{b_0^1} \bar Y_H\|^2_{H^k}(\tau)\big)
\rmd\tau\\
&\,\, +\int_0^t(a-k)(a-k-1)(4a+\tau)^{a-k-2}\|\bar Y_H\|_{H^k}^2 \rmd\tau,
\end{split}
\end{align*}
for $0\leq k\leq a$. Using the Poincar\'e inequality and the interpolation between $\mathcal H_a(t)$ and $\mathcal{W}_0(t)$, we immediately have the following temporal weighted energy bound:
$$	 \mathcal{W}_k(t)\leq C\epsilon_0^2,\,\,\textrm{for}\,\, 0\leq k\leq a.$$
Correspondingly,
\begin{align*}
\begin{split}
\| u(t,X(t,y(z))) - u_L(t,X(t,y(z)))\|_{H^k}\lesssim \epsilon_0 (4a+t)^{-\f{a-k}{2}}, \,\,\textrm{for}\,0\leq k\leq a,\\
\| b(t,X(t,y(z))) - \big( \gamma(z_2),0\big)^\top\|_{H^k}\lesssim \epsilon_0 (4a+t)^{-\f{a-k}{2}}, \,\,\textrm{for}\,0\leq k\leq a.
\end{split}
\end{align*}
\end{rem}

\begin{rem}
Consider the problem \eqref{equ-Y} on the
two-dimensional torus $\bT^2$ (in z-coordinates).
Under assumptions similar to those in Theorem \ref{th1}, we can establish the existence of global-in-time solutions near equilibrium. Moreover, the proof will be simpler.
Precisely, $\p_t\bar{Y}_L$ no longer appears in \eqref{nap2-naqLr-es}, \eqref{nap2-naqes}, and is absent from every other term contributing to the estimate for $\mathcal H_a(t)$.
Consequently, the high-frequency energy
$\mathcal H_a(t)$ alone closes the estimate, and neither $\mathcal L_a(t)$ nor $\mathcal{W}_0(t)$ needs to be estimated.
\end{rem}

We now state our main result in Eulerian coordinates as follows.
\begin{thm}\label{MTH}
Let $a\geq 3$, $ u_0 \in H^a(\bT\times \bR)$, $b_0-(\xi,0)^\top \in H^{a+1}(\bT\times \bR)$ with $\mathrm{div}\, u_0=\mathrm{div}\, b_0=0$,
$\xi$ satisfies \eqref{xi-cond0}, 
and $u_0,\,b_0$ satisfy \eqref{assump-A1}, \eqref{assump-B1}.
Then there exists a constant $\epsilon_0$ such that, if
\begin{equation*}
	\|u_0\|_{H^a}\leq \epsilon_0,\,\,\|b_0-(\xi,0)^\top\|_{H^{a+1}}\leq \epsilon_0,
\end{equation*}
\eqref{A1} has a unique global solution $(u,b)$ such that for any $T>0$,
$$u\in C([0,T];H^a),\,\, \nabla u\in L^2(0,T; H^a),\,\, b-(\xi,0)^\top  \in C([0,T];H^a).$$
\end{thm}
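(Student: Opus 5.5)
Theorem \ref{MTH} is obtained by transferring Theorem \ref{th1} back from Lagrangian (and straightened $z$-) coordinates to Eulerian coordinates; no new energy estimates are needed. The plan has three steps: invert the coordinate changes, read off the regularity of $u$ and $b$, and prove uniqueness.

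\emph{Reconstructing $(u,b)$.} Theorem \ref{th1} gives a global solution $Y(t,z)$ of \eqref{equ-Y} with $\mathcal H_a+\mathcal L_a+\mathcal W_0\le C\epsilon_0^2$. By Appendix \ref{append_A}, the map $z\mapsto y(z)$ is an $H^{a+1}$-diffeomorphism (its Jacobian is $\rme^{-h}\ge\f12$), so $Y(t,y)$ is recovered with comparable Sobolev norms. Since $\bar Y=Y-\wt Y$ and $\|\wt Y\|_{H^{a+1}}\lesssim\epsilon_0$, the bounds on $\na_z\bar Y_H$, $\p_t\bar Y_H$ and $\p_t\bar Y_L^2$ give
\[
\sup_t\|\na_yY(t)\|_{H^a}\lesssim\epsilon_0 .
\]
(Only $\p_t\bar Y^2_L$ is controlled, not $\bar Y_L^2$ itself; but $\na_z\bar Y_L=(0,\p_{z_2}\bar Y_L^2)$, and on any finite interval $\int_0^T\|\p_{z_2}\p_t\bar Y_L^2\|_{H^a}\,\rmd t\lesssim T^{1/2}\epsilon_0$, which suffices for finite $T$.) Hence $X=y+Y$ stays close to the identity in $C^1$ and is a volume-preserving bijection of $\bT\times\bR$, periodic in $y_1$ by Appendix \ref{append_opp}. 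I then set
\[
u(t,X(t,y))=\p_tY(t,y),\qquad b(t,X(t,y))=\p_{b_0}X(t,y),
\]
and recover $p$ likewise. Reversing the derivation \eqref{flow}--\eqref{A14} (using \eqref{eqbd}, $\det\na_yX=1$ and the chain rule with $A=(\na_yX)^{-\top}$) shows that $(u,b,p)$ solves \eqref{A1}, with $\mathrm{div}\,b=0$ propagated by the transport structure.

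\emph{Regularity.} Composing with $X^{-1}$, which lies in $H^{a+1}_{loc}$ modulo the identity, preserves $H^a$ for $a\ge3$ by standard composition estimates. This gives:
\begin{itemize}
\item $u\in L^\infty(0,T;H^a)$ with $\na u\in L^2(0,T;H^a)$, from the $\p_t\bar Y$ and $\na_z\p_t\bar Y$ bounds;
\item $b-(\xi,0)^\top\in L^\infty(0,T;H^a)$, since
\[
b\circ X-(\xi,0)^\top\circ X=\big(b_0-(\xi,0)^\top\big)+\p_{b_0}Y+\big[(\xi(y_2),0)-(\xi(X_2),0)\big],
\]
where the last bracket is controlled by $\|\xi'\|_{H^{a+1}}\|Y\|$. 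Only $\na Y$ is bounded globally, but $\|Y(t)\|_{L^2}\le\int_0^t\|\p_tY\|_{L^2}$ is finite for finite $T$, which is all that is required.
\end{itemize}
Time continuity in $H^a$ follows from the equations and a standard Lions--Magenes/weak-continuity argument plus energy equality; for $b$, which has no smoothing, I would use the transport equation and the usual regularization argument.

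\emph{Uniqueness (main obstacle).} Because there is no resistivity, $b$ has no smoothing and a direct Eulerian difference estimate loses a derivative. I would prove uniqueness in Lagrangian variables instead. Any two Eulerian solutions in the stated class generate flow maps $X_1,X_2$ with $\na u_i\in L^1_tL^\infty$. Their difference $\delta Y$ satisfies the linearized version of \eqref{A14}: $\p_{b_0}^2\delta Y$ is a linear wave-like term, viscosity controls $\na\delta Y_t$, and the pressure difference is handled through $\det(I+\na Y_i)=1$. Provided the pressure terms can be controlled this way, an $L^2$/$H^1$ energy estimate for $(\delta Y_t,\p_{b_0}\delta Y,\na\delta Y)$ closes by Gronwall on $[0,T]$, so $\delta Y\equiv0$. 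The main delicate point is this pressure difference: it requires an elliptic estimate for $\na_Y\cdot(A\na\delta p)$ with coefficients depending on $Y_1,Y_2$, in the spirit of \cite{AZ,LXZ1}.
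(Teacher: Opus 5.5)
Your existence and regularity argument is sound, but it takes a different route from the paper. The paper does not push the Lagrangian solution forward by hand. It starts from the standard Eulerian local solution of \eqref{MHD-shear-Euler} and proves an $H^a$ energy inequality whose Gronwall factor is $\exp C\big(\int_0^t(\|\na_x u\|_{L^\infty}+\|\na_x b\|_{L^\infty})\,\rmd\tau+t\|\xi'\|_{H^a}\big)$. It then uses Theorem \ref{th1} only to show $\int_0^t(\|\na_x u\|_{L^\infty}+\|\na_x b\|_{L^\infty})\,\rmd\tau\lesssim\epsilon_0((1+t)^{1/2}+t)$, which excludes blow-up at any finite time.

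Your reconstruction through $z\mapsto y\mapsto X$, with composition estimates of the type in Lemma \ref{lemA}, gives the same finite-$T$ regularity directly and makes the Lagrangian--Eulerian correspondence explicit. The price is verifying that $(u,b,p)$ solves \eqref{A1} and the inverse-map bookkeeping, whereas the paper's route needs only the $L^1_t\mathrm{Lip}$ bound. Your handling of $b\circ X-(\xi,0)^\top\circ X$ via $Y(0)=0$ and $\|Y(t)\|_{L^2}\le t\sup_\tau\|\p_\tau Y\|_{L^2}$ is correct.

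Two small corrections. First, $\|\wt Y\|_{H^{a+1}}$ is not small, since $\wt Y^2$ contains $\psi(z_2)$, which need not lie in $L^2$. Only $\|\na\wt Y\|_{H^a}\lesssim\epsilon_0$ holds (see \eqref{nawideY}), and that is all you actually use. Second, no finite-$T$ workaround is needed for $\p_{z_2}\bar Y_L^2$. Lemma \ref{lem-p2Y2-1} (\eqref{p2Y2}, \eqref{pbarY}, \eqref{nblaY}) bounds $\na\bar Y$ and $\na Y$ by $\na\bar Y_H$ and $\na\wt Y$ uniformly in time via the identity \eqref{barY-normf}. This is what keeps $X$ uniformly $C^1$-close to the identity.

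The uniqueness step, however, is a gap as written. Your Lagrangian argument is explicitly conditional on a pressure-difference estimate you do not supply, and your reason for leaving Eulerian variables is mistaken. In the class of the theorem, $a\ge3$, so $\na u_i$ and $\na b_i$ lie in $L^1(0,T;L^\infty)$ (with $\na b_i$ including $\xi'$). Then the plain $L^2$ estimate for $(\delta u,\delta b)=(u_1-u_2,b_1-b_2)$ loses no derivative.

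The only terms carrying a derivative of the difference are the following:
\begin{itemize}
\item the transport terms, which vanish because $\mathrm{div}\,u_1=0$;
\item the magnetic coupling, which cancels by $\mathrm{div}\,b_1=0$:
\[
\int(b_1\cdot\na\delta b)\cdot\delta u+\int(b_1\cdot\na\delta u)\cdot\delta b=\int b_1\cdot\na(\delta u\cdot\delta b)=0.
\]
This is legitimate since $b_1\in L^\infty$ and $\delta u\cdot\delta b\in W^{1,1}$.
\end{itemize}
The pressure drops out because $\mathrm{div}\,\delta u=0$. What remains is bounded by $(\|\na u_2\|_{L^\infty}+\|\na b_2\|_{L^\infty})(\|\delta u\|_{L^2}^2+\|\delta b\|_{L^2}^2)$, so Gronwall gives $\delta u=\delta b=0$. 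You should replace the Lagrangian sketch by this standard estimate; the paper itself does not argue uniqueness separately beyond the standard local theory.
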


%



We now sketch the main steps and explain the key ideas underlying the energy estimates.
The equation for $\bar{Y}_H$ contains the following linear pressure term:
\begin{align}\label{def-q}
q:=2\Delta_z^{-1}\p_{z_1}(\gamma'(z_2)\p_{b_0^1} \bar{Y}_H^2).
\end{align}
Since $\gamma'(z_2)\sim \xi'(z_2)$ is not small,
we thus incorporate this term into the linear part
and write the equation as follows:
%
\begin{equation}\label{linearsys}
\begin{cases}
	\p_t^2\bar  {Y}_H -\Delta_z \p_t\bar  {Y}_H  - \p_{b_0^1}^2  \bar  {Y}_H+\nabla_z q =\mathcal F,\\
	\p_t^2\bar  {Y}_L^2  -\p_{z_2}^2 \p_t\bar  {Y}_L^2 =\mathfrak f,
\end{cases}
\end{equation}
where $\mathcal{F}$ denotes the nonlinear terms:
\begin{align}\label{mthcalF-def}
\mathcal{ F}=f+\nabla_{z}q-(0,\mathfrak{ f})^\top \, .
\end{align}
In the linear energy estimate, it is crucial to treat the following term:
\begin{align*}
\bigl( \na_z q | 2\p_t\bar{Y}_H +\f12\bar{Y}_H \bigr)_{L^2}.
\end{align*}
The key observation is the following: by using 
 the incompressibility condition $\det(I+\nabla_{Z}Y)=1$
 and the volume-preserving property of the corrector map
$\det(I+\nabla_{Z}\widetilde{Y})=1$, we write $Y=\bar{Y}+\widetilde{Y}$ and obtain the identity (see Lemma \ref{lem-p2Y2-1})
\begin{align}\label{incomp}
\frac{\rme^{-h(z)}}{1+\p_{z_1}\widetilde{Y}^1}\p_{z_1}\bar Y^1
+ \p_{z_2}\bar Y^2
= \p_{z_1}\bar{Y}^2 \p_{z_2}Y^1- \p_{z_1}Y^1 \p_{z_2}\bar Y^2.
\end{align}
Hence
\begin{align*}
   \nabla_z\cdot\bar Y_H=- \p_{z_2}\bar Y_L^2+\mathcal{R}_H^1,\quad
\nabla_z\cdot\p_t\bar Y_H=-\p_t\p_{z_2}\bar Y^2_L+\mathcal{R}_H^2,
\end{align*}
where $\mathcal{R}_H^1$ and $\mathcal{R}_H^2$ are quadratic terms (see \eqref{RH-def}).
Consequently,
\begin{align*}
&\bigl( \na_z q | 2  \p_t \bar{Y}_H +\f12 \bar Y_H \bigr)_{L^2}\\
&= \bigl( \Delta_z^{-1}\p_{z_1}^2\big(\gamma'(z_2) \gamma(z_2)\bar Y_H^2\big) |
2\nabla_z\cdot\p_t \bar{Y}_H +\f12\nabla_z\cdot \bar Y_H \bigr)_{L^2}+\textrm{terms under control}\\
&=\bigl( \Delta_z^{-1}\p_{z_1}^2\big(\gamma'(z_2) \gamma(z_2)\bar Y_H^2\big) |2\mathcal{R}_H^2+\f12\mathcal{R}_H^1\bigr)_{L^2}
+\textrm{terms under control}.
\end{align*}
Thus, the linear term $\bigl( \na_z q | 2 \bar \p_t Y_H +\f12 \bar Y_H \bigr)_{L^2}$ becomes nonlinear terms that can be estimated.
This effect is, in some sense, similar to the normal form, where the quadratic nonlinear terms are transformed into cubic ones.



In the higher-order derivative estimates for the linear system, specifically, at the $\dot{H}^b$-level ($1\leq b\leq a$), the linear commutators arise when the $\p_2$ derivative falls on $\xi$.
These commutators are absorbed iteratively by the dissipative energy   $\|\p_{b_0^1} \bar{Y}_H\|_{\dot{H}^b}$ together with its lower-order counterparts.


In the estimate of the nonlinear terms, it is common to encounter the presence of $\nabla_z Y$. 
For $Y=\bar Y+\widetilde Y$,
we can show that $\widetilde Y$ can be estimated in terms of $\Phi$ and further bounded by the initial data.
Next, consider $\bar Y=\bar{Y}_H+ \bar{Y}_L$. By the energy functional $\mathcal{L}_a$,
we expect to have control over $\bar{Y}_L$ with temporal derivatives, but not spatial derivatives.
Fortunately, the even-odd symmetry setting makes $\bar{Y}_L^1=0$. For $\bar{Y}_L^2$, note that
it depends only on $(t,z_2)$; thus $\p_{z_1}\bar{Y}_L^2=0$. For $\p_{z_2} \bar{Y}_L^2$,
the quadratic relation of $\na_z\cdot \bar{Y}$ from identity \eqref{incomp}
 yields the control over $\p_{z_2} \bar{Y} ^2$ and its low-frequency part $\p_{z_2} \bar{Y}_L^2$.

The same issue also  arises in nonlinear estimates involving $\p_t\nabla_z\bar{Y}_L$. From the energy norm for the low frequency energy $\mathcal{L}_a$, the temporal decay for $\p_t\nabla_z\bar{Y}_L$ is not enough. We use the incompressible structure \eqref{incomp}
to rewrite this term and thereby gain additional temporal decay rate.



Due to the presence of the nontrivial background state $(\xi(y_2),0)$ and a nontrivial perturbation,
the estimate of nonlinearities is very delicate.
Note $Y=\bar{Y}_H+\bar{Y}_L+\widetilde{Y}$.
The full expansion of the following diffusive nonlinear terms will generate hundreds of distinct nonlinear terms, all of which require careful estimation:
\begin{align*}
\widetilde{\mathcal{ F}}=\nabla_Z\cdot((A^TA-I)\nabla_Z\bar Y_t)+(B-I)\nabla_z\cdot\nabla_Z\bar Y_t+\nabla_z\cdot((B-I)\nabla_z\bar Y_t).
\end{align*}
Moreover, we need to face the no-temporal decay issue of
$\bar{Y}_L$ and $\widetilde{Y}$,
and face the possible derivative loss problem in the highest order energy estimate. Based on the structure of system, we classify the nonlinear terms $\widetilde{\mathcal{F}}$ into the five types. (We refer to Section \ref{sec-tldFa} for the details.)
Here we pick a tricky term denoted by
 $\tilde{K}$  as an example.
Denote $\mathcal{\tilde T}_{3}$ by 
\begin{align*}
\begin{split}
	\mathcal{\tilde T}_{3}&:=r_3(z)\p_{z_2}\big((A^\top A-I)r_4(z)\p_{z_2}\p_t\bar{Y}_H^1\big),	
\end{split}
\end{align*}
where $r_j(z) (j=3,4)$ are time-independent functions that vary from term to term.
Specifically, $r_3$ and $r_4$ are chosen independently as follows:
\begin{align*}
&r_3(z)=-\frac{b_0^2}{b_0^1}(y(z))\rme^h\ \text{or}\ r_3(z)=\rme^h,\quad r_4(z)=-\frac{b_0^2}{b_0^1}(y(z))\rme^h\ \text{or}\ r_4(z)=\rme^h.
\end{align*}
Let us first write
\begin{align*}
\tilde{K}&:=\sum_{0\leq |\alpha|\leq a}\Big|\int_0^t\bigl( \p_z^\al \mathcal{\tilde T}_{3}|  \p_z^\al \bar Y_H^1\bigr)_{L^2}\rmd\tau\Big|\\
&\leq \frac12\sum\limits_{|\alpha|= a}\Big|\int_0^t\big( r_3\p_t(A^\top A-I)r_4\p_z^{\alpha}\p_{z_2}\bar{Y}_H^1|  \p_z^\al\p_{z_2} \bar Y_H^1\big)_{L^2}\rmd\tau\Big|
+ \textrm{terms under control}.
\end{align*}
To estimate the above term, we calculate $\p_t(A^\top A-I)$
and roughly classify it into the following two types:
\begin{align*}
(1+\nabla_ZY)(\p_t\p_{z_1}\bar Y_H+\p_t\nabla_Z\bar Y_H)
\quad
\textrm{and}
\quad
(1+\nabla_ZY^2)\tilde{r}(z)\p_t\p_{z_2}\bar Y^2,
\end{align*}
where $\tilde{r}(z)$ is 
one of the following functions:
\begin{align*}
\tilde{r}(z)=-\frac{b_0^2}{b_0^1}(y(z))\rme^h\ \text{or}\ \tilde{r}(z)=\rme^h.
\end{align*}
By integration by parts, the first type is good and thus under control. For the second type, we still have the slow temporal decay issue.
To deal with this troublesome term, using the incompressibility condition $\det(I+\nabla_{Z}Y)=1$, we substitute the expression for $\p_t\p_{z_2}\bar Y^2$ into the above term and obtain
\begin{align*}
&\sum\limits_{|\alpha|= a}
\Big|\int_0^t r_3r_4\tilde{r}(z)(1+\nabla_ZY^2)\p_t\p_{z_2}\bar Y^2
\p_z^{\alpha}\p_{z_2}\bar{Y}_H^1|  \p_z^\al\p_{z_2} \bar Y_H^1\big)_{L^2}\rmd\tau\Big|\\
&\leq \sum\limits_{|\alpha|= a}\Big|\int_0^t\Big( r_3r_4\tilde{r}\rme^{-h}\frac{1+\nabla_ZY^2}{(1+\p_{z_1}Y^1)^2}\p_{z_1}\p_t\bar Y_H^1\p_z^{\alpha}\p_{z_2}\bar{Y}_H^1|  \p_z^\al\p_{z_2} \bar Y_H^1\Big)_{L^2}\rmd\tau\Big|\\
&\quad+\sum\limits_{|\alpha|= a}\Big|\int_0^t\Big( r_3r_4\tilde{r}\frac{1+\nabla_ZY^2}{1+\p_{z_1}Y^1}\p_{z_1}\p_t\bar Y_H^2 \p_{z_2}Y^1\p_z^{\alpha}\p_{z_2}\bar{Y}_H^1|  \p_z^\al\p_{z_2} \bar Y_H^1\Big)_{L^2}\rmd\tau\Big|\\
&\quad+\sum\limits_{|\alpha|= a}\Big|\int_0^t\Big( r_3r_4\tilde{r}\frac{1+\nabla_ZY^2}{1+\p_{z_1}Y^1}\p_{z_1}\bar Y_H^2 \p_{z_2}\p_t\bar Y_H^1\p_z^{\alpha}\p_{z_2}\bar{Y}_H^1|  \p_z^\al\p_{z_2} \bar Y_H^1\Big)_{L^2}\rmd\tau\Big|\\
&\quad+\sum\limits_{|\alpha|= a}\Big|\int_0^t\Big( r_3r_4\tilde{r}\frac{1+\nabla_ZY^2}{(1+\p_{z_1}Y^1)^2}\p_{z_1}\bar Y_H^2 \p_{z_2}Y^1\p_{z_1}\p_t\bar Y_H^1\p_z^{\alpha}\p_{z_2}\bar{Y}_H^1|  \p_z^\al\p_{z_2} \bar Y_H^1\Big)_{L^2}\rmd\tau\Big|.
\end{align*}
We thus gain additional temporal decay rate needed.

By using the identity \eqref{incomp},
we can always estimate
 $\p_2 \bar{Y}^2$ and $\p_2 \bar{Y}_L^2$
in terms of $\bar Y_H$ (see Lemma \ref{lem-p2Y2-1}). In the energy estimate for $\mathcal{H}_a(t)$, $\bar{Y}_L$ will not be explicitly present in the estimate for $\widetilde{\mathcal{F}}$.
The low frequency for $\p_t\bar Y$ arises from the estimate of the pressure (see \eqref{nap2-naqes}). While in the energy estimate for $\mathcal{L}_a(t)$, we need additional temporal decay for $\bar{Y}_H$. We thus introduce the
zero order temporal weighted  energy estimate for the high-frequency part.
The estimates for $\mathcal{H}_a(t)$,
$\mathcal{L}_a(t)$ and $\mathcal{W}_0(t)$
suffice to close the energy.
Combining the uniform-in-time estimate with the standard local well-posedness result stated in Proposition \ref{prop-local}, we obtain global nonlinear stability.

\textbf{Notation.}

$\nabla$: derivative with respect to $z$.

$\nabla_x$, $\nabla_y$: derivative with respect to $x$, $y$.

$\p_{b_0^1}:=b_0^1(y(z))\p_{z_1}.$

$h(z):=-\int_{-\f12}^{z_1} \p_{y_2}\big( \f{b_0^2}{b_0^1}\big) ( z_1',y_2(z_1',z_2) ) \rmd z_1'.$

$A:=(\nabla_y X)^{-\top}$,
\, $B:=(\nabla_z y)^{-\top}$.

$\nabla_Y:=A\nabla_y$,\,
$\nabla_Z:=B\nabla_z$.

$\widetilde{\nabla}:=(B-I)\nabla$,\,\,
$\bar{\p}_2:=\tilde\p_1=-\f{b_0^2}{b_0^1}(y(z))\rme^h\p_2,$\,
$\tilde\p_2=(\rme^h-1)\p_2$.

$\gamma(z_2):=\left(\int_{\bT} \f{1}{b_0^1(y(\bar{z}_1,z_2))} \rmd \bar{z}_1\right)^{-1}.$

$\Phi(z):=
b_0(y(z))-\big( \gamma(z_2),0\big)^\top.$

$\widetilde{Y}(z):=-\int_0^{z_1} \f{\Phi(\bar{z}_1,z_2)}{b_0^1(y(\bar{z}_1,z_2))} \rmd \bar{z}_1+(0,\psi)^\top,\quad  \bar{Y}(t,z):=Y(t,z)-\widetilde{Y}(z).$	

$\bar{Y}_H^1=\bar Y^1, \quad \bar{Y}_H^2=\bar{Y}^2-\bar{Y}_L^2\quad\hbox{and}
\quad \bar{Y}_L^1=0,\,\, \bar{Y}_L^2=\int_{\bT} \bar{Y}^2\, \rmd z_1 .$

$q:=2\Delta_z^{-1}\p_{z_1}(\gamma'(z_2)\p_{b_0^1} \bar{Y}_H^2)$,
the linear pressure term.

Fourier transform:
$\hat{f}(\lambda)=\int_{\mathbb{T}\times\mathbb{R}}f(z)\rme^{-2\pi iz\cdot \lambda}\rmd z$
for $\lambda=(\lambda_1,\lambda_2)^\top$ with $\lambda_1\in \mathbb{Z}$, $\lambda_2\in \mathbb{R}$.

In what follows, we omit subscripts for derivatives with respect to $z$, while retaining them explicitly for derivatives with respect to $x$ and $y$.
For any $1\le p\le \infty$ and any measurable scalar or vector function $f$,
we will use $\|f\|_{L^p}$ to denote the usual $L^p$ norm. 
We use $\|\cdot\|_{L^{p}_{z_1}(L^{q}_{z_2})}$ to denote the $L^{q}_{z_2}$ norm with respect to $z_2$ and the $L^p_{z_1}$ norm with respect to $z_1$.
For a nonnegative integer $s$, the $H^s$ inner product denotes $(f|g)_{H^s}=\sum_{|\alpha|\leq s}\int_{\bT\times\bR} \p^\alpha f\cdot\p^\alpha g\, \rmd z$ and $(f|g)_{L^2}=\int_{\bT\times\bR}  f\cdot g\, \rmd z$.
For any two quantities $X$ and $Y$, we denote $X \lesssim Y$ if
$X \le C Y$ for some constant $C>0$. Similarly $X \gtrsim Y$ if $X
\ge CY$ for some $C>0$.
The dependence of the constant $C$ on
other parameters or constants is usually clear from the context and
we usually suppress  this dependence.
Throughout the paper, the summation convention over repeated
indices is always used.

The remaining part of this paper is organized as follows:
Section \ref{sec-pre} is devoted to reformulating the pressure field within the new coordinate framework. In Section \ref{sec-prs-es}, we provide a set of preliminary estimates. Section \ref{sec-High-linear} and Section \ref{sec-High-nonl} are dedicated to conducting the linear and nonlinear estimates of the high-frequency component, respectively. We then address the estimate of the low-frequency component in Section \ref{sec-low}, followed by the temporal weighted estimates for the high-frequency component in Section \ref{sec-tempHigh}. Finally, in Section \ref{sec-proof-main}, we present the proof of the main result.

\section{The pressure in new coordinates}\label{sec-pre}

To estimate the pressure and several related terms,
we introduce auxiliary quantities in the $(t,y)$ coordinates.
Note that the mapping \eqref{mapyz} is invertible.
Hence, in the $y$-coordinates, we define
\begin{align*}
\eta(y)=(\eta^1, \eta^2)(y):=(\gamma(z_2(y)),0)^\top.
\end{align*}
It should be pointed out that $\eta^2=0$ and $\eta^1$ is not small.
By \eqref{def-Phi} and
\eqref{Phi-1}, we have
\begin{align}\label{b0-y}
\begin{split}
&	b_0(y)=\eta(y)+\Phi(z(y)),\\
&	\p_{b_0} b_0(y)=\p_{b_0}\Phi(z(y)),\\ 
&\p_{b_0} Y(t,y)+\Phi(z(y)) =\p_{b_0} \bar{Y}(t,y).
\end{split}
\end{align}

Now let us study the structure of the pressure term.
In the Eulerian coordinates, we have
\begin{align}\label{pres-euler}
-\Delta_x p(t,x)=\nabla_{x_i}\nabla_{x_j}\bigl(u^i u^j-b^i b^j \bigr).
\end{align}
In the following, we present the structure of the pressure in the $z$-coordinates.
\begin{lemma}\label{pres-decomp}
   Let\begin{align*}
p(t,z):=p(t,X(t,y(z))).
\end{align*}
Then $p$ can be decomposed as
\begin{align}\label{pre-4}
p(t,z)=p_1(t,z)+p_2(t,z),
\end{align}
where $p_1$ and $p_2$ satisfy the divergence-form equations
\begin{align}\label{p1p2-z1}
\Delta p_1=\nabla\cdot\Upsilon,
\qquad
\Delta p_2=\nabla\cdot\Pi,
\end{align}
with
\begin{align}\label{Up-Pi-def}
\begin{split}
\Upsilon={}&-(\rme^{-h}B^\top B-I)\nabla p
-\rme^{-h}B^\top(A^\top A-I)\nabla_Zp,\\
\Pi_k={}&\rme^{-h}B_{lk}A_{il}A_{jm}\nabla_{Z^m}
\bigl(
\p_{b_0^1}\bar Y_H^i\p_{b_0^1}\bar Y_H^j
-\p_t\bar Y^i\p_t\bar Y^j
\bigr)\\
&-2\rme^{-h}B_{lk}A_{1l}\p_{b_0^1}\bar Y_H^1
 A_{jm}\nabla_{Z^m}\p_{b_0^1}\bar Y_H^j\\
&+2\rme^{-h}B_{lk}A_{1l}\p_{b_0^1}\bar Y_H^2
 A_{2m}B_{m2}\gamma'(z_2),
\qquad k=1,2.
\end{split}
\end{align}
In addition, $p_2$ satisfies the following nondivergence-form equation:
\begin{align}\label{p2-z2}
\begin{split}
\Delta p_2={}&\rme^{-h}A_{il}\nabla_{Z^l}\p_{b_0^1}\bar Y_H^j
 A_{jm}\nabla_{Z^m}\p_{b_0^1}\bar Y_H^i
-\rme^{-h}A_{il}\nabla_{Z^l}\p_t\bar Y^j
 A_{jm}\nabla_{Z^m}\p_t\bar Y^i\\
&+\rme^{-h}\bigl(A_{il}\nabla_{Z^l}\p_{b_0^1}\bar Y_H^i\bigr)^2
-2\rme^{-h}A_{il}\nabla_{Z^l}\p_{b_0^1}\bar Y_H^i
 A_{1k}\nabla_{Z^k}\p_{b_0^1}\bar Y_H^1\\
&+2\rme^{-h}A_{2m}B_{m2}\gamma'(z_2)
 A_{1k}\nabla_{Z^k}\p_{b_0^1}\bar Y_H^2.
\end{split}
\end{align}
\end{lemma}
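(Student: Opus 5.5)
We pull the Eulerian pressure equation \eqref{pres-euler} back to the $z$-coordinates in two steps, $x\mapsto y$ and then $y\mapsto z$, and split the result into a part proportional to $p$ itself (giving $p_1$) and a part containing only the quadratic forcing (giving $p_2$).

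\textbf{Step 1: the composite Laplacian.} Since $\det\nabla_yX=1$, Piola's identity gives $\nabla_y\cdot(A\,\cdot)=0$ columnwise. Hence
\[
(\Delta_x p)(X(t,y))=\nabla_y\cdot(A^\top A\nabla_y p).
\]
The map $y\mapsto z$ has Jacobian $\rme^{-h}$, and \eqref{B-Piola} says $\nabla_z\cdot(\rme^{-h}B)=0$. Together these give
\[
\rme^{-h}(\Delta_xp)(X(t,y(z)))=\nabla\cdot\bigl(\rme^{-h}B^\top A^\top A\,\nabla_Z p\bigr).
\]
Writing
\[
\rme^{-h}B^\top A^\top A B=I+(\rme^{-h}B^\top B-I)+\rme^{-h}B^\top(A^\top A-I)B,
\]
we get $\rme^{-h}(\Delta_xp)\circ X\circ y=\Delta p-\nabla\cdot\Upsilon$. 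This is exactly the $\Upsilon$ of \eqref{Up-Pi-def}, so $p_1$ is defined by $\Delta p_1=\nabla\cdot\Upsilon$.

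\textbf{Step 2: the source term.} Write the right side of \eqref{pres-euler} as $\partial_{x_i}\partial_{x_j}(u^iu^j-b^ib^j)$ and use $\mathrm{div}_x u=\mathrm{div}_x b=0$ to reduce it to
\[
\partial_{x_i}u^j\,\partial_{x_j}u^i-\partial_{x_i}b^j\,\partial_{x_j}b^i,
\]
or to the divergence form $\partial_{x_k}(\partial_{x_i}u^k\,u^i-\cdots)$.

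In Lagrangian variables we substitute:
\begin{itemize}
\item $u\circ X=\partial_tY=\partial_t\bar Y$, since $\widetilde Y$ is time independent;
\item $b\circ X=\partial_{b_0}X=b_0+\partial_{b_0}Y$;
\item by \eqref{b0-y}, $b_0+\partial_{b_0}Y=\eta+\partial_{b_0}\bar Y$, i.e. $(\gamma(z_2)+\partial_{b_0^1}\bar Y^1,\ \partial_{b_0^1}\bar Y^2)$ in $z$-variables;
\item $\partial_{b_0^1}\bar Y=\partial_{b_0^1}\bar Y_H$, since $\bar Y_L$ is independent of $z_1$.
\end{itemize}
Eulerian derivatives become $\nabla_x=A\nabla_Z$, i.e. $\partial_{x_i}\to A_{il}\nabla_{Z^l}$.

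Expanding $b^ib^j$ with $b=\eta+\partial_{b_0^1}\bar Y_H$ produces three kinds of terms:
\begin{itemize}
\item Quadratic terms in $\partial_{b_0^1}\bar Y_H$; these give the first line of $\Pi$ and of \eqref{p2-z2}.
\item Cross terms $\gamma\,\partial_{b_0^1}\bar Y_H$. Here we use $\mathrm{div}_x b=0$ in Lagrangian form, namely $A_{il}\nabla_{Z^l}(\gamma\delta_{i1}+\partial_{b_0^1}\bar Y_H^i)=0$. This trades $A_{1l}\nabla_{Z^l}\gamma$, which equals $A_{1m}B_{m2}\gamma'$ because $\gamma$ depends only on $z_2$, against $A_{il}\nabla_{Z^l}\partial_{b_0^1}\bar Y_H^i$. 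Doing so yields the terms with coefficient $2$ and $\gamma'$.
\item The term $\eta\eta$. It is harmless since $\partial_{x_i}\partial_{x_j}(\eta^i\eta^j)$ reduces to $\partial_{x_1}^2(\gamma^2)$ composed with the flow. I expect it to cancel against the $\gamma$-pieces after using incompressibility, leaving only the $\gamma'$-terms shown, all multiplied by a factor that is at least linear in the perturbation.
\end{itemize}
Multiplying by $\rme^{-h}$ and pulling the divergence through with \eqref{B-Piola} gives $\Delta p_2=\nabla\cdot\Pi$, with $\Pi_k=\rme^{-h}B_{lk}(\cdots)_l$ as in \eqref{Up-Pi-def}. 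Since the two equations in \eqref{p1p2-z1} sum to $\Delta p=\nabla\cdot(\Upsilon+\Pi)$, the decomposition \eqref{pre-4} follows by uniqueness of decaying solutions of the Poisson problem on $\bT\times\bR$.

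\textbf{Step 3: the nondivergence form \eqref{p2-z2}.} We compute $\nabla\cdot\Pi$ directly. Using \eqref{B-Piola} and the Lagrangian Piola identity $\nabla_Z\cdot$-form of $\nabla_y\cdot A=0$ (i.e. $\nabla_{Z^l}A_{il}=0$), the outer derivative lands only on the inner factors. The expression $\nabla_{Z^m}(\cdots)$ then becomes the product of first derivatives, by the same identity $\partial_i\partial_j(f^if^j)=\partial_if^j\partial_jf^i+(\mathrm{div}f)^2$-type expansion together with $\mathrm{div}$ relations. This yields the squared term and the $-2$ cross term.

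\textbf{Main obstacle.} The main obstacle is the bookkeeping in Step 2: identifying exactly which $\gamma$-linear terms survive as the $\gamma'$ contribution and verifying that the $\eta\eta$ and $\gamma\partial_{b_0^1}\bar Y_H$ terms without $\gamma'$ cancel via $\mathrm{div}_x b=0$. I would do this first in the linear case $A=I$, $B=I$, where the formula must reduce to $\Delta p\approx2\gamma'\partial_{z_1}\partial_{b_0^1}\bar Y_H^2$, consistent with \eqref{def-q}. Then I would insert the general $A$ and $B$.
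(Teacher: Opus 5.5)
Your architecture is the paper's. Step 1 is exactly how the paper obtains $\Upsilon$. In Step 2 you use the same substitution $b\circ X=\eta+f$, with $f:=\p_{b_0}\bar Y$ (equal to $\p_{b_0^1}\bar Y_H$ in $z$), and the same key relation $\nabla_{Y^1}\eta^1=-\nabla_Y\cdot f$, which comes from $\nabla_Y\cdot(b\circ X)=0$ and $\eta^2=0$. But the lemma is nothing more than this computation, and you leave its decisive step as an expectation; parts of your description of it are also off.

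Contrary to your remark, the $\eta\eta$ term is not harmless on its own: $\nabla_{Y^1}\nabla_{Y^1}(\eta^1)^2=2\nabla_{Y^1}(\eta^1\nabla_{Y^1}\eta^1)=-2\nabla_{Y^1}(\eta^1\nabla_Y\cdot f)$ is linear in the perturbation with the $O(1)$ coefficient $\gamma$. What actually happens, and what you should write down, is
\begin{align*}
2\nabla_{Y^1}\nabla_{Y^j}(\eta^1f^j)+\nabla_{Y^1}\nabla_{Y^1}(\eta^1)^2
&=2\nabla_{Y^1}\bigl(\eta^1\nabla_Y\cdot f+f^j\nabla_{Y^j}\eta^1\bigr)-2\nabla_{Y^1}\bigl(\eta^1\nabla_Y\cdot f\bigr)\\
&=-2\nabla_{Y^1}\bigl(f^1\,\nabla_Y\cdot f\bigr)+2\nabla_{Y^1}\bigl(f^2\,\nabla_{Y^2}\eta^1\bigr).
\end{align*}
Here the divergence relation is used a second time, on the $j=1$ term; that is the origin of the $\gamma'$-free term with coefficient $-2$ in $\Pi$. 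The $\gamma'$ that survives enters through $\nabla_{Y^2}\eta^1=A_{2m}B_{m2}\gamma'$, not through the factor $A_{1l}\nabla_{Z^l}\gamma$ you mention, which is the one traded away. Writing $\rme^{-h}\nabla_{Y^1}G=\p_k(\rme^{-h}B_{lk}A_{1l}G)$ (Piola for $A$ plus \eqref{B-Piola}) then gives the last two lines of $\Pi_k$.

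Step 3 contains an actual error. The identity $\p_i\p_j(f^if^j)=\p_if^j\p_jf^i+(\mathrm{div}f)^2$ needs $f$ to be divergence-free, and your $f$ is not. In general
\begin{align*}
\nabla_{Y^i}\nabla_{Y^j}(f^if^j)=\nabla_{Y^i}f^j\,\nabla_{Y^j}f^i+(\nabla_Y\cdot f)^2+2f^j\nabla_{Y^j}(\nabla_Y\cdot f).
\end{align*}
If you differentiate $\Pi$ directly, you must show that this second-order term cancels against two others: $-2f^1\nabla_{Y^1}(\nabla_Y\cdot f)$ from the $-2$ term, and $2f^2\nabla_{Y^1}\nabla_{Y^2}\eta^1=-2f^2\nabla_{Y^2}(\nabla_Y\cdot f)$ from the $\gamma'$ term (this uses that $\nabla_{Y^1}$ and $\nabla_{Y^2}$ commute). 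The first-order remainder $2\nabla_{Y^1}f^2\,\nabla_{Y^2}\eta^1$ of the $\gamma'$ term is exactly the last term of \eqref{p2-z2}, which your sketch does not account for.

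The paper avoids this bookkeeping. It applies the identity to the full field $b\circ X=\eta+f$, which is divergence-free, so $\nabla_{Y^i}\nabla_{Y^j}(b^ib^j)=\nabla_{Y^i}b^j\,\nabla_{Y^j}b^i$ (and likewise for $\p_t\bar Y$). Only then does it expand using $\eta^2=0$ and $\nabla_{Y^1}\eta^1=-\nabla_Y\cdot f$, which yields all five terms of \eqref{p2-z2} at once. With these two computations supplied, your argument becomes a complete proof along the paper's lines.
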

The linear pressure contribution is contained in the last term of $\Pi_1$.
More precisely, its component corresponding to $k=l=1$ and $m=2$ satisfies
\begin{align}\label{linear-pressure-origin}
\begin{split}
&2\rme^{-h}B_{11}A_{11}A_{22}B_{22}
\gamma'(z_2)\p_{b_0^1}\bar Y_H^2\\
&\quad=2\gamma'(z_2)\p_{b_0^1}\bar Y_H^2
+2\bigl(\rme^{-h}B_{11}A_{11}A_{22}B_{22}-1\bigr)
\gamma'(z_2)\p_{b_0^1}\bar Y_H^2.
\end{split}
\end{align}
The first term on the right-hand side of
\eqref{linear-pressure-origin} gives rise to the linear pressure
\begin{align}\label{linear-pressure-q}
q=2\Delta^{-1}\p_1
\bigl(\gamma'(z_2)\p_{b_0^1}\bar Y_H^2\bigr),
\end{align}
which agrees with the definition in \eqref{def-q}.
The remaining parts 
$A\nabla_Zp-\nabla q$ 
are  nonlinear terms.

The decomposition in Lemma \ref{pres-decomp} separates two
different sources of the pressure. The term $p_1$ contains the perturbation of
the elliptic operator caused by the coordinate changes.
The term $p_2$ contains the explicit
velocity, magnetic, and background-shear source terms.
Furthermore, the two equations for $p_2$ serve different purposes.
The divergence-form
equation in \eqref{p1p2-z1} is used to estimate $\nabla p_2$, including the
low-order and negative-order bounds for $\nabla p_2-\nabla q$. By contrast,
the nondivergence-form equation \eqref{p2-z2}
is used in the
highest-order pressure estimate, where differentiating the divergence-form
source $\Pi$ would otherwise introduce loss of one derivative.

\begin{proof}[Proof of Lemma \ref{pres-decomp}]
Transforming \eqref{pres-euler} into the Lagrangian coordinates and using \eqref{flow} and \eqref{eqbt}, we obtain
\begin{align}\label{pres-0}
-\nabla_Y\cdot\nabla_Y p(t,X(t,y))
=
\nabla_{Y^i}\nabla_{Y^j}
\bigl(
\p_tX^i\p_tX^j
-\p_{b_0}X^i\p_{b_0}X^j
\bigr)(t,y).
\end{align}
Recall that \(X(t,y)=Y(t,y)+y\). By \eqref{b0-y}, we have
\begin{align}\label{B-5}
\p_{b_0}X(t,y)
=
\p_{b_0}Y(t,y)+b_0(y)
=
\p_{b_0}\bar Y(t,y)+\eta(y).
\end{align}
It follows that
\begin{align}\label{pres-1}
\begin{split}
&\nabla_{Y^i}\nabla_{Y^j}
\bigl(\p_{b_0}X^i\p_{b_0}X^j\bigr)\\
&\quad=
\nabla_{Y^i}\nabla_{Y^j}
\bigl(
\p_{b_0}\bar Y^i\p_{b_0}\bar Y^j
\bigr)
+
2\nabla_{Y^i}\nabla_{Y^j}
\bigl(
\eta^i\p_{b_0}\bar Y^j
\bigr)
+
\nabla_{Y^i}\nabla_{Y^j}
\bigl(\eta^i\eta^j\bigr).
\end{split}
\end{align}

Since \(\nabla_x\cdot b=0\), it follows from \eqref{B-5} that
\begin{align}\label{divb-y}
0
=
\nabla_Y\cdot b(t,X(t,y))
=
\nabla_Y\cdot\p_{b_0}X
=
\nabla_Y\cdot\p_{b_0}\bar Y
+
\nabla_Y\cdot\eta.
\end{align}
Since \(\eta^2=0\), we further obtain
\begin{align}\label{div-eta}
\nabla_Y\cdot\eta
=
A_{1k}\p_{y_k}\eta^1
=
-\nabla_Y\cdot\p_{b_0}\bar Y.
\end{align}
Consequently,
\begin{align*}
\nabla_{Y^i}\nabla_{Y^j}
\bigl(\eta^i\eta^j\bigr)
&=
2A_{1l}\p_{y_l}
\bigl(
\eta^1A_{1k}\p_{y_k}\eta^1
\bigr)=
-2A_{1l}\p_{y_l}
\bigl(
\eta^1\nabla_Y\cdot\p_{b_0}\bar Y
\bigr).
\end{align*}
Combining this identity with the second term on the right-hand side of
\eqref{pres-1}, we obtain
\begin{align}\label{pres-2}
\begin{split}
&2\nabla_{Y^i}\nabla_{Y^j}
\bigl(
\eta^i\p_{b_0}\bar Y^j
\bigr)
+
\nabla_{Y^i}\nabla_{Y^j}
\bigl(\eta^i\eta^j\bigr)=
2A_{1l}\p_{y_l}
\bigl(
\p_{b_0}\bar Y^j
A_{jm}\p_{y_m}\eta^1
\bigr)\\
&\quad=
-2\p_{y_l}
\bigl(
A_{1l}\p_{b_0}\bar Y^1
A_{jm}\p_{y_m}\p_{b_0}\bar Y^j
\bigr)+
2\p_{y_l}
\bigl(
A_{1l}\p_{b_0}\bar Y^2
A_{2m}\p_{y_m}\eta^1
\bigr).
\end{split}
\end{align}
Substituting \eqref{pres-2} into \eqref{pres-1} and then using
\eqref{pres-0}, we arrive at
\begin{align}\label{pres-y1}
\begin{split}
\Delta_y p(t,X(t,y))
={}&
-\p_{y_l}
\bigl(
(A^\top A-I)_{lk}\p_{y_k}p(t,X(t,y))
\bigr)\\
&+
\p_{y_l}
\Bigl(
A_{il}A_{jm}\p_{y_m}
\bigl(
\p_{b_0}\bar Y^i\p_{b_0}\bar Y^j
-\p_t\bar Y^i\p_t\bar Y^j
\bigr)
\Bigr)\\
&-
2\p_{y_l}
\bigl(
A_{1l}\p_{b_0}\bar Y^1
A_{jm}\p_{y_m}\p_{b_0}\bar Y^j
\bigr)+
2\p_{y_l}
\bigl(
A_{1l}\p_{b_0}\bar Y^2
A_{2m}\p_{y_m}\eta^1
\bigr).
\end{split}
\end{align}

We next derive a second form of the pressure equation. Since the operators
\(\nabla_{Y^i}\) and \(\nabla_{Y^j}\) commute and
\(\nabla_Y\cdot\p_{b_0}X=0\), we get
\begin{align*}
\nabla_{Y^i}\nabla_{Y^j}
\bigl(
\p_{b_0}X^i\p_{b_0}X^j
\bigr)
=
\nabla_{Y^i}\p_{b_0}X^j\,
\nabla_{Y^j}\p_{b_0}X^i.
\end{align*}
Using \eqref{B-5}, \eqref{divb-y}, \eqref{div-eta}, and
\(\eta^2=0\), we obtain
\begin{align}\label{pres-5}
\begin{split}
&\nabla_{Y^i}\nabla_{Y^j}
\bigl(
\p_{b_0}X^i\p_{b_0}X^j
\bigr)\\
&\quad=
A_{il}\p_{y_l}\p_{b_0}\bar Y^j
A_{jm}\p_{y_m}\p_{b_0}\bar Y^i
+
\bigl(
A_{il}\p_{y_l}\p_{b_0}\bar Y^i
\bigr)^2\\
&\qquad
-
2A_{il}\p_{y_l}\p_{b_0}\bar Y^i
A_{1k}\p_{y_k}\p_{b_0}\bar Y^1
+
2A_{2m}\p_{y_m}\eta^1
A_{1k}\p_{y_k}\p_{b_0}\bar Y^2.
\end{split}
\end{align}
On the other hand, since
\(\p_tX=\p_tY=\p_t\bar Y\) and
\(\nabla_Y\cdot\p_t\bar Y=0\), we have
\begin{align}\label{p2exp-6}
\begin{split}
\nabla_{Y^i}\nabla_{Y^j}
\bigl(
\p_tX^i\p_tX^j
\bigr)
&=
A_{il}\p_{y_l}
\Bigl(
A_{jm}\p_{y_m}
\bigl(
\p_t\bar Y^i\p_t\bar Y^j
\bigr)
\Bigr)=
A_{il}\p_{y_l}\p_t\bar Y^j
A_{jm}\p_{y_m}\p_t\bar Y^i.
\end{split}
\end{align}
Combining \eqref{pres-0}, \eqref{pres-5}, and \eqref{p2exp-6}, we obtain
\begin{align}\label{eq-p2-y1}
\begin{split}
&\Delta_y p(t,X(t,y))=-\p_{y_l}((A^{\top}A-I)_{lk}\p_{y_k} p)+	\nabla_{Y^i} \nabla_{Y^j}\bigl(\p_{b_0} X^i \p_{b_0} X^j-\p_t\b Y^i \p_t\b Y^j\bigr)\\
&\qquad =-\p_{y_l}((A^{\top}A-I)_{lk}\p_{y_k} p(t,X(t,y)))-A_{il}\p_{y_l}\p_t\b Y^jA_{jm}\p_{y_m}\p_t\b Y^i\\
&\qquad\quad+A_{il}\p_{y_l}\p_{b_0}\b Y^jA_{jm}\p_{y_m}\p_{b_0} \b Y^i+\big(A_{il}\p_{y_l}\p_{b_0} \b Y^i\big)^2 \\
&\qquad\quad -2A_{il}\p_{y_l}\p_{b_0} \b Y^i A_{1k}\p_{y_k}\p_{b_0}\b Y^1+2A_{2m}\p_{y_m}\eta^1 A_{1k}\p_{y_k}\p_{b_0}\b Y^2.
\end{split}
\end{align}

We now rewrite the pressure equation in the \(z\)-coordinates. Transforming \eqref{pres-y1} into the \(z\)-coordinates,  multiplying the resulting equation by \(\rme^{-h}\), and
using \eqref{B-Piola}, we have
\begin{align*}
&\nabla\cdot\big(\rme^{-h}B^\top B\nabla p\big)\\&=-\nabla\cdot\big(\rme^{-h}B^\top(A^{\top}A-I)\nabla_{Z}p\big)+\p_k\big(\rme^{-h}B_{lk}A_{il}A_{jm}\na_{Z^m}(\p_{b_0^1} \b Y^i_H \p_{b_0^1}\b Y^j_H-\p_t\bY^i \p_t\bY^i)\big)\\&\quad-2\p_k\big(\rme^{-h}B_{lk}A_{1l} \p_{b_0^1}\bar{Y}^1_HA_{jm}\na_{Z^m}\p_{b_0^1}\bar{Y}^j_H\big)+2\p_k\big(\rme^{-h}B_{lk}A_{1l} \p_{b_0^1}\bar{Y}^2_HA_{2m}B_{m2}{\gamma'(z_2)}\big).
\end{align*}
Here we have used the fact that
\(\p_{b_0^1}\bar Y_L=0\), and hence
\(\p_{b_0^1}\bar Y=\p_{b_0^1}\bar Y_H\).

Since
\begin{align*}
\nabla\cdot
\bigl(
\rme^{-h}B^\top B\nabla p
\bigr)
=
\Delta p
+
\nabla\cdot
\bigl(
(\rme^{-h}B^\top B-I)\nabla p
\bigr),
\end{align*}
the preceding equation can be written as
\begin{align*}
\Delta p
=
\nabla\cdot\Upsilon+\nabla\cdot\Pi,
\end{align*}
where $\Upsilon$ and $\Pi$ are given by \eqref{Up-Pi-def}.
The decomposition \eqref{pre-4} then follows by defining $p_1$ and $p_2$
through \eqref{p1p2-z1}.

Finally, rewriting \eqref{eq-p2-y1} in the \(z\)-coordinates and multiplying by \(\rme^{-h}\), the first term on the right-hand side is incorporated into the equation for \(p_1\), whereas the remaining terms yield the nondivergence-form equation \eqref{p2-z2} for \(p_2\).

\end{proof}

\section{Preliminary estimates}\label{sec-prs-es}

We first show the volume-preserving property of $\widetilde{Y}$; consequently, $\textrm{div} \bar{Y}$ becomes nonlinear and hence
negligible in the
perturbation regime.
\begin{lemma}\label{lem-p2Y2-1}
There hold
\begin{align}\label{tY-normf}
\det(I+\nabla_{Z}\widetilde{Y})=1\,.
\end{align}
and
\begin{align}\label{barY-normf}
\frac{\rme^{-h(z)}}{1+\p_1\widetilde{Y}^1}\p_1\bar Y^1
+ \p_2\bar Y^2
= \p_1\bar{Y}^2 \p_2Y^1- \p_1Y^1 \p_2\bar Y^2.
\end{align}
Moreover, under the assumptions \eqref{xi-cond0}, \eqref{b01-cond} and the ansatz $\mathcal{H}_{a}(t)\leq \delta$ for sufficiently small $\delta$, we have for $s=0$ or $a$,
\begin{align}
\|\p_2\bar Y^2\|_{H^s}
&\lesssim \|\nabla \bar{Y}_H\|_{H^s},\label{p2Y2}\\	
\|\p_t\p_2\bar Y^2\|_{H^s}
&\lesssim \|\nabla\p_t \bar{Y}_H\|_{H^s},\label{ptp2Y2}
\end{align}
and
\begin{align}
\|\na\bar Y\|_{H^s}
&\lesssim \|\nabla \bar{Y}_H\|_{H^s} ,\label{pbarY}\\	
\|\na Y\|_{H^s}
&\lesssim \|\nabla \bar{Y}_H\|_{H^s} +\|\nabla \widetilde{Y}_H\|_{H^s} ,\label{nblaY}\\	\|\p_t\na\bar Y\|_{H^s}
&\lesssim \|\nabla\p_t \bar{Y}_H\|_{H^s}.\label{ptpbarY}
\end{align}
\end{lemma}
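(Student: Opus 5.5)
We prove the two identities by direct computation in the $z$-coordinates and then read off the estimates from \eqref{barY-normf}.

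\textbf{Step 1: the identity \eqref{tY-normf}.} By the chain rule, $\det(I+\nabla_y\widetilde Y)=\det(\nabla_z y+\nabla_z\widetilde Y)/\det(\nabla_z y)$. Here $\det(\nabla_z y)=\rme^{-h}$ by \eqref{nazy}, so it suffices to compute
\[
\det\begin{pmatrix}1+\p_1\widetilde Y^1 & \p_2\widetilde Y^1\\ \frac{b_0^2}{b_0^1}+\p_1\widetilde Y^2 & \rme^{-h}+\p_2\widetilde Y^2\end{pmatrix}.
\]
From \eqref{DefBaY} and \eqref{def-Phi} we have $1+\p_1\widetilde Y^1=1-\Phi^1/b_0^1=\gamma/b_0^1$. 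We also have $\frac{b_0^2}{b_0^1}+\p_1\widetilde Y^2=0$, so the lower-left entry vanishes. The determinant therefore equals
\[
\frac{\gamma}{b_0^1}\bigl(\rme^{-h}+\p_2\widetilde Y^2\bigr).
\]
We need $\rme^{-h}+\p_2\widetilde Y^2=\rme^{-h}b_0^1/\gamma$. Since $\rme^{-h}b_0^1$ depends only on $z_2$, this requires
\[
\p_2\widetilde Y^2=\rme^{-h}\Bigl(\frac{b_0^1}{\gamma}-1\Bigr).
\]
At $z_1=0$ this is exactly $\psi'(z_2)$ by \eqref{def-psi}. For general $z_1$, differentiate both sides in $z_1$. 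The left side gives $-\p_2(b_0^2/b_0^1)(y(z))$. The right side gives $-\p_1\rme^{-h}$, since $\p_1(\rme^{-h}b_0^1)=0$ and $\gamma$ is independent of $z_1$.

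These two quantities agree: by \eqref{ode}, $\p_2\bigl[(b_0^2/b_0^1)(y(z))\bigr]=\p_{y_2}(b_0^2/b_0^1)\,\rme^{-h}$, and by \eqref{pz2y2}, $\p_1\rme^{-h}=\rme^{-h}\p_{y_2}(b_0^2/b_0^1)$. Thus $\det(I+\nabla_y\widetilde Y)=1$, which is \eqref{tY-normf}.

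\textbf{Step 2: the identity \eqref{barY-normf}.} Similarly, $\det(I+\nabla_Z Y)=1$ is equivalent to $\det(\nabla_z y+\nabla_z Y)=\rme^{-h}$. Write $Y=\bar Y+\widetilde Y$, so that $\nabla_z y+\nabla_z Y=M_0+\nabla\bar Y$ with
\[
M_0=\begin{pmatrix}\gamma/b_0^1 & \p_2\widetilde Y^1\\ 0 & \rme^{-h}b_0^1/\gamma\end{pmatrix},\qquad \det M_0=\rme^{-h}.
\]
For $2\times2$ matrices, $\det(M_0+N)=\det M_0+\operatorname{tr}(\operatorname{cof}M_0\,N)+\det N$. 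Hence
\[
0=\frac{\rme^{-h}b_0^1}{\gamma}\p_1\bar Y^1+\frac{\gamma}{b_0^1}\p_2\bar Y^2-\p_2\widetilde Y^1\p_1\bar Y^2+\p_1\bar Y^1\p_2\bar Y^2-\p_2\bar Y^1\p_1\bar Y^2 .
\]
Divide by $\gamma/b_0^1=1+\p_1\widetilde Y^1$. Then regroup the products using $\p_2\widetilde Y^1+\p_2\bar Y^1=\p_2Y^1$ and $\p_1\widetilde Y^1+\p_1\bar Y^1=\p_1Y^1$. The lower-order $\widetilde Y$-terms recombine, and one obtains \eqref{barY-normf}. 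This regrouping is routine but should be checked carefully.

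\textbf{Step 3: the estimates.} Since $\bar Y^1=\bar Y^1_H$, the left side of \eqref{barY-normf} gives
\[
\p_2\bar Y^2=-\frac{\rme^{-h}}{1+\p_1\widetilde Y^1}\p_1\bar Y_H^1-\p_1Y^1\p_2\bar Y^2+\p_1\bar Y_H^2\,\p_2 Y^1 .
\]
Note $\p_1 Y^1=\p_1\bar Y_H^1+\p_1\widetilde Y^1$. The coefficient $\rme^{-h}/(1+\p_1\widetilde Y^1)$ and $\p_1\widetilde Y^1=-\Phi^1/b_0^1$ are bounded in $W^{a,\infty}$ by \eqref{b01-cond} and Appendix \ref{append_A}. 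Moreover $\|\p_1\widetilde Y^1\|_{W^{a,\infty}}\lesssim\epsilon_0$, because $\Phi^1$ has zero mean along fibers and is small.

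Taking $H^s$ norms, $s=0$ or $a$, and using product estimates with $H^a\subset L^\infty$ for $a\ge3$ and the ansatz $\mathcal H_a\le\delta$:
\begin{itemize}
\item the term $\p_1Y^1\p_2\bar Y^2$ is bounded by $(\epsilon_0+\delta^{1/2})\|\p_2\bar Y^2\|_{H^s}$, which is absorbed into the left side;
\item the remaining terms are bounded by $\|\nabla\bar Y_H\|_{H^s}$.
\end{itemize}
This gives \eqref{p2Y2}.

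\textbf{Expected main obstacle.} The factor $\p_2 Y^1=\p_2\bar Y^1_H+\p_2\widetilde Y^1$ contains $\p_2\widetilde Y^1$, which is not small. However, it is bounded in $W^{a,\infty}$ with constant depending on $L$, and it multiplies the quadratic-type factor $\p_1\bar Y_H^2$, so it still contributes only $\lesssim\|\nabla\bar Y_H\|_{H^s}$. The delicate point is the absorption at $s=a$. We need the coefficient of the top-order $\p_2\bar Y^2$ to be small in $L^\infty$, and lower-order commutator terms to be controlled by $\|\p_2\bar Y^2\|_{L^2}$ (the $s=0$ case) plus $\|\nabla\bar Y_H\|_{H^a}$. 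This works because the small coefficient $\p_1Y^1$ is measured in $H^a\cap W^{1,\infty}$, which is controlled by $\delta^{1/2}+\epsilon_0$.

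\textbf{Remaining estimates.}
\begin{itemize}
\item \eqref{pbarY}: we have $\nabla\bar Y=\nabla\bar Y_H+(0,\p_2\bar Y_L^2)$ and $\|\p_2\bar Y^2_L\|_{H^s}\le\|\p_2\bar Y^2\|_{H^s}$ by averaging in $z_1$, so \eqref{p2Y2} applies.
\item \eqref{nblaY}: this is immediate from \eqref{pbarY} and $Y=\bar Y+\widetilde Y$.
\item \eqref{ptp2Y2}: differentiate \eqref{barY-normf} in $t$; note that $\widetilde Y$ and its coefficient are time-independent. Absorb $\p_1Y^1\p_t\p_2\bar Y^2$ as before. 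Bound $\p_t\p_1\bar Y^1\,\p_2\bar Y^2$ by $\|\nabla\p_t\bar Y_H\|_{H^s}\|\p_2\bar Y^2\|_{H^a}$ and use \eqref{p2Y2} with smallness. The terms with $\p_t\p_1\bar Y_H^2$ and $\p_t\p_2Y^1=\p_t\p_2\bar Y_H^1$ are handled similarly.
\item \eqref{ptpbarY}: this follows from \eqref{ptp2Y2} in the same way that \eqref{pbarY} follows from \eqref{p2Y2}.
\end{itemize}
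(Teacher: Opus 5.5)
Your argument follows the paper's proof.
\begin{itemize}
\item Step 1 is the same computation, $\p_1\wt Y^1=\gamma/b_0^1-1$ and $\p_2\wt Y^2=\rme^{-h}(b_0^1/\gamma-1)$. You check the second formula via its value at $z_1=0$ and its $z_1$-derivative, whereas the paper integrates $\p_1\rme^{-h}$ directly.
\item Step 2 is the same substitution $Y=\bar Y+\wt Y$ into the expanded Jacobian constraint.
\item In Step 3 you absorb $\p_1Y^1\p_2\bar Y^2$ into the left side, whereas the paper solves for $\p_2\bar Y^2$ by dividing by $1+\p_1Y^1$, cf.\ \eqref{exp-p2Y2}. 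Your route is fine. Since $H^a$ is an algebra for $a\ge 3$ in two dimensions, $\|\p_1Y^1\p_2\bar Y^2\|_{H^a}\lesssim(\epsilon_0+\delta^{1/2})\|\p_2\bar Y^2\|_{H^a}$ holds directly. The right side is finite because $\nabla Y,\nabla\wt Y\in H^a$. No commutator bookkeeping is needed.
\end{itemize}

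One step is wrong as written, exactly where you flagged it: in Step 2 you must \emph{not} divide by $\gamma/b_0^1$. Your undivided identity is already \eqref{barY-normf}. Substitute $\gamma/b_0^1=1+\p_1\wt Y^1$ and $\rme^{-h}b_0^1/\gamma=\rme^{-h}/(1+\p_1\wt Y^1)$, then regroup using
\[
(1+\p_1\wt Y^1)\p_2\bar Y^2+\p_1\bar Y^1\p_2\bar Y^2=\p_2\bar Y^2+\p_1Y^1\p_2\bar Y^2,\qquad \p_2\wt Y^1\p_1\bar Y^2+\p_2\bar Y^1\p_1\bar Y^2=\p_2Y^1\p_1\bar Y^2 .
\]
This gives \eqref{barY-normf} exactly. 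If you divide first, the coefficient of $\p_1\bar Y^1$ becomes $\rme^{-h}/(1+\p_1\wt Y^1)^2$ and both quadratic terms pick up a factor $1/(1+\p_1\wt Y^1)$. No regrouping then produces the stated form.

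Two side remarks are also inaccurate, though they do not affect the argument.
\begin{itemize}
\item The coefficients are not known to lie in $W^{a,\infty}$; for instance, $\nabla\rme^{-h}$ is only controlled in $H^{a-1}$ by \eqref{naeh-es}. The $H^a$ product estimate only needs $\|f\|_{L^\infty}+\|\nabla f\|_{H^{a-1}}$, which Lemma \ref{lem-nazy} and \eqref{nawideY} supply.
\item $\p_2\wt Y^1$ is in fact small, since \eqref{nawideY} gives $\|\nabla\wt Y\|_{H^a}\lesssim\epsilon_0$. Likewise, the smallness of $\p_1\wt Y^1=-\Phi^1/b_0^1$ comes from \eqref{phi-small}, not from any fiberwise zero mean.
\end{itemize}
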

\begin{proof}
We first establish the following identity:
\begin{align}\label{twY-div}
 \rme^{-h(z)}\p_1\wt{Y}^1 + \p_2\wt{Y}^2 + \p_1\wt{Y}^1 \p_2\wt{Y}^2=0.
\end{align}
Indeed, from the definitions \eqref{DefBaY} and \eqref{def-psi} of $\widetilde{Y}$ and $\psi$, we compute
\begin{align*}
\p_1\wt{Y}^1=\f{\gamma(z_2)}{b_0^1(y(z))}-1,\,\,
\p_2\wt{Y}^2=-\int_0^{z_1} \p_1\rme^{-h(\bar{z}_1,z_2)}\rmd \bar{z}_1
+\psi'(z_2)=\rme^{-h(z)}\f{b_0^1(y(z))}{\gamma(z_2)}-\rme^{-h(z)}.
\end{align*}
whence \eqref{twY-div} follows. Consequently, \eqref{tY-normf} is a direct consequence of \eqref{twY-div}.

Next, we show \eqref{barY-normf}.
By using \eqref{def-B} and the fact that $\det(I+\nabla_{Z}Y)=1$, we obtain
\begin{align}\label{div-Y}
\p_1Y^1-\frac{b_0^2}{b_0^1}(y(z))\rme^{h(z)} \p_2 Y^1 +\rme^{h(z)} \p_2Y^2
=\rme^{h(z)} \p_1Y^2 \p_2Y^1- \rme^{h(z)} \p_1Y^1 \p_2Y^2.
\end{align}
Note that
\begin{align*}
\p_1Y^2=\p_1\bar{Y}^2+\p_1\widetilde{Y}^2\ \text{and}\ \p_1\widetilde{Y}^2=-\frac{b_0^2}{b_0^1}(y(z)).
\end{align*}
Hence \eqref{div-Y} reduces to
\begin{align}\label{nablaY-exp}
\rme^{-h(z)}\p_1Y^1 
+ \p_2Y^2
= \p_1\bar{Y}^2 \p_2Y^1- \p_1Y^1 \p_2Y^2.
\end{align}
Substituting $Y=\bar{Y}+\wt{Y}$ into \eqref{nablaY-exp} and invoking \eqref{twY-div}, we obtain
\begin{align}\label{barY-normf-}
\rme^{-h(z)}\p_1\bar{Y}^1 + \p_2\bar{Y}^2
= \p_1\bar{Y}^2 \p_2Y^1
- \p_1Y^1 \p_2\bar{Y}^2 - \p_1\bar{Y}^1 \p_2\wt{Y}^2.
\end{align}
A rearrangement of \eqref{barY-normf-} yields \eqref{barY-normf}.

Now we show \eqref{p2Y2}--\eqref{ptpbarY}.
Since $\p_1Y^1=\p_1\bar{Y}^1_H+\p_1\widetilde{Y}^1$ is sufficiently small and $\rme^{h(z)}$ is close to 1, we 
obtain
\begin{align}\label{exp-p2Y2}
\begin{split}
\p_2\bar Y^2=\frac{1}{1+\p_1Y^1 }\big(-\frac{\rme^{-h(z)}}{1+\p_1\widetilde{Y}^1}\p_1\bar Y_H^1+\p_1\bar Y_H^2 \p_2Y^1\big).
\end{split}
\end{align}
We write
\begin{align*}
\frac{1}{1+\p_1 Y^1}=1-\frac{\p_1\bar Y_H^1+\p_1\widetilde Y^1}{1+\p_1\bar Y_H^1+\p_1\widetilde Y^1}=1-G(\p_1\bar Y_H^1+\p_1\widetilde Y^1),
\end{align*}
where $G(g)=\frac{g}{1+g}$ is a function with $G(0)=0$. Hence, if $g$ is sufficiently small, we will get
$\|G(g)\|_{H^a}\lesssim \|g\|_{H^a}$.
Thus,  using the fact that
$\p_1\bar Y_H^1$ and $\p_1\widetilde Y^1$ are sufficiently small, we derive from \eqref{exp-p2Y2}, \eqref{nawideY}, \eqref{ehLinfty-es} and \eqref{naeh-es} that
\begin{align*}
\begin{split}
\|\p_2\bar Y^2\|_{H^s}
&\lesssim  (1+\|\nabla \bar Y_H\|_{H^a}+\|\nabla \widetilde Y\|_{H^a})^2\|\nabla \bar Y_H\|_{H^s},\ s=0\ \text{or}\ a.
\end{split}
\end{align*}
Thus, the smallness of $\|\nabla \bar Y_H\|_{H^a}$ and $\|\nabla \widetilde Y\|_{H^a}$ gives \eqref{p2Y2}.

Furthermore, applying $\p_t$ to \eqref{exp-p2Y2} and noting that $\widetilde{Y}$ is independent of $t$, we obtain
\begin{align}\label{exp-ptp2Y2}
\begin{split}
\p_t\p_2\bar Y^2&=-\frac{1}{(1+\p_1Y^1 )^2}\big(\p_1\p_t\bar Y_H^1\rme^{-h(z)}+\p_1\p_t\bar Y_H^1 \p_1\bar Y_H^2\p_2Y^1\big)\\
&\quad+\frac{1}{1+\p_1Y^1 }\big(\p_1\p_t\bar Y_H^2 \p_2Y^1+\p_1\bar Y_H^2 \p_2\p_t\bar Y_H^1\big).
\end{split}
\end{align}
This gives the following estimate:
\begin{align*}
\begin{split}
\|\p_t\p_2\bar Y^2\|_{H^s}&\lesssim  (1+\|\nabla \bar Y_H\|_{H^a}+\|\nabla \widetilde Y\|_{H^a})^4\|\nabla\p_t \bar Y_H\|_{H^s}\\&\quad\times\big(1+\|\rme^{-h(z)}\|_{L^\infty}+\|\nabla\rme^{-h(z)}\|_{H^{a-1}}\big), \ s=0\ \text{or}\ a.
\end{split}
\end{align*}
Hence, the smallness of $\|\nabla \bar Y_H\|_{H^a}$ and $\|\nabla \widetilde Y\|_{H^a}$ gives \eqref{ptp2Y2}. To verify \eqref{pbarY}, \eqref{nblaY} and \eqref{ptpbarY}, it suffices to use the fact that $\nabla Y=\nabla \bar Y+\nabla \widetilde Y$ and
\begin{align*}
\nabla\bar Y=
\begin{pmatrix}
\p_1\bar{Y}_H^1&\p_2\bar{Y}_H^1\\\p_1\bar{Y}_H^2&\p_2\bar{Y}^2
\end{pmatrix}\ \text{and}\	\p_t\nabla\bar Y=
\begin{pmatrix}
\p_t\p_1\bar{Y}_H^1&\p_t\p_2\bar{Y}_H^1\\\p_t\p_1\bar{Y}_H^2&\p_t\p_2\bar{Y}^2
\end{pmatrix}.
\end{align*}
\end{proof}

For the mapping $y=y(z)$ defined in \eqref{mapyz} and $h=h(z)$ defined in \eqref{pz2y2},
 we collect here estimates for the Jacobian
$\nabla_z y$, $\rme^h$ and $\frac{b_0^2}{b_0^1}(y(z))$,
all of which are used frequently throughout the paper. The detailed proof is given in Appendix \ref{append_A}.
\begin{lemma}\label{lem-nazy}
Let $s\geq 0$ be an integer.
Under the assumptions \eqref{xi-cond0} and \eqref{b01-cond}, there hold
\begin{align}
&\|\nabla_zy-I\|_{L^\infty}+\|(\nabla_zy)^{-\top}-I\|_{L^\infty} \leq C(\frac1m,\|\nabla_y b_0\|_{H^2})\| b_0^2\|_{H^3},\label{nazyLinf-es}\\
&\|\rme^h\|_{L^\infty}+\|\rme^{-h}\|_{L^\infty} \leq C(\frac1m,\| b_0^2\|_{H^3},\|\nabla_y b_0\|_{H^2}),\label{ehLinfty-es}\\
&\|\nabla_z^2y\|_{H^{s-2}}\leq  C(\frac1m,\| b_0^2\|_{H^{s}},\|\nabla_y b_0\|_{H^{s-1}})\|b_0^2\|_{H^{s}}, \ \text{for}\ s\geq 3,\label{naz2yHs-es}\\
&	\|\nabla \rme^{h}\|_{H^{s-2}}=\|\nabla \rme^{-h}\|_{H^{s-2}}\leq C(\frac1m,\| b_0^2\|_{H^{s}},\|\nabla_y b_0\|_{H^{s-1}})\|b_0^2\|_{H^{s}},\ \text{for}\ s\geq 3,\label{naeh-es}\\
&\|\frac{b_0^2}{b_0^1}(y(z))\|_{H^{s-1}}\leq C(\frac1m,\| b_0^2\|_{H^{s-1}},\|\nabla_y b_0\|_{H^{s-2}})\|b_0^2\|_{H^{s-1}},\ \text{for}\ s\geq 4,\label{b12Hs-es}\\
&\|B-I\|_{H^s}\lesssim C(\frac1m,\| b_0^2\|_{H^{s+1}},\|\nabla_y b_0\|_{H^{s}})\|b_0^2\|_{H^{s+1}},\ \text{for}\ s\geq 3.
\label{B-I-es}
\end{align}
where $C(\frac1m,\|\nabla_{y}b_0\|_{H^2})$ is an increasing function depending on $\frac1m$ and $\|\nabla_{y}b_0\|_{H^2}$. The same convention applies to $C(\frac1m,\| b_0^2\|_{H^{s}},\|\nabla_y b_0\|_{H^{s-1}})$.
\end{lemma}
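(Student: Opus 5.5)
The argument is an ODE analysis of the characteristic map \eqref{ode}, followed by composition (Moser-type) estimates. Set $g:=b_0^2/b_0^1$ as a function of $y$. By \eqref{b01-cond}, $b_0^1\ge m/2$, so quotient estimates give $\|g\|_{H^{s}}\le C(\frac1m,\|\nabla_yb_0\|_{H^{s-1}})\|b_0^2\|_{H^{s}}$. Sobolev embedding on $\bT\times\bR$ then gives $\|g\|_{W^{1,\infty}}\lesssim C\|b_0^2\|_{H^3}$, which is small. For each fixed $z_2$, \eqref{ode} is a scalar ODE in $z_1$ over an interval of length one with a Lipschitz right-hand side, so $y_2(\cdot,z_2)$ exists and is smooth. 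Moreover $|y_2-z_2|\le\|g\|_{L^\infty}$.

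\textbf{Sup-norm bounds \eqref{nazyLinf-es} and \eqref{ehLinfty-es}.} By \eqref{pz2y2}, $|h|\le\int|\p_{y_2}g|\le\|\p_{y_2}g\|_{L^\infty}$, which gives \eqref{ehLinfty-es}. Since $\|\p_{y_2}g\|_{L^\infty}\lesssim C\|b_0^2\|_{H^3}$, we also get $|\rme^{\pm h}-1|\lesssim C\|b_0^2\|_{H^3}$. The entries of $\nabla_zy-I$ in \eqref{nazy} are $g(y(z))$ and $\rme^{-h}-1$. The entries of $B-I$ in \eqref{def-B} are $-g\,\rme^{h}$ and $\rme^h-1$. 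Each is bounded in $L^\infty$ by $C\|b_0^2\|_{H^3}$, which yields \eqref{nazyLinf-es}.

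\textbf{Sobolev bounds by induction on derivatives.} First, all derivatives are expressed through $g\circ y$ and $h$. We have $\p_{z_1}y_2=g(y(z))$, $\p_{z_2}y_2=\rme^{-h}$, $\p_{z_1}h=-(\p_{y_2}g)(y(z))$, and
$$\p_{z_2}h=-\int_{-1/2}^{z_1}(\p_{y_2}^2g)(y)\,\rme^{-h}\,\rmd z_1'.$$
So $\nabla_z^2y$ and $\nabla h$ are products of $(\nabla_y^kg)\circ y$, $\rme^{\pm h}$ and $\nabla h$.

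Second, the composition step. Because $\det\nabla_zy=\rme^{-h}$ is bounded from above and below, the change of variables gives $\|F\circ y\|_{L^2_z}\simeq\|F\|_{L^2_y}$, with constants controlled by \eqref{ehLinfty-es}. Combining this with the chain rule and Moser's product estimates, I prove by induction on $k\le s-2$ that
$$\|\nabla^k\nabla_z^2y\|_{L^2}+\|\nabla^k\nabla h\|_{L^2}\le C\,\|\nabla_y^2 g\|_{H^{k}}+(\text{lower-order products}).$$
The lower-order products are handled by the induction hypothesis together with $L^\infty$ bounds; the top-order factor of $g$ is $\nabla_y^{k+2}g$, controlled by $\|b_0^2\|_{H^{s}}$. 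This gives \eqref{naz2yHs-es} and \eqref{naeh-es}. The identity $\|\nabla\rme^{h}\|=\|\nabla\rme^{-h}\|$ should be read as equivalence up to constants, since $\nabla\rme^{\pm h}=\pm\rme^{\pm h}\nabla h$.

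Third, \eqref{b12Hs-es} and \eqref{B-I-es}. Estimate \eqref{b12Hs-es} is the same composition estimate applied to $g\circ y$ itself. For \eqref{B-I-es}, write $B-I$ entrywise as $-(g\circ y)\rme^{h}$ and $\rme^h-1$. Bound each product by $\|g\circ y\|_{H^s}(1+\|\nabla\rme^h\|_{H^{s-1}})$, and bound $\|\rme^h-1\|_{H^s}$ via $\|\rme^h-1\|_{L^2}\lesssim\|h\|_{L^2}\lesssim\|\p_{y_2}g\|_{L^2}$ (Minkowski in $z_1$) plus \eqref{naeh-es} with $s+1$.

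\textbf{Main obstacle.} The hardest step is the $\p_{z_2}$ derivatives of $h$. Each one produces a nested integral over $z_1$ containing composed higher derivatives of $g$, so a direct computation gains nothing. I plan to handle it by Minkowski's integral inequality in $z_1\in\bT$: the $L^2_{z_2}$ norm of $\int_{-1/2}^{z_1}F(z_1',y_2(z_1',z_2))\,\rmd z_1'$ is at most $\sup_{z_1'}\|F(z_1',y_2(z_1',\cdot))\|_{L^2_{z_2}}$. For fixed $z_1'$, the map $z_2\mapsto y_2$ is a diffeomorphism with Jacobian $\rme^{-h}\approx1$, so this reduces to $\sup_{z_1}\|F(z_1,\cdot)\|_{L^2_{y_2}}$. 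That supremum is controlled by $\|F\|_{H^1}$ via the one-dimensional trace/Sobolev inequality in $y_1$. This costs one derivative, and that loss is consistent with the $H^{s}$ versus $H^{s-2}$ indices in the statement.
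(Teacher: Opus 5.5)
You follow the paper's route: explicit formulas for $\p_{z_1}h$ and $\p_{z_2}h$, composition under $z\mapsto y(z)$ with Jacobian $\rme^{-h}$, and induction on the number of derivatives. Your $L^\infty$ bounds are fine.

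\textbf{The gap.} It sits in the step you call the main obstacle. You bound $\int_{-1/2}^{z_1}F(z_1',y_2(z_1',z_2))\,\rmd z_1'$ in $L^2_{z_2}$ by $\sup_{z_1'}\|F(z_1',\cdot)\|_{L^2_{y_2}}$, and then by $\|F\|_{H^1}$ through a trace inequality in $y_1$. You assert that this lost derivative fits inside the gap between $H^{s}$ and $H^{s-2}$. That count is wrong: the $z_1$-antiderivative in $h$ gives no smoothing in $z_2$, and the two-derivative gap is already used up.

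Indeed $\p_{z_2}^2y_2=-\rme^{-h}\p_2h$ with $\p_2h=-\int_{-1/2}^{z_1}(\p_{y_2}^2g)(y)\,\rme^{-h}\,\rmd z_1'$. So the top-order part of $\p_{z_2}^{s}y_2$ is $\int_{-1/2}^{z_1}(\p_{y_2}^{s}g)(y)\times(\text{bounded factors})\,\rmd z_1'$. This term belongs to $\|\nabla_z^2y\|_{H^{s-2}}$, and the same holds for $\p_2^{s-1}\rme^{-h}$ in $\|\nabla\rme^{-h}\|_{H^{s-2}}$. After your trace step it requires $\p_{y_1}\p_{y_2}^{s}g\in L^2$, i.e.\ $\|b_0^2\|_{H^{s+1}}$. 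Hence \eqref{naz2yHs-es} and \eqref{naeh-es} come out one derivative weaker than stated.

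The loss then propagates:
\begin{itemize}
\item \eqref{b12Hs-es} uses \eqref{naz2yHs-es} at level $s-1$ inside the composition constant.
\item \eqref{B-I-es} uses \eqref{naeh-es} at level $s+1$, which would now need $b_0^2\in H^{s+2}$. For $s=a$ this exceeds the $H^{a+1}$ regularity the paper assumes.
\end{itemize}

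\textbf{The repair.} It is simple, and it is what the paper does in \eqref{p2L2} and \eqref{ps-1}: never take a supremum in $z_1'$.
\begin{itemize}
\item Since $z_1$ ranges over an interval of length one, Cauchy--Schwarz gives $\bigl|\int_{-1/2}^{z_1}G(z_1',z_2)\,\rmd z_1'\bigr|^2\le\int_{\bT}|G(z_1',z_2)|^2\,\rmd z_1'$. Hence $\bigl\|\int_{-1/2}^{z_1}G\,\rmd z_1'\bigr\|_{L^2(\bT\times\bR)}\le\|G\|_{L^2(\bT\times\bR)}$.
\item The two-dimensional change of variables then gives $\|F\circ y\|_{L^2_z}\le\|\rme^{h}\|_{L^\infty}^{1/2}\|F\|_{L^2_y}$.
\end{itemize}
This is lossless. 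It is precisely the argument you already use for $\|h\|_{L^2}\lesssim\|\p_{y_2}g\|_{L^2}$ in your treatment of $B-I$. With it, the top-order factor in your induction really is $\nabla_y^{k+2}g$, and the estimates close with $\|b_0^2\|_{H^{s}}$ as stated.

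Your remark that $\|\nabla\rme^{h}\|_{H^{s-2}}$ and $\|\nabla\rme^{-h}\|_{H^{s-2}}$ are only comparable, not equal, is correct.
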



%

In what follows we record several preliminary lemmas concerning estimates for
$\nabla {b_0^1(y(z))}$, $\Phi$, $\na \widetilde Y$, $\nabla_Z$, and the
equivalence of the two directional derivatives $\partial_1$ and $\partial_{b_0^1}$.
The detailed proofs are all given in Appendix \ref{append_A}.
\begin{lemma}\label{lem-p2Y2}
Under the assumptions \eqref{xi-cond0} and \eqref{b01-cond}, for any integer $a\geq3$, there holds
\begin{align}
\label{p1b01-es}
\|\p_1 {b_0^1(y(z))}\|_{H^a}&\leq C\epsilon_0,\\
\label{nab01-es}
\|\nabla {b_0^1(y(z))}\|_{H^a}&\leq C,\\
\label{phi-small}
\|\Phi\|_{H^{a+1}}&\leq \tilde C \|b_0-(\xi,0)^\top\|_{H^{a+1}},\\
\label{nawideY}
\|\na \widetilde Y\|_{H^a}&\leq \tilde C \|b_0-(\xi,0)^\top\|_{H^{a+1}},
\end{align}
where $C=C(\frac1m,\eps_0,L)$ and $\tilde C=\tilde C(M,\frac1m,\eps_0,L)$ are positive constants.
\end{lemma}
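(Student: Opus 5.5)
Throughout, write every quantity explicitly as a composition with the map $y(z)$ and reduce each estimate to composition estimates, using Lemma \ref{lem-nazy} to control the Jacobian $\nabla_z y$, the weight $\rme^{\pm h}$ and $\frac{b_0^2}{b_0^1}(y(z))$. Since $\det(\nabla_z y)=\rme^{-h}$ is bounded above and below, $H^k$ norms of $g$ and $g\circ y$ are equivalent for $k\le a+1$, with constants depending on $\|\nabla_z y - I\|_{W^{a,\infty}}$ and hence on $\frac1m$, $\epsilon_0$ and $L$.

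\textbf{Step 1: estimates for $b_0^1$.} For \eqref{p1b01-es} use the chain rule
\begin{align*}
\p_1\big(b_0^1(y(z))\big)=(\p_{y_1}b_0^1)(y(z))+\frac{b_0^2}{b_0^1}(y(z))\,(\p_{y_2}b_0^1)(y(z)).
\end{align*}
The first term is $O(\epsilon_0)$ in $H^a$ by \eqref{b01-cond}. The second is a product of $\frac{b_0^2}{b_0^1}$, which is $O(\epsilon_0)$ by \eqref{b12Hs-es}, with a function of size $L$; it is bounded by the algebra property of $H^a$ ($a\ge3$). Estimate \eqref{nab01-es} follows the same way from $\nabla b_0^1(y(z))=(\nabla_z y)^\top(\nabla_y b_0^1)(y(z))$. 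Here $\nabla_y b_0^1$ is $O(L+\epsilon_0)$: note $\p_{y_2}\xi=\xi'$ lies in $H^{a+1}$ even though $\xi\notin L^2$.

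\textbf{Step 2: estimate for $\Phi$.} $\Phi^2=b_0^2(y(z))$ is immediate. For $\Phi^1=b_0^1(y(z))-\gamma(z_2)$, the key point is that $\xi$ must cancel. Write $b_0^1=\xi(y_2)+\beta$ with $\beta$ small in $H^{a+1}$. Then $\gamma^{-1}(z_2)=\int_\bT \frac{\rmd \bar z_1}{b_0^1(y(\bar z_1,z_2))}$, and
\begin{align*}
\Phi^1=\gamma\int_\bT\Big(\frac{1}{b_0^1(y(\bar z_1,z_2))}-\frac{1}{b_0^1(y(z))}\Big)\rmd\bar z_1\cdot b_0^1(y(z)),
\end{align*}
so $\Phi^1$ is controlled by oscillations of $b_0^1(y(\cdot,z_2))$ in $z_1$. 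Wirtinger's inequality on $\bT$, uniformly in $z_2$, gives $\|\Phi^1\|_{L^2}\lesssim\|\p_1 (b_0^1\circ y)\|_{L^2}$. For higher derivatives, $\p_1\Phi^1=\p_1(b_0^1\circ y)$, while $\p_2$ derivatives fall either on the oscillating difference or on $\gamma$; $\gamma'$ and its derivatives are bounded via Step 1. All terms are therefore bounded by $\|\p_1(b_0^1\circ y)\|_{H^{a}}$ times constants. At top order $a+1$ one must avoid \eqref{p1b01-es}, which only gives $H^a$. Instead, use directly $\p_{y_1}b_0^1=\p_{y_1}\beta\in H^a$ together with $b_0^2\in H^{a+1}$, giving $\p_1(b_0^1\circ y)\in H^a$ with norm $\lesssim\|b_0-(\xi,0)^\top\|_{H^{a+1}}$; this yields \eqref{phi-small}.

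\textbf{Step 3: estimate for $\nabla\widetilde Y$.} Use the formulas from the proof of Lemma \ref{lem-p2Y2-1}:
\begin{align*}
\p_1\widetilde Y^1=-\frac{\Phi^1}{b_0^1},\qquad \p_1\widetilde Y^2=-\frac{b_0^2}{b_0^1},\qquad \p_2\widetilde Y^2=\rme^{-h}\Big(\frac{b_0^1}{\gamma}-1\Big)=\rme^{-h}\frac{\Phi^1}{\gamma}.
\end{align*}
These are bounded by Step 2 and \eqref{b12Hs-es}. The remaining entry is
\begin{align*}
\p_2\widetilde Y^1=-\int_0^{z_1}\p_2\Big(\frac{\Phi^1}{b_0^1}\Big)\rmd\bar z_1 .
\end{align*}
On the bounded interval $\bT$ this is controlled in $H^a$ by $\|\Phi^1/b_0^1\|_{H^{a+1}}$: integration in $z_1$ costs no derivative in $z_2$, and $\p_1$ of it returns the integrand. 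Periodicity of $\widetilde Y^1$ follows from the zero mean $\int_\bT \Phi^1/b_0^1\,\rmd z_1=0$.

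\textbf{Main obstacle.} The main difficulty is Step 2, and in particular confirming that $\Phi^1$ lies in $L^2(\bT\times\bR)$ even though $b_0^1$ and $\gamma$ individually do not decay in $z_2$. The smallness must come purely from $z_1$-oscillation, via the Poincaré inequality on fibers, while keeping the top-order $H^{a+1}$ count consistent with the regularity available for $b_0$.
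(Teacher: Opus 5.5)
\textbf{There is a genuine gap at the top-order pure $z_2$-derivative of $\Phi^1$.}

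Your Step 1 coincides with the paper's argument. Your oscillation identity $\Phi^1=\gamma\,b_0^1(y(z))\int_\bT\big(f(\bar z_1,z_2)-f(z)\big)\rmd\bar z_1$, with $f=1/b_0^1(y(z))$, correctly gives three things:
\begin{itemize}
\item $\Phi^1\in L^2$;
\item control of every $\p^\alpha\Phi^1$ with $|\alpha|\le a$;
\item control of the order-$(a+1)$ derivatives containing at least one $\p_1$, since $\p_1\Phi^1=\p_1(b_0^1\circ y)\in H^a$.
\end{itemize}

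What remains is $\p_2^{a+1}\Phi^1$. It is needed for \eqref{phi-small}. It is also needed for $\p_2\widetilde Y^1\in H^a$ in your Step 3, because $\p_2^{a}\p_2\widetilde Y^1=-\int_0^{z_1}\p_2^{a+1}(\Phi^1/b_0^1)\,\rmd\bar z_1$.

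When all $a+1$ derivatives fall on the oscillating factor, you obtain the $z_1$-oscillation of $\p_2^{a+1}f$. There are only two ways to bound it, and neither works:
\begin{itemize}
\item Wirtinger bounds it by $\|\p_1\p_2^{a+1}f\|_{L^2}$. This needs $\p_1(b_0^1\circ y)\in H^{a+1}$, i.e.\ one more derivative on $b_0-(\xi,0)^\top$ than is assumed (e.g.\ $\p_{y_1}b_0^1\in H^{a+1}$).
\item The trivial bound $2\|\p_2^{a+1}f\|_{L^2}$ is of size $L$, not small.
\end{itemize}
Your ``top order'' sentence only re-derives $\p_1(b_0^1\circ y)\in H^a$ with an explicit constant. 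That is what \eqref{p1b01-es} already provides, so it does not close the gap. In fact no argument using only $\p_1\Phi^1\in H^a$ and a vanishing (weighted) $z_1$-mean can work. The function $F=\cos(2\pi z_1)\phi(z_2)$ with $\phi\in H^a\setminus H^{a+1}$ has $\p_1F\in H^a$ and zero $z_1$-mean, yet $F\notin H^{a+1}$.

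\textbf{The missing idea.} At top order the smallness must come from $b_0^2$ (field lines nearly horizontal, so $y_2(z)\approx z_2$), not from $z_1$-oscillation. The paper splits
\begin{align*}
\Phi^1=\big(b_0^1(y(z))-\xi(z_2)\big)-\big(\gamma(z_2)-\xi(z_2)\big)
\end{align*}
and uses the representation
\begin{align*}
b_0^1(y(z))-\xi(z_2)=\int_{-\f12}^{z_1}\Big(\xi'\f{b_0^2}{b_0^1}\Big)(z_1',y_2(z_1',z_2))\,\rmd z_1'+(b_0^1-\xi)(y(z)).
\end{align*}
Both terms are small in $H^{a+1}$ by Lemma \ref{lemA} and Lemma \ref{lem-g1g2}. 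The composition step uses $\nabla_z^2y\in H^{a-1}$, which is \eqref{naz2yHs-es} with $s=a+1$. Then
\begin{align*}
\gamma-\xi=\gamma\int_\bT\f{b_0^1(y(z))-\xi(z_2)}{b_0^1(y(z))}\,\rmd z_1
\end{align*}
inherits the same $H^{a+1}$ bound.

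Within your framework the equivalent repair is to note that the oscillation of $\p_2^{a+1}f$ equals that of $\p_2^{a+1}\big(f-1/\xi(z_2)\big)$. However, proving that $f-1/\xi(z_2)$ is small in $H^{a+1}$ requires exactly this integral representation.

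\textbf{A minor point.} Composition constants depending on $\|\nabla_zy-I\|_{W^{a,\infty}}$ are not available, since Lemma \ref{lem-nazy} gives only $\nabla_z^2y\in H^{a-1}$. The right tool is Lemma \ref{lemA}, which needs only $\nabla_z^2y\in H^{k-2}$.
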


Among the nonlinearities, many terms arise from  the coordinate transformation. Recall that
$\nabla_Z=B\nabla_z$. We will show that $B$ is close to the identity matrix, implying that $\nabla_Z$ is close to $\nabla_z$. We first study the difference between $\nabla_Z$ and $\nabla_z$. Define $$\widetilde{\nabla}=(\tilde\p_1,\tilde\p_2):=(B-I)\nabla.$$
Then we see that
\begin{align*}
&\tilde\p_1=-\f{b_0^2}{b_0^1}(y(z))\rme^{h(z)}\p_2:=\bar{\p}_2,\\
&\tilde\p_2=(\rme^{h(z)}-1)\p_2.
\end{align*}
Thus
\begin{align*}
\nabla_{Z^1}=\p_1+\bar{\p}_2,\quad\nabla_{Z^2}=\p_2+\tilde{\p}_2.
\end{align*}
\begin{lemma}\label{lem-tldna}
Let $a\geq 3$. Under the assumption \eqref{b01-cond},  there hold
\begin{align}\label{tldna}
\|\widetilde{\nabla}g\|_{H^s}\lesssim \epsilon_0\|\na g\|_{H^s},\ \|\nabla_Zg\|_{H^s}\sim\|\na g\|_{H^s},\ \text{for}\, \, s=0,\,a-1,\, a,
\end{align}
provided the right hand sides are finite.
\end{lemma}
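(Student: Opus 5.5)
The estimate reduces to multiplier bounds in $H^s$ for the coefficients $r_1:=-\frac{b_0^2}{b_0^1}(y(z))\rme^{h(z)}$ and $r_2:=\rme^{h(z)}-1$. By the formulas just derived,
\[
\widetilde\nabla g=(r_1\p_2 g,\; r_2\p_2 g),
\]
so it suffices to show $\|r_i\,\p_2 g\|_{H^s}\lesssim\epsilon_0\|\p_2 g\|_{H^s}\le \epsilon_0\|\nabla g\|_{H^s}$ for $s=0,a-1,a$.

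\emph{The case $s=0$.} This is immediate from $\|r_i\|_{L^\infty}\lesssim\epsilon_0$. For $r_1$, the factor $\frac{b_0^2}{b_0^1}$ is bounded in $L^\infty$ by $\frac{2}{m}\|b_0^2\|_{L^\infty}\lesssim\epsilon_0$ by \eqref{b01-cond}, and $\rme^{h}$ is bounded by \eqref{ehLinfty-es}. For $r_2$, use $|\rme^{h}-1|\le \rme^{|h|}|h|$. From \eqref{pz2y2},
\[
|h|\le \int_\bT \Big|\p_{y_2}\frac{b_0^2}{b_0^1}\Big|\,\rmd z_1,
\]
and $\p_{y_2}\frac{b_0^2}{b_0^1}=\frac{\p_{y_2}b_0^2}{b_0^1}-\frac{b_0^2\,\p_{y_2}b_0^1}{(b_0^1)^2}$ is $O(\epsilon_0)$ in $L^\infty$. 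This uses $\|b_0^2\|_{H^{a+1}}\le\epsilon_0$, the Sobolev embedding for $a\ge3$, and $\|\p_{y_2}b_0^1\|_{L^\infty}\lesssim L$.

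\emph{The case $s=a-1,a$.} Use the Moser/Kato–Ponce product estimate in two dimensions:
\[
\|r\,f\|_{H^s}\lesssim \|r\|_{L^\infty}\|f\|_{H^s}+\|\nabla r\|_{H^{s-1}}\|f\|_{L^\infty},
\]
or, to avoid $L^\infty$ of $f$, the variant in which the low-order derivatives of $r$ are placed in $L^\infty$ or $L^4$ via Gagliardo–Nirenberg. Because $\|f\|_{L^\infty}$ is not controlled by $\|f\|_{H^0}$, I would instead expand $\p^\alpha(r f)$ by Leibniz and treat each term. In $\p^\beta r\,\p^{\alpha-\beta}f$ with $|\beta|\le|\alpha|\le a$, place $\p^\beta r$ in $L^\infty$ when $|\beta|\le a-2$, using $\nabla r\in H^{a-1}\subset W^{a-3,\infty}$. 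When $|\beta|\in\{a-1,a\}$, place $\p^\beta r$ in $L^2$ or $L^4$ and $\p^{\alpha-\beta}f$ (at most one derivative) in $L^\infty$ or $L^4$. The latter requires $f\in H^{2}$, which holds since $s\ge a-1\ge2$. So in every case one needs $\|\nabla r_i\|_{H^{a-1}}\lesssim\epsilon_0$. For $r_2$ this follows from \eqref{naeh-es} with $s=a+1$, since $\|\nabla\rme^{h}\|_{H^{a-1}}\lesssim\|b_0^2\|_{H^{a+1}}\lesssim\epsilon_0$. For $r_1$, combine \eqref{b12Hs-es} with $s=a+1\ge4$, which gives $\|\frac{b_0^2}{b_0^1}(y(z))\|_{H^a}\lesssim\epsilon_0$, with the algebra property of $H^a$ and \eqref{ehLinfty-es}, \eqref{naeh-es} for $\rme^{h}$.

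This Leibniz bookkeeping is the main obstacle: at top order $|\beta|=a$ one needs $\nabla^a r_i\in L^2$, which is exactly what the extra derivative on $b_0$ in \eqref{asmp} supplies, and the $L^\infty$ bound on the at-most-one-derivative factor of $f$ must follow from $s\ge2$. The constants depend only on $m,M,L,a$.

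\emph{The equivalence $\|\nabla_Z g\|_{H^s}\sim\|\nabla g\|_{H^s}$.} Since $\nabla_Z g=\nabla g+\widetilde\nabla g$, the first bound gives
\[
(1-C\epsilon_0)\|\nabla g\|_{H^s}\le\|\nabla_Z g\|_{H^s}\le(1+C\epsilon_0)\|\nabla g\|_{H^s}
\]
by the triangle inequality, and choosing $\epsilon_0$ small yields the equivalence.
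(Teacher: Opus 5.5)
Your proof is correct. For the bound on $\widetilde{\nabla}g$ it is essentially the paper's argument. The paper writes $\widetilde\nabla g=(B-I)\nabla g$ and applies
\[
\|(B-I)\nabla g\|_{H^s}\le\|B-I\|_{L^\infty}\|\nabla g\|_{H^s}+\|B-I\|_{H^s}\|\nabla g\|_{L^\infty},
\]
together with \eqref{nazyLinf-es} and \eqref{B-I-es}. It controls $\|\nabla g\|_{L^\infty}$ by $\|\nabla g\|_{H^s}$ directly, since $s\ge a-1\ge 2$. So your concern that $\|f\|_{L^\infty}$ is uncontrolled does not arise for $s=a-1,a$, and your Leibniz bookkeeping simply reproves this product estimate. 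It does so under the slightly weaker requirement $\|r_i\|_{L^\infty}+\|\nabla r_i\|_{H^{a-1}}\lesssim\epsilon_0$, which avoids needing $\rme^{h}-1\in L^2$ as in the proof of \eqref{B-I-es}. One wording point: $\rme^h\notin H^a$, so for $r_1$ you are really using $\|uw\|_{H^a}\lesssim\|u\|_{H^a}(\|w\|_{L^\infty}+\|\nabla w\|_{H^{a-1}})$ rather than the algebra property. Your citations of \eqref{ehLinfty-es} and \eqref{naeh-es} supply exactly this.

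You genuinely differ from the paper on the equivalence $\|\nabla_Zg\|_{H^s}\sim\|\nabla g\|_{H^s}$. The paper proves each direction separately as a multiplier bound. It bounds $\nabla_Zg=B\nabla g$ using $\|B\|_{L^\infty}+\|\nabla B\|_{H^{a-1}}$, and the reverse direction via $\nabla g=(\nabla_zy)^\top\nabla_Zg$ using $\|\nabla_zy\|_{L^\infty}+\|\nabla_z^2y\|_{H^{a-1}}$. You instead obtain both directions from the first estimate and the triangle inequality. Your route is shorter and needs no information about $B^{-1}$, but it relies on $C\epsilon_0<1$. The paper's route uses only boundedness of $B$ and $(\nabla_zy)^\top$, not smallness. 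The difference is immaterial here because $\epsilon_0$ is taken small in any case.
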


\begin{lemma}\label{lem-equiva}
Let $a\geq 3$ and $0\leq b\leq a$ be integers.	Under the condition \eqref{b01-cond}, for every smooth function $g$ defined on $\mathbb T\times \mathbb R$, there exist positive constants $C_1$ and  $C_2$ depending on $m,a$ and $L$ such that
\begin{align}
\label{equiva-1}
&C_1\|\p_1 g\|_{H^b}\leq \|\p_{b_0^1} g\|_{H^b}\leq 	C_2\|\p_1 g\|_{H^b},
\end{align}
\end{lemma}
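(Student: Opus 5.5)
The plan is a product-estimate argument. Write $\p_{b_0^1} g = b_0^1(y(z))\,\p_1 g$, so that both inequalities in \eqref{equiva-1} reduce to bounding the $H^b$ norm of a product, where one factor is $\p_1 g$ and the other is either $b_0^1(y(z))$ or $1/b_0^1(y(z))$.

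\textbf{Upper bound.} We need the multiplier $b_0^1(y(z))$ to be bounded on $H^b$ for $0\le b\le a$. Split it as
\[
b_0^1(y(z)) = \gamma\text{-free constant part} + \text{variable part},
\]
where \eqref{b01-cond} gives $\|b_0^1\|_{L^\infty}\le 2M$, and \eqref{nab01-es} gives $\|\nabla b_0^1(y(z))\|_{H^a}\le C$.

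By Leibniz, $\p^\alpha(b_0^1\,\p_1 g)$ is a sum of terms $\p^\beta b_0^1\,\p^{\alpha-\beta}\p_1 g$.
\begin{itemize}
\item When $\beta=0$, we use the $L^\infty$ bound on $b_0^1$.
\item When $|\beta|\ge1$, we write $\p^\beta b_0^1=\p^{\beta-e}\nabla b_0^1\in H^{a-|\beta|+1}$. Since $a\ge3$, either $\p^\beta b_0^1\in L^\infty$ (via $H^2\subset L^\infty$ in 2D, valid when $|\beta|\le a-1$) or $\p^{\alpha-\beta}\p_1 g$ is low order and controlled in $L^\infty$ by $\|\p_1 g\|_{H^{b}}$. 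This requires care when $b$ is small.
\end{itemize}
For small $b$ this causes no trouble: if $|\beta|=b\le a$, then $\p^\beta b_0^1\in H^{a-b+1}\subset L^\infty$ as long as $a-b+1\ge2$, i.e.\ $b\le a-1$. For $b=a$ and $|\beta|=a$, we have $\p^\beta b_0^1\in H^1$ and $\p_1 g\in H^a\subset L^\infty$ because $a\ge3$. So a standard Kato--Ponce/Moser-type bound $\|fh\|_{H^b}\lesssim \|f\|_{L^\infty}\|h\|_{H^b}+\|\nabla f\|_{H^{a}}\|h\|_{H^{b}}$ holds for $b\le a$ in 2D. This gives the constant $C_2=C(M,L)$.

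\textbf{Lower bound.} Write $\p_1 g=\frac{1}{b_0^1(y(z))}\,\p_{b_0^1}g$ and apply the same product estimate with multiplier $1/b_0^1$. Here $\|1/b_0^1\|_{L^\infty}\le 2/m$ by \eqref{b01-cond}. For the derivative, $\nabla(1/b_0^1)=-\nabla b_0^1/(b_0^1)^2$, and composition estimates give $\|\nabla(1/b_0^1)\|_{H^a}\le C(\frac1m,L)$: write $1/b_0^1=F(b_0^1)$ with $F$ smooth on $[m/2,2M]$ and use Moser's composition lemma together with \eqref{nab01-es}. This yields $C_1$ depending on $m$, $a$, $L$ (and $M$, as the paper allows).

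\textbf{Main obstacle.} The only delicate point is the top-order term at $b=a$, where all derivatives fall on the coefficient. There we must use $\p^\alpha b_0^1\in L^2$ paired with $\p_1 g\in L^\infty$. The $L^\infty$ bound comes from Sobolev embedding $H^{a}\subset L^\infty$ applied to $\p_1 g$ with the $H^b$ norm at $b=a$; for lower $b$ the coefficient derivatives are themselves bounded. With that, the estimate closes.
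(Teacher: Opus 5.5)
Your proposal is correct and follows essentially the paper's argument. You write $\p_{b_0^1}g=b_0^1(y(z))\,\p_1 g$ and $\p_1 g=\frac{1}{b_0^1(y(z))}\,\p_{b_0^1}g$, and bound each product in $H^b$ using the $L^\infty$ bounds from \eqref{b01-cond}, the gradient bound \eqref{nab01-es}, and the gradient-form composition estimate for $1/b_0^1$ that the paper records as \eqref{gHs}; your Leibniz case split simply unpacks the product estimate $\|fh\|_{H^b}\lesssim(\|f\|_{L^\infty}+\|\nabla f\|_{H^{a-1}})\|h\|_{H^b}$ that the paper quotes directly, and the constants depend on $M$ just as they do in the paper's own proof.
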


\section{Estimate of the high-frequency component: the linear estimate}\label{sec-High-linear}

In the following sections, we will estimate $\mathcal{ H}_a(t)$, $\mathcal{ L}_a(t)$ and $\mathcal{W}_0(t)$
defined by \eqref{def-Ha}-\eqref{def-W0}.
Before proceeding with  the energy estimate, we recall a few basic facts.
For $\theta\in\mathbb{R}$, the fractional Laplacian $|\p_2|^\theta$ corresponds to the Fourier multiplier $|2\pi\lambda_2|^\theta$ defined as
\begin{align*}
\widehat{|\p_2|^\theta f}(\lambda)=|2\pi\lambda_2|^\theta \hat{f}(\lambda),
\end{align*}
whenever it is well defined.
The Riesz transforms $R_1,R_2$ on $\mathbb T\times \mathbb R$ are defined by the multipliers
$$
\widehat{(R_1 f)}(\lambda)=i\frac{\lambda_1}{\sqrt{\lambda_1^2+\lambda_2^2}}\widehat f(\lambda),\qquad
\widehat{(R_2 f)}(\lambda)=i\frac{\lambda_2}{\sqrt{\lambda_1^2+\lambda_2^2}}\widehat f(\lambda).
$$
Clearly, by the Parseval identity,
 the Riesz transforms are bounded in $L^2(\mathbb T\times \mathbb R)$.
Next, for a function $f: \mathbb{T}\times\mathbb{R}\rightarrow \bR$ satisfying $\int_{\bT}f(z)\rmd z_1=0$, there holds the Poincar\'e inequality:
\begin{align*}
\|f\|_{L^2} \leq  \f{1}{2\pi}\|\p_1f  \|_{L^2} .
\end{align*}

Now we cook up the energy estimate for the linear system $\eqref{linearsys}_1$.
\begin{prop}\label{prop3}
Let $\bar Y_H$ 
be the smooth solution of system $\eqref{linearsys}_1$ on $[0,T]$.
Under the conditions of Theorem \ref{th1}, there exist a sufficiently small $\epsilon_0\in(0,1)$ and a constant $C_a$ such that, for $0<t\leq T$,
\begin{align}\label{HaEs}
&\mathcal{ H}_{a}(t)\leq {{C}_a}\mathcal{ H}_{a}(0)+C_a\int_{0}^t\|\p_{b_0^1}\bar Y_H\|_{L^2}\|\mathcal{R}_H\|_{L^2}\rmd\tau+{{C}_a}\mathcal{ F}_{a}(t),
\end{align}
where $\mathcal{R}_H=(\mathcal{R}_H^1,\mathcal{R}_H^2)^\top$ is defined by
\begin{align}\label{RH-def}
\begin{split}
\mathcal{R}_H^1:=&\big(1-\frac{\rme^{-h(z)}}{1+\p_1\widetilde{Y}^1}\frac{1}{1+\p_1Y^1}\big)\p_1\bar Y_H^1+\frac{1}{1+\p_1Y^1}\p_1\bar Y_H^2 \p_2Y^1,\\
\mathcal{R}_H^2:=&-\frac{1}{(1+\p_1Y^1 )^2}\big(\p_1\p_t\bar Y_H^1(\rme^{-h(z)}-1)+\p_1\p_t\bar Y_H^1 \p_1\bar Y_H^2\p_2Y^1\big)\\&+\frac{2\p_1Y^1+(\p_1Y^1)^2}{(1+\p_1Y^1)^2}\p_1\p_t\bar{Y}_H^1+\frac{1}{1+\p_1Y^1 }\big(\p_1\p_t\bar Y_H^2 \p_2Y^1+\p_1\bar Y_H^2 \p_2\p_t\bar Y_H^1\big),
\end{split}
\end{align}
and
\begin{align}
\mathcal{ F}_a(t)&=\sum_{0\leq |\alpha|\leq a}\Big|\int_0^t\bigl( \p^\al \mathcal F |  \p^\al  \p_t\bar  {Y}_H+\f14 \p^\al  \bar  {Y}_H \bigr)_{L^2}\rmd\tau\Big|
\label{mclFa-def}
\end{align}
The positive constant $C_a$ depends only on $m,\,M,\, L$, $a$ and $\epsilon_0$.
\end{prop}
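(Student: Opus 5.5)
The plan is to take, for each multi-index $|\alpha|\le a$, the $L^2$ inner product of $\p^\alpha$ of $\eqref{linearsys}_1$ with the multiplier $\p^\alpha\p_t\bar Y_H+\f14\p^\alpha\bar Y_H$, integrate over $[0,t]$, and sum over $\alpha$. The nonlinear right-hand side then produces exactly $\mathcal F_a(t)$ in \eqref{mclFa-def}, so it suffices to handle the linear operator
\[
\p_t^2-\Delta\p_t-\p_{b_0^1}^2+\nabla q .
\]

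\textbf{Step 1: constant-coefficient skeleton ($\alpha=0$).}
Pairing with $\p_t\bar Y_H$ gives
\[
\f12\f{\rmd}{\rmd t}\big(\|\p_t\bar Y_H\|_{L^2}^2+\|\p_{b_0^1}\bar Y_H\|_{L^2}^2\big)+\|\nabla\p_t\bar Y_H\|_{L^2}^2 .
\]
The commutator from moving $\p_{b_0^1}$ across is $\p_1 b_0^1(y(z))\,\p_1\bar Y_H$, which is $O(\epsilon_0)$ by \eqref{p1b01-es}.
Pairing with $\f14\bar Y_H$ gives
\[
\f14\f{\rmd}{\rmd t}\Big((\p_t\bar Y_H|\bar Y_H)+\f12\|\nabla\bar Y_H\|_{L^2}^2\Big)-\f14\|\p_t\bar Y_H\|_{L^2}^2+\f14\|\p_{b_0^1}\bar Y_H\|_{L^2}^2 .
\]
The negative term $-\f14\|\p_t\bar Y_H\|^2$ is absorbed by $\|\nabla\p_t\bar Y_H\|^2$ via the Poincar\'e inequality in $z_1$, using $\int_\bT\bar Y_H\,\rmd z_1=0$. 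The cross term $(\p_t\bar Y_H|\bar Y_H)$ is dominated by the positive energy, again by Poincar\'e together with Lemma \ref{lem-equiva}. This yields the energy and dissipation in \eqref{def-Ha} at level $0$.

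\textbf{Step 2: the linear pressure.}
The term $(\nabla q\,|\,\p_t\bar Y_H+\f14\bar Y_H)$ cannot be absorbed, since $\gamma'\sim\xi'$ is not small. I integrate by parts to move the gradient onto the test function, giving $-(q\,|\,\nabla\cdot\p_t\bar Y_H+\f14\nabla\cdot\bar Y_H)$. I then use Lemma \ref{lem-p2Y2-1} in the form stated in the introduction:
\[
\nabla\cdot\bar Y_H=-\p_2\bar Y_L^2+\mathcal R_H^1,\qquad \nabla\cdot\p_t\bar Y_H=-\p_t\p_2\bar Y_L^2+\mathcal R_H^2 .
\]
The terms involving $\bar Y_L^2$ vanish identically because $q=2\Delta^{-1}\p_1(\cdots)$ has zero $z_1$-mean while $\bar Y_L^2$ depends only on $(t,z_2)$. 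What remains is $(q\,|\,\mathcal R_H^2+\f14\mathcal R_H^1)$. Since $\|q\|_{L^2}\lesssim\|\gamma'\|_{L^\infty}\|\p_{b_0^1}\bar Y_H^2\|_{L^2}$ (the multiplier $\Delta^{-1}\p_1$ is bounded on mean-zero-in-$z_1$ functions), this produces exactly the term $\int_0^t\|\p_{b_0^1}\bar Y_H\|_{L^2}\|\mathcal R_H\|_{L^2}\,\rmd\tau$ in \eqref{HaEs}. For $|\alpha|\ge1$ I would not repeat this argument. Instead I bound $\|\p^\alpha\nabla q\|_{L^2}\lesssim L\|\p_{b_0^1}\bar Y_H\|_{H^{|\alpha|}}$ using boundedness of Riesz transforms and Lemma \ref{lem-equiva}. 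Only the top order then needs care, and there I keep one derivative on $q$ and place the other on the dissipation $\|\nabla\p_t\bar Y_H\|_{H^a}$, using Young's inequality with a small parameter. The $\|\p_{b_0^1}\bar Y_H\|_{H^{a}}$ factor must then be controlled by the inductive scheme of Step 3.

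\textbf{Step 3: higher derivatives and $z_2$-commutators (main obstacle).}
For $\alpha$ containing $\p_2$ derivatives, $[\p^\alpha,\p_{b_0^1}^2]$ contains $\p_2 b_0^1(y(z))$, which is $O(L)$ rather than small (by \eqref{nab01-es}). The bad terms have the form $(\p^{\alpha-e_2}(\p_2b_0^1\p_1\p_{b_0^1}\bar Y_H)\,|\,\p^\alpha\p_t\bar Y_H)$. I would integrate by parts in $z_1$ once, to put the $\p_1$ onto the test function and obtain $\|\nabla\p_t\bar Y_H\|_{\dot H^{b}}$, and bound the remaining factor by $L\|\p_{b_0^1}\bar Y_H\|_{\dot H^{b-1}}$ plus lower-order pieces. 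I then run an induction on $b=|\alpha|$: define weighted combinations $E_b=\sum_{k\le b}\kappa_k(\text{level-}k\text{ energy})$ with $\kappa_0\gg\kappa_1\gg\cdots\gg\kappa_a$. With this choice, the level-$b$ commutators, which are bounded by $L^2\|\p_{b_0^1}\bar Y_H\|_{H^{b-1}}^2$ plus $\f18$ of the level-$b$ dissipation, are absorbed by the larger-weight dissipation at lower levels. This fixes $C_a=C_a(m,M,L,a)$. The commutators with $\p_1 b_0^1$, $\rme^{\pm h}$ and $B-I$ are $O(\epsilon_0)$ by Lemmas \ref{lem-nazy}--\ref{lem-tldna} and are absorbed directly. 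Integrating in time and taking the supremum over $[0,t]$ gives \eqref{HaEs}.

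The delicate point is the order of the absorptions. The pressure bound at each level and the top-order term $\|\p_{b_0^1}\bar Y_H\|_{H^a}$ must both be handled by this same descending-weight induction, because neither is small.
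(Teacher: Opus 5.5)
Your architecture matches the paper's. You use the multiplier $\p^\alpha\p_t\bar Y_H+\f14\p^\alpha\bar Y_H$, Poincar\'e in $z_1$ for the indefinite terms, and the divergence identity \eqref{barY-normf} to turn the level-zero linear pressure into $\int_0^t\|\p_{b_0^1}\bar Y_H\|_{L^2}\|\mathcal R_H\|_{L^2}\,\rmd\tau$. You then run an induction on $b$ to absorb the $O(L)$ commutators and the pressure at levels $b\ge1$; your weights $\kappa_0\gg\cdots\gg\kappa_a$ are equivalent to the paper's step of multiplying the level-$(b-1)$ bound by $2\tilde C_b$ and adding.

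The real difference is at level zero. The paper writes $b_0^1=\gamma+\Phi^1$ and estimates the $\Phi^1$-piece $Q_1$ by smallness via \eqref{phi-small}. It applies the divergence identity only to $Q_2$, which contains $\Delta^{-1}\p_1^2(\gamma'\gamma\bar Y_H^2)$. You instead integrate by parts on all of $q$ and bound $\|q\|_{L^2}$ directly. That is valid and slightly simpler, since this term then produces no $\epsilon_0$-error. However, $\gamma' b_0^1\p_1\bar Y_H^2$ is not mean-zero in $z_1$, so your stated justification is off. The correct one: the symbol of $\Delta^{-1}\p_1$ vanishes at $\lambda_1=0$ and is bounded by $(2\pi|\lambda_1|)^{-1}$ otherwise. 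Hence $\Delta^{-1}\p_1$ is bounded on all of $L^2(\bT\times\bR)$, and $\int_\bT q\,\rmd z_1=0$ holds automatically.

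One step does not close as you describe it: the top-order pairing $(\p^\alpha\nabla q\,|\,\f14\p^\alpha\bar Y_H)_{L^2}$ with $|\alpha|=b$.
\begin{itemize}
\item Moving a derivative onto $\|\nabla\p_t\bar Y_H\|_{H^b}$ handles only the $\p_t\bar Y_H$ part. For $\bar Y_H$, the time-integrated dissipation controls only $\|\p_{b_0^1}\bar Y_H\|_{H^b}\sim\|\p_1\bar Y_H\|_{H^b}$.
\item When $\alpha=(0,b)$, moving a $\p_2$ produces $\|\p_2^{b+1}\bar Y_H\|_{L^2}$, which is bounded only in $L^\infty_t$.
\item Leaving all derivatives on $q$ instead gives $L\|\p_{b_0^1}\bar Y_H\|_{H^b}^2$ at level $b$, with a non-small constant.
\end{itemize}
No choice of weights absorbs such a term, because the induction only absorbs errors of the form $\f18\mathcal H_b(t)+C\mathcal H_{b-1}(t)$. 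So your closing remark, that the top-order factor $\|\p_{b_0^1}\bar Y_H\|_{H^a}$ is handled by the induction, is incorrect as stated.

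The fix is to move the $\p_1$ built into $q=2\Delta^{-1}\p_1 g$, where $g=\gamma'\p_{b_0^1}\bar Y_H^2$. This gives $(\p^\alpha\nabla q|\p^\alpha\bar Y_H)_{L^2}=-2(\p^\alpha\nabla\Delta^{-1}g|\p^\alpha\p_1\bar Y_H)_{L^2}$. The operator $\p^\alpha\nabla\Delta^{-1}$ is of order $b-1$, so the term is $\lesssim\|\gamma'\|_{C^{b-1}}\|\p_{b_0^1}\bar Y_H\|_{H^{b-1}}\|\p_1\bar Y_H\|_{H^b}$. This is exactly the paper's bound for $K_2$. Young's inequality then splits it into $\f18\mathcal H_b(t)+CL^2\mathcal H_{b-1}(t)$, which your induction does absorb.
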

\begin{proof}
{\textbf{Step 1: Estimate of $\mathcal{ H}_0(t)$.}}

Taking the $L^2$ inner product of $\p_t\bar Y_H+\f14\bar Y_H$ with \eqref{linearsys}$_1$,
we get
\begin{align}\label{H0-5}
\begin{split}
&\f{\rmd}{\rmd t}\big(\f12\|\p_t \bar{Y}_H\|_{L^2}^2+\f12\|\p_{b_0^1}\bar{Y}_H\|_{L^2}^2+ \f18\|\nabla \bar{Y}_H\|_{L^2}^2+\frac14(\p_t\bar{Y}_H   | \bar{Y}_H)_{L^2} \big)\\
&\quad+\|\na\p_t \bar{Y}_H\|_{L^2}^2-\f14\|\p_t\bar{Y}_H\|_{L^2}^2+\frac14\| \p_{b_0^1} \bar{Y}_H\|_{L^2}^2\\
&=-\bigl(\p_1b_0^1(y(z))\p_{b_0^1}\bar{Y}_H |\p_t\bar{Y}_H+\f14\bar{Y}_H\bigr)_{L^2}-\big( \Delta^{-1}\nabla\p_1 (\gamma'(z_2)\p_{b_0^1}\bar{Y}_H^2) |2\p_t\bar Y_H+\f12\bar Y_H\big)_{L^2}\\
&\quad+\bigl(\mathcal F |  \p_t\bar{Y}_H +\f14 \bar{Y}_H\bigr)_{L^2}.
\end{split}
\end{align}
Now we deal with the third line of \eqref{H0-5}.

\textbf{Estimate of $-\bigl(\p_1b_0^1(y(z))\p_{b_0^1}\bar{Y}_H |\p_t\bar{Y}_H+\f14 \bar{Y}_H\bigr)_{L^2}
$.}

By using H\"older inequality, Poincar\'e inequality,  Sobolev embedding and \eqref{p1b01-es}, we have
\begin{align}\label{esb01-1}
\begin{split}
&\big| \bigl(\p_1b_0^1(y(z))\p_{b_0^1}\bar{Y}_H |\p_t\bar{Y}_H+\f14 \bar{Y}_H\bigr)_{L^2}\big| \\
&\leq C \|\p_1b_0^1(y(z))\|_{H^2}
(1+\|\f{1}{b_0^1}\|_{L^\infty})\big( \|\p_1\p_t\bar{Y}_H\|_{L^2}^2+\| \p_{b_0^1}\bar{Y}_H\|_{L^2}^2 \big)\\&\leq C\epsilon_0\big( \|\p_1\p_t\bar{Y}_H\|_{L^2}^2+\| \p_{b_0^1}\bar{Y}_H\|_{L^2}^2 \big).
\end{split}
\end{align}

\textbf{Estimate of
$-\bigl( \Delta^{-1}\nabla\p_1\big(\gamma'(z_2) \p_{b_0^1}\bar Y_H^2\big) |\frac12\bar Y_H+2\p_t\bar Y_H\bigr)_{L^2}$.}

At first glance, this term
 should be regarded as a linear term since $\gamma'\sim \xi'$ is not small.
Actually, by exploiting the normal form structure of $\nabla\cdot \bar Y_H$ and $\nabla\cdot \p_t\bar Y_H$, this term is regarded as the nonlinear perturbation.

By using \eqref{def-Phi} and  integration by parts, one has
\begin{align}\label{q-YH-1}
\begin{split}
&-\bigl( \Delta^{-1}\nabla\p_1\big(\gamma'(z_2) \p_{b_0^1}\bar Y_H^2\big) |\frac12\bar Y_H+2\p_t\bar Y_H\bigr)_{L^2}\\&=-\bigl( \nabla \Delta^{-1}\p_1\big(\gamma'(z_2) \Phi^1\p_{1}\bar Y_H^2\big) |\frac12\bar Y_H+2\p_t\bar Y_H\bigr)_{L^2}\\&\qquad+\bigl( \Delta^{-1}\p_1^2\big(\gamma'(z_2) \gamma (z_2)\bar Y_H^2\big) |\frac12\nabla\cdot\bar Y_H+2\nabla\cdot\p_t\bar Y_H\bigr)_{L^2}\\&:=Q_1+Q_2.	
\end{split}
\end{align}
Let us first estimate $Q_1$.
By using $L^2$ boundedness of Riesz operator, the H\"older inequality, Sobolev embedding, Poincar\'e inequality, \eqref{phi-small}, \eqref{gm-xi-small} and Lemma \ref{lem-equiva}, one has
\begin{align}\label{Q1-es1}
\begin{split}
|Q_1|&\lesssim (\|\xi'\|_{H^2}+\|\xi'-\gamma'\|_{H^2})\|\Phi\|_{H^2}\|\p_1\bar{Y}_H\|_{L^2}\big(\|\bar{Y}_H\|_{L^2}+\|\p_t\bar{Y}_H\|_{L^2}\big)\\&
\lesssim \eps_0\|\p_{b_0^1}\bar{Y}_H\|_{L^2}\big(\|\p_{b_0^1}\bar{Y}_H\|_{L^2}+\|\p_1\p_t\bar{Y}_H\|_{L^2}\big).	
\end{split}
\end{align}
Next, we deal with $Q_2$ by the normal form structure of $\nabla\cdot\bar Y_H$ and $\nabla\cdot\p_t\bar Y_H$.
According to \eqref{exp-p2Y2} and \eqref{exp-ptp2Y2}  in the proof of Lemma \ref{lem-p2Y2-1}, we have
\begin{align*}
   \nabla\cdot\bar Y_H=- \p_2\bar Y_L^2+\mathcal{R}_H^1,\quad
\nabla\cdot\p_t\bar Y_H=-\p_t\p_2\bar Y^2_L+\mathcal{R}_H^2,
\end{align*}
with $\mathcal{R}_H^1$ and $\mathcal{R}_H^2$ defined by \eqref{RH-def}.
Notice that $\p_2\bar Y^2_L$ and $\p_t\p_2\bar Y^2_L$ don't depend on $z_1$ and
$\int_{\mathbb{T}}\Delta^{-1}\p_1^2\big(\gamma'(z_2) \gamma(z_2)\bar Y_H^2\big)\rmd z_1=0.$
We have
\begin{align*}
&\bigl( \Delta^{-1}\p_1^2\big(\gamma'(z_2) \gamma(z_2)\bar Y_H^2\big) |\frac12\p_2\bar Y^2_L+2\p_t\p_2\bar Y^2_L\bigr)_{L^2}\\
&=\int_{\mathbb{R}}\int_{\mathbb{T}}\Delta^{-1}\p_1^2\big(\gamma'(z_2) \gamma(z_2)\bar Y_H^2\big)\rmd z_1(\frac12\p_2\bar Y^2_L+2\p_t\p_2\bar Y^2_L)\rmd z_2=0.
\end{align*}
Consequently,
\begin{align*}
Q_2=\bigl( \Delta^{-1}\p_1^2\big(\gamma'(z_2) \gamma(z_2)\bar Y_H^2\big) |\f12\mathcal{R}_H^1+2\mathcal{R}_H^2\bigr)_{L^2}.
\end{align*}
By H\"older inequality, Poincar\'e inequality, Sobolev inequality, \eqref{xi-cond0}, \eqref{b01-cond}, \eqref{phi-small} and \eqref{gm-xi-small}, one has
\begin{align}\label{Q2-es1}
\begin{split}
|Q_2|
&\lesssim \|\gamma'\|_{H^2}\|\gamma(z_2)\bar Y_H^2\|_{L^2}\|\mathcal{R}_H\|_{L^2}
\lesssim \|\p_{b_0^1}\bar Y_H^2\|_{L^2}\|\mathcal{R}_H\|_{L^2}.
\end{split}
\end{align}
Combining \eqref{q-YH-1}, \eqref{Q1-es1} with \eqref{Q2-es1}, one has
\begin{align}\label{q-es-2}
\begin{split}
   &\big|\bigl( \Delta^{-1}\nabla\p_1\big(\gamma'(z_2) \p_{b_0^1}\bar Y_H^2\big) |\frac12\bar Y_H+2\p_t\bar Y_H\bigr)_{L^2}\big|\\&\lesssim \eps_0\|\p_{b_0^1}\bar{Y}_H\|_{L^2}^2+\eps_0\|\p_1\p_t\bar{Y}_H\|_{L^2}^2+\|\p_{b_0^1}\bar Y_H^2\|_{L^2}\|\mathcal{R}_H\|_{L^2}.
\end{split}
\end{align}

For \eqref{H0-5}, taking the integral in time from $0$ to $t$, then using \eqref{esb01-1} and \eqref{q-es-2}, we deduce that
\begin{align*}
\begin{split}
&\big(\f14\|\p_t\bar{Y}_H\|_{L^2}^2+\f12\|\p_{b_0^1}\bar{Y}_H\|_{L^2}^2+ \frac{1}{16}\|\nabla \bar{Y}_H\|_{L^2}^2 \big)
+\int_{0}^{t}\big(\frac{3}{4}\|\na \p_t\bar{Y}_H\|_{L^2}^2+\frac{1}{4}\| \p_{b_0^1} \bar{Y}_H\|_{L^2}^2\big)\rmd t\\
&\leq C\mathcal{ H}_0(0)+C\epsilon_0 \mathcal{ H}_0(t)+C\int_0^t\|\p_{b_0^1}\bar Y_H^2\|_{L^2}\|\mathcal{R}_H\|_{L^2}\rmd\tau+\mathcal{ F}_0(t).
\end{split}
\end{align*}
Taking the supremum in time over $[0,t]$ 
yields
\begin{align}\label{H0-es}
\begin{split}
\mathcal{H}_0(t)\leq& \frac{\tilde C_0}{2}\mathcal{ H}_0(0)+{\tilde C}_0\eps_0\mathcal{ H}_0(t)+\frac{\tilde C_0}{2}\int_0^t\|\p_{b_0^1}\bar Y_H^2\|_{L^2}\|\mathcal{R}_H\|_{L^2}\rmd\tau+\frac{\tilde C_0}{2}\mathcal{ F}_0(t),	
\end{split}
\end{align}
with a positive constant $\tilde{C}_0$ depending on $m,M,\eps_0, L$.

{\textbf{Step 2: Estimate $\mathcal H_b(t)$ with $1\leq b\leq a$.}}

Let $\alpha=(\alpha_1,\alpha_2)\in\mathbb{N}^2$ with $0\leq |\alpha|\leq b$. Applying $\p^\alpha=\p_1^{\alpha_1}\p_2^{\alpha_2}$ to $\eqref{linearsys}_1$,
then taking the $L^2$ inner product of the resulting equation with $ \p^\al \p_t\bar{Y}_H+\f14 \p^\al\bar{Y}_H $, we write
\begin{align}\label{D1}
\begin{split}
&\f{\rmd}{\rmd t} \big( \f12\|\p^\alpha \p_t\bar{Y}_H\|_{L^2}^2
+\f18\| \p^\alpha\na \bar{Y}_H\|_{L^2}^2 +( \p^\al\p_t\bar{Y}_H | \f14 \p^\alpha\bar{Y}_H)_{L^2}
\big)\\
&-\bigl( \p^\alpha\p_{b_0^1}^2\bar{Y}_H |\p^\alpha\p_t\bar{Y}_H+\f14 \p^\alpha \bar{Y}_H\bigr)_{L^2}
+\|\p^\al  \na \p_t\bar{Y}_H\|_{L^2}^2-\f14\|\p^\al \p_t\bar{Y}_H\|_{L^2}^2  \\
&=-\bigl(\p^\alpha\na q |\p^\alpha\p_t\bar{Y}_H+\f14 \p^\alpha\bar{Y}_H\bigr)_{L^2}+\bigl( \p^\al \mathcal F |  \p^\al \p_t\bar{Y}_H+\f14 \p^\al\bar{Y}_H\bigr)_{L^2}.
\end{split}
\end{align}
For the first term in the second line of \eqref{D1},
one calculates
\begin{align*}
\begin{split}
&-\bigl( \p^\alpha\p_{b_0^1}^2\bar{Y}_H |\p^\al \p_t\bar{Y}_H+\f14 \p^\al \bar{Y}_H\bigr)_{L^2}\\
&=\f12\frac{\rmd}{\rmd t}\|\p^\alpha\p_{b_0^1}\bar{Y}_H\|^2_{L^2}
+\f14 \|\p^\alpha\p_{b_0^1}\bar{Y}_H\|^2_{L^2}
-\sum\limits_{\tiny{\substack {\beta+\gamma=\alpha\\\beta\neq \bf{0}}}}C_\alpha^{\beta}\bigl(\p^\alpha\p_{b_0^1}\bar{Y}_H |\p^{\beta}{b_0^1(y(z))}\p^{\gamma}(\p_1\p_t\bar{Y}_H+\f14\p_1\bar{Y}_H) \bigr)_{L^2}\\
&\quad+\bigl( \p_1{b_0^1(y(z))} \p^\alpha\p_{b_0^1}\bar{Y}_H |\p^\alpha(\p_t\bar{Y}_H+\f14\bar{Y}_H)\bigr)_{L^2}
-\sum\limits_{\tiny{\substack {\beta+\gamma=\alpha\\\beta\neq \bf{0}}}}
C_\alpha^{\beta}\bigl(\p^{\beta}{b_0^1(y(z))}\p^{\gamma}\p_1\p_{b_0^1}\bar{Y}_H |\p^\alpha(\p_t\bar{Y}_H+\f14\bar{Y}_H)\bigr)_{L^2}.
\end{split}
\end{align*}
Hence,
taking the integral in time over $[0,t]$ for \eqref{D1}
and summing over $0\leq |\alpha |\leq b$, we get
\begin{align}\label{CC9-0}
\begin{split}
&\bigl(\f14\|\p_t\bar{Y}_H\|_{H^b}^2+\f12\|\p_{b_0^1}\bar{Y}_H\|^2_{H^b}
+\frac{1}{16}\|  \na\bar{Y}_H\|_{H^b}^2\bigr)
\\&\quad+\Big(\int_0^t\f34\| \na \p_t\bar{Y}_H\|_{H^b}^2\rmd\tau
+\int_0^t\f14\|\p_{b_0^1}\bar{Y}_H\|^2_{H^b}\rmd\tau\Big)\\
&\lesssim \bigl(\| \p_t\bar{Y}_H\|_{H^b}^2(0)+\|\p_{b_0^1}\bar{Y}_H\|^2_{H^b}(0)+\| \na\bar{Y}_H\|_{H^b}^2(0)\bigr)+\sum_{j=1}^{3}\sum_{0\leq |\alpha|\leq b}K_j\\
&\quad+\sum_{0\leq |\alpha|\leq b}\Big|\int_0^t\bigl( \p^\al \mathcal F |  \p^\al \p_t\bar{Y}_H+\f14 \p^\al\bar{Y}_H\bigr)_{L^2}\rmd\tau\Big|,
\end{split}
\end{align}
with
\begin{align*}
&K_1=\Big|\int_0^t\bigl( \p_1{b_0^1(y(z))}\p^\alpha\p_{b_0^1}\bar{Y}_H |\p^\alpha\p_t\bar{Y}_H+\f14\p^\alpha\bar{Y}_H\bigr)_{L^2}\rmd\tau\Big|,\\
&K_2=\Big|\int_0^t\bigl(\p^\alpha\na q |\p^\alpha\p_t\bar{Y}_H+\f14\p^\alpha\bar{Y}_H\bigr)_{L^2}\rmd\tau\Big|,\\
&K_3
=
\sum\limits_{\tiny{\beta+\gamma=\alpha,\,\beta\neq \bf{0}}}
C_\alpha^{\beta}\Big|\int_0^t\bigl(\p^\alpha\p_{b_0^1}\bar{Y}_H |\p^{\beta}{b_0^1(y(z))}(\p^{\gamma}\p_1\p_t\bar{Y}_H+\f14 \p^{\gamma}\p_1\bar{Y}_H) \bigr)_{L^2}\rmd\tau\Big|\\
&\qquad+
\sum\limits_{\tiny{\beta+\gamma=\alpha,\,\beta\neq \bf{0}}}
C_\alpha^{\beta}\Big|\int_0^t\bigl(\p^{\beta}{b_0^1(y(z))}\p^{\gamma}\p_1\p_{b_0^1}\bar{Y}_H |\p^\alpha\p_t\bar{Y}_H+\f14\p^\alpha\bar{Y}_H\bigr)_{L^2}\rmd\tau\Big|.
\end{align*}
Here, in deriving \eqref{CC9-0}, we have used the Poincar\'e and Young inequalities since $\bar{Y}_H$ is mean free with respect to $z_1$.

For $K_j$ $(1\leq j\leq 3)$ in \eqref{CC9-0}, $K_1$ and $K_3$ are the commutators
arising from applying $\p^\alpha$  to $\p_{b_0^1} \bar Y_H$.
Here the terms containing $\p_2^k {b_0^1}(y(z))$ are ``linear commutator terms" since they are not small.
$K_2$ is the linear pressure term which is also not small.

Now we estimate $K_1$ through $K_3$. Using \eqref{p1b01-es}, the Poincar\'e inequality and Lemma \ref{lem-equiva}, one has
\begin{align}\label{K1es}
\begin{split}
K_1
&\leq \| \p_1{b_0^1(y(z))} \|_{H^a}\int_0^t \Big(\|\p_{b_0^1}\bar{Y}_H\|_{H^b}^2+\|\mathcal \p_1\p_t\bar{Y}_H\|_{H^b}^2+\|\p_1\bar{Y}_H\|_{H^b}^2 \Big)\rmd\tau\leq C\eps_0\mathcal{ H}_b(t).
\end{split}
\end{align}
For $K_2$, we estimate
\begin{align*}
\begin{split}
&\big|\bigl(\p^\alpha\nabla q |\p^\alpha (\p_t\bar{Y}_H+\f14 \bar{Y}_H)\bigr)_{L^2}\big|\lesssim \big|\bigl( \na q | \p_t\bar{Y}_H+\f14 \bar{Y}_H\bigr)_{H^b}\big| \\
&\lesssim \|\gamma'\|_{C^{a-1}}
\|\p_{b_0^1}\bar{Y}_H\|_{H^{b-1}}\big(\|\nabla\p_t\bar{Y}_H\|_{H^{b}}+\|\p_1\bar{Y}_H\|_{H^{b}}\big).
\end{split}
\end{align*}
Thus, combining the above two cases, using \eqref{xi-cond0}, \eqref{gm-xi-small} and noticing that $b\geq 1$, we have
\begin{align}\label{K2es0}
\begin{split}
K_2&\leq C (\|\xi'\|_{C^{a-1}}+
\|\gamma'-\xi'\|_{C^{a-1}} )\int_0^t\|\p_{b_0^1}\bar{Y}_H \|_{H^{b-1}}\big(\|\nabla\p_t\bar{Y}_H\|_{H^{b}}+\|\p_1\bar{Y}_H\|_{H^{b}}\big)\rmd\tau\\
&\leq \frac{1}{8}\mathcal{ H}_b(t)+CL^2\mathcal{ H}_{b-1}(t).
\end{split}
\end{align}
It remains to deal with $K_3$.
When $\alpha=0$, the term $K_3$ vanishes. While for $1\leq |\alpha|\leq b$, we have $0\leq |\gamma|\leq b-1$
and $1\leq |\beta|\leq b$, thus by using \eqref{nab01-es},
we obtain
\begin{align}\label{K2es-1}
\begin{split}
K_3&\leq C 
\|\na {b_0^1(y(z))}\|_{{H}^{a-1}}  \cdot \int_0^t \big(\|\p_{b_0^1}\bar{Y}_H\|_{H^{b-1}}+\|\na\p_t\bar{Y}_H\|_{H^{b-1}}\big)
\|\p_{b_0^1}\bar{Y}_H\|_{H^b}\rmd\tau\\
&\leq \frac{1}{8}\mathcal{ H}_b(t)+C\mathcal{ H}_{b-1}(t).
\end{split}
\end{align}
Plugging \eqref{K1es}, \eqref{K2es0} and \eqref{K2es-1}
into \eqref{CC9-0}, then taking the supremum in time over $[0,t]$, for all $1\leq b\leq a$, we find that there exist constants $\tilde{C}_b$ such that
\begin{align}\label{Hb-es0}
\mathcal{ H}_{b}(t)\leq \frac{\tilde C_b}{2}\mathcal{ H}_{b}(0)+ \tilde{C}_b\eps_0\mathcal{ H}_b(t)+\frac{\tilde C_b}{2}\mathcal{ H}_{b-1}(t)+\frac{\tilde C_b}{2}\mathcal{ F}_{b}(t).
\end{align}
Take $\eps_0$ such that
\begin{align*}
\max\{\tilde C_0,\cdots,\tilde C_a\}\eps_0\leq \frac14.
\end{align*}
Then by \eqref{H0-es} and \eqref{Hb-es0}, we get
\begin{align}
&\mathcal{ H}_{0}(t)\leq \tilde C_0\mathcal{ H}_{0}(0)+{\tilde C_0}\int_0^t\|\p_{b_0^1}\bar Y_H^2\|_{L^2}\|\mathcal{R}_H\|_{L^2}\rmd\tau+\tilde C_0\mathcal{ F}_{0}(t),\label{H0-es1}\\
&\mathcal{ H}_{b}(t)\leq {\tilde C_b}\mathcal{ H}_{b}(0)+\tilde C_b\mathcal{ H}_{b-1}(t)+{\tilde C_b}\mathcal{ F}_{b}(t),\ \forall\, 1\leq b\leq a.\label{Hb-es1}
\end{align}

\textbf{Step 3: Iteration to absorb the commutators.}
In this step, we will prove by induction that
\begin{align}\label{Ha-es}
\begin{split}
\mathcal{ H}_{b}(t)\leq& {{C}_b}\mathcal{ H}_{b}(0)+C_b\int_0^t\|\p_{b_0^1}\bar Y_H^2\|_{L^2}\|\mathcal{R}_H\|_{L^2}\rmd\tau+{{C}_b}\mathcal{ F}_{b}(t),\ \forall\, 0\leq b\leq a.		
\end{split}
\end{align}
Once \eqref{Ha-es} established, \eqref{HaEs} follows immediately. For $b=0$, it follows from \eqref{H0-es1} that \eqref{Ha-es} holds with ${C}_0=\tilde C_0$. Assume that \eqref{Ha-es} holds for $b-1$, i.e.,
\begin{align}\label{Hb1-es}
\begin{split}
\mathcal{ H}_{b-1}(t)\leq& {{C}_{b-1}}\mathcal{ H}_{b-1}(0)+C_{b-1}\int_0^t\|\p_{b_0^1}\bar Y_H^2\|_{L^2}\|\mathcal{R}_H\|_{L^2}\rmd\tau+{{C}_{b-1}}\mathcal{ F}_{b-1}(t).
\end{split}
\end{align}
Then multiplying \eqref{Hb1-es} by $2\tilde C_b$ and adding \eqref{Hb-es1}, we will see that \eqref{Ha-es} holds for $b$ with ${C}_{b}=\tilde C_b+2\tilde C_bC_{b-1}$. This completes the induction and hence the proof of the lemma.

%
\end{proof}

\section{Estimate of the high-frequency component: the nonlinear estimate}\label{sec-High-nonl}
In this section, we study the nonlinear estimate for $\eqref{linearsys}_1$.  Building on the linear energy estimate for $\bar{Y}_H$ established in Proposition \ref{prop3}, we need to estimate the nonlinear terms in \eqref{HaEs}, that is
\begin{align*}
\int_{0}^t\|\p_{b_0^1}\bar Y_H\|_{L^2}\|\mathcal{R}_H\|_{L^2}\rmd\tau\ \text{and}\ \mathcal{ F}_{a}(t),
\end{align*} where $\mathcal{R}_H$ is defined in \eqref{RH-def} and $\mathcal{F}_a$ is given by \eqref{mclFa-def}.
The term $\mathcal{F}$ within \eqref{mclFa-def} is defined in \eqref{mthcalF-def}.
The main result of this section can be stated as follows.
\begin{prop}\label{Prop-non}
Under the conditions of Theorem \ref{th1}, there exists a sufficiently small constant $\delta\in (0,1)$, such that if $\mathcal{H}_a(t)+\mathcal{L}_a(t)\leq \delta$, then
\begin{align}\label{Fa-es}
\begin{split}
&\int_{0}^t\|\p_{b_0^1}\bar Y_H\|_{L^2}\|\mathcal{R}_H\|_{L^2}\rmd\tau+\mathcal{ F}_{a}(t)\\
&\lesssim \mathcal{H}_a(0)+\mathcal{H}_a^3(0)+\epsilon_0\mathcal{H}_a(t)+\mathcal{L}_a^\frac12(t)\mathcal{H}_a(t)
+\mathcal{H}_a^\frac32(t)+\mathcal{H}_a^4(t),	
\end{split}
\end{align}
where $\eps_0$ is a sufficiently small constant from Theorem \ref{th1}.
\end{prop}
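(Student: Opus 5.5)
The plan is to split the left-hand side of \eqref{Fa-es} into three pieces and treat them in turn: the remainder term $\int_0^t\|\p_{b_0^1}\bar Y_H\|_{L^2}\|\mathcal R_H\|_{L^2}\rmd\tau$; the pressure part of $\mathcal F_a(t)$, namely $A\nabla_Zp-\nabla q$ paired with $\p^\al\p_t\bar Y_H+\frac14\p^\al\bar Y_H$; and the diffusive part $\widetilde{\mathcal F}$ together with the remaining commutator pieces of $f$ and the low-frequency subtraction $(0,\mathfrak f)^\top$. Throughout, every occurrence of $\nabla Y$, $\nabla\bar Y$ and $\p_t\nabla\bar Y$ is replaced by $\bar Y_H$-quantities via Lemma \ref{lem-p2Y2-1}: \eqref{pbarY}, \eqref{nblaY} and \eqref{ptpbarY} for $s=0,a$. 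The coefficients $B-I$, $\rme^{\pm h}$ and $\frac{b_0^2}{b_0^1}$ are controlled by Lemma \ref{lem-nazy}, and $\widetilde Y$, $\Phi$ by Lemma \ref{lem-p2Y2}. Since $\widetilde Y$ carries no time decay, all $\widetilde Y$-only factors are treated as bounded multipliers of size $\epsilon_0$ or $O(1)$, and each term must contain at least two time-integrable factors taken from $\|\nabla\p_t\bar Y_H\|_{H^a}$ and $\|\p_{b_0^1}\bar Y_H\|_{H^a}$. When $\p_1\bar Y_H$ appears without dissipation, Lemma \ref{lem-equiva} together with Poincaré converts it to $\p_{b_0^1}\bar Y_H$.

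For the remainder term, inspection of \eqref{RH-def} shows that $\mathcal R_H^1$ is either $(1-\rme^{-h}(1+\p_1\widetilde Y^1)^{-1}(1+\p_1Y^1)^{-1})\p_1\bar Y_H^1$, whose coefficient is $O(\epsilon_0+\|\nabla\bar Y_H\|_{H^2})$, or a product with $\p_1\bar Y_H^2$. Likewise, $\mathcal R_H^2$ consists of $\p_1\p_t\bar Y_H$ multiplied by a small coefficient, or of quadratic products in which one factor is $\p_1\p_t\bar Y_H$ or $\p_1\bar Y_H$. By Hölder, Sobolev and Young, this gives the bound $\epsilon_0\mathcal H_a+\mathcal H_a^{3/2}$.

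For the pressure I use Lemma \ref{pres-decomp}. The $p_1$ equation has the form $\Delta p_1=\nabla\cdot\Upsilon$ with $\Upsilon$ of size $(\epsilon_0+\|\nabla Y\|)\nabla p$, so $\|\nabla p_1\|_{H^s}$ is absorbed into $\|\nabla p\|_{H^s}$ by smallness, and it suffices to bound $\nabla p_2$. For $|\al|<a$ I use the divergence form $\Pi$. Its last term splits as in \eqref{linear-pressure-origin}: the principal part is exactly $q$, and the remainder is $(\rme^{-h}B_{11}A_{11}A_{22}B_{22}-1)\gamma'\p_{b_0^1}\bar Y_H^2$, which is $O(\epsilon_0+\|\nabla\bar Y_H\|)$ times the dissipative quantity. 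The quadratic terms in $\p_t\bar Y$ contain $\p_t\bar Y_L^2$, and this is where $\mathcal L_a^{1/2}\mathcal H_a$ enters. I pair those terms after integrating by parts so that at least one factor is $\nabla\p_t\bar Y_H$, or $\p_t\p_2\bar Y^2_L$ rewritten through \eqref{ptp2Y2}. At top order $|\al|=a$ I switch to the nondivergence form \eqref{p2-z2} to avoid a derivative loss. Each term there is a product of two first derivatives of $\p_{b_0^1}\bar Y_H$ or of $\p_t\bar Y$, so elliptic $\dot H^{a-1}$ bounds for $\nabla p_2$ follow from product estimates.

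The main obstacle is the diffusive part $\widetilde{\mathcal F}$ at the highest order. There, $\p^\al\nabla\cdot((A^\top A-I)\nabla\p_t\bar Y_H)$ paired with $\p^\al\bar Y_H$ loses a derivative unless the top-order piece is symmetrized. After integrating by parts in $z$ and then in time, it becomes $\frac12(\p_t(A^\top A-I)\p^\al\nabla\bar Y_H|\p^\al\nabla\bar Y_H)$ plus a boundary term bounded by $\sup\|\nabla Y\|_{L^\infty}\|\nabla\bar Y_H\|_{H^a}^2$; this is the term $\tilde K$ discussed in the introduction. I then expand $\p_t(A^\top A-I)$. Factors of the form $\p_t\nabla\bar Y_H$ decay integrably against $\|\nabla\bar Y_H\|_{H^a}^2$ after using Poincaré and $\p_1$. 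The slowly decaying factor $\p_t\p_2\bar Y^2$ I replace through \eqref{exp-ptp2Y2} by $\p_1\p_t\bar Y_H^1$ plus quadratic terms, which yields contributions $\mathcal H_a^{3/2}$, and the repeated substitution generates the $\mathcal H_a^4$ term. The initial-time boundary terms give $\mathcal H_a(0)+\mathcal H_a^3(0)$. Finally, the $(0,\mathfrak f)^\top$ term is handled by the same bounds because it is the $z_1$-average of the $\bar Y^2$ components, except for $\int\p_1\Phi^1\p_{b_0^1}\bar Y_H^2\rmd z_1$, which contributes $\epsilon_0\mathcal H_a$.
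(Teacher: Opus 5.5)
Your overall decomposition matches the paper's, and your treatment of $\mathcal R_H$ and of the pressure is essentially the paper's: absorb $p_1$ by smallness, use the divergence form at low order and the nondivergence form at top order, and let $\p_t\bar Y_L^2$ produce $\mathcal L_a^{1/2}\mathcal H_a$. The gap is in the top-order $\widetilde{\mathcal F}$ estimate, which is the heart of the proposition.

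\textbf{Missing second time integration.} After your symmetrizing time integration you are left with cubic terms such as
\[
\int_0^t\bigl(r\,\p_t\nabla\bar Y_H\,\p^\al\p_2\bar Y_H^1\,\big|\,\p^\al\p_2\bar Y_H^1\bigr)_{L^2}\rmd\tau,\qquad |\al|=a.
\]
After substituting \eqref{exp-ptp2Y2} you get the same term with $\p_1\p_t\bar Y_H^1$ in place of $\p_t\nabla\bar Y_H$. These terms do not ``decay integrably against $\|\nabla\bar Y_H\|_{H^a}^2$.'' $\mathcal H_a$ gives only $L^\infty_t$ control of $\p_2^{a+1}\bar Y_H$, and Poincar\'e cannot upgrade a top-order factor. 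So only the $\p_t$-factor is square integrable in time, and the direct bound grows like $\sqrt t\,\mathcal H_a^{3/2}$.

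The missing step is a second integration by parts in time, as in the paper's $\mathcal K_1$, $\mathcal K_{21}$, $\mathcal K_{22}$. Write the low-order factor as a time derivative so that $\p_t$ lands on $\p^\al\p_2\bar Y_H^1$, which is square integrable via $\|\nabla\p_t\bar Y_H\|_{H^a}$. The undifferentiated low-order factor then satisfies $\|\nabla\bar Y_H\|_{L^\infty}\lesssim\|\p_1\bar Y_H\|_{H^3}$ by Poincar\'e, so it is square integrable as well.

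This is also the real role of \eqref{exp-ptp2Y2}. $\p_t\p_2\bar Y^2$ is square integrable in time just like $\p_t\nabla\bar Y_H$ (by \eqref{ptp2Y2}), so it is not ``slowly decaying.'' The issue is that the second time integration would otherwise produce $\p_2\bar Y_L^2$, which has no Poincar\'e gain unless it is rewritten through $\p_1\bar Y_H$ via \eqref{exp-p2Y2}. Relatedly, the top-order difficulty is time integrability of the $\p_2\bar Y_H$ pairings, not a loss of derivatives.

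\textbf{No symmetric structure for the second component.} Your identity $\tfrac12(\p_t(A^\top A-I)\p^\al\nabla\bar Y_H|\p^\al\nabla\bar Y_H)$ requires the differentiated function and the test function to coincide. This fails for the second component. $\widetilde{\mathcal F}^2$ contains $\p_2\p_t\bar Y^2=\p_2\p_t\bar Y_H^2+\p_2\p_t\bar Y_L^2$, while the test function is $\bar Y_H^2$. Since the coefficients depend on $z_1$, the $\bar Y_L^2$ part does not drop out.

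This already happens in the linear terms with $B-I$ coefficients. There your norm replacement \eqref{ptpbarY} only yields $\eps_0\int_0^t\|\nabla\p_t\bar Y_H\|_{H^a}\|\nabla\bar Y_H\|_{H^a}\rmd\tau$, which is again not uniform in $t$. The paper's estimates of $N$ and $S$ handle this by substituting \eqref{exp-ptp2Y2} pointwise at top order. They then integrate by parts in time so that $\p_t$ falls on $\p^\al\p_2\bar Y_H^2$ while a $\p_1\bar Y_H^1$ factor remains.

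\textbf{Two smaller points.} First, the $(0,\mathfrak f)^\top$ contribution is identically zero, because $\mathfrak f$ depends only on $z_2$ and $\bar Y_H^2$ has zero $z_1$-mean. Your ``same bounds'' argument would not actually go through, since averaging in $z_1$ destroys the pairing structure those bounds rely on. Second, the last term of \eqref{p2-z2} is linear, with coefficient $\gamma'$, not quadratic. At top order you must estimate $\Delta p_2-\Delta q$. Its linear remainder $2\gamma'\bar\p_2\p_{b_0^1}\bar Y_H^2+\mathcal R_2$ is small only because $\bar\p_2$ carries the factor $b_0^2/b_0^1=O(\eps_0)$ and $\mathcal R_2$ is quadratic.
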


The estimate of $\int_0^t\|\p_{b_0^1}\bar{Y}_H \|_{L^2}\|\mathcal R_H\|_{L^2}\rmd\tau$ is in subsection \ref{sec-mfkF}.
To estimate $\mathcal F_a(t)$,
we decompose $\mathcal{F}$ into three parts:
\begin{align}\label{mcalF-def}
{\mathcal F}=\widetilde{\mathcal F}-(A\nabla_Z p-\nabla q)-(0,\mathfrak f)^\top,
\end{align}
The first part, denoted by $\widetilde{\mathcal F}=(\widetilde{\mathcal F}^1,\widetilde{\mathcal F}^2)^\top$, arises from the nonlinear diffusive terms, namely
\begin{align}\label{mcalwF}
\widetilde{\mathcal F}^i=&\na_Z\cdot((A^\top A-I)\na_Z\p_t\bar Y^i)+\widetilde{\nabla}\cdot\na_Z\p_t\bar Y^i+\nabla\cdot\widetilde{\nabla}\p_t\bar Y^i,\,\, i=1,\,2.
\end{align}
The second part contains the nonlinear pressure terms.
The third part arises from the low-frequency component of $Y^2$.
By the expression of $q$, we see that
\begin{align}\label{q-int0}
\int_{\mathbb T}\p_{2}q( z_1,z_2)\rmd z_1=0.
\end{align}	
Due to \eqref{q-int0}, one can rewrite $\mathfrak{f}$ defined in \eqref{mfkf-def1} as follows
\begin{align}\label{mfkf-def2}
\begin{split}
\mathfrak f=&\int_{\mathbb T} \nabla_Z\cdot((A^\top A-I)\nabla_Z\p_t\bar Y^2)\rmd z_1+\int_{\mathbb T} \widetilde{\nabla}\cdot \nabla_Z\p_t\bar Y^2\rmd z_1\\&+\int_{\bT} \p_2\tilde{\p}_2\p_t\bar Y^2\rmd z_1-\int_{\mathbb T}\big(A_{2j}B_{jk}\p_k p-\p_2q\big)\, \rmd z_1-\int_{\mathbb T}  \p_1\Phi^1\p_{b_0^1}  \bar Y_H^2\, \rmd z_1.
\end{split}
\end{align}
By the decomposition of $\mathcal F$, $\mathcal{F}_a(t)$ is divided into three parts:
\begin{align*}
\mathcal{F}_a(t)\lesssim \mathcal{\widetilde F}_a(t)+\mathcal{F}_a^{p}(t)+\mathfrak{F}_a(t),
\end{align*}
where
\begin{align*}
\mathcal{\widetilde F}_a(t)&=
\sum_{0\leq |\alpha|\leq a}\Big|\int_0^t\bigl( \p^\al \widetilde{\mathcal F} |  \p^\al \p_t\bar{Y}_H+\f14 \p^\al\bar{Y}_H \bigr)_{L^2}\rmd\tau\Big|
,\\
\mathcal{ F}_a^{p}(t)&=
\sum_{0\leq |\alpha|\leq a}\Big|\int_0^t\bigl( \p^\al(A\nabla_Z p-\nabla q) |  \p^\al \p_t\bar{Y}_H  +\f14 \p^\al \bar{Y}_H \bigr)_{L^2}\rmd\tau\Big|,
\\
\mathfrak{F}_a(t)&=
\sum_{0\leq |\alpha|\leq a}\Big|\int_0^t\bigl( \p^\al\mathfrak{f} |  \p^\al \p_t\bar{Y}_H^2 +\f14 \p^\al \bar{Y}_H^2 \bigr)_{L^2}\rmd\tau\Big|.
\end{align*}
In the following three subsections, we will estimate $\widetilde{\mathcal{F}}_a$, $\mathcal{ F}_a^{p}$, $\mathfrak{F}_a$ and
$\int_0^t\|\p_{b_0^1}\bar{Y}_H \|_{L^2}\|\mathcal R_H\|_{L^2}\rmd\tau$.

\subsection{Estimate of nonlinear terms}  \label{sec-tldFa}
This section is devoted to the estimate of $\mathcal{\widetilde F}_a(t)$.
It is crucial to study the structure of $A^\top A-I$.
By \eqref{defA-z}, we derive that
\begin{align}\label{ATA}
\begin{split}
   (A^\top A-I)_{11}&=2\nabla_{Z^2}Y^2+(\nabla_{Z^2}Y^2)^2+(\nabla_{Z^2}Y^1)^2,\\
    (A^\top A-I)_{12}&=-\nabla_{Z^1}Y^2-\nabla_{Z^2}Y^1-\nabla_{Z^2}Y^2\nabla_{Z^1}Y^2-\nabla_{Z^1}Y^1\nabla_{Z^2}Y^1,\\
    (A^\top A-I)_{21}&= -\nabla_{Z^1}Y^2-\nabla_{Z^2}Y^1-\nabla_{Z^2}Y^2\nabla_{Z^1}Y^2-\nabla_{Z^1}Y^1\nabla_{Z^2}Y^1,\\
    (A^\top A-I)_{22}&= 2\nabla_{Z^1}Y^1+(\nabla_{Z^1}Y^1)^2+(\nabla_{Z^1}Y^2)^2.
    \end{split}
\end{align}
Note that $Y=\bar Y+\widetilde Y.$
Applying Lemma \ref{lem-p2Y2} and Lemma \ref{lem-p2Y2-1}, one has
\begin{align}\label{es-AAI}
\begin{split}
\|A^\top A-I\|_{H^a}&\lesssim \|\nabla Y\|_{H^a}+\|\nabla Y\|_{H^a}^2\lesssim \|\nabla \bar Y_H\|_{H^a}+\|\nabla \bar Y_H\|_{H^a}^2+\epsilon_0.
\end{split}
\end{align}

Let us first study
\begin{align*}
\sum_{0\leq |\alpha|\leq a}\Big|\int_0^t\bigl( \p^\al \widetilde{\mathcal F} |  \p^\al \p_t\bar{Y}_H\bigr)_{L^2}\rmd\tau\Big|.
\end{align*}
Since $\int_{\mathbb T}\bar{Y}_H\rmd z_1=0$,
we write 
\begin{align}\label{Fa-pt}
\sum_{0\leq |\alpha|\leq a}\Big|\int_0^t\bigl( \p^\al \widetilde{\mathcal F} |  \p^\al \p_t\bar{Y}_H\bigr)_{L^2}\rmd\tau\Big|\lesssim \int_0^t\|\widetilde{\mathcal{ F}}\|_{H^{a-1}}\|\nabla\p_t\bar{Y}_H\|_{H^{a}}\rmd\tau.
\end{align}
For $\|\widetilde{\mathcal{ F}}\|_{H^{a-1}}$, it follows from the definition \eqref{mcalwF}, Lemma \ref{lem-tldna}
and \eqref{es-AAI}
that
\begin{align}\label{es-mcalF}
\begin{split}
\|\widetilde{\mathcal{ F}}\|_{H^{a-1}}&\lesssim \|\nabla_Z\big((A^\top A-I)\nabla_Z\p_t\bar Y\big)\|_{H^{a-1}}+\|\widetilde{\na}\nabla_Z\p_t\bar Y\|_{H^{a-1}}+\|\nabla \widetilde{\na}\p_t\bar Y\|_{H^{a-1}}\\
&\lesssim  \big(\|\nabla \bar Y_H\|_{H^a}+\|\nabla \bar Y_H\|_{H^a}^2+\epsilon_0\big)\|\nabla\p_t\bar Y\|_{H^{a}}.
\end{split}
\end{align}
By \eqref{Fa-pt} and \eqref{es-mcalF}, one has
\begin{align}\label{es-mcalF1}
\begin{split}
\sum_{0\leq |\alpha|\leq a}\Big|\int_0^t\bigl( \p^\al \widetilde{\mathcal F} |  \p^\al \p_t\bar{Y}_H\bigr)_{L^2}\rmd\tau\Big|
\lesssim \mathcal{ H}_a^\frac32(t)+\mathcal{H}_a^2(t)+\epsilon_0\mathcal{ H}_a(t).
\end{split}
\end{align}
Next we estimate
\begin{align*}
\sum_{0\leq |\alpha|\leq a}\Big|\int_0^t\bigl( \p^\al \widetilde{\mathcal F} |  \p^\al \bar{Y}_H\bigr)_{L^2}\rmd\tau\Big|.
\end{align*}
Based on the structure of \eqref{ATA} within \eqref{mcalwF}, we classify the nonlinear terms $\widetilde{\mathcal{F}}$ into the following five types:
\\
{\bf Type I:} Terms containing a $\p_1$ derivative,
\begin{align*}
{\mathcal T}_1^i=&\p_1\bar{\p}_2\p_t\bar{Y}^i+\p_1\left(	(A^\top A-I)_{1k}\nabla_{Z^k}\p_t\bar Y^i\right)\\&+\bar\p_{2}\left(\big(1+(A^\top A-I)_{11}	\big)\p_1\p_t\bar Y^i_H\right)+\nabla_{Z^2}\left((A^\top A-I)_{21}\p_1\p_t\bar Y^i_H\right),\,\, 1\leq i\leq 2.
\end{align*}
{\bf Type II:} Terms containing  $\p_2\p_t\bar{Y}_H^1$,
\begin{align*}
\begin{split}
{\mathcal T}_2=&\bar{\p}_2\bar{\p}_2\p_t\bar{Y}^1_H+\tilde{\p}_2\nabla_{Z^2}\p_t\bar{Y}^1_H+\p_2\tilde{\p}_2\p_t\bar{Y}^1_H.
\end{split}
\end{align*}
{\bf Type III:} Terms containing  $\nabla_ZY\p_2\p_t\bar{Y}_H^1$,
\begin{align*}
\begin{split}
{\mathcal T}_3=&\bar{\p}_2\left((A^\top A-I)_{11}\bar{\p}_2\p_t\bar{Y}_H^1\right)+\bar{\p}_2\left((A^\top A-I)_{12}\nabla_{Z^2}\p_t\bar{Y}_H^1\right)\\& +\nabla_{Z^2}\left((A^\top A-I)_{21}\bar{\p}_2\p_t\bar{Y}_H^1\right) +\nabla_{Z^2}\left((A^\top A-I)_{22}\nabla_{Z^2}\p_t\bar{Y}_H^1\right).
\end{split}
\end{align*}
{\bf Type IV:} Terms containing  $\p_2\p_t\bar{Y}^2$,
\begin{align*}
\begin{split}
{\mathcal T}_4=&\bar{\p}_2\bar{\p}_2\p_t\bar{Y}^2+\tilde{\p}_2\nabla_{Z^2}\p_t\bar{Y}^2+\p_2\tilde{\p}_2\p_t\bar{Y}^2.
\end{split}
\end{align*}
{\bf Type V:} Terms containing $\nabla_Z Y\p_2\p_t\bar{Y}^2$,
\begin{align*}
\begin{split}
{\mathcal T}_5=&\bar{\p}_2\left((A^\top A-I)_{11}\bar{\p}_2\p_t\bar{Y}^2\right)+\bar{\p}_2\left((A^\top A-I)_{12}\nabla_{Z^2}\p_t\bar{Y}^2\right)\\& +\nabla_{Z^2}\left((A^\top A-I)_{21}\bar{\p}_2\p_t\bar{Y}^2\right) +\nabla_{Z^2}\left((A^\top A-I)_{22}\nabla_{Z^2}\p_t\bar{Y}^2\right).
\end{split}
\end{align*}
Let us define
\begin{align*}
&I:=\sum_{0\leq |\alpha|\leq a}\Big|\int_0^t\bigl( \p^\al {\mathcal T}^i_1 |  \p^\al \bar Y_H^i\bigr)_{L^2}\rmd\tau\Big|,\quad
J:=\sum_{0\leq |\alpha|\leq a}\Big|\int_0^t\bigl( \p^\al {\mathcal T}_2 |  \p^\al \bar Y_H^1\bigr)_{L^2}\rmd\tau\Big|,\\
&K:=\sum_{0\leq |\alpha|\leq a}\Big|\int_0^t\bigl( \p^\al {\mathcal T}_3 |  \p^\al \bar Y_H^1\bigr)_{L^2}\rmd\tau\Big|,\quad
N:=\sum_{0\leq |\alpha|\leq a}\Big|\int_0^t\bigl( \p^\al {\mathcal T}_{4} |  \p^\al \bar Y_H^2\bigr)_{L^2}\rmd\tau\Big|,\\
& S:=\sum_{0\leq |\alpha|\leq a}\Big|\int_0^t\bigl( \p^\al {\mathcal T}_{5} |  \p^\al \bar Y_H^2\bigr)_{L^2}\rmd\tau\Big|.
\end{align*}
Therefore,
\begin{align}\label{es-mcalF2}
\sum_{0\leq |\alpha|\leq a}\Big|\int_0^t\bigl( \p^\al \widetilde{\mathcal F} |  \p^\al \bar{Y}_H\bigr)_{L^2}\rmd\tau\Big|\lesssim I+J+K+N+S.
\end{align}

{\bf{Estimate of $I$.}} Let us define
\begin{align*}
&\mathcal{T}_{11}^i:=\p_1\bar{\p}_2\p_t\bar{Y}^i+\p_1\left((A^\top A-I)_{1k}\nabla_{Z^k}\p_t\bar Y^i\right),\\
&\mathcal{T}_{12}^i:=\bar\p_{2}\left(\big(1+(A^\top A-I)_{11}\big)\p_1\p_t\bar Y^i_H\right)+\nabla_{Z^2}\left((A^\top A-I)_{21}\p_1\p_t\bar Y^i_H\right),\,\,1\leq i\leq 2,
\end{align*}
and
\begin{align*}
I_1:=\sum_{0\leq |\alpha|\leq a}\Big|\int_0^t\bigl( \p^\al {\mathcal T}^i_{11} |  \p^\al \bar Y_H^i\bigr)_{L^2}\rmd\tau\Big|,\ 	I_2:=\sum_{0\leq |\alpha|\leq a}\Big|\int_0^t\bigl( \p^\al {\mathcal T}^i_{12} |  \p^\al \bar Y_H^i\bigr)_{L^2}\rmd\tau\Big|.
\end{align*}
By using integration by parts, Lemma \ref{lem-tldna},  \eqref{es-AAI} and Lemma \ref{lem-p2Y2-1}, one has
\begin{align}\label{I1-es}
\begin{split}
I_1&\lesssim  \int_0^t\big( \|\bar{\p}_2\p_t\bar{Y}\|_{H^a}	+\|A^\top A-I\|_{H^a}\|\nabla_Z\p_t\bar{Y}\|_{H^a} \big)	\|\p_1\bar{Y}_H\|_{H^a}\rmd\tau\\
&\lesssim \epsilon_0\mathcal{H}_a(t)+\mathcal{H}_a^{\frac32}(t)+\mathcal{H}_a^2(t).
\end{split}
\end{align}
Now we estimate $I_2$. When $0\leq |\alpha|\leq a-1$,
by Lemma \ref{lem-tldna}, \eqref{es-AAI} and Poincar\'e inequality, one has
\begin{align*}
&\sum_{0\leq |\alpha|\leq a-1}\big|\left(\p^\alpha\mathcal{T}_{12}^i|\p^\alpha\bar{Y}^i_H\right)_{L^2}\big|
\lesssim \|\mathcal{T}_{12}^i\|_{H^{a-1}}\|\p_1\bar{Y}^i_H\|_{H^{a-1}} \\
&\lesssim \epsilon_0\|\p_1\p_t\bar Y_H\|_{H^a}\|\p_1\bar{Y}_H\|_{H^{a-1}}+\big(\|\nabla\bar Y_H\|_{H^a}+\|\nabla\bar Y_H\|_{H^a}^2\big)\|\p_1\p_t\bar Y_H\|_{H^a}\|\p_1\bar{Y}_H\|_{H^{a-1}}.
\end{align*}
When $|\alpha|=a$, applying integration by parts,
we obtain that
\begin{align}\label{T12-1}
\begin{split}
&\left(\p^\alpha \bar\p_{2}\left(\big(1+(A^\top A-I)_{11}\big)\p_1\p_t\bar Y^i_H\right) |  \p^\al \bar Y_H^i\right)_{L^2}\\
&=-\frac{\rmd}{\rmd t}\left(\p^{\alpha-e}\bar\p_{2}\left(\big(1+(A^\top A-I)_{11}\big)\p_1\bar Y^i_H\right)|\p^{\alpha+e} \bar Y_H^i\right)_{L^2}\\
&\quad+\left(\p^{\alpha-e}\bar\p_{2}\left(\p_t(A^\top A-I)_{11}\p_1\bar Y^i_H\right)|\p^{\alpha+e}\bar Y_H^i\right)_{L^2}\\
&\quad+\left(\p^{\alpha-e}\bar\p_{2}\left(\big(1+(A^\top A-I)_{11}\big)\p_1\bar Y^i_H\right)|\p^{\alpha+e}\p_t \bar Y_H^i\right)_{L^2}.	
\end{split}
\end{align}
Here $e$ takes $e_1$ if $\alpha_1>0$, otherwise it takes $e_2$.
Note that by \eqref{ATA}, Lemma \ref{lem-p2Y2}, Lemma \ref{lem-p2Y2-1} and Lemma \ref{lem-tldna}, we have
\begin{align}\label{es-ptAAI}
\begin{split}
\|\p_t(A^\top A-I)\|_{H^s}
\lesssim (1+\|\nabla \bar Y_H\|_{H^a})\|\nabla\p_t\bar Y_H\|_{H^s},\ s=0, a.
\end{split}
\end{align}
Consequently, for \eqref{T12-1}, taking integral in time from $0$ to $t$,
by using  \eqref{es-AAI} and \eqref{es-ptAAI},
  we obtain
\begin{align*}
&\sum_{|\alpha|=a}\big|\int_0^t	\bigl(\p^\alpha \bar\p_{2}\big(\big(1+(A^\top A-I)_{11}\big)\p_1\p_t\bar Y^i_H\big) |  \p^\al \bar Y_H^i\bigr)_{L^2}\rmd\tau\big|\\
&\lesssim \epsilon_0\big(1+\|A^\top A-I\|_{H^a}(0)\big)\|\nabla\bar Y_H\|_{H^a}^2(0)+\epsilon_0\big(1+\|A^\top A-I\|_{H^a}(t)\big)\|\nabla\bar Y_H\|_{H^a}^2(t)\\
&\quad+\epsilon_0\int_0^t\|\p_t(A^\top A-I)\|_{H^a}\|\p_1\bar Y_H\|_{H^a}\|\nabla\bar Y_H\|_{H^a}\rmd\tau\\
&\quad+\epsilon_0\int_0^t\big(1+\|A^\top A-I\|_{H^a}\big)\|\p_1\bar Y_H\|_{H^a}\|\nabla\p_t\bar Y_H\|_{H^a}\rmd\tau\\
&\lesssim \epsilon_0\big(\mathcal{H}_a(0)+\mathcal{H}_a^2(0)+\mathcal{H}_a(t)+\mathcal{H}_a^2(t)\big).
\end{align*}
Similarly, using \eqref{es-AAI} and \eqref{es-ptAAI}, we get
\begin{align*}
&\sum_{|\alpha|=a}\big|\int_0^t	\bigl(\p^\alpha \nabla_{Z^2}\big((A^\top A-I)_{21}\p_1\p_t\bar Y^i_H\big) |  \p^\al \bar Y_H^i\bigr)_{L^2}\rmd\tau\big|\\
&\lesssim \mathcal{H}_a(0)+\mathcal{H}_a^2(0)+\epsilon_0\mathcal{H}_a(t)+\mathcal{H}_a^\frac32(t)+\mathcal{H}_a^2(t).
\end{align*}
Hence, one has
\begin{align}\label{I2-es}
I_2\lesssim \mathcal{H}_a(0)+\mathcal{H}_a^2(0)+\epsilon_0\mathcal{H}_a(t)+\mathcal{H}_a^\frac32(t)+\mathcal{H}_a^2(t).
\end{align}
Combining \eqref{I1-es} and \eqref{I2-es}, one has
\begin{align}\label{es-I}
I\lesssim \mathcal{H}_a(0)+\mathcal{H}_a^2(0)+\epsilon_0\mathcal{H}_a(t)+\mathcal{H}_a^\frac32(t)+\mathcal{H}_a^2(t).
\end{align}

{\bf{Estimate of $J$.}}
Observe that each element in $\mathcal{T}_2$ has the following form:
\begin{align*}
\begin{split}
\mathcal{\tilde T}_{2}&:=	r_1(z)\p_2\left(r_2(z)\p_2\p_t\bar{Y}_H^1\right),
\end{split}
\end{align*}
where the coefficients $r_j(z) (j=1,2)$
are time-independent functions that vary from term to term.
Specifically, they take one of the following forms:
\begin{align}\label{r1r2-def}
\begin{split}
r_1(z)&=-\frac{b_0^2}{b_0^1}(y(z))\rme^h,\quad r_2(z)=-\frac{b_0^2}{b_0^1}(y(z))\rme^h, \\
\textrm{or}\quad
r_1(z)&=\rme^h-1,\quad r_2(z)=\rme^h, \\
\textrm{or}\quad
r_1(z)&=1,\quad r_2(z)=\rme^h-1.
\end{split}
\end{align}

Let us write
\begin{align*}
{\tilde J}:=&\sum_{0\leq |\alpha|\leq a}\Big|\int_0^t\bigl( \p^\al \mathcal{\tilde T}_{2}|  \p^\al \bar Y_H^1\bigr)_{L^2}\rmd\tau\Big|\\
\leq &\sum\limits_{0\leq |\alpha|\leq a-1}\Big|\int_0^t\big(\p^\alpha\big(r_1\p_2\left(r_2\p_2\p_t\bar{Y}_H^1\right)\big)|\p^\alpha\bar Y_H^1\big)_{L^2}\rmd\tau\Big|\\
&+\sum\limits_{\substack{\alpha_1+\alpha_2=\al,\\|\alpha|=a,\,\al_1\neq \textbf{0}}}\Big|\int_0^t\bigl( \p^{\alpha_1}r_1\p^{\alpha_2}\p_2\left(r_2\p_2\p_t\bar{Y}_H^1\right)|  \p^\al \bar Y_H^1\bigr)_{L^2}\rmd\tau\Big|\\
&+\sum\limits_{|\alpha|=a}\Big|\int_0^t\left( \p_2r_1\p^{\alpha}(r_2\p_2\p_t\bar{Y}_H^1)|  \p^\al \bar Y_H^1\right)_{L^2}\rmd\tau\Big|\\
&+\sum\limits_{\substack{\alpha_1+\alpha_2=\al,\\|\alpha|=a,\,\al_1\neq \textbf{0}}}\Big|\int_0^t\left( r_1\p^{\alpha_1}r_2\p^{\alpha_2}\p_2\p_t\bar{Y}_H^1| \p^\al\p_2 \bar Y_H^1\right)_{L^2}\rmd\tau\Big|\\
&+\sum\limits_{|\alpha|= a}\Big|\int_0^t\left( r_1r_2\p^{\alpha}\p_2\p_t\bar{Y}_H^1)| \p^\al\p_2 \bar Y_H^1\right)_{L^2}\rmd\tau\Big|\\
:=&{\tilde J}_{1}+\tilde J_{2}+\tilde J_{3}+\tilde J_{4}+\tilde J_{5}.
\end{align*}
For ${\tilde J}_1$ and ${\tilde J}_2$, note that $1\leq|\alpha_1|\leq a$ and $ 0\leq|\alpha_2|\leq a-1$. By using Poincar\'e inequality, one has
\begin{align*}
\sum_{i=1}^{3}\tilde J_{i}
\lesssim \big(\|r_1\|_{L^\infty}+\|\nabla r_1\|_{H^{a-1}}\big)\big(\|r_2\|_{L^\infty}+\|\nabla r_2\|_{H^{a-1}}\big)\mathcal{H}_a(t).
\end{align*}
For $\tilde J_{4}$, applying integration by parts with respect to $t$, one has
\begin{align*}
\tilde J_{4}
\lesssim \|r_1\|_{L^\infty}\|\nabla r_2\|_{H^{a-1}}\mathcal{H}_a(0)+\|r_1\|_{L^\infty}\|\nabla r_2\|_{H^{a-1}}\mathcal{H}_a(t).
\end{align*}
For $\tilde J_{5}$, we write
\begin{align*}
\tilde J_{5}
&
=\sum\limits_{|\alpha|= a}\Big| \frac12\int_0^t\frac{\rmd}{\rmd t}\left( r_1r_2\p^{\alpha}\p_2\bar{Y}_H^1|  \p^\al\p_2 \bar Y_H^1\right)_{L^2}\rmd\tau \Big|\\
\lesssim & \|r_1\|_{L^\infty}\|r_2\|_{L^\infty}\mathcal{H}_a(t)+\|r_1\|_{L^\infty}\|r_2\|_{L^\infty}\mathcal{H}_a(0).
\end{align*}
Combining the above estimates, we derive that
\begin{align*}
\tilde J\lesssim \big(\|r_1\|_{L^\infty}+\|\nabla r_1\|_{H^{a-1}}\big)\big(\|r_2\|_{L^\infty}+\|\nabla r_2\|_{H^{a-1}}\big)\big(\mathcal{H}_a(0)+\mathcal{H}_a(t)\big).
\end{align*}
On the other hand, by Lemma \ref{lem-nazy} and \eqref{b01-cond}, we obtain
\begin{align*}
&(\|r_1\|_{L^\infty}+\|\nabla r_1\|_{H^{a-1}})(\|r_2\|_{L^\infty}+\|\nabla r_2\|_{H^{a-1}})\lesssim
\eps_0.
\end{align*}
Consequently,
\begin{align*}
\sum_{0\leq |\alpha|\leq a}\big|\int_0^t\bigl( \p^\al (\bar{\p}_2^2\p_t\bar{Y}^1_H+\tilde{\p}_2\nabla_{Z^2}\p_t\bar{Y}^1_H+\p_2\tilde{\p}_2\p_t\bar{Y}^1_H)|  \p^\al \bar Y_H^1\bigr)_{L^2}\rmd\tau\big|\lesssim \mathcal{H}_a(0)+\epsilon_0\mathcal{H}_a(t).
\end{align*}
Hence we conclude
\begin{align}\label{es-J}
J\lesssim \mathcal{H}_a(0)+\epsilon_0\mathcal{H}_a(t).
\end{align}

{\bf{Estimate of $K$.}} The terms in $\mathcal{T}_3$ can be written as follows
\begin{align*}
\begin{split}
\mathcal{\tilde T}_{3}&:=r_3(z)\p_2\big((A^\top A-I)r_4(z)\p_2\p_t\bar{Y}_H^1\big),	
\end{split}
\end{align*}
where $r_j(z) (j=3,4)$ are time-independent functions that vary from term to term.
Specifically, they take one of the following forms:
\begin{align}\label{r3r4-def}
&r_3(z)=-\frac{b_0^2}{b_0^1}(y(z))\rme^h\ \text{or}\ r_3(z)=\rme^h,\quad r_4(z)=-\frac{b_0^2}{b_0^1}(y(z))\rme^h\ \text{or}\ r_4(z)=\rme^h.
\end{align}
Let us first estimate
\begin{align*}
\tilde{K}&:=\sum_{0\leq |\alpha|\leq a}\Big|\int_0^t\bigl( \p^\al \mathcal{\tilde T}_{3}|  \p^\al \bar Y_H^1\bigr)_{L^2}\rmd\tau\Big|\\
&\leq \sum_{0\leq |\alpha|\leq a-1}\Big|\int_0^t\big(\p^\alpha\big(r_3\p_2\big((A^\top A-I)r_4\p_2\p_t\bar{Y}_H^1\big)\big)|\p^\alpha \bar Y_H^1\big)_{L^2}\rmd\tau\Big|\\
&\qquad+\sum\limits_{\substack{\alpha_1+\alpha_2=\al,\\|\alpha|=a,\,\al_1\neq \textbf{0}}}\Big|\int_0^t\bigl( \p^{\alpha_1}r_3\p^{\alpha_2}\p_2\big((A^\top A-I)r_4\p_2\p_t\bar{Y}_H^1\big)|  \p^\al \bar Y_H^1\bigr)_{L^2}\rmd\tau\Big|\\
&\qquad+\sum\limits_{|\alpha|= a}\Big|\int_0^t\left( \p_2r_3\p^{\alpha}\big((A^\top A-I)r_4\p_2\p_t\bar{Y}_H^1\big)|  \p^\al \bar Y_H^1\right)_{L^2}\rmd\tau\Big|\\
&\qquad+\sum\limits_{\substack{\alpha_1+\alpha_2=\al,\\|\alpha|=a,\,\al_1\neq \textbf{0}}}\Big|\int_0^t\left( r_3\p^{\alpha_1}\big((A^\top A-I)r_4\big)\p^{\alpha_2}\p_2\p_t\bar{Y}_H^1|  \p^\al\p_2 \bar Y_H^1\right)_{L^2}\rmd\tau\Big|\\
&\qquad+\sum\limits_{|\alpha|= a}\Big|\int_0^t\left( r_3(A^\top A-I)r_4\p^{\alpha}\p_2\p_t\bar{Y}_H^1|  \p^\al\p_2 \bar Y_H^1\right)_{L^2}\rmd\tau\Big|\\
&:=\tilde{K}_1+\tilde{K}_2+\tilde{K}_3+\tilde{K}_4+\tilde{K}_5.
\end{align*}
For $\tilde K_{1}$, $\tilde K_{2}$ and $\tilde K_{3}$, they are bounded by
\begin{align*}
\sum_{i=1}^{3}\tilde K_{i}
&\lesssim \big(\|r_3\|_{L^\infty}+\|\nabla r_3\|_{H^{a-1}}\big)\big(\|r_4\|_{L^\infty}+\|\nabla r_4\|_{H^{a-1}}\big)\big(\mathcal{H}_a^\frac32(t)+\mathcal{H}_a^2(t)+\epsilon_0\mathcal{H}_a(t)\big).
\end{align*}
For $\tilde{K}_{4}$, applying integration by parts in time, using the H\"older inequality, \eqref{es-AAI} and \eqref{es-ptAAI}, one has
\begin{align*}
\begin{split}
\tilde{K}_{4}\leq
&\sum\limits_{\substack{\alpha_1+\alpha_2=\al,\\|\alpha|=a,\,\al_1\neq \textbf{0}}}\Big|\int_0^t\frac{\rmd}{\rmd t}\big( r_3\p^{\alpha_1}\big((A^\top A-I)r_4\big)\p^{\alpha_2}\p_2\bar{Y}_H^1|  \p^\al\p_2 \bar Y_H^1\big)_{L^2}\rmd\tau\Big|\\
&+\sum\limits_{\substack{\alpha_1+\alpha_2=\al,\\|\alpha|=a,\,\al_1\neq \textbf{0}}}
\Big|\int_0^t\big( r_3\p^{\alpha_1}\big((A^\top A-I)r_4\big)\p^{\alpha_2}\p_2\bar{Y}_H^1|
\p_t\p^\al\p_2 \bar Y_H^1\big)_{L^2}\rmd\tau\Big|\\
&+\sum\limits_{\substack{\alpha_1+\alpha_2=\al,\\|\alpha|=a,\,\al_1\neq \textbf{0}}}
\Big|\int_0^t\big( r_3\p^{\alpha_1}\big(\p_t(A^\top A-I)r_4\big)\p^{\alpha_2}\p_2\bar{Y}_H^1| \p^\al\p_2 \bar Y_H^1\big)_{L^2}\rmd\tau\Big|\\
\lesssim& \|r_3\|_{L^\infty}\big(\|r_4\|_{L^\infty}
+\|\nabla r_4\|_{H^{a-1}}\big)\big(\mathcal{H}_a(0)+\mathcal{H}_a^2(0)+\mathcal{H}_a^\frac32(t)
+\mathcal{H}_a^2(t)+\epsilon_0\mathcal{H}_a(t)\big).
\end{split}
\end{align*}
Now we estimate the troublesome term $\tilde K_{5}$.
We first write
\begin{align*}
\begin{split}
\tilde K_{5}\leq &\frac12\sum\limits_{|\alpha|= a}\Big|\int_0^t\frac{\rmd}{\rmd t}\big( r_3(A^\top A-I)r_4\p^{\alpha}\p_2\bar{Y}_H^1|  \p^\al\p_2 \bar Y_H^1\big)_{L^2}\rmd\tau\Big|\\&+\frac12\sum\limits_{|\alpha|= a}\Big|\int_0^t\big( r_3\p_t(A^\top A-I)r_4\p^{\alpha}\p_2\bar{Y}_H^1|  \p^\al\p_2 \bar Y_H^1\big)_{L^2}\rmd\tau\Big|\\
:=&\tilde{K}_{51}+\tilde{K}_{52}.
\end{split}
\end{align*}
It is easy to get that
\begin{align*}
\begin{split}
\tilde{K}_{51}
\lesssim &\|r_3\|_{L^\infty}\|r_4\|_{L^\infty}\big(\mathcal{H}_a(0)+\mathcal{H}_a^2(0)
+\mathcal{H}_a^\frac32(t)+\mathcal{H}_a^2(t)+\epsilon_0\mathcal{H}_a(t)\big).	
\end{split}
\end{align*}
To deal with $\tilde{K}_{52}$, we need to use the structure of $\p_t(A^\top A-I)$.
Let us  classify the nonlinear terms $\p_t(A^\top A-I)_{ij}$ $(1\leq i,\,j\leq 2)$ into the following two types, denoted by $\mathcal{T}_{51}$ and $\mathcal{T}_{52}$:
\begin{align*}
\mathcal{T}_{51}:=&	-\p_t\p_1\bar Y_H^2-\p_t\nabla_{Z^2}\bar Y_H^1+2\p_t\nabla_{Z^1}\bar Y_H^1\\&+2\nabla_{Z^2}Y^1\p_t\nabla_{Z^2}\bar Y_H^1-\nabla_{Z^2}Y^2\p_t\p_1\bar Y_H^2-\p_t\nabla_{Z^1}\bar Y_H^1\nabla_{Z^2}Y^1\\&-\nabla_{Z^1}Y^1\p_t\nabla_{Z^2}\bar Y_H^1+2\nabla_{Z^1}Y^1\p_t\nabla_{Z^1}\bar Y_H^1+2\nabla_{Z^1}Y^2\p_t\p_1\bar Y_H^2,\\
\mathcal{T}_{52}:=&2\p_t\nabla_{Z^2}\bar Y^2-\p_t\bar{\p}_2\bar Y^2+2\nabla_{Z^2}Y^2\p_t\nabla_{Z^2}\bar Y^2\\&-\nabla_{Z^2}Y^2\p_t\bar{\p}_2\bar Y^2-\p_t\nabla_{Z^2}\bar Y^2\nabla_{Z^1}Y^2+2\nabla_{Z^1}Y^2\p_t\bar{\p}_2\bar Y^2.
\end{align*}
We can roughly write $\mathcal{T}_{51}$ and $\mathcal{T}_{52}$ as
\begin{align*}
&\mathcal{T}_{51}\sim (1+\nabla_ZY)(\p_t\p_1\bar Y_H+\p_t\nabla_Z\bar Y_H),\\
&\mathcal{T}_{52}\sim  (1+\nabla_ZY^2)\tilde{r}(z)\p_t\p_2\bar Y^2,
\end{align*}
where $\tilde{r}(z)$ is a function independent of $t$ taking one of the following forms:
\begin{align*}
\tilde{r}(z)=-\frac{b_0^2}{b_0^1}(y(z))\rme^h\ \text{or}\ \tilde{r}(z)=\rme^h.
\end{align*}
Thus we decompose $\tilde{K}_{52}$ as follows:
\begin{align*}
&\mathcal{K}_1:=\sum\limits_{|\alpha|= a}\Big|\int_0^t\big( r_3r_4(1+\nabla_ZY)(\p_t\p_1\bar Y_H+\p_t\nabla_Z\bar Y_H)\p^{\alpha}\p_2\bar{Y}_H^1|  \p^\al\p_2 \bar Y_H^1\big)_{L^2}\rmd\tau\Big|,\\&\mathcal{K}_2:=\sum\limits_{|\alpha|= a}\Big|\int_0^t\big( r_3r_4(1+\nabla_ZY^2)\tilde{r}(z)\p_t\p_2\bar Y^2\p^{\alpha}\p_2\bar{Y}_H^1|  \p^\al\p_2 \bar Y_H^1\big)_{L^2}\rmd\tau\Big|.
\end{align*}
For $\mathcal{K}_1$,
by using integration by parts with respect to time, H\"older inequality, Sobolev embedding and the Poincar\'e inequality together with Lemma \ref{lem-p2Y2-1}, Lemma \ref{lem-tldna},
we derive that
\begin{align*}
\begin{split}
\mathcal{K}_1
&\leq \sum\limits_{|\alpha|= a}\Big|\int_0^t\frac{\rmd}{\rmd t}\left( r_3r_4(1+\nabla_ZY)(\p_1\bar Y_H+\nabla_Z\bar Y_H)\p^{\alpha}\p_2\bar{Y}_H^1|  \p^\al\p_2 \bar Y_H^1\right)_{L^2}\rmd\tau\Big|\\
&\quad+\sum\limits_{|\alpha|= a}\Big|\int_0^t\left( r_3r_4(1+\nabla_ZY)(\p_1\bar Y_H+\nabla_Z\bar Y_H)\p^{\alpha}\p_2\bar{Y}_H^1|  \p_t\p^\al\p_2 \bar Y_H^1\right)_{L^2}\rmd\tau\Big|\\
&\quad+\sum\limits_{|\alpha|= a}\Big|\int_0^t\left( r_3r_4\p_t\nabla_Z\bar Y(\p_1\bar Y_H+\nabla_Z\bar Y_H)\p^{\alpha}\p_2\bar{Y}_H^1|  \p^\al\p_2 \bar Y_H^1\right)_{L^2}\rmd\tau\Big|\\
&\lesssim \|r_3\|_{L^\infty}\|r_4\|_{L^\infty}(\mathcal{H}_a^\frac32(0)+\mathcal{H}_a^2(0)+\mathcal{H}_a^\frac32(t)+\mathcal{H}_a^2(t)).
\end{split}
\end{align*}
Now we deal with the troublesome term $\mathcal{K}_2$. Plugging the expression for $\p_t\p_2\bar Y^2$ from \eqref{exp-ptp2Y2} into  $\mathcal{K}_2$, we then decompose it as follows:
\begin{align*}
\mathcal{K}_2\leq \mathcal{K}_{21}+\mathcal{K}_{22}+\mathcal{K}_{23},
\end{align*}
where
\begin{align*}
\mathcal{K}_{21}:=&\sum\limits_{|\alpha|= a}\Big|\int_0^t\Big( r_3r_4\tilde{r}\rme^{-h}\frac{1+\nabla_ZY^2}{(1+\p_1Y^1)^2}\p_1\p_t\bar Y_H^1\p^{\alpha}\p_2\bar{Y}_H^1|  \p^\al\p_2 \bar Y_H^1\Big)_{L^2}\rmd\tau\Big|,\\\mathcal{K}_{22}:=&\sum\limits_{|\alpha|= a}\Big|\int_0^t\Big( r_3r_4\tilde{r}\frac{1+\nabla_ZY^2}{1+\p_1Y^1}\p_1\p_t\bar Y_H^2 \p_2Y^1\p^{\alpha}\p_2\bar{Y}_H^1|  \p^\al\p_2 \bar Y_H^1\Big)_{L^2}\rmd\tau\Big|,\\\mathcal{K}_{23}:=&\sum\limits_{|\alpha|= a}\Big|\int_0^t\Big( r_3r_4\tilde{r}\frac{1+\nabla_ZY^2}{1+\p_1Y^1}\p_1\bar Y_H^2 \p_2\p_t\bar Y_H^1\p^{\alpha}\p_2\bar{Y}_H^1|  \p^\al\p_2 \bar Y_H^1\Big)_{L^2}\rmd\tau\Big|\\&+\sum\limits_{|\alpha|= a}\Big|\int_0^t\Big( r_3r_4\tilde{r}\frac{1+\nabla_ZY^2}{(1+\p_1Y^1)^2}\p_1\bar Y_H^2 \p_2Y^1\p_1\p_t\bar Y_H^1\p^{\alpha}\p_2\bar{Y}_H^1|  \p^\al\p_2 \bar Y_H^1\Big)_{L^2}\rmd\tau\Big|.
\end{align*}
The terms $\mathcal{K}_{21}$ and $\mathcal{K}_{22}$ share the same structure as
$\mathcal{K}_1$, since they all contain $\p_t\nabla\bar Y_H$.
For $\mathcal{K}_{21}$, using integration by parts with respect to time,
 H\"older inequality, Sobolev embedding, Lemma \ref{lem-p2Y2-1} and \eqref{pzy2-es1},
  we get that
\begin{align*}
\mathcal{K}_{21}\leq &\sum\limits_{|\alpha|= a}\Big|\int_0^t\frac{\rmd}{\rmd t}\Big( r_3r_4\tilde{r}\rme^{-h}\frac{1+\nabla_ZY^2}{(1+\p_1Y^1)^2}\p_1\bar Y_H^1\p^{\alpha}\p_2\bar{Y}_H^1|  \p^\al\p_2 \bar Y_H^1\Big)_{L^2}\rmd\tau\Big|\\&+2\sum\limits_{|\alpha|= a}\Big|\int_0^t\Big( r_3r_4\tilde{r}\rme^{-h}\frac{1+\nabla_ZY^2}{(1+\p_1Y^1)^2}\p_1\bar Y_H^1\p^{\alpha}\p_2\bar{Y}_H^1|  \p_t\p^\al\p_2 \bar Y_H^1\Big)_{L^2}\rmd\tau\Big|\\&+\sum\limits_{|\alpha|= a}\Big|\int_0^t\Big( r_3r_4\tilde{r}\rme^{-h}\p_t\frac{1+\nabla_ZY^2}{(1+\p_1Y^1)^2}\p_1\bar Y_H^1\p^{\alpha}\p_2\bar{Y}_H^1|  \p^\al\p_2 \bar Y_H^1\Big)_{L^2}\rmd\tau\Big|\\
\lesssim &\|r_3\|_{L^\infty}\|r_4\|_{L^\infty}\|\tilde{r}\|_{L^\infty}
\big(\mathcal{H}_a^\frac32(0)+\mathcal{H}_a^\frac32(t)+\mathcal{H}_a^2(t)\big).
\end{align*}
The estimate of $\mathcal{K}_{22}$ can be treated similarly:
\begin{align*}
    \mathcal{K}_{22}\lesssim \|r_3\|_{L^\infty}\|r_4\|_{L^\infty}\|\tilde{r}\|_{L^\infty}\big(\mathcal{H}_a^2(t)+\mathcal{H}_a^\frac52(t)\big).
\end{align*}
For $\mathcal{K}_{23}$, we directly use H\"older inequality to get
\begin{align*}
\mathcal{K}_{23}
\lesssim \|r_3\|_{L^\infty}\|r_4\|_{L^\infty}\|\tilde{r}\|_{L^\infty}
\big(\mathcal{H}_a^2(t)+\mathcal{H}_a^\frac52(t)\big).
\end{align*}
On the other hand, it is easy to see that
\begin{align*}
\|\tilde{r}\|_{L^\infty}+\big(\|r_3\|_{L^\infty}+\|\nabla r_3\|_{H^{a-1}}\big)\big(\|r_4\|_{L^\infty}+\|\nabla r_4\|_{H^{a-1}}\big)\leq C.
\end{align*}
Hence
\begin{align}\label{es-K}
K,\,\tilde{K}\lesssim \mathcal{H}_a(0)+\mathcal{H}_a^2(0)+\epsilon_0\mathcal{H}_a(t)+\mathcal{H}_a^\frac32(t)+\mathcal{H}_a^\frac52(t).
\end{align}

{\bf{Estimate of $N$.}} Similar to $\mathcal{T}_2$, each element in $\mathcal{T}_4$ takes the following form:
\begin{align*}
\begin{split}
\mathcal{\tilde T}_{4}&:=	r_1(z)\p_2\left(r_2(z)\p_2\p_t\bar{Y}^2\right),
\end{split}
\end{align*}
where the coefficients $r_j(z) (j=1,2)$
are time-independent functions that vary from term to term.
Specifically, they take one of the forms in \eqref{r1r2-def}.
%

Let us write
\begin{align*}
{\tilde N}:=&\sum_{0\leq |\alpha|\leq a}\Big|\int_0^t\bigl( \p^\al \mathcal{\tilde T}_{4}|  \p^\al \bar Y_H^2\bigr)_{L^2}\rmd\tau\Big|\\
\leq &\sum\limits_{0\leq |\alpha|\leq a-1}\Big|\int_0^t\big(\p^\alpha\big(r_1\p_2\left(r_2\p_2\p_t\bar{Y}^2\right)\big)|\p^\alpha\bar Y_H^2\big)_{L^2}\rmd\tau\Big|\\
&+\sum\limits_{\substack{\alpha_1+\alpha_2=\al,\\|\alpha|=a,\,\al_1\neq \bf{0}}}\Big|\int_0^t\bigl( \p^{\alpha_1}r_1\p^{\alpha_2}\p_2\left(r_2\p_2\p_t\bar{Y}^2\right)|  \p^\al \bar Y_H^2\bigr)_{L^2}\rmd\tau\Big|\\
&+\sum\limits_{|\alpha|=a}\Big|\int_0^t\left( \p_2r_1\p^{\alpha}(r_2\p_2\p_t\bar{Y}^2)|  \p^\al \bar Y_H^2\right)_{L^2}\rmd\tau\Big|\\
&+\sum\limits_{|\alpha|=a}\Big|\int_0^t\left( r_1\p^{\alpha}(r_2\p_2\p_t\bar{Y}^2)| \p^\al\p_2 \bar Y_H^2\right)_{L^2}\rmd\tau\Big|\\
:=&{\tilde N}_{1}+\tilde N_{2}+\tilde N_{3}+\tilde N_{4}.
\end{align*}
By using Poincar\'e inequality and Lemma \ref{lem-p2Y2-1}, one has
\begin{align*}
\sum_{i=1}^{3}\tilde N_{i}
&\lesssim \big(\|r_1\|_{L^\infty}+\|\nabla r_1\|_{H^{a-1}}\big)\big(\|r_2\|_{L^\infty}+\|\nabla r_2\|_{H^{a-1}}\big)\mathcal{H}_a(t).
\end{align*}
To deal with $\tilde N_{4}$,
plugging the expression for $\p_t\p_2\bar Y^2$ from \eqref{exp-ptp2Y2} into  $\tilde{N}_{4}$, we then estimate it as follows:
\begin{align*}
\tilde N_{4}\leq \tilde N_{41}+\tilde N_{42}+\tilde N_{43},
\end{align*}
where
\begin{align*}
\tilde{N}_{41}:=&\sum\limits_{|\alpha|=a}\Big|\int_0^t\Big( r_1\p^{\alpha}\frac{r_2\p_1\p_t\bar Y_H^1}{(1+\p_1Y^1 )^2\rme^{h}}| \p^\al\p_2 \bar Y_H^2\Big)_{L^2}\rmd\tau\Big|,\\
\tilde{N}_{42}:=&\sum\limits_{|\alpha|=a}\Big|\int_0^t\Big( r_1\p^{\alpha}\frac{r_2\p_1\p_t\bar Y_H^2\p_2Y^1}{1+\p_1Y^1}| \p^\al\p_2 \bar Y_H^2\Big)_{L^2}\rmd\tau\Big|,\\
\tilde{N}_{43}:=&\sum\limits_{|\alpha|=a}\Big|\int_0^t\Big( r_1\p^{\alpha}\big(\frac{r_2}{(1+\p_1Y^1 )^2}\p_2Y^1\p_1\p_t\bar Y_H^1 \p_1\bar Y_H^2\big)| \p^\al\p_2 \bar Y_H^2\Big)_{L^2}\rmd\tau\Big|\\&+\sum\limits_{|\alpha|=a}\Big|\int_0^t\Big( r_1\p^{\alpha}\big(\frac{r_2}{1+\p_1Y^1}\p_2\p_t\bar Y_H^1 \p_1\bar Y_H^2 \big)| \p^\al\p_2 \bar Y_H^2\Big)_{L^2}\rmd\tau\Big|.
\end{align*}
For $\tilde N_{41}$, by using integration by parts with respect to time,
 H\"older inequality and Lemma \ref{lem-nazy}, we derive that
\begin{align*}
\begin{split}
\tilde N_{41}\leq &\sum\limits_{|\alpha|=a}\Big|\int_0^t\frac{\rmd}{\rmd t}\Big( r_1\p^{\alpha}\frac{r_2\p_1\bar Y_H^1}{(1+\p_1Y^1 )^2\rme^{h}}| \p^\al\p_2 \bar Y_H^2\Big)_{L^2}\rmd\tau\Big|\\&+\sum\limits_{|\alpha|=a}\Big|\int_0^t\Big( r_1\p^{\alpha}\frac{r_2\p_1\bar Y_H^1}{(1+\p_1Y^1 )^2\rme^{h}}| \p_t\p^\al\p_2 \bar Y_H^2\Big)_{L^2}\rmd\tau\Big|\\&+2\sum\limits_{|\alpha|=a}\Big|\int_0^t\Big( r_1\p^{\alpha}\frac{r_2\p_t\p_1\bar{Y}^1_H\p_1\bar Y_H^1}{(1+\p_1Y^1 )^3\rme^{h}}| \p^\al\p_2 \bar Y_H^2\Big)_{L^2}\rmd\tau\Big|\\
\lesssim&\|r_1\|_{L^\infty}\big(\|r_2\|_{L^\infty}+\|\nabla r_2\|_{H^{a-1}}\big)(\mathcal{H}_a(0)+\mathcal{H}_a(t)+\mathcal{H}_a^2(0)+\mathcal{H}_a^3(t)).
\end{split}
\end{align*}
The estimate of $\tilde N_{42}$ is similar to $\tilde{N}_{41}$:
\begin{align*}
\begin{split}
\tilde N_{42}\lesssim&\|r_1\|_{L^\infty}\big(\|r_2\|_{L^\infty}+\|\nabla r_2\|_{H^{a-1}}\big)(\mathcal{H}_a(0)+\mathcal{H}_a^2(0)+\epsilon_0\mathcal{H}_a(t)
+\mathcal{H}_a^\frac32(t)+\mathcal{H}_a^3(t)).
\end{split}
\end{align*}
For $\tilde{N}_{43}$, we use H\"older inequality to directly bound
\begin{align*}
\tilde N_{43}\lesssim &\|r_1\|_{L^\infty}\big(\|r_2\|_{L^\infty}+\|\nabla r_2\|_{H^{a-1}}\big)(\mathcal{H}_a^\frac32(t)+\mathcal{H}_a^3(t)).
\end{align*}
Consequently, we estimate $\tilde{N}$ as follows.
\begin{align*}
\tilde N\lesssim&\big(\|r_1\|_{L^\infty}+\|\nabla r_1\|_{H^{a-1}}\big)\big(\|r_2\|_{L^\infty}+\|\nabla r_2\|_{H^{a-1}}\big)(\mathcal{H}_a(0)+\mathcal{H}_a(t)+\mathcal{H}_a^2(0)+\mathcal{H}_a^3(t)).
\end{align*}
Using the estimates for $r_j$ $(1\leq j\leq 2)$, we derive that
\begin{align}\label{es-N}
N\lesssim \epsilon_0(\mathcal{H}_a(0)+\mathcal{H}_a(t)+\mathcal{H}_a^2(0)+\mathcal{H}_a^3(t)).
\end{align}

{\bf{Estimate of $S$.}} Similar to $\mathcal{T}_3$, each term in $\mathcal{T}_5$ can be written as
\begin{align*}
\begin{split}
\mathcal{\tilde T}_{5}&:=r_3(z)\p_2\big((A^\top A-I)r_4(z)\p_2\p_t\bar{Y}^2\big),	
\end{split}
\end{align*}
where $r_j(z) (j=3,4)$ are time-independent functions that vary from term to term.
Specifically, they take one of forms in \eqref{r3r4-def}.
We see that $\tilde{S}$ defined by
\begin{align*}
\tilde{S}&:=\sum_{0\leq |\alpha|\leq a}\Big|\int_0^t\bigl( \p^\al \mathcal{\tilde T}_{5}|  \p^\al \bar Y_H^2\bigr)_{L^2}\rmd\tau\Big|\\
&\leq \sum_{0\leq |\alpha|\leq a-1}\Big|\int_0^t\big(\p^\alpha\big(r_3\p_2\big((A^\top A-I)r_4\p_2\p_t\bar{Y}^2\big)\big)|\p^\alpha \bar Y_H^2\big)_{L^2}\rmd\tau\Big|
\end{align*}
is similar to $\tilde{N}$ with an extra term $A^\top A-I$. Hence, we use a strategy similar to that employed in estimating $\tilde{N}$, and therefore omit the details.
We can obtain
\begin{align}\label{es-S}
S\lesssim \mathcal{H}_a(0)+\mathcal{H}_a^3(0)+\epsilon_0\mathcal{H}_a(t)+\mathcal{H}_a^\frac32(t)+\mathcal{H}_a^4(t).
\end{align}

Finally, plugging \eqref{es-I}, \eqref{es-J}, \eqref{es-K}, \eqref{es-N} and \eqref{es-S} into \eqref{es-mcalF2}, one has
\begin{align}\label{es-mcalFH}
\begin{split}
&\sum_{0\leq |\alpha|\leq a}\Big|\int_0^t\bigl( \p^\al \widetilde{\mathcal F} |  \p^\al \bar{Y}_H\bigr)_{L^2}\rmd\tau\Big|\lesssim \mathcal{H}_a(0)+\mathcal{H}_a^3(0)+\epsilon_0\mathcal{H}_a(t)
+\mathcal{H}_a^\frac32(t)+\mathcal{H}_a^4(t).	
\end{split}
\end{align}
Combining \eqref{es-mcalF1} and \eqref{es-mcalFH}, we conclude that
\begin{align}\label{tldFa-es}
\mathcal{\widetilde F}_a(t)\lesssim \mathcal{H}_a(0)+\mathcal{H}_a^3(0)+\epsilon_0\mathcal{H}_a(t)+\mathcal{H}_a^\frac32(t)+\mathcal{H}_a^4(t).	
\end{align}

\subsection{Estimate of the pressure term}
In this section, we will estimate $\mathcal{F}_a^p(t)$ involving the pressure term. In the following, we will use \eqref{p1p2-z1} to estimate $\|\nabla p_2\|_{L^2}$ and use \eqref{p2-z2} to estimate $\|\nabla^2p_2\|_{H^{a-1}}$.
The result can be stated as follows.
\begin{lemma}\label{lem-p}
Let $a\geq 3$, $\theta\in (0,\frac12)$ and $s=0$ or $a$. Assume that \eqref{xi-cond0} and \eqref{b01-cond} hold. Under the ansatz that $\mathcal{H}_{a}(t)+\mathcal{ L}_a(t)\leq \delta$ for $\delta\in (0,1)$ sufficiently small, we have
\begin{align}
&\|\nabla p\|_{H^{s}}\lesssim\|\p_{b_0^1}  \bar Y_H\|_{H^{s}}+\|\nabla \p_t\bar Y_H\|_{H^{s}}.\label{napes}
\end{align}
Moreover, for $\nabla p-\nabla q$ with $q$ defined by \eqref{def-q}, there hold
\begin{align}\label{nap2-naqLr-es}
\begin{split}
\big\||\p_2|^{-\theta}(\nabla p-\nabla q)\big\|_{L^2}\lesssim&\big(\eps_0+\|\p_{b_0^1}\bar Y_H\|_{H^a}+\|\nabla \bar Y_H\|_{H^a}\big)\big(\|\p_{b_0^1}  \bar Y_H\|_{L^{2}}+\|\nabla \p_t\bar Y_H\|_{L^{2}}\big)\\& +\big(\|\p_{t}\bar{Y}_H\|_{H^a}+\|\p_{t}\bar{Y}_L^2\|_{H^a}\big)\|\nabla \p_{t}\bar{Y}_H\|_{L^2},	
\end{split}
\end{align}
and
\begin{align}\label{nap2-naqes}
\begin{split}
\|\nabla p-\nabla q\|_{H^{s}}\lesssim&\big(\eps_0+\|\p_{b_0^1}\bar Y_H\|_{H^a}+\|\nabla \bar Y_H\|_{H^a}\big)\big(\|\p_{b_0^1}\bar{Y}_H\|_{H^s}+\|\nabla \p_t\bar Y_H\|_{H^{s}}\big)\\& +\big(\|\p_{t}\bar{Y}_H\|_{H^a}+\|\p_{t}\bar{Y}_L^2\|_{H^a}\big)\|\nabla \p_{t}\bar{Y}_H\|_{H^s}.
\end{split}
\end{align}
\end{lemma}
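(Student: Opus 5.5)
We work throughout with the splitting $p=p_1+p_2$ of Lemma \ref{pres-decomp}, and we use the ansatz $\mathcal H_a+\mathcal L_a\le\delta$ to absorb every term that is small times $\nabla p$.

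\textbf{Step 1: $p_1$.} From $\Delta p_1=\nabla\cdot\Upsilon$ and $L^2$-boundedness of Riesz transforms we get $\|\nabla p_1\|_{L^2}\lesssim\|\Upsilon\|_{L^2}$. The same bound holds in $\dot H^a$, because $\Delta^{-1}\nabla\nabla\cdot$ is an $H^s$-bounded Fourier multiplier. By \eqref{B-I-es}, \eqref{naeh-es} and \eqref{ehLinfty-es} we have $\|\rme^{-h}B^\top B-I\|_{H^a}\lesssim\epsilon_0$. By \eqref{es-AAI}, $\|A^\top A-I\|_{H^a}\lesssim \epsilon_0+\|\nabla\bar Y_H\|_{H^a}$. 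Tame product estimates then give
\[
\|\nabla p_1\|_{H^s}\lesssim(\epsilon_0+\|\nabla\bar Y_H\|_{H^a})\|\nabla p\|_{H^s}
\]
for $s=0,a$. For the negative-order bound we use $\||\p_2|^{-\theta}\nabla\Delta^{-1}\nabla\cdot g\|_{L^2}\lesssim\|g\|_{L^{r}}$ with $r=2/(1+2\theta)\in(1,2)$. This is a Hardy--Littlewood--Sobolev type inequality in $z_2$, uniform in $z_1$ because of the periodic direction. We then put $\Upsilon$ in $L^r$ as a product of an $L^{2/(2\theta)}$ factor and an $L^2$ factor, getting $\lesssim(\epsilon_0+\|\nabla\bar Y_H\|_{H^a})\|\nabla p\|_{L^2}$.

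\textbf{Step 2: $p_2$ at order $0$ and negative order.} We use the divergence form $\Delta p_2=\nabla\cdot\Pi$. In the last term of $\Pi$ we separate $2\gamma'\p_{b_0^1}\bar Y_H^2$, which produces exactly $\nabla q$ through \eqref{linear-pressure-origin}. What remains is $2(\rme^{-h}B_{11}A_{11}A_{22}B_{22}-1)\gamma'\p_{b_0^1}\bar Y_H^2$ together with the other off-diagonal $k,l,m$ components. These have coefficient factors of size $\epsilon_0+\|\nabla\bar Y_H\|_{H^a}$, since $B-I$ and $A-I$ are small, multiplying $\p_{b_0^1}\bar Y_H$. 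The quadratic velocity and magnetic terms in $\Pi$ contribute:
\begin{itemize}
\item $\|\p_{b_0^1}\bar Y_H\|_{H^a}(\|\p_{b_0^1}\bar Y_H\|+\|\nabla\p_t\bar Y_H\|)$ from the magnetic products;
\item $\|\p_t\bar Y\|_{H^a}\|\nabla\p_t\bar Y\|$ from the velocity products.
\end{itemize}
Since $\p_t\bar Y=\p_t\bar Y_H+\p_t\bar Y_L^2$, the factor $\|\p_t\bar Y_L^2\|_{H^a}$ appears. Whenever $\nabla\p_t\bar Y$ appears, we replace $\p_2\p_t\bar Y_L^2$ by high-frequency quantities via \eqref{ptpbarY}. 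The weaker $L^r$ norm in the $|\p_2|^{-\theta}$ estimate is handled exactly as in Step 1, with the non-decaying factor placed in $L^{1/\theta}$ and controlled by its $H^a$ norm. This gives \eqref{nap2-naqLr-es} and the $s=0$ case of \eqref{nap2-naqes}, once we know $\|\nabla p\|_{L^2}$.

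\textbf{Step 3: top order and the closing argument.} For $s=a$, differentiating $\Pi$ $a$ times would land $a+1$ derivatives on $\p_t\bar Y$. We therefore estimate $\|\nabla^2p_2\|_{H^{a-1}}\lesssim\|\text{RHS of }\eqref{p2-z2}\|_{H^{a-1}}$, which is legitimate since $\nabla^2\Delta^{-1}$ is bounded. Each term on that right side has only one derivative on $\p_t\bar Y$ or $\p_{b_0^1}\bar Y_H$. The last term of \eqref{p2-z2} again splits into $\nabla\cdot\nabla q$, namely $2\gamma'\p_1\p_{b_0^1}\bar Y_H^2$ up to the $A_{22}B_{22}\rme^{-h}$ factors, plus a small remainder. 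The commutator involving $\p_1 b_0^1$ is bounded by \eqref{p1b01-es}. Combining Steps 1--2 gives
\[
\|\nabla p\|_{H^s}\le\|\nabla q\|_{H^s}+C(\epsilon_0+\delta^{1/2})\|\nabla p\|_{H^s}+\dots
\]
The bound $\|\nabla q\|_{H^s}\lesssim L\|\p_{b_0^1}\bar Y_H\|_{H^s}$ holds because $\Delta^{-1}\p_1\nabla$ is of order zero. Absorbing the small multiple of $\|\nabla p\|_{H^s}$ yields \eqref{napes}. Substituting \eqref{napes} back into the $p_1$ bounds then gives \eqref{nap2-naqes} and \eqref{nap2-naqLr-es}.

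\textbf{Main obstacle.} The delicate point is the top-order bound. The nondivergence form must be used without re-creating a loss of derivatives through the $\Upsilon$ part, where $\Upsilon$ is differentiated $a$ times against $\nabla p$. A second difficulty is that the low-frequency velocity $\p_t\bar Y_L^2$, which has no spatial decay control, may enter only as an $H^a$ coefficient factor and never as the $L^2$-level factor. We handle the first by keeping $p_1$ in divergence form, which costs no derivative on $p$. We handle the second by systematically routing $\p_2\p_t\bar Y_L^2$ through Lemma \ref{lem-p2Y2-1}.
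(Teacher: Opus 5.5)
Your proposal is correct and follows the paper's proof essentially step by step. It uses the same $p=p_1+p_2$ splitting with $\|\nabla p_1\|_{H^s}\lesssim(\epsilon_0+\|\nabla\bar Y_H\|_{H^a})\|\nabla p\|_{H^s}$, the divergence form of $p_2$ for the $L^2$ and $|\p_2|^{-\theta}$ bounds (with $2\gamma'\p_{b_0^1}\bar Y_H^2$ peeled off to produce $\nabla q$), the nondivergence form \eqref{p2-z2} at top order, and then absorption under the ansatz followed by back-substitution.

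One small correction: the Hardy--Littlewood--Sobolev step must have the mixed norm $L^2_{z_1}(L^r_{z_2})$ on the right-hand side, as in the paper. A bound by the full $L^r(\mathbb T\times\mathbb R)$ norm is false, because $|\p_2|^{-\theta}$ gives no smoothing in $z_1$. Accordingly, the coefficient factors should be placed in $L^\infty_{z_1}(L^{1/\theta}_{z_2})$, which is still controlled by their $H^a$ norms.
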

\begin{proof}
Firstly, we estimate \eqref{napes}. By \eqref{pre-4}, we can directly get that
\begin{align}\label{p-es}
\begin{split}
\|\nabla p\|_{H^{s}}&\leq \|\nabla p_1\|_{H^{s}}+\|\nabla p_2\|_{H^{s}},\ s=0\ \text{or}\ a.
\end{split}
\end{align}	
By the definition \eqref{def-B} and Lemma \ref{lem-nazy}, we compute
that
\begin{align}
\|\rme^{-h}B^\top B-I\|_{H^a}&\lesssim \epsilon_0,\label{es-BB-I}\\\|\rme^{-h}B^\top\|_{L^\infty}+\|\nabla(\rme^{-h}B^\top)\|_{H^{a-1}}&\leq C.\label{es-eB}
\end{align}
Due to the equation of $p_1$ in \eqref{p1p2-z1}, using the boundedness of Riesz operator in $L^2$ together with \eqref{es-AAI}, \eqref{es-BB-I}, \eqref{es-eB}, we have for $s=0$ or $a$,
\begin{align}\label{p1-es}
\begin{split}
\|\nabla p_1\|_{H^{s}}&\lesssim\|\rme^{-h}B^\top B-I\|_{H^{a}}\|\nabla p\|_{H^s}\\&\quad+\|A^\top A-I\|_{H^{a}}\big(\|\rme^{-h}B^\top\|_{L^\infty}+\|\nabla(\rme^{-h}B^\top)\|_{H^a}\big)\|\nabla_Z p\|_{H^s}\\
&\lesssim \big(\eps_0+\|\nabla \bar Y_H\|_{H^a}\big)\|\nabla p\|_{H^s}.
\end{split}
\end{align}


Now we estimate $\|\nabla p_2\|_{H^{s}}$ with $s=0$ or $a$. It is easy to see
\begin{align*}
\|\nabla p_2\|_{H^a}\leq \|\nabla p_2\|_{L^2}+\|\nabla^2 p_2\|_{H^{a-1}}.
\end{align*}
By the equation of $p_2$ in \eqref{p1p2-z1}, using the boundedness of Riesz operator in $L^2$, and applying \eqref{es-eB} together with Lemma \ref{lem-p2Y2-1}, we have
\begin{align}\label{p2-L2-es}
\begin{split}
\|\nabla p_2\|_{L^2}&\lesssim\|\rme^{-h}BAA\|_{L^\infty}\big(\|\p_{b_0^1}\bar{Y}_H\|_{H^a}\|\p_{b_0^1}\bar{Y}_H\|_{L^2}+\|\nabla \p_{t}\bar{Y}\|_{L^2}\|\p_{t}\bar{Y}\|_{H^a}\big)\\&\quad+\|\rme^{-h}BBAA\|_{L^\infty}\|\p_{b_0^1}\bar{Y}_H\|_{L^2}\|\gamma'\|_{H^2}\\&\lesssim\big(1+\|\p_{b_0^1}\bar{Y}_H\|_{H^a}\big)\|\p_{b_0^1}\bar{Y}_H\|_{L^2}+\big(\|\p_{t}\bar{Y}_H\|_{H^a}+\|\p_{t}\bar{Y}_L^2\|_{H^a}\big)\|\nabla \p_{t}\bar{Y}_H\|_{L^2}.	
\end{split}
\end{align}
Here, to get the last line in \eqref{p2-L2-es}, we have used the fact that
\begin{align}\label{gm=xi}
\|\gamma'\|_{H^a}\leq \|\xi'\|_{H^a}+\|\xi-\gamma\|_{H^{a+1}}\lesssim \|\xi'\|_{H^a}+\epsilon_0,
\end{align}
where $\|\gamma-\xi\|_{H^{a+1}}$ is small due to \eqref{gm-xi-small}.
Combining \eqref{p1-es} with \eqref{p2-L2-es}, and using \eqref{p-es}, we derive that
\begin{align}\label{nap-L2-es1}
\begin{split}
\|\nabla p\|_{L^2}\leq& {C_1}(\eps_0+ \mathcal{H}_a^\frac12(t)  )\|\nabla p\|_{L^2}\\&+{C_1}(1+\mathcal{H}_a^\frac12(t) +\mathcal{L}_a^\frac12(t)    )\big(\|\p_{b_0^1}\bar{Y}_H\|_{L^2}+\|\nabla \p_{t}\bar{Y}_H\|_{L^2}\big),	
\end{split}
\end{align}
with a constant $C_1$.

We next deal with $\|\nabla^2 p_2\|_{H^{a-1}}$. By using the boundedness of the Riesz operator in $L^2$, \eqref{p2-z2},
  \eqref{gm=xi}, Lemma \ref{lem-nazy} and Lemma \ref{lem-p2Y2-1}, we have
\begin{align}
\label{es-deltap2}
\begin{split}
&\|\nabla^2 p_2\|_{H^{a-1}}\lesssim \|\Delta p_2\|_{H^{a-1}}\\&\lesssim\big( \|A\nabla_{Z}\p_t\b Y\|_{H^{a-1}}^2+\|A\nabla_{Z}\p_{b_0^1}\b Y_H\|_{H^{a-1}}^2\big)\big(\|\rme^{-h}\|_{L^\infty}+\|\nabla\rme^{-h}\|_{H^{a-2}}\big)\\&\quad+\|A\nabla_{Z}\p_{b_0^1}\b Y_H\|_{H^{a-1}}\|\gamma'\|_{H^{a-1}}\big(1+\|\nabla Y\|_{H^{a-1}}\big)\big(1+\|\frac{b_0^2}{b_0^1}(y(z))\|_{H^{a-1}}\big)\\	
%
&\lesssim \big(1+\|\p_t\b Y_H\|_{H^{a}}+\|\p_{b_0^1}\b Y_H\|_{H^{a}}+\|\nabla\bar Y_H\|_{H^{a-1}}\big)\big(\|\p_t\nabla\b Y_H\|_{H^{a-1}}+\|\p_{b_0^1}\b Y_H\|_{H^{a}}\big).
\end{split}
\end{align}
The combination of \eqref{p2-L2-es} and \eqref{es-deltap2} shows that
\begin{align}\label{nap2-Ha}
\begin{split}
\|\nabla p_2\|_{H^{a}}&\lesssim(1+\|\p_{b_0^1}\b Y_H\|_{H^{a}}+\|\p_t\bar Y_H\|_{H^{a}}+\|\p_{t}\bar{Y}_L^2\|_{H^a}+\|\nabla\bar Y_H\|_{H^{a-1}})\\&\quad\times\big(\|\p_t\nabla\b Y_H\|_{H^{a-1}}+\|\p_{b_0^1}\b Y_H\|_{H^{a}}\big).	
\end{split}
\end{align}
Plugging \eqref{p1-es} and \eqref{nap2-Ha} into \eqref{p-es}, one has
\begin{align}\label{nap-Ha-es1}
\begin{split}
\|\nabla p\|_{H^{a}}\leq &C_2\big(\epsilon_0+ \mathcal{H}_a^\frac12(t)\big)\|\nabla p\|_{H^{a}}\\&+C_2(1+\mathcal{H}_a^\frac12(t)+\mathcal{L}_a^\frac12(t) )\big(\|\p_{b_0^1}\b Y_H\|_{H^{a}}+\|\p_t\nabla\b Y_H\|_{H^{a-1}}\big),	
\end{split}
\end{align}
with a constant $C_2$.  By the ansatz
$\mathcal{H}_{a}(t)+\mathcal{L}_a(t)\leq \delta$, we choose
 $\epsilon_0$ and $\delta$ sufficiently small so that
\begin{align*}
&\max\{C_1,C_2\}(\epsilon_0+\delta^\frac12) \leq \f12.
\end{align*}
Then \eqref{napes} for $s=0$ or $a$ follows from \eqref{nap-L2-es1} and \eqref{nap-Ha-es1}.

Next, we estimate $\nabla p-\nabla q$. Clearly,
\begin{align}\label{nap-naq}
\nabla p-\nabla q=\nabla p_1+(\nabla p_2-\nabla q).
\end{align}
By \eqref{p1p2-z1} and \eqref{def-q}, one has
\begin{align}\label{deltap2-q}
\Delta p_2-\Delta q=\p_1\big(\Pi_1-2\gamma'(z_2)\p_{b_0^1} \bar{Y}_H^2\big)+\p_2\Pi_2.
\end{align}
Using the definition of $\Pi$ in \eqref{Up-Pi-def}, one has for $k=1,2$,
\begin{align}\label{Pi-q-exp}
\begin{split}
\Pi_k-\delta_{k1}2\gamma'(z_2)\p_{b_0^1} \bar{Y}_H^2&=\rme^{-h}B_{lk}A_{il}A_{jm}\na_{Z^m}(\p_{b_0^1} \b Y^i_H \p_{b_0^1}\b Y^j_H-\p_t\bY^i \p_t\bY^j)\\&\quad-2\rme^{-h}B_{lk}A_{1l} \p_{b_0^1}\bar{Y}^1_HA_{jm}\na_{Z^m}\p_{b_0^1}\bar{Y}^j_H+\mathcal{R}_1,
\end{split}
\end{align}
with
\begin{align*}
\mathcal{R}_1\sim&
\big(\rme^{-h}B\nabla_ZY(1+\nabla_ZY)(B_{12}+B_{22})
+B_{12}(1+\nabla_ZY)(1+\nabla_ZY)\big)\p_{b_0^1}\bar{Y}^2_H\gamma'(z_2).
\end{align*}

Now we deal with $\||\p_2|^{-\theta}(\nabla p-\nabla q)\|_{L^2}$ with $\theta\in (0,\frac12)$.
By applying the H\"older inequality, the Sobolev inequality,
 \eqref{Up-Pi-def}, \eqref{Pi-q-exp},
   \eqref{es-AAI}, \eqref{es-BB-I},
  \eqref{es-eB} together with Lemma \ref{lem-p2Y2-1} and Lemma \ref{lem-tldna}, we have
\begin{align*}
\begin{split}
\|\Upsilon\|_{L^2_{z_1}(L^r_{z_2})}&\lesssim\|\rme^{-h}B^\top B-I\|_{H^{a}}\|\nabla p\|_{L^2}+\|A^\top A-I\|_{H^{a}}\|\rme^{-h}B\|_{L^\infty}\|\nabla_Z p\|_{L^2}\\
&\lesssim \big(\eps_0+\|\nabla \bar Y_H\|_{H^a}\big)\|\nabla p\|_{L^2},
\end{split}
\end{align*}
and
\begin{align}
\label{nap2-qes2}
\begin{split}
&\sum_{k=1}^2\|\Pi_k-\delta_{k1}2\gamma'(z_2)\p_{b_0^1} \bar{Y}_H^2\|_{L_{z_1}^2(L_{z_2}^{r})}+\|\nabla p_2-\nabla q\|_{L^2}\\
&\lesssim \|\rme^{-h}BAA\|_{L^\infty}\big(\|\p_{b_0^1}\bar{Y}_H\|_{H^a}\|\p_{b_0^1}\bar{Y}_H\|_{L^2}+\|\nabla \p_{t}\bar{Y}\|_{L^2}\|\p_{t}\bar{Y}\|_{H^a}\big)\\
&\qquad+\|\mathcal{R}_1\|_{L^2_{z_1}(L^r_{z_2})}+\|\mathcal{R}_1\|_{L^2}\\
&\lesssim \big(\eps_0+\|\p_{b_0^1}\bar{Y}_H\|_{H^a}+\|\nabla \bar Y_H\|_{H^a}\big)\|\p_{b_0^1}\bar{Y}_H\|_{L^2}+\big(\|\p_{t}\bar{Y}_H\|_{H^a}+\|\p_{t}\bar{Y}_L^2\|_{H^a}\big)\|\nabla \p_{t}\bar{Y}_H\|_{L^2}.	
\end{split}
\end{align}
Hence by \eqref{p1p2-z1}, \eqref{nap-naq} and \eqref{deltap2-q}, applying the boundedness of Riesz operator in $L^2$ and Sobolev embedding with respect to $z_2$, we derive that
\begin{align*}
&\big\||\p_2|^{-\theta}(\nabla p-\nabla q)\big\|_{L^2}
\lesssim  \|\Upsilon\|_{L_{z_1}^2(L_{z_2}^{r})}+\sum_{k=1}^2\|\Pi_k-\delta_{k1}2\gamma'(z_2)\p_{b_0^1} \bar{Y}_H^2\|_{L_{z_1}^2(L_{z_2}^{r})}\\
&\lesssim \big(\eps_0+\|\p_{b_0^1}\bar{Y}_H\|_{H^a}+\|\nabla \bar Y_H\|_{H^a}\big)\big(\|\p_{b_0^1}\bar{Y}_H\|_{L^2}+\|\nabla p\|_{L^2}\big)\\
&\quad
+\big(\|\p_{t}\bar{Y}_H\|_{H^a}+\|\p_{t}\bar{Y}_L^2\|_{H^a}\big)\|\nabla \p_{t}\bar{Y}_H\|_{L^2}
\end{align*}
with $r=\frac{2}{2\theta+1}$.
Thus we can derive \eqref{nap2-naqLr-es} by substituting \eqref{napes} with $s=0$ into the above estimate.

Finally, we estimate \eqref{nap2-naqes}. Due to \eqref{nap-naq}, combining \eqref{p1-es}, \eqref{nap2-qes2} with \eqref{napes} yields \eqref{nap2-naqes} for $s=0$.
Next, to estimate \eqref{nap2-naqes} for $s=a$, in view of \eqref{p1-es},
it remains to estimate $\|\nabla^2 p_2-\nabla^2 q\|_{H^{a-1}}$. By \eqref{p2-z2}, we compute
\begin{align*}
\Delta p_2-\Delta q&=\rme^{-h}A_{il}\nabla_{Z^l}\p_{b_0^1}\b Y^j_HA_{jm}\nabla_{Z^m}\p_{b_0^1} \b Y^i_H-\rme^{-h}A_{il}\nabla_{Z^l}\p_t\b Y^jA_{jm}\nabla_{Z^m}\p_t\b Y^i \\&\quad+\rme^{-h}\big(A_{il}\nabla_{Z^l}\p_{b_0^1} \b Y^i_H)^2 -2\rme^{-h}A_{il}\nabla_{Z^l}\p_{b_0^1} \b Y^i_H A_{1k}\nabla_{Z^k}\p_{b_0^1}\b Y^1_H+2\gamma'(z_2)\bar{\p}_2\p_{b_0^1}\bar{Y}_H^2+\mathcal{R}_2,	
\end{align*}
with
\begin{align*}
\mathcal{R}_2\sim\big(\nabla_ZY+(\nabla_ZY)^2\big)\rme^{-h}(B_{12}+B_{22})\gamma'(z_2)\nabla_{Z}\p_{b_0^1}\bar{Y}_H^2.
\end{align*}
As in the derivation of \eqref{es-deltap2},
by using \eqref{gm=xi}, Lemma \ref{lem-p2Y2-1} and Lemma \ref{lem-tldna},
we derive that
\begin{align*}
&\|\nabla^2 p_2-\nabla^2 q\|_{H^{a-1}}\\&\lesssim \big( \|A\nabla_{Z}\p_{b_0^1}\b Y_H\|_{H^{a-1}}^2+\|A\nabla_{Z}\p_t\b Y\|_{H^{a-1}}^2\big)\big(\|\rme^{-h}\|_{L^\infty}+\|\nabla\rme^{-h}\|_{H^{a-2}}\big)\\
&\qquad+\|\bar{\p}_2\p_{b_0^1}\b Y_H\|_{H^{a-1}}\|\gamma'\|_{H^{a-1}}+\|\mathcal{R}_2\|_{H^{a-1}}\\
&\lesssim \big(\eps_0+\|\p_{b_0^1}\bar Y_H\|_{H^a}+\|\nabla \bar Y_H\|_{H^a}\big)\|\p_{b_0^1}\bar{Y}_H\|_{H^a}+\|\p_{t}\bar{Y}_H\|_{H^a}\|\nabla \p_{t}\bar{Y}_H\|_{H^a}.
\end{align*}
Combining the above estimate with \eqref{nap2-naqes} for $s=0$ and \eqref{p1-es}, one obtains \eqref{nap2-naqes} for $s=a$.

\end{proof}

Now we are ready to estimate the pressure term $\mathcal{F}^{p}_{a}(t)$ where we recall that
\begin{align*}
\mathcal{ F}_a^{p}(t)&=
\sum_{0\leq |\alpha|\leq a}\Big|\int_0^t\bigl( \p^\al(A\nabla_Z p-\nabla q) |  \p^\al \p_t\bar{Y}_H +\f14 \p^\al \bar{Y}_H \bigr)_{L^2}\rmd\tau\Big|.
\end{align*}
It is clear that
\begin{align}\label{Anapes}
\|A\nabla_Z p-\nabla q\|_{H^a} \leq \|(A-I)\nabla_Z p\|_{H^a}+\|\nabla_Zp-\nabla p\|_{H^a}+\|\nabla p-\nabla q\|_{H^a}.
\end{align}
By using \eqref{defA-z},  Lemma \ref{lem-p2Y2}, Lemma \ref{lem-p2Y2-1}, Lemma \ref{lem-tldna}  and \eqref{napes}, we derive that
\begin{align}\label{A-Inapes}
\begin{split}
&\|(A-I)\nabla_Z p\|_{H^{a}}+\|\nabla_Zp-\nabla p\|_{H^a}\\&\lesssim 	\|A-I\|_{H^{a}}	\|\nabla p\|_{H^{a}}+\|\widetilde{\nabla} p\|_{H^a}\lesssim \big(\eps_0+\|\nabla \b Y_H\|_{H^{a}}\big)\big(\|\p_{b_0^1}  \bar Y_H\|_{H^{a}}+\|\nabla \p_t\bar Y_H\|_{H^{a}}\big).	
\end{split}
\end{align}
Plugging \eqref{A-Inapes} and \eqref{nap2-naqes} into \eqref{Anapes}, one has
\begin{align*}
\begin{split}
\|A\nabla_Z p-\nabla q\|_{H^a}\lesssim& \big(\epsilon_0+\|\p_{b_0^1}\b Y_H\|_{H^{a}}+\|\nabla\b Y_H\|_{H^{a}}+\|\p_{t}\bar{Y}_H\|_{H^a}+\|\p_{t}\bar{Y}_L^2\|_{H^a}\big)\\&\times\big(\|\p_t\nabla\b Y_H\|_{H^{a}}+\|\p_{b_0^1}\b Y_H\|_{H^{a}}\big).	
\end{split}
\end{align*}
%
Consequently,
\begin{align}\label{Fap-es}
\begin{split}
\mathcal{ F}_a^{p}(t)
&\lesssim \int_0^t \|A\nabla_Z p-\nabla q\|_{H^a}\big(\|\p_1\p_t \bar Y_H\|_{H^{a}}+\|\p_1 \bar Y_H\|_{H^{a}}\big)\rmd\tau\\
&\lesssim \epsilon_0\mathcal H_a(t)+\mathcal{H}_a^\frac32(t)+\mathcal{L}_a^\frac12(t)\mathcal{H}_a(t).	
\end{split}
\end{align}

\subsection{Estimate of the source term}\label{sec-mfkF}
Recall that
\begin{align*}
\mathfrak{F}_\al(t)=\sum_{0\leq |\alpha|\leq a}\Big|\int_0^t\bigl( \p^\al\mathfrak{f} |
\p^\al \p_t\bar Y_H^2+\f14 \p^\al \bar Y_H^2 \bigr)_{L^2}\rmd\tau\Big|
\end{align*}
with $\mathfrak{f}$ given by \eqref{mfkf-def2}.
Since $\mathfrak{ f}$ depends only on $z_2$, writing $\p^\alpha=\p_1^{\alpha_1}\p_2^{\alpha_2}$, we see that $\p^\alpha\mathfrak f=0$ whenever $\alpha_1\geq1$. Thus, only the case $\alpha_1=0$ contributes to $\mathfrak F_\alpha(t)$, and in this case $\p^\alpha\mathfrak f=\p_2^{\alpha_2}\mathfrak f$. Note that $\int_{\mathbb T}\p_2^{\alpha_2}\bar Y_H\rmd z_1=0$.
It follows that
\begin{align*}
\bigl( \p^\al\mathfrak{f} |  \p^\al \bar Y_H^2\bigr)_{L^2}
&=	\bigl( \p_2^{\alpha_2}\mathfrak{f} |  \p_2^{\alpha_2} \bar Y_H^2\bigr)_{L^2}
=\int_{\bR}\p_2^{\alpha_2}\mathfrak{f}(z_2)\int_{\mathbb T}\p_2^{\alpha_2}\bar Y_H^2(z)\rmd z_1\rmd z_2=0.	
\end{align*}
Similarly,
\begin{align*}
\bigl( \p^\al\mathfrak{f} |  \p^\al\p_t \bar Y_H^2\bigr)_{L^2}=0.
\end{align*}
Hence,
\begin{align}\label{mfkFa-es}
\mathfrak{F}_\al(t)=0.
\end{align}

At last, we estimate
$\int_0^t\|\p_{b_0^1}\bar{Y}_H \|_{L^2}\|\mathcal R_H\|_{L^2}\rmd\tau$. By the definition of $\mathcal{R}_H$ (see \eqref{RH-def}), we use H\"older inequality, Sobolev embedding, Lemma  \ref{lem-nazy}
and Lemma \ref{lem-p2Y2-1} to get
\begin{align}\label{RH-es}
\begin{split}
\|\mathcal R_H\|_{L^2}&\lesssim \|\nabla\p_t \bar Y_H\|_{L^2}\big(\|\rme^{-h}-1\|_{H^a}+\|\nabla Y\|_{H^a}+\|\nabla \bar Y_H\|_{H^a}\big)\\&\lesssim \|\nabla\p_t \bar Y_H\|_{L^2}\big(\eps_0+\|\nabla\bar Y_H\|_{H^a}\big).	
\end{split}
\end{align}
Hence,
\begin{align}\label{intRH-es}
\int_0^t\|\p_{b_0^1}\bar{Y}_H \|_{L^2}\|\mathcal R_H\|_{L^2}\rmd\tau\lesssim\eps_0\mathcal{H}_a(t)+\mathcal{H}_a^\frac32(t).
\end{align}

\begin{proof}[Proof of Proposition \ref{Prop-non}]
Applying \eqref{tldFa-es}, \eqref{Fap-es} and \eqref{mfkFa-es},
one has
\begin{align*}
\mathcal{F}_a(t)\lesssim \mathcal{H}_a(0)+\mathcal{H}_a^3(0)+\epsilon_0\mathcal{H}_a(t)
+\mathcal{L}_a^\frac12(t)\mathcal{H}_a(t)+\mathcal{H}_a^\frac32(t)+\mathcal{H}_a^4(t).	
\end{align*}
Combining the above with \eqref{intRH-es} yields \eqref{Fa-es}.
\end{proof}

\section{Estimate of the low-frequency component}\label{sec-low}
In this section, we will estimate $\mathcal{L}_a(t)$ defined in \eqref{def-La} for the low-frequency term $\p_t\bar Y_L^2$.
\begin{prop}
Under the conditions of Theorem \ref{th1}, there exists a sufficiently small constant $\delta\in (0,1)$, such that if $\mathcal{H}_a(t)+\mathcal{L}_a(t)\leq \delta$, then
\begin{align}\label{La-es}
\begin{split}
\mathcal{L}_a(t)\leq&\frac{C_{\mathcal{L}}}{2}\eps_0\big( \mathcal{W}_0(t)+\mathcal{H}_a(t)+\mathcal{L}_a(t)\big) +\frac{C_{\mathcal{L}}}{2}\mathcal{L}_a(0)\\&+\frac{C_{\mathcal{L}}}{2}(\mathcal{H}_a^\frac12(t)+\mathcal{L}_a^\frac12(t))\big( \mathcal{W}_0(t)+\mathcal{H}_a(t)+\mathcal{L}_a(t)\big),	
\end{split}
\end{align}
with a constant $C_{\mathcal{L}}$.
\end{prop}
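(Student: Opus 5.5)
The low-frequency equation $\eqref{linearsys}_2$, $\p_t^2\bar Y_L^2-\p_2^2\p_t\bar Y_L^2=\mathfrak f$, is a one-dimensional heat equation for $w:=\p_t\bar Y_L^2$ in the variable $z_2\in\bR$. I would apply $\p_2^k$, $0\le k\le a$, take the $L^2(\bR)$ inner product with $\p_2^k w$, and integrate in time. This gives
\begin{align*}
\mathcal L_a(t)\lesssim \mathcal L_a(0)+\sum_{k\le a}\Big|\int_0^t\big(\p_2^k\mathfrak f\,|\,\p_2^k w\big)_{L^2}\rmd\tau\Big| .
\end{align*}
The only difficulty is that there is no damping of $w$ itself, only of $\p_2 w$. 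Each source term therefore has to be paired with $\p_2 w$ through a negative power of $|\p_2|$, or else absorbed using time decay of $\bar Y_H$. This is where $\mathcal W_0$ enters.

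\textbf{Divergence-form terms.} I split $\mathfrak f$ as in \eqref{mfkf-def2}. The diffusive pieces $\int_{\bT}\nabla_Z\cdot(\cdots)\rmd z_1$, $\int_{\bT}\widetilde\nabla\cdot\nabla_Z\p_t\bar Y^2\rmd z_1$ and $\int_{\bT}\p_2\tilde\p_2\p_t\bar Y^2\rmd z_1$ are, up to lower-order coefficient commutators, exact $\p_2$-derivatives of $z_1$-averages. This holds because $\p_1$-derivatives integrate to zero over $\bT$, while $\bar\p_2$ and $\nabla_{Z^2}$ are $\p_2$ with coefficients. For these terms I move one $\p_2$ onto $\p_2^k w$ and use $\int_0^t\|\p_2 w\|_{H^a}^2\rmd\tau$. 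The remaining factor, for example $(A^\top A-I)\nabla_Z\p_t\bar Y^2$ or $(B-I)\nabla\p_t\bar Y$, contains $\nabla\p_t\bar Y$. By \eqref{ptpbarY} this is bounded by $\|\nabla\p_t\bar Y_H\|$ times $(\epsilon_0+\mathcal H_a^{1/2}+\mathcal L_a^{1/2})$, which is square-integrable in time. Cauchy--Schwarz in time then gives $(\epsilon_0+\mathcal H_a^{1/2}+\mathcal L_a^{1/2})(\mathcal H_a+\mathcal L_a)$.

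\textbf{Pressure term.} For the pressure piece $\int_{\bT}(A_{2j}B_{jk}\p_kp-\p_2q)\rmd z_1$, I would write it as $\int_{\bT}(\nabla p-\nabla q)^2\rmd z_1$ plus terms $((A B)-I)\nabla p$, which are small by \eqref{napes}. At top order I pair with $\p_2 w$ after moving a derivative, controlled by \eqref{nap2-naqes}. At order zero, where no derivative is available, I use \eqref{nap2-naqLr-es}:
\begin{align*}
\big|\big((\nabla p-\nabla q)_L\,|\,w\big)\big|\le \big\||\p_2|^{-\theta}(\nabla p-\nabla q)\big\|_{L^2}\,\||\p_2|^{\theta}w\|_{L^2} .
\end{align*}
I then interpolate $\||\p_2|^\theta w\|\le\|w\|^{1-\theta}\|\p_2w\|^{\theta}$. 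The right-hand factor of \eqref{nap2-naqLr-es} involves $\|\p_{b_0^1}\bar Y_H\|_{L^2}+\|\nabla\p_t\bar Y_H\|_{L^2}$, whose time integrability with the weight $(4a+\tau)^a$ comes from $\mathcal W_0$. Since $a\ge3$, Hölder in time with the weight makes $\int_0^t\|\cdot\|_{L^2}\|w\|^{1-\theta}\|\p_2 w\|^\theta\rmd\tau$ finite. Precisely, the factor $(4a+\tau)^{-a/2}$ times $(\int\|\p_2w\|^2)^{\theta/2}$ with exponent balance is integrable because $a/2>1$. The result is bounded by $(\epsilon_0+\mathcal H_a^{1/2}+\mathcal L_a^{1/2})(\mathcal W_0+\mathcal H_a+\mathcal L_a)$.

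\textbf{Last term.} The term $\int_{\bT}\p_1\Phi^1\p_{b_0^1}\bar Y_H^2\rmd z_1$ is linear but carries the small factor $\|\p_1\Phi\|\lesssim\epsilon_0$ from \eqref{phi-small}. I would handle it the same way, using the $\mathcal W_0$ decay of $\|\p_{b_0^1}\bar Y_H\|_{L^2}$ and $\sup\|w\|$, which gives $\epsilon_0\mathcal W_0^{1/2}\mathcal L_a^{1/2}$ at order zero and a $\p_2$ transfer at higher order.

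\textbf{Main obstacle.} The hardest step is the order-zero pairing, where the only dissipation is $\|\p_2 w\|$ and $w$ itself is merely bounded. Closing it requires the weighted decay in $\mathcal W_0$ together with the negative-derivative pressure bound \eqref{nap2-naqLr-es}, and the time exponents must be checked to be integrable for $a\ge3$.
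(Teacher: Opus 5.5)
\textbf{Gap: order-zero commutators in the diffusive terms.} Your scheme is the paper's own. You use the $\p_2^k$ energy identity for $w=\p_t\bar Y_L^2$ and move one $\p_2$ onto $\p_2^{k+1}w$ for $k\ge1$. At $k=0$ you use \eqref{nap2-naqLr-es} with $\||\p_2|^\theta w\|_{L^2}\lesssim\|w\|_{L^2}^{1-\theta}\|\p_2 w\|_{L^2}^\theta$ and the weight from $\mathcal W_0$, which is exactly the paper's $G_{41}$ and $G_5$. The problem is the diffusive terms at $k=0$. You split them into exact $\p_2$-derivatives plus ``lower-order coefficient commutators'', but you only estimate the exact-derivative part. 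At $k=0$ the commutators, for instance
\[
\int_{\bT\times\bR}\p_2B_{i2}\,\big((A^\top A-I)\nabla_Z\p_t\bar Y^2\big)_i\,w\,\rmd z
\quad\text{and}\quad
\int_{\bT\times\bR}\p_2B_{12}\,\nabla_{Z^1}\p_t\bar Y^2\,w\,\rmd z,
\]
are paired with $w$ itself, and no derivative is left to move. Their coefficients are $O(\eps_0)$ but do not decay in time. $\mathcal L_a$ controls only $\sup_\tau\|w\|_{L^2}$, and $\mathcal H_a$ controls $\|\nabla\p_t\bar Y_H\|_{L^2}$ only in $L^2_t$, not $L^1_t$. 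So the bound $(\eps_0+\mathcal H_a^{1/2}+\mathcal L_a^{1/2})(\mathcal H_a+\mathcal L_a)$ you claim for this group does not follow from Cauchy--Schwarz against $\int_0^t\|\p_2 w\|_{H^a}^2\rmd\tau$. The same objection applies to the pressure remainder $\big((AB)-I\big)_{2k}\p_kp$ at $k=0$. Its coefficient contains the non-decaying $\nabla\widetilde Y$, so being ``small by \eqref{napes}'' is not enough.

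\textbf{Fix.} Use the weighted decay you already apply to the pressure and $\Phi$ terms; this is what the paper does in \eqref{G1-es1}--\eqref{G1-es} and \eqref{G42-G45-es}. Bound each commutator by $(\eps_0+\mathcal H_a^{1/2})\|\nabla\p_t\bar Y_H\|_{L^2}\|w\|_{L^2}$. Bound the pressure remainder, via \eqref{napes} with $s=0$, by $(\eps_0+\mathcal H_a^{1/2})\big(\|\p_{b_0^1}\bar Y_H\|_{L^2}+\|\nabla\p_t\bar Y_H\|_{L^2}\big)\|w\|_{L^2}$. The paper uses $\|w\|_{L^\infty}\lesssim\|w\|_{L^2}^{1/2}\|\p_2w\|_{L^2}^{1/2}$ instead, which works equally well. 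Then integrate in time with the weight:
\[
\int_0^t\|\nabla\p_t\bar Y_H\|_{L^2}\|w\|_{L^2}\,\rmd\tau\le \mathcal L_a^{1/2}(t)\,\mathcal W_0^{1/2}(t)\Big(\int_0^\infty(4a+\tau)^{-a}\rmd\tau\Big)^{1/2}.
\]
This gives contributions $(\eps_0+\mathcal H_a^{1/2})\mathcal W_0^{1/2}\mathcal L_a^{1/2}$, which fit into \eqref{La-es}, and with this correction the argument closes. For $k\ge1$ the divergence structure is not needed at all. As in the paper's $F_H$, you can move one $\p_2$ off $\p_2^k\mathfrak f$ for every term of $\mathfrak f$ at once.
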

\begin{proof}
Applying $\partial_2^{b}$ for $0\le b\le a$ to both sides of $\eqref{linearsys}_2$, taking the $L^2$ inner product with $\partial_2^{b}\partial_t\bar{Y}_L^{2}$, integrating over time on $[0,t]$, and then summing over all $0\le b\le a$, we obtain
\begin{align}\label{La-es1}
\mathcal{L}_a(t)\leq 2\mathcal{L}_a(0)+2\sum_{0\leq b\leq a}\Big|\int^t_0(\p_2^b\mathfrak f|\p_2^b \p_t\bar Y_L^2)_{L^2}\rmd\tau\Big|,
\end{align}
Denote the last term in \eqref{La-es1} by $2{F}_a(t)$.
We decompose $F_{a}(t)$ as follows:
\begin{align*}
{F}_a(t)=\Big|\int^t_0(\mathfrak f|\p_t\bar Y_L^2)_{L^2}\rmd\tau\Big|+\sum_{1\leq b\leq a}\Big|\int^t_0(\p_2^b\mathfrak f|\p_2^b \p_t\bar Y_L^2)_{L^2}\rmd\tau\Big|:=F_0(t)+F_H(t).
\end{align*}
 We first study the troublesome term $F_0(t)$.
Note that
\begin{align*}
(\mathfrak f|\p_t\bar Y_L^2)_{L^2}&=\int_{\mathbb{T}\times\mathbb{R}}B\nabla\cdot((A^\top A-I)B\nabla\p_t\bar Y^2)\p_t\bar Y_L^2\rmd z\\
&+\int_{\mathbb{T}\times\mathbb{R}}(B-I)\nabla\cdot(B\nabla\p_t\bar Y^2)\p_t\bar Y_L^2\rmd z+\int_{\mathbb{T}\times\mathbb{R}}\p_{2}\tilde{\p}_2\p_t\bar Y^2\p_t\bar Y_L^2\rmd z\\
&-\int_{\mathbb{T}\times\mathbb{R}}\big(A_{2j}B_{jk}\p_{k} p-\p_2q\big)\p_t\bar Y_L^2\rmd z-\int_{\mathbb{T}\times\mathbb{R}}   \p_{1}\Phi^1\p_{b_0^1}\bar Y_H^2\p_t\bar Y_L^2\rmd z:=\sum_{i=1}^{5}G_i.
\end{align*}
It is clear that
\begin{align*}
F_0(t)\leq \sum_{i=1}^{5}\big|\int_{0}^{t}G_i(\tau)\rmd\tau\big|.
\end{align*}
For $G_1$, by integration by parts, 
H\"older inequality, Sobolev embedding and Lemma \ref{lem-p2Y2-1}, one has
\begin{align}\label{G1-es1}
\begin{split}
|G_1|&\lesssim \|\nabla B\|_{L^\infty}\|B\|_{L^\infty}\|A^\top A-I\|_{L^2}\|\nabla\p_t\bar Y^2\|_{L^2}\|\p_t\bar Y_L^2\|_{L^\infty}\\&\quad+\|A^\top A-I\|_{L^\infty}\|B\|_{L^\infty}^2\|\nabla\p_t\bar Y^2\|_{L^2}\|\p_2\p_t\bar Y^2_L\|_{L^2}\\&\lesssim	\big(\|\f{b_0^2}{b_0^1}(y(z))\rme^h\|_{H^a}+\|\nabla \rme^h\|_{H^{a-1}}+\|\rme^{h}\|_{L^\infty}\big)^2\|A^\top A-I\|_{H^a}\\&\qquad\times\big(\|\nabla\p_t\bar Y_H\|_{L^2}\|\p_t\bar Y_L^2\|_{L^\infty}+\|\nabla\p_t\bar Y_H\|_{L^2}\|\p_2\p_t\bar Y_L^2\|_{L^2}\big).	
\end{split}
\end{align}
Since $\p_t\bar Y_L^2$ is a function of $z_2\in\mathbb{R}$, we use the interpolation inequality to get
\begin{align}\label{bY2-int}
\|\p_t\bar Y_L^2\|_{L^\infty}\lesssim \|\p_t\bar Y_L^2\|_{L^2}^\frac12\|\p_2\p_t\bar Y_L^2\|_{L^2}^\frac12.
\end{align}
Making use of Lemma \ref{lem-nazy},
\eqref{es-AAI} and \eqref{bY2-int} in \eqref{G1-es1}, one has
\begin{align*}
|G_1(t)|&\lesssim \big(\|\nabla \bar Y_H\|_{H^a}+\epsilon_0\big)\big(\|\nabla\p_t\bar Y_H\|_{L^2}\|\p_t\bar Y_L^2\|_{L^2}^\frac12\|\p_2\p_t\bar Y_L^2\|_{L^2}^\frac12+\|\nabla\p_t\bar Y_H\|_{L^2}\|\p_2\p_t\bar Y_L^2\|_{L^2}\big).
\end{align*}
Then performing  time integration over $[0,t]$ yields
\begin{align}\label{G1-es}
\begin{split}
\int_{0}^{t}|G_1(\tau)|\rmd\tau
&\lesssim (\eps_0+\mathcal{H}_a^\frac12(t))\big( \mathcal{W}_0^\frac12(t)+\mathcal{H}_a^\frac12(t)\big)\mathcal{L}_a^\frac12(t).	
\end{split}
\end{align}
Using the same method, we can get the estimates for $G_2(t)$ and $G_3(t)$ as follows.
\begin{align}\label{G23-es}
\int_{0}^{t}|G_2(\tau)|\rmd\tau+\int_{0}^{t}|G_3(\tau)|\rmd\tau&\lesssim\eps_0\big( \mathcal{W}_0^\frac12(t)+\mathcal{H}_a^\frac12(t)\big)\mathcal{L}_a^\frac12(t).
\end{align}
For $G_4$, we compute
\begin{align*}
G_4&=-\int_{\mathbb{T}\times\mathbb{R}}\big(A_{22}B_{2k}\p_{k} p-\p_2q\big)\p_t\bar Y_L^2\rmd z-\int_{\mathbb{T}\times\mathbb{R}}A_{21}B_{1k}\p_{k} p\p_t\bar Y_L^2\rmd z\\&=-\int_{\mathbb{T}\times\mathbb{R}}(\p_{2} p-\p_2q)\p_t\bar Y_L^2\rmd z-\int_{\mathbb{T}\times\mathbb{R}}(\rme^{h}-1)\p_{2} p\,\p_t\bar Y_L^2\rmd z\\&\quad-\int_{\mathbb{T}\times\mathbb{R}}\nabla_{Z^1}Y^1\na_{Z^2} p\,\p_t\bar Y_L^2\rmd z+\int_{\mathbb{T}\times\mathbb{R}}\nabla_{Z^2}Y^1\nabla_{Z^1} p\,\p_t\bar Y_L^2\rmd z:=\sum_{i=1}^{4}G_{4i}.
\end{align*}
For $G_{41}$, by H\"older inequality, it is clear that
\begin{align}\label{G41-es1}
\begin{split}
|G_{41}|&=\big|\int_{\mathbb{T}\times\mathbb{R}}|\p_2|^{-\frac14}(\p_{2} p-\p_2q)|\p_2|^{\frac14}\p_t\bar Y_L^2\rmd z\big| \\& \leq \big\||\p_2|^{-\frac14}(\p_{2} p-\p_2q)\big\|_{L^2}\big\||\p_2|^{\frac14}\p_t\bar Y_L^2\big\|_{L^2}.
\end{split}
\end{align}
Applying \eqref{nap2-naqLr-es} with $\theta=\frac14$, we derive that
\begin{align}\label{nap-naqLr-es}
\begin{split}
\big\||\p_2|^{-\frac14}(\p_{2} p-\p_2q)\big\|_{L^2}\lesssim&\big(\eps_0+\|\p_{b_0^1}\bar Y_H\|_{H^a}+\|\nabla \bar Y_H\|_{H^a}\big)\big(\|\nabla \p_t\bar Y_H\|_{L^{2}}+\|\p_{b_0^1}  \bar Y_H\|_{L^{2}}\big)\\& +\big(\|\p_{t}\bar{Y}_H\|_{H^a}+\|\p_{t}\bar{Y}_L^2\|_{H^a}\big)\|\nabla \p_{t}\bar{Y}_H\|_{L^2}.	
\end{split}
\end{align}
Since $\p_t\bar Y_L^2$ depends only on $z_2$, we use the interpolation inequality to obtain that
\begin{align}\label{bY2-int2}
\big\||\p_2|^{\frac14}\p_t\bar Y_L^2\big\|_{L^2}\lesssim \|\p_t\bar Y_L^2\|_{L^2}^\frac34\|\p_2\p_t\bar Y_L^2\|_{L^2}^\frac14.
\end{align}
Thus, by using \eqref{nap-naqLr-es} and \eqref{bY2-int2} in \eqref{G41-es1}, we get that
\begin{align*}
|G_{41}|\lesssim& \big(\eps_0+\|\p_{b_0^1}\bar Y_H\|_{H^a}+\|\nabla \bar Y_H\|_{H^a}\big)\big(\|\nabla \p_t\bar Y_H\|_{L^{2}}+\|\p_{b_0^1}  \bar Y_H\|_{L^{2}}\big)\|\p_t\bar Y_L^2\|_{L^2}^\frac34\|\p_2\p_t\bar Y_L^2\|_{L^2}^\frac14\\& +\big(\|\p_{t}\bar{Y}_H\|_{H^a}+\|\p_{t}\bar{Y}_L^2\|_{H^a}\big)\|\nabla \p_{t}\bar{Y}_H\|_{L^2}\|\p_t\bar Y_L^2\|_{L^2}^\frac34\|\p_2\p_t\bar Y_L^2\|_{L^2}^\frac14,
\end{align*}
which yields
\begin{align}\label{G41-es}
\begin{split}
\int_0^t|G_{41}(\tau)|\rmd\tau
\lesssim (\eps_0+\mathcal{H}_a^\frac12(t)+\mathcal{L}_a^\frac12(t))
\mathcal{W}_0^\frac12(t)\mathcal{L}_a^\frac12(t).	
\end{split}
\end{align}
The estimates for $G_{4i}$ with $i=2,3,4$ are similar to that for $G_1$. Indeed, by Lemma \ref{lem-tldna}, Lemma \ref{lem-p} and \eqref{bY2-int}, one has
\begin{align}\label{G42-G45-es}
\begin{split}
\sum_{i=2}^{4}\int_0^t|G_{4i}(\tau)|\rmd\tau
&\lesssim
\sum_{i=2}^{4}\int_0^t \big(\|\rme^h-1\|_{L^2}+\|\nabla_ZY\|_{L^2}\big)\|\nabla p\|_{L^2}\|\p_t\bar Y_L^2\|_{L^\infty} (\tau)\rmd\tau\\
&\lesssim (\eps_0+\mathcal{H}_a^\frac12(t)) \mathcal{W}_0^\frac12(t)\mathcal{L}_a^\frac12(t).
\end{split}	
\end{align}
For $G_5$, by using H\"older inequality,
\eqref{bY2-int}
and \eqref{phi-small}, we have
\begin{align}\label{G5-es}
\int_0^t|G_5(\tau)|\rmd\tau\lesssim
\int_0^t \|\p_{1} \Phi^1\|_{L^2} \|\p_{b_0^1} \bar Y_H^2\|_{L^2}\|\p_t\bar Y_L^2\|_{L^\infty} (\tau)\rmd\tau
\lesssim \eps_0\mathcal{W}_0^\frac12\mathcal{L}_a^\frac12(t).
\end{align}
Consequently, by \eqref{G1-es}, \eqref{G23-es}, \eqref{G41-es}, \eqref{G42-G45-es} and \eqref{G5-es}, we get
\begin{align}\label{F0-es}
F_0(t)\lesssim (\eps_0+\mathcal{H}_a^\frac12(t)+\mathcal{L}_a^\frac12(t))\big( \mathcal{W}_0^\frac12(t)+\mathcal{H}_a^\frac12(t)\big)\mathcal{L}_a^\frac12(t) .
\end{align}

Next we estimate $F_H(t)$. For $1\leq b\leq a$, we apply integration by parts to get
\begin{align*}
\big|(\p_2^b\mathfrak f|\p_2^b \p_t\bar Y_L^2)_{L^2}\big|=\big|(\p_2^{b-1}\mathfrak f|\p_2^{b+1} \p_t\bar Y_L^2)_{L^2}\big|.
\end{align*}
Using the definition of $\mathfrak{f}$ in \eqref{mfkf-def2} and the H\"older inequality, applying Lemma \ref{lem-tldna}, we have
\begin{align*}
F_H(t)
\lesssim& \int_0^t\|\p_2\p_t\bar Y_L^2\|_{H^a}\Big(\|A^\top A-I\|_{H^{a}}\|\na\p_t\bar Y^2\|_{H^{a}}+\eps_0\|\nabla\p_t\bar Y^2\|_{H^{a}}+\|\nabla Y\|_{H^{a-1}}\|\nabla p\|_{H^{a-1}}\\&\quad+\|\rme^h-1\|_{H^{a-1}} \|\p_2p\|_{H^{a-1}}
+\|\p_2p-\p_{2}q\|_{H^{a-1}}+\| \p_{1} \Phi^1 \|_{H^{a-1}}\|\p_{b_0^1} \bar Y_H^2\|_{H^{a-1}}\Big)\rmd\tau.
\end{align*}
Applying Lemma \ref{lem-tldna}, Lemma \ref{lem-p}, Lemma \ref{lem-nazy}, 
\eqref{phi-small}, \eqref{nblaY} and \eqref{es-AAI}, we arrive at
\begin{align}\label{FH-es}
\begin{split}
F_H(t)	\lesssim&\int_0^t\|\p_2\p_t\bar Y_L^2\|_{H^a}\big(\|\na\p_t\bar Y_H\|_{H^{a}}+\|\p_{b_0^1}\bar{Y}_H\|_{H^a}\big)\\&\quad\times\big(\eps_0+\|\p_{b_0^1}\bar Y_H\|_{H^a}+\|\nabla \bar Y_H\|_{H^a}+\|\p_{t}\bar{Y}_H\|_{H^a}+\|\p_{t}\bar{Y}_L^2\|_{H^a}\big)\rmd\tau\\\lesssim&\big(\eps_0+\mathcal{H}_a^\frac12(t)+\mathcal{L}_a^\frac12(t)\big)\big(\mathcal{H}_a(t)+\mathcal{L}_a(t)\big).
\end{split}
\end{align}
Combining \eqref{La-es1}, \eqref{F0-es} with \eqref{FH-es}, we derive \eqref{La-es}. This completes the proof.
\end{proof}

\section{Temporal weighted energy estimate}\label{sec-tempHigh}
In this section, we estimate $\mathcal{W}_0(t)$ defined in \eqref{def-W0}.
\begin{prop}
Under the conditions of Theorem \ref{th1}, there exists a sufficiently small constant $\delta\in (0,1)$, such that if $\mathcal{H}_a(t)+\mathcal{L}_a(t)\leq \delta$, then
\begin{align}\label{W0-es}
\begin{split}
\mathcal{W}_0(t)\leq  &(1+\frac{C_{\mathcal{W}}}{2}\epsilon_0)\mathcal{H}_a(t)
+\frac{C_{\mathcal{W}}}{2}\epsilon_0\mathcal{W}_0(t)
+\frac{C_{\mathcal{W}}}{2}\big(\mathcal{H}_a(0)+\mathcal{H}_a^2(0)\big)\\
&+\frac{C_{\mathcal{W}}}{2}\big(\mathcal{H}_a^\frac12(t)+\mathcal{H}_a^2(t)
+\mathcal{L}_a^\frac12(t)\big)\big(\mathcal{H}_a(t)+\mathcal{W}_0(t)\big),
\end{split}
\end{align}
where $C_\mathcal{W}$ is a positive constant.
\end{prop}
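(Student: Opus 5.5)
We take the zeroth-order energy identity behind Step 1 of Proposition \ref{prop3}, multiply it by the weight $(4a+t)^a$, and integrate in time. Concretely, we pair $\eqref{linearsys}_1$ in $L^2$ with $\p_t\bar Y_H+\f14\bar Y_H$; this gives the identity \eqref{H0-5}. Multiplying by $(4a+t)^a$ and integrating over $[0,t]$, the time derivative of the energy
\[
E_0=\f12\|\p_t \bar Y_H\|_{L^2}^2+\f12\|\p_{b_0^1}\bar Y_H\|_{L^2}^2+\f18\|\na\bar Y_H\|_{L^2}^2+\f14(\p_t\bar Y_H|\bar Y_H)_{L^2}
\]
produces a commutator $a(4a+t)^{a-1}E_0$. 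We split $E_0$ into its two parts.
\begin{itemize}
\item The $\|\p_t\bar Y_H\|_{L^2}^2$ part is handled by writing $a(4a+\tau)^{a-1}\le \f{a}{4a}(4a+\tau)^a$. By Poincar\'e, it is absorbed by $\int (4a+\tau)^a\|\na\p_t\bar Y_H\|_{L^2}^2$.
\item The $\|\p_{b_0^1}\bar Y_H\|^2$ part is absorbed by the dissipation $\int(4a+\tau)^a\|\p_{b_0^1}\bar Y_H\|^2$, again using $a/(4a+\tau)\le 1/4$.
\item The $\|\na\bar Y_H\|_{L^2}^2$ part has no dissipative counterpart and must be handled differently.
\end{itemize}
This last part is the crux. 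We split $\na\bar Y_H$ in frequency: the $\p_1$ component is controlled by $\p_{b_0^1}$ (Lemma \ref{lem-equiva}), while $\p_2\bar Y_H$ has no decay mechanism. For it we interpolate: $a(4a+\tau)^{a-1}\|\p_2\bar Y_H\|^2\le a(4a+\tau)^{a-1}\|\bar Y_H\|\,\|\p_2^2\bar Y_H\|$. By Young, this is bounded by $a(a-1)(4a+\tau)^{a-2}\|\bar Y_H\|^2$ times a small constant plus $(4a+\tau)^{a}\|\na^2 \bar Y_H\|^2$. The latter is not integrable uniformly, so instead we iterate downward in the weight power, as in the remark on $\mathcal W_k$. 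That is, we run the same weighted estimate with weights $(4a+\tau)^{a-k}$ at the $\dot H^k$ level and climb down from the unweighted $\mathcal H_a$ bound, where the $H^a$ terms cost exactly $\mathcal H_a(t)$. This is the reason for the leading coefficient $(1+\f{C_\mathcal W}2\epsilon_0)\mathcal H_a(t)$.

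The term $a(a-1)(4a+\tau)^{a-2}\|\bar Y_H\|^2$ in $\mathcal W_0$ is generated naturally as follows. We test the cross term $\f14(\p_t\bar Y_H|\bar Y_H)$ with the weight, which gives $\f18\frac{\rmd}{\rmd t}\|\bar Y_H\|^2$. Integrating by parts in time against $a(4a+\tau)^{a-1}$ yields the $(4a+\tau)^{a-2}$ term with the right sign.

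The remaining right-hand side terms of \eqref{H0-5} are estimated with the weight attached.
\begin{itemize}
\item The $\p_1b_0^1$ term follows exactly as \eqref{esb01-1} and gives $\epsilon_0\mathcal W_0$.
\item The linear pressure contribution is treated via the $Q_1,Q_2$ splitting \eqref{q-YH-1}. $Q_1$ gives $\epsilon_0\mathcal W_0$. For $Q_2$ we use \eqref{RH-es}, bounding it by $(\epsilon_0+\mathcal H_a^{1/2})\mathcal W_0$ times weights.
\item The nonlinear term $(\mathcal F|\p_t\bar Y_H+\f14\bar Y_H)$ is handled by redoing the Section \ref{sec-High-nonl} estimates at order $0$ with a weight. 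In each product we put the weight on the factor(s) measured in $L^2$ and take the $H^a$ norms unweighted. The diffusive part $\widetilde{\mathcal F}$ and the pressure part (Lemma \ref{lem-p} with $s=0$) then give $(\epsilon_0+\mathcal H_a^{1/2}+\mathcal L_a^{1/2})\mathcal W_0$. The time integrations by parts (as in $I_2$ and $\tilde K_4$) create boundary terms, including $\mathcal H_a(0)+\mathcal H_a^2(0)$. The source $\mathfrak f$ is orthogonal to $\bar Y_H$ by \eqref{mfkFa-es}.
\end{itemize}

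The main obstacle is the non-dissipated $\|\p_2\bar Y_H\|^2$ commutator, i.e. making the downward iteration in weight powers close with only $\mathcal H_a$ at the top. I would first try the direct weighted Fourier-free argument above, and fall back on interpolating $\mathcal W_0$ against $\mathcal H_a$ if the indices do not match.
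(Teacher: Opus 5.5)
Most of your plan matches the paper's proof. You pair \eqref{linearsys}$_1$ with $(4a+t)^a(\partial_t\bar Y_H+\frac14\bar Y_H)$ and get the good term $\frac{a(a-1)}{8}(4a+t)^{a-2}\|\bar Y_H\|_{L^2}^2$ from the cross term. You absorb the weight-commutators of $\|\partial_t\bar Y_H\|^2$ and $\|\partial_{b_0^1}\bar Y_H\|^2$ via $a/(4a+t)\le\frac14$ and Poincar\'e. You treat $Q_1,Q_2$ with \eqref{RH-es}, weight only the $L^2$ factors in the nonlinear and pressure terms, and use the orthogonality of $\mathfrak f$.

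The gap is exactly at the step you call the crux: the term $\frac a8(4a+t)^{a-1}\|\nabla\bar Y_H\|_{L^2}^2$ is never actually controlled. Your primary route, a downward hierarchy of weighted estimates, is not justified.
\begin{itemize}
\item The remark on $\mathcal W_k$ obtains $\mathcal W_k$ by interpolating between $\mathcal W_0$ and $\mathcal H_a$. It presupposes the bound you are proving, so citing it is circular.
\item A genuine hierarchy would need weighted versions of Proposition \ref{prop3} and of all of Section \ref{sec-High-nonl} at every level $1\le k\le a-1$. There the commutators $K_2,K_3$, with coefficients $\gamma'$ and $\partial_2 b_0^1(y(z))$ of size $L$ (not small), are bounded at level $k$ only by the level-$(k-1)$ energy, which carries the larger weight $(4a+\tau)^{a-k+1}$. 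Meanwhile the weight-commutator at level $k$ needs level $k+1$.
\item So the levels couple in both directions with non-small constants, and nothing in your sketch makes the chain close. You would need an extra smallness mechanism, e.g.\ a large time shift in place of $4a$.
\end{itemize}
For the same reason, your explanation of the coefficient $1+O(\epsilon_0)$ in front of $\mathcal H_a(t)$ is unsupported.

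The missing idea is a single endpoint interpolation that skips all intermediate levels. Interpolating one derivative up, as you did, necessarily leaves a weight of at least $(4a+\tau)^{a-2}$ on $\|\nabla^2\bar Y_H\|_{L^2}$. The point is to go all the way to $\nabla^a$, where the weights match exactly. Using Poincar\'e on both factors,
\[
(4a+t)^{a-1}\|\nabla\bar Y_H\|_{L^2}^2\lesssim (4a+t)^{a-1}\|\bar Y_H\|_{L^2}^{\frac{2(a-1)}{a}}\|\nabla^a\bar Y_H\|_{L^2}^{\frac2a}\lesssim \big((4a+t)^a\|\partial_1\bar Y_H\|_{L^2}^2\big)^{\frac{a-1}{a}}\|\partial_1\bar Y_H\|_{\dot H^a}^{\frac2a}.
\]
Young's inequality with exponents $\frac{a}{a-1}$ and $a$ gives $\eta(4a+t)^a\|\partial_1\bar Y_H\|_{L^2}^2+C_\eta\|\partial_1\bar Y_H\|_{\dot H^a}^2$. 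After time integration, the first piece is absorbed by $\int_0^t(4a+\tau)^a\|\partial_{b_0^1}\bar Y_H\|_{L^2}^2\,\mathrm{d}\tau$ (Lemma \ref{lem-equiva}). The second is $\lesssim\mathcal H_a(t)$, which is precisely where the $\mathcal H_a(t)$ term in the statement comes from.

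Your ``fallback'' of interpolating $\mathcal W_0$ against $\mathcal H_a$ gestures at this. But specifying it and checking that the indices match is the whole content of the step.

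The same device is also needed inside the nonlinear estimates. Integrating by parts in time against the weight produces not only boundary terms but also extra $(4a+\tau)^{a-1}\|\nabla\bar Y_H\|_{L^2}^2$-type terms. Examples are the $\mathcal T_{12}$ terms and the $\tilde{\mathcal T}_2$ contribution $\int_0^t a(4a+\tau)^{a-1}\|r_1r_2\|_{L^\infty}\|\partial_2\bar Y_H^1\|_{L^2}^2\,\mathrm{d}\tau$. These are closed by the same interpolation \eqref{interpo}.
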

\begin{proof}
Taking the $L^2$ inner product of $(4a+t)^a(\p_t\bar Y_H+\f14 \bar Y_H)$ with \eqref{linearsys}$_1$, we derive that
\begin{align}\label{W0-7}
\begin{split}
&\f{\rmd}{\rmd t}\Big(\f12(4a+t)^a\|\p_t\bar Y_H\|_{L^2}^2+\f12(4a+t)^a\|\p_{b_0^1}\bar Y_H\|_{L^2}^2+\f18(4a+t)^a\|\na \bar Y_H\|_{L^2}^2\\
&\quad -\f{a}{8}(4a+t)^{a-1}\|\bar Y_H\|_{L^2}^2
+\frac14\big(\p_t\bar Y_H |(4a+t)^a \bar Y_H\big)_{L^2}\Big)\\
& +(4a+t)^a\|\na \p_t\bar Y_H\|_{L^2}^2+\frac14(4a+t)^a\|\p_{b_0^1}\bar Y_H\|_{L^2}^2
+\frac{a(a-1)}{8}(4a+t)^{a-2}\|\bar Y_H\|_{L^2}^2 \\
&=\frac14(4a+t)^a\|\p_t\bar Y_H\|_{L^2}^2+\frac{a}{2}(4a+t)^{a-1}\|\p_t\bar Y_H\|_{L^2}^2+\frac{a}{2}(4a+t)^{a-1}\|\p_{b_0^1}\bar Y_H\|_{L^2}^2\\
&\quad+\frac{a}{8}(4a+t)^{a-1}\|\na \bar Y_H\|_{L^2}^2	
-\bigl(\p_1b_0^1(y(z)) \p_{b_0^1}\bar Y_H |(4a+t)^a(\p_t\bar Y_H+\frac14\bar Y_H)\bigr)_{L^2}\\
&\quad-\bigl(\na q |(4a+t)^a(\p_t\bar Y_H+\frac14\bar Y_H)\bigr)_{L^2}
+\bigl(\mathcal F |(4a+t)^a (\p_t\bar Y_H+\f14 \bar Y_H)\bigr)_{L^2}.
\end{split}
\end{align}
By the Poincar\'e inequality and the H\"older inequality,
the second line of \eqref{W0-7} can be absorbed by the first line inside the bracket,
and the fourth line of \eqref{W0-7} can be absorbed by the third line.
Next, by using the interpolation inequality, we estimate
\begin{align}\label{interpo}
\begin{split}
(4a+t)^{a-1}\|\na \bar Y_H\|^2_{L^2}
&\leq (4a+t)^{a-1}\| \bar Y_H\|^{\f{2(a-1)}{a}}_{L^2} \|\na^a \bar Y_H\|^{\f{2}{a}}_{L^2}\\
&\leq 
\big( (4a+t)^a\| \p_1 \bar Y_H\|_{L^2}^2\big)^{\f{a-1}{a}}
\|\p_1  \bar Y_H\|^{\f{2}{a}}_{\dot{H}^a}.
\end{split}
\end{align}
By the Poincar\'e inequality and the H\"older inequality, we obtain
\begin{align*}
\begin{split}
&\big|\bigl(\p_1b_0^1(y(z)) \p_{b_0^1}\bar Y_H |(4a+t)^a(\p_t\bar Y_H+\frac14\bar Y_H)\bigr)_{L^2}\big|\\
&\lesssim \eps_0\big((4a+t)^a\|\p_1\p_t\bar{Y}_H\|_{L^2}^2+(4a+t)^a\| \p_{b_0^1}\bar{Y}_H\|_{L^2}^2\big).    	
\end{split}
\end{align*}
Next, for the linear pressure term, arguing as in the proof of Proposition \ref{prop3}, we derive from \eqref{q-es-2} and \eqref{RH-es} that
\begin{align*}
&\big|\bigl(\na q |(4a+t)^a(\p_t\bar Y_H+\frac14\bar Y_H)\bigr)_{L^2}\big|\\
&\lesssim \big(\eps_0+\|\nabla\bar Y_H\|_{H^a}\big)\big((4a+t)^a\|\na\p_t\bar{Y}_H\|_{L^2}^2+(4a+t)^a\| \p_{b_0^1}\bar{Y}_H\|_{L^2}^2\big).
\end{align*}
Hence, for \eqref{W0-7}, taking the integral in time over $[0,t]$, then using
the Poincar\'e inequality and H\"older inequality, we deduce that
\begin{align}\label{W0es-1}
\begin{split}
&\f14(4a+t)^a\|\p_t\bar{Y}_H\|_{L^2}^2+\f12(4a+t)^a\|\p_{b_0^1}\bar{Y}_H\|_{L^2}^2+ \frac{1}{32}(4a+t)^a\|\nabla \bar{Y}_H\|_{L^2}^2
\\&+\int_{0}^{t}\frac{1}{2}(4a+\tau)^a\|\na \p_\tau\bar{Y}_H\|_{L^2}^2+\frac{1}{8}(4a+\tau)^a\| \p_{b_0^1} \bar{Y}_H\|_{L^2}^2+\frac{a(a-1)}{8}(4a+\tau)^{a-2}\|\bar Y_H\|_{L^2}^2
\rmd \tau\\
&\lesssim \mathcal{ W}_0(0)+\mathcal{H}_a(t)+\epsilon_0 \mathcal{ W}_0(t)+\mathcal{H}_a^\frac12(t)\mathcal{ W}_0(t)+\mathcal{ F}_0^w(t),
\end{split}
\end{align}
with
\begin{align*}
\mathcal{ F}_0^w(t):=
\big|\int_{0}^{t}\bigl(\mathcal F |(4a+\tau)^a (\p_\tau\bar Y_H+\f14\bar Y_H)\bigr)_{L^2}\rmd\tau\big|.
\end{align*}
Taking the supremum in time over $[0,t]$ in \eqref{W0es-1}, one has
\begin{align}\label{W0es-2}
\mathcal{W}_0(t)\lesssim \mathcal{ H}_a(0)+\mathcal{H}_a(t)+\epsilon_0 \mathcal{ W}_0(t)+\mathcal{H}_a^\frac12(t)\mathcal{ W}_0(t)+\mathcal{ F}_0^w(t).
\end{align}
Here we have used the fact that $\mathcal{W}_0(0)\leq \mathcal{H}_a(0)$.
Let us focus on $\mathcal{ F}_0^w(t)$. By the definition of $\mathcal{F}$ (see \eqref{mcalF-def}), one has
\begin{align}\label{mcalFw0-es1}
\begin{split}
&\mathcal{ F}_0^w(t)\leq \big|\int_{0}^{t}\bigl( \widetilde{\mathcal F} |(4a+\tau)^a \p_\tau\bar Y_H\bigr)_{L^2}\rmd\tau\big|+\big|\int_{0}^{t}\bigl(\widetilde{\mathcal F} |(4a+\tau)^a \bar Y_H\bigr)_{L^2}\rmd\tau\big|	\\
&+\big|\int_{0}^{t}\bigl( (A\nabla_Z p-\nabla q) |(4a+\tau)^a (\p_\tau\bar Y_H+\f14 \bar Y_H)\bigr)_{L^2}\rmd\tau\big|
\\
&+\big|\int_{0}^{t}\bigl(\mathfrak f |(4a+\tau)^a (\p_\tau\bar Y_H^2+\f14\bar Y_H^2)\bigr)_{L^2}\rmd\tau\big|.
\end{split}
\end{align}
For the first term on the right hand side of \eqref{mcalFw0-es1}, using integration by parts, H\"older inequality, Poincar\'e inequality and Sobolev embedding, we have
\begin{align*}
&\big|\int_0^t\bigl(  \widetilde{\mathcal F} |(4a+\tau)^a  \p_\tau\bar{Y}_H\bigr)_{L^2}\rmd\tau\big|\\
&\lesssim \int_0^t\|A^\top A-I\|_{H^2}\big(\|B\|_{L^\infty}+\|\nabla B\|_{H^2}\big)(4a+\tau)^a\|\nabla\p_\tau\bar Y\|_{L^{2}}\|\nabla\p_\tau\bar Y_H\|_{L^{2}}\rmd\tau\\
&\quad+\int_0^t\big(\|B-I\|_{L^\infty}+\|\nabla B\|_{H^2}\big)(4a+\tau)^a\|\nabla\p_\tau\bar Y\|_{L^{2}}\|\nabla\p_\tau\bar Y_H\|_{L^{2}}\rmd\tau.
\end{align*}
According to \eqref{es-AAI},
Lemma \ref{lem-nazy}
and Lemma \ref{lem-p2Y2-1}, we derive that
\begin{align}\label{mcalFw0-es2}
 \big|\int_0^t\bigl(  \widetilde{\mathcal F} |  (4a+\tau)^a  \p_\tau\bar{Y}_H\bigr)_{L^2}\rmd\tau\big|
 \lesssim (\eps_0+\mathcal{H}_a^\frac12(t)+\mathcal{H}_a(t))\mathcal{W}_0(t).
\end{align}

Then we deal with the second term on the right hand side of \eqref{mcalFw0-es1}.
Based on the analysis in Section \ref{sec-tldFa}, we deduce that
\begin{align}\label{tldFw0-es1}
\begin{split}
&\big|\int_0^t\bigl(  \widetilde{\mathcal F} |  (4a+\tau)^a  \bar{Y}_H\bigr)_{L^2}\rmd\tau\big|
\lesssim \big|\int_0^t\bigl( {\mathcal T}^i_1 |(4a+\tau)^a   \bar Y_H^i\bigr)_{L^2}\rmd\tau\big| \\
&\quad+\big|\int_0^t\bigl( {\mathcal T}_2 | (4a+\tau)^a  \bar Y_H^1\bigr)_{L^2}\rmd\tau\big|
 +\big|\int_0^t\bigl( {\mathcal T}_3 | (4a+\tau)^a  \bar Y_H^1\bigr)_{L^2}\rmd\tau\big|  \\
&\quad+\big|\int_0^t\bigl( {\mathcal T}_4 | (4a+\tau)^a  \bar Y_H^2\bigr)_{L^2}\rmd\tau\big|
 +\big|\int_0^t\bigl( {\mathcal T}_5 | (4a+\tau)^a  \bar Y_H^2\bigr)_{L^2}\rmd\tau\big|.	
\end{split}
\end{align}
We begin with terms containing $\mathcal{T}_1$. Similar to the estimate of $I_1$ (see \eqref{I1-es}), we have
\begin{align}\label{I1-0es}
\begin{split}
&\big|\int_0^t\bigl(  {\mathcal T}^i_{11} | (4a+\tau)^a \bar Y_H^i\bigr)_{L^2}\rmd\tau\big|\\&\lesssim\int_0^t(4a+\tau)^a \big(\|\bar{\p}_2\p_\tau\bar{Y}\|_{L^2}	\|\p_1\bar{Y}_H\|_{L^2}+\|A^\top A-I\|_{H^2}\|\nabla\p_\tau\bar{Y}\|_{L^2}\|\p_1\bar{Y}_H\|_{L^2}\big)\rmd\tau\\
&\lesssim (\eps_0+\mathcal{H}_a^\f12(t)+\mathcal{H}_a(t))\mathcal{W}_0(t).	
\end{split}
\end{align}
To deal with terms containing $\mathcal{T}_{12}$, we use the same idea as that used to estimate $I_2$ when $|\alpha|=a$, namely, integration by parts with respect to $t$. The difference is that we will have one more term than before since we have $(4a+\tau)^a$ in the integral. More precisely, taking the first term of $\mathcal{T}_{12}$ as an example, we compute
\begin{align*}
&\bigl( \bar\p_{2}\big((1+(A^\top A-I)_{11})\p_1\p_\tau\bar Y^i_H\big) |  (4a+\tau)^a\bar Y_H^i\bigr)_{L^2}\\
&=\frac{\rmd}{\rmd\tau}\bigl( \bar\p_{2}\big((1+(A^\top A-I)_{11})\p_1\bar Y^i_H\big) |  (4a+\tau)^a\bar Y_H^i\bigr)_{L^2}-\bigl( \bar\p_{2}\big(\p_\tau(A^\top A-I)_{11}\p_1\bar Y^i_H\big) |  (4a+\tau)^a\bar Y_H^i\bigr)_{L^2}\\
&\quad-\bigl( \bar\p_{2}\big((1+(A^\top A-I)_{11})\p_1\bar Y^i_H\big) |  (4a+\tau)^a\p_\tau Y_H^i\bigr)_{L^2} \\
&\quad-a(4a+\tau)^{a-1}\bigl( \bar\p_{2}\big((1+(A^\top A-I)_{11})\p_1\bar Y^i_H\big) |  \bar Y_H^i\bigr)_{L^2}.
\end{align*}
Compared to \eqref{T12-1}, the last term in the above equality is the extra term. Using integration by parts, the H\"older inequality and the Poincar\'e inequality, we estimate the last term by
\begin{align*}
&a(4a+\tau)^{a-1} \big|\bigl( \bar\p_{2}\big((1+(A^\top A-I)_{11})\p_1\bar Y^i_H\big) |  \bar Y_H^i\bigr)_{L^2}\big| \\
&\lesssim \big(4a+\tau)^{a-1}(\eps_0+\|A^\top A-I\|_{H^a}\big) \|\p_1\bar Y_H\|_{L^2}\|\nabla\bar Y_H\|_{L^2}.
\end{align*}
Integrating in time from $0$ to $t$ and using the same method as for $I_2$, one has
\begin{align*}
&\big|\int_0^t(\mathcal{T}_{12}^i |(4a+\tau)^a  \bar Y_H^i\bigr)_{L^2}\rmd\tau\big|\\
&\lesssim \big(\epsilon_0+\|A^\top A-I\|_{H^a}(0)\big)\|\nabla\bar Y_H\|_{L^2}^2(0)
+\big(\epsilon_0+\|A^\top A-I\|_{H^a}(t)\big)(4a+t)^a\|\nabla\bar Y_H\|_{L^2}^2(t)\\
&\quad+\int_0^t\eps_0(4a+\tau)^a\|\p_\tau(A^\top A-I)\|_{L^2}\|\p_1\bar Y_H\|_{L^2}\|\nabla\bar Y_H\|_{H^a}\rmd\tau\\
&\quad+\int_0^t\big(\eps_0+\|A^\top A-I\|_{H^a}\big)\big((4a+\tau)^a\|\p_1\bar Y_H\|_{L^2}\|\nabla\p_\tau\bar Y_H\|_{L^2}
+(4a+\tau)^{a-1}\|\p_{b_0^1}\bar Y_H\|_{L^2}\|\na \bar Y_H\|_{L^2} \big)\rmd\tau.
\end{align*}
Using \eqref{es-ptAAI} with $s=0$, \eqref{es-AAI} and \eqref{interpo}, we have
\begin{align}\label{I2-0es}
\begin{split}
&\big|\int_0^t(\mathcal{T}_{12}^i |  (4a+\tau)^a\bar Y_H^i\bigr)_{L^2}\rmd\tau\big|\\&\lesssim \mathcal{H}_a(0)+\mathcal{H}_a^2(0)
+\big(\epsilon_0+\mathcal{H}_a^\frac12(t)+\mathcal{H}_a(t)\big)
\big(
\mathcal{W}_0(t)+\mathcal{H}_a(t)\big).	
\end{split}
\end{align}
The combination of \eqref{I1-0es} with \eqref{I2-0es} gives rise to
\begin{align}\label{I-0es}
\begin{split}
&\big|\int_0^t\bigl( {\mathcal T}^i_1 |(4a+\tau)^a   \bar Y_H^i\bigr)_{L^2}\rmd\tau\big|\\&\lesssim \mathcal{H}_a(0)+\mathcal{H}_a^2(0)
+\big(\epsilon_0+\mathcal{H}_a^\frac12(t)+\mathcal{H}_a(t)\big) \big(
\mathcal{W}_0(t)+\mathcal{H}_a(t)\big).	
\end{split}
\end{align}

For the second term on the right hand side of \eqref{tldFw0-es1},  we argue as in the estimates of $\tilde{J}_3$ and $\tilde{J}_5$ 
 to derive that
\begin{align}\label{T2-dec}
\begin{split}
&\big|	\int_0^t\bigl(  \mathcal{\tilde T}_{2}| (4a+\tau)^a  \bar Y_H^1\bigr)_{L^2}\rmd\tau\big|
\lesssim 2\Big|\int_0^t(4a+\tau)^a \left( \p_2r_1(r_2\p_2\p_t\bar{Y}_H^1)| \bar Y_H^1\right)_{L^2}\rmd\tau\Big|\\
&+\Big| \left( (4a+\tau)^a r_1r_2 \p_2\bar{Y}_H^1|   \p_2 \bar Y_H^1\right)_{L^2} \big|_{\tau=0}^{\tau=t} \Big|
+\Big| \int_0^t a( 4a+\tau)^{a-1} \|r_1r_2\|_{L^\infty} \|\p_2\bar{Y}_H^1\|^2_{L^2}\rmd\tau \Big|.
\end{split}
\end{align}
For the first and second terms on right hand side of   \eqref{T2-dec}, they are bounded by
 \begin{align*}
\begin{split}
\big(\|r_1\|_{L^\infty}+\|\nabla r_1\|_{H^{2}}\big)\|r_2\|_{L^\infty}
\big(\mathcal{W}_0(0) +\mathcal{W}_0(t)\big).
\end{split}
\end{align*}
For the third term on right hand side of \eqref{T2-dec}, by using \eqref{interpo}, it is further bounded by
 \begin{align*}
\begin{split}
 \|r_1 r_2\|_{L^\infty}\big(\mathcal{W}_0(t) +\mathcal{H}_a(t)\big).
\end{split}
\end{align*}
Taking $r_1$ and $r_2$ as in the estimate of $J$,
 we have
\begin{align}\label{J-0es}
\begin{split}
\big|\int_0^t\bigl(  \mathcal{T}_2| (4a+\tau)^a \bar Y_H^1\bigr)_{L^2}\rmd\tau\big|\lesssim
\eps_0\big(\mathcal{W}_0(0)+ \mathcal{W}_0(t)+\mathcal{H}_a(t)\big).	
\end{split}
\end{align}
To estimate the third and last terms on the right hand side of \eqref{tldFw0-es1},
we follow the calculation of $\tilde{K}_3$, $\tilde{K}_{5}$, $\tilde{N}_3$, $\tilde{N}_4$ with $|\alpha|=0$ and \eqref{T2-dec},
then use H\"older inequality, Sobolev embedding, Poincar\'e inequality, Lemma \ref{lem-p2Y2-1} and \eqref{interpo} to derive that
\begin{align*}
& \big|\int_0^t\bigl( \tilde{\mathcal T}_3 | (4a+\tau)^a  \bar Y_H^1\bigr)_{L^2}\rmd\tau\big|+\big|\int_0^t\bigl( \tilde{\mathcal T}_5 | (4a+\tau)^a  \bar Y_H^2\bigr)_{L^2}\rmd\tau\big|
\lesssim	\big(\|r_3\|_{L^\infty}+\|\na r_3\|_{L^{\infty}}\big)\|r_4\|_{L^\infty}\\
&\times\Big\{\int_0^t(4a+\tau)^a\|A^\top A-I\|_{H^a}\|\na\p_\tau\bar{Y}_H\|_{L^2}\|\p_1\bar{Y}_H\|_{L^2}\rmd\tau\\
&\qquad+\|A^\top A-I\|_{H^a}(0)\|\na\bar Y_H\|^2_{L^2}(0)+\|A^\top A-I\|_{H^a}(t)(4a+t)^a\|\na\bar Y_H\|^2_{L^2}(t)\\
&\qquad+\int_0^t(4a+\tau)^a\|\p_\tau(A^\top A-I)\|_{L^2}\|\na\bar{Y}_H\|_{L^2}  \|\partial_1\bar{Y}_H\|_{H^a} \rmd\tau
\\&\qquad+\int_0^t(4a+\tau)^{a-1}\|A^\top A-I\|_{H^a}\|\na\bar{Y}_H\|_{L^2}^2\rmd\tau\Big\}.
\end{align*}
Applying \eqref{es-AAI}, \eqref{es-ptAAI} and \eqref{interpo} to the above equality, then taking $r_3$ and $r_4$ as in \eqref{r3r4-def}, we obtain
\begin{align}\label{K-0es}
\begin{split}
&\big|\int_0^t\bigl( {\mathcal T}_3 | (4a+\tau)^a  \bar Y_H^1\bigr)_{L^2}\rmd\tau\big|
+\big|\int_0^t\bigl( {\mathcal T}_5 | (4a+\tau)^a  \bar Y_H^1\bigr)_{L^2}\rmd\tau\big|\\
&\lesssim (\eps_0+\mathcal{H}^{\f12}_a(0)+\mathcal{H}_a(0))\mathcal{W}_0(0) +\big(\epsilon_0+\mathcal{H}_a^\frac12(t)
+\mathcal{H}_a(t)\big) \big(\mathcal{W}_0(t)+\mathcal{H}_a(t)\big).	
\end{split}
\end{align}
It remains to estimate the forth term on the right hand side of \eqref{tldFw0-es1}. Along the same lines as $\tilde{N}_3$ and $\tilde{N}_4$ and by using \eqref{interpo},
one has
\begin{align}\label{NS-0es}
\begin{split}
& \big|\int_0^t\bigl( \tilde{\mathcal T}_4 | (4a+\tau)^a  \bar Y_H^2\bigr)_{L^2}\rmd\tau\big|
\lesssim	\big(\|r_1\|_{L^\infty}+\|\na r_1\|_{L^{\infty}}\big)\|r_2\|_{L^\infty}
(1+\| \nabla \bar{Y}_H(t)\|_{L^\infty})^2 \\
&\qquad\times\Big\{\int_0^t(4a+\tau)^a
\big( \|\p_1\bar{Y}_H\|_{L^2}^2+ \|\p_\tau\na\bar{Y}_H\|_{L^2}^2 \big)\rmd\tau
+\int_0^t(4a+\tau)^{a-1}\|\na\bar{Y}_H\|_{L^2}^2\rmd\tau\\
&\qquad\qquad+\|\na\bar Y_H\|^2_{L^2}(0)+(4a+t)^a\|\na\bar Y_H\|^2_{L^2}(t) \Big\}\\
&\lesssim (\eps_0+\mathcal{H}^{\f12}_a(0)+\mathcal{H}_a(0))\mathcal{W}_0(0)  +\big(\epsilon_0+\mathcal{H}_a^\frac12(t)
+\mathcal{H}_a(t)\big) \big(\mathcal{W}_0(t)+\mathcal{H}_a(t)\big).
\end{split}
\end{align}
%
By \eqref{I-0es}, \eqref{J-0es}, \eqref{K-0es} and \eqref{NS-0es}, we get
\begin{align}\label{mcalFw0-es3}
\begin{split}
&\big|\int_0^t\bigl(  \widetilde{\mathcal F} |  (4a+\tau)^a  \bar{Y}_H\bigr)_{L^2}\rmd\tau\big|\\
&\lesssim (\eps_0+\mathcal{H}^{\f12}_a(0)+\mathcal{H}_a(0))\mathcal{W}_0(0) +\big(\epsilon_0+\mathcal{H}_a^\frac12(t)
+\mathcal{H}_a(t)\big)\big(\mathcal{W}_0(t)+\mathcal{H}_a(t)\big).
\end{split}
\end{align}

Next, we estimate the second line of \eqref{mcalFw0-es1}.
By using Lemma \ref{lem-tldna} and Lemma \ref{lem-p}, one has
\begin{align*}
\begin{split}
\|A\nabla_Z p-\nabla q\|_{L^2}&\leq \|(A-I)\nabla_Z p\|_{L^2}+\|\nabla_Zp-\nabla p\|_{L^2}+\|\nabla p-\nabla q\|_{L^2}\\
&\lesssim \big(\eps_0+\|\p_{b_0^1}\bar Y_H\|_{H^a}+\|\nabla \bar Y_H\|_{H^a}\big)\big(\|\p_{b_0^1}\bar{Y}_H\|_{L^2}+\|\na\p_{t}\bar{Y}_H\|_{L^2}\big)\\& \quad+\big(\|\p_{t}\bar{Y}_H\|_{H^a}+\|\p_{t}\bar{Y}_L^2\|_{H^a}\big)\|\nabla \p_{t}\bar{Y}_H\|_{L^2}.
\end{split}
\end{align*}
Consequently,
by H\"older inequality and Poincar\'e inequality, one has
\begin{align} \label{p-w0-es}
\begin{split}
&\big|\int_{0}^{t}\bigl( (A\nabla_Z p-\nabla q) |(4a+\tau)^a (\p_\tau\bar Y_H+\f14\bar Y_H)\bigr)_{L^2}\rmd\tau\big| \\
&\lesssim \int_{0}^{t}\|A\nabla_Z p-\nabla q\|_{L^2}(4a+\tau)^a\big(\|\p_1\p_\tau\bar{Y}_H\|_{L^2}+\|\p_1\bar{Y}_H\|_{L^2}\big)\rmd\tau\\
&\lesssim (\eps_0+\mathcal{H}_a^\frac12(t)+\mathcal{L}_a^\frac12(t))\mathcal{W}_0(t).	
\end{split}
\end{align}
For the last line of \eqref{mcalFw0-es1}, it is clear from \eqref{mfkFa-es} that
\begin{align}\label{mfkw0-es}
\big|\int_{0}^{t}\bigl(\mathfrak f |(4a+\tau)^a (\p_\tau\bar Y_H^2+ \f14 \bar Y_H^2)\bigr)_{L^2}\rmd\tau\big|
=0.
\end{align}
Plugging \eqref{mcalFw0-es2}, \eqref{mcalFw0-es3}, \eqref{p-w0-es} and \eqref{mfkw0-es} into \eqref{mcalFw0-es1}, we derive that
\begin{align*}
\begin{split}
\mathcal{ F}_0^w(t)\lesssim &\mathcal{H}_a(0)+\mathcal{H}_a^2(0)
+\big(\epsilon_0+\mathcal{H}_a^\frac12(t)+\mathcal{H}_a^2(t)+\mathcal{L}_a^\frac12(t)\big)\mathcal{W}_0(t)\\
&+\epsilon_0\mathcal{H}_a(t)+\mathcal{H}_a^\frac32(t)+\mathcal{H}_a^2(t).		
\end{split}
\end{align*}
Combining the above estimate of $\mathcal{ F}_0^w(t)$ with \eqref{W0es-2} yields \eqref{W0-es}.
\end{proof}

\section{Proof of the main theorem}\label{sec-proof-main}
\begin{prop}\label{Prop-m}
Under the conditions of Theorem \ref{th1}, there exist  sufficiently small constants $\delta\in (0,1)$ and $\eps_0\in (0,1)$, such that if $\mathcal{H}_a(t)+\mathcal{L}_a(t)\leq \delta$, then
\begin{align}\label{HLW-es}
\begin{split}
&\mathcal{ H}_{a}(t)+\mathcal{W}_0(t)+\mathcal{L}_a(t)\leq M_0\big(\mathcal{H}_a(0)+\mathcal{H}_a^3(0)+\mathcal{L}_a(0)\big)\\
&\qquad+M_0\big(\mathcal{H}_a^\frac12(t)+\mathcal{H}_a^3(t)
+\mathcal{L}_a^\frac12(t)\big)\big(\mathcal{H}_a(t)+\mathcal{W}_0(t)+\mathcal{L}_a(t)\big),	
\end{split}	
\end{align}
where $M_0$ is a positive constant.
\end{prop}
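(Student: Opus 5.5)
Proposition \ref{Prop-m} follows by a weighted linear combination of the three a priori estimates already established: \eqref{HaEs} combined with Proposition \ref{Prop-non} for $\mathcal H_a$, \eqref{La-es} for $\mathcal L_a$, and \eqref{W0-es} for $\mathcal W_0$. The only subtlety is the linear coupling. The bound for $\mathcal W_0$ contains $\mathcal H_a(t)$ with coefficient $1+\frac{C_{\mathcal W}}{2}\epsilon_0$, which is not small. The bound for $\mathcal L_a$ contains $\mathcal W_0+\mathcal H_a+\mathcal L_a$, but only multiplied by $\epsilon_0$ or by $\mathcal H_a^{1/2}+\mathcal L_a^{1/2}$. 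So the order is: close $\mathcal H_a$ first, then feed it into $\mathcal W_0$, then treat $\mathcal L_a$.

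\textbf{Step 1 ($\mathcal H_a$).} Insert \eqref{Fa-es} into \eqref{HaEs} to get
\begin{align*}
\mathcal H_a(t)\le C_a C\big(\mathcal H_a(0)+\mathcal H_a^3(0)\big)+C_aC\epsilon_0\mathcal H_a(t)+C_aC\big(\mathcal L_a^{1/2}\mathcal H_a+\mathcal H_a^{3/2}+\mathcal H_a^4\big)(t).
\end{align*}
Choose $\epsilon_0$ so small that $C_aC\epsilon_0\le \frac12$ and absorb that term into the left side. This gives
\begin{align*}
\mathcal H_a(t)\le 2C_aC\big(\mathcal H_a(0)+\mathcal H_a^3(0)\big)+2C_aC\big(\mathcal H_a^{1/2}+\mathcal H_a^3+\mathcal L_a^{1/2}\big)\mathcal H_a(t),
\end{align*}
using $\mathcal H_a^4=\mathcal H_a^3\cdot\mathcal H_a$ and $\mathcal H_a^{3/2}=\mathcal H_a^{1/2}\cdot\mathcal H_a$.

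\textbf{Step 2 ($\mathcal W_0$).} In \eqref{W0-es}, take $\epsilon_0$ so small that $\frac{C_{\mathcal W}}{2}\epsilon_0\le\frac12$ and absorb $\mathcal W_0$ into the left side. The remaining $2(1+\frac{C_{\mathcal W}}2\epsilon_0)\mathcal H_a(t)$ is then replaced by the Step 1 bound. The term $\mathcal H_a^2(t)$ on the right is at most $\mathcal H_a^{1/2}+\mathcal H_a^3$ because $\delta<1$. The result is
\begin{align*}
\mathcal W_0\lesssim \mathcal H_a(0)+\mathcal H_a^3(0)+\big(\mathcal H_a^{1/2}+\mathcal H_a^3+\mathcal L_a^{1/2}\big)(\mathcal H_a+\mathcal W_0),
\end{align*}
where $\mathcal H_a^2(0)\le \mathcal H_a(0)+\mathcal H_a^3(0)$.

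\textbf{Step 3 ($\mathcal L_a$) and summation.} In \eqref{La-es}, the term $\frac{C_{\mathcal L}}2\epsilon_0(\mathcal W_0+\mathcal H_a+\mathcal L_a)$ is the delicate one. Its $\mathcal L_a$ part is absorbed once $\epsilon_0$ is small. Its $\epsilon_0(\mathcal W_0+\mathcal H_a)$ part is handled with the bounds from Steps 1--2: this yields $\epsilon_0$ times initial data plus $\epsilon_0$ times the nonlinear quantities, and since $\epsilon_0<1$ the constant is merely enlarged. Summing the three resulting bounds with constant weights gives \eqref{HLW-es} with a suitable $M_0$ depending on $C_a,C,C_{\mathcal W},C_{\mathcal L}$.

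The main obstacle is bookkeeping of the constants, so that every smallness requirement on $\epsilon_0$ is imposed before $M_0$ is fixed. The non-small coefficient of $\mathcal H_a$ in \eqref{W0-es} is the specific point where a naive summation would fail. It is avoided only by closing $\mathcal H_a$ independently first, which relies on Proposition \ref{Prop-non} not involving $\mathcal W_0$.
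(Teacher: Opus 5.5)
Your argument is correct and is essentially the paper's proof. The paper also inserts \eqref{Fa-es} into \eqref{Ha-es} and fixes $(C_{\mathcal H}+C_{\mathcal W}+C_{\mathcal L})\epsilon_0\le 1$ to absorb the $\epsilon_0$-terms, then handles the non-small $3\mathcal H_a$ in the $\mathcal W_0$ bound and the $(\mathcal W_0+\mathcal H_a)$ term in the $\mathcal L_a$ bound by the weighted sums $4\times$\eqref{HaLW-es1} plus \eqref{WHL-es}, and then \eqref{LHW-es} plus $2\times$\eqref{H+W-es}; your substitution of the closed $\mathcal H_a$ bound is just an equivalent way of carrying out these weighted combinations.
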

\begin{proof}
By \eqref{Ha-es} and \eqref{Fa-es}, one has
\begin{align}\label{HaLW-es}
\begin{split}
\mathcal{ H}_{a}(t)\leq\frac{C_{\mathcal{H}}}{2}\epsilon_0\mathcal{H}_a(t)+ \frac{C_{\mathcal{H}}}{2}\big(\mathcal{ H}_{a}(0)+\mathcal{H}_a^3(0)\big)+\frac{C_{\mathcal{H}}}{2}\big(\mathcal{L}_a^\frac12(t)+\mathcal{H}_a^\frac12(t)+\mathcal{H}_a^3(t)\big)\mathcal{H}_a(t),	
\end{split}
\end{align}
for a constant $C_{\mathcal{H}}$.
Let us take $\eps_0$ to satisfy
\begin{align*}
(C_\mathcal{H}+C_\mathcal{W}+C_\mathcal{L})\eps_0\leq 1.
\end{align*}
Then, we derive from \eqref{HaLW-es}, \eqref{W0-es} and \eqref{La-es} that
\begin{align}\label{HaLW-es1}
\mathcal{ H}_{a}(t)\leq C_{\mathcal{H}}\big(\mathcal{ H}_{a}(0)+\mathcal{H}_a^3(0)\big)+C_{\mathcal{H}}\big(\mathcal{L}_a^\frac12(t)+\mathcal{H}_a^\frac12(t)+\mathcal{H}_a^3(t)\big)\mathcal{H}_a(t),
\end{align}
and
\begin{align}\label{WHL-es}
\begin{split}
\mathcal{W}_0(t)\leq  &3\mathcal{H}_a(t)+C_{\mathcal{W}}\big(\mathcal{H}_a(0)+\mathcal{H}_a^2(0)\big)\\&+C_{\mathcal{W}}\big(\mathcal{H}_a^\frac12(t)+\mathcal{H}_a^2(t)+\mathcal{L}_a^\frac12(t)\big)\big(\mathcal{H}_a(t)+\mathcal{W}_0(t)\big),
\end{split}
\end{align}
and
\begin{align}\label{LHW-es}
\begin{split}
\mathcal{L}_a(t)\leq& (\mathcal{W}_0(t)+\mathcal{H}_a(t))+C_{\mathcal{L}}\mathcal{L}_a(0)+C_{\mathcal{L}}(\mathcal{H}_a^\frac12(t)+\mathcal{L}_a^\frac12(t))\big( \mathcal{W}_0(t)+\mathcal{H}_a(t)+\mathcal{L}_a(t)\big),	
\end{split}
\end{align}
Taking the sum of 4$\cdot$\eqref{HaLW-es1} and \eqref{WHL-es}, we have
\begin{align}\label{H+W-es}
\begin{split}
\mathcal{ H}_{a}(t)+\mathcal{W}_0(t)\leq& (4C_{\mathcal{H}}+2C_{\mathcal{W}})\big(\mathcal{H}_a(0)+\mathcal{H}_a^3(0)\big)\\&+(4C_{\mathcal{H}}+2C_{\mathcal{W}})\big(\mathcal{H}_a^\frac12(t)+\mathcal{H}_a^3(t)+\mathcal{L}_a^\frac12(t)\big)\big(\mathcal{H}_a(t)+\mathcal{W}_0(t)\big).	
\end{split}
\end{align}
Summing up \eqref{LHW-es} and 2$\cdot$\eqref{H+W-es}, one has
\begin{align*}
&\mathcal{ H}_{a}(t)+\mathcal{W}_0(t)+\mathcal{L}_a(t)\leq(8C_{\mathcal{H}}+4C_{\mathcal{W}}
+C_\mathcal{L})\big(\mathcal{H}_a(0)+\mathcal{H}_a^3(0)+\mathcal{L}_a(0)\big)\\
&+(8C_{\mathcal{H}}+4C_{\mathcal{W}}+C_{\mathcal{L}})\big(\mathcal{H}_a^\frac12(t)
+\mathcal{H}_a^3(t)+\mathcal{L}_a^\frac12(t)\big)\big(\mathcal{H}_a(t)
+\mathcal{W}_0(t)+\mathcal{L}_a(t)\big).	
\end{align*}
This proves \eqref{HLW-es} if we take $M_0=8C_{\mathcal{H}}+4C_{\mathcal{W}}+C_\mathcal{L}$.
\end{proof}

The following local well-posedness result is standard.

\begin{prop}\label{prop-local}
Let $a\geq 3$. Assume that 
 $$0<m'\leq b_0^1\leq M',$$
 for some positive constants  $m', M'$ and 
 $Y_t (0,z), \nabla Y(0,z), \nabla b_0\in H^a.$
Then system  \eqref{equ-Y} with initial data $(Y_t(0,z), Y(0,z))$ has a unique local solution on $[0,T]$ for some $T>0$ such that
$$
\partial_tY,\, \partial_1Y,\, \nabla Y
\in C([0,T];H^a),\qquad\nabla\partial_tY\in L^2(0,T;H^a).$$
Moreover, if $T^*$ is the life span for this solution and $T^*<+\infty$, one has
$$\int_0^{T^*}\|\nabla_ZY_t(t)\|_{L^\infty}+\|\p_{b_0^1}Y(t)\|_{L^\infty}\,\mathrm dt=+\infty.$$
\end{prop}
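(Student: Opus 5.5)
The local well-posedness statement is proved by the standard quasilinear parabolic--hyperbolic scheme. I would not work in the $z$-variables with the full nonlinear structure. Instead I would construct the solution by iteration on the Lagrangian system \eqref{equ-Y}, viewed as a damped wave system $Y_{tt}-\nabla_Z\cdot(A^\top A\nabla_Z Y_t)-\p_{b_0^1}^2Y=\p_{b_0^1}b_0(y(z))-A\nabla_Z p$, with the pressure determined by the elliptic equation of Lemma \ref{pres-decomp}. Since $\nabla_Z=B\nabla_z$ with $B$ close to $I$ (Lemma \ref{lem-nazy}), and $A^\top A$ is uniformly elliptic as long as $\|\nabla Y\|_{L^\infty}$ stays small on a short time interval, the operator $-\nabla_Z\cdot(A^\top A\nabla_Z\,\cdot)$ is uniformly elliptic. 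Equivalently, I could pass back to Eulerian variables and invoke the classical local theory for viscous non-resistive MHD in $H^a$, which is a Navier--Stokes equation coupled with a transport equation for $b$. I would then transfer the result through the flow map \eqref{flow} and the change of variables \eqref{mapyz}, whose regularity uses $\nabla b_0\in H^a$ and $0<m'\le b_0^1\le M'$.

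The steps, in order, are as follows. (i) Linearize: given an iterate $Y^{(n)}$ with the regularity stated in Proposition \ref{prop-local}, freeze $A^{(n)}$ and $B$ in the coefficients and solve the linear problem for $Y^{(n+1)}$. The pressure $p^{(n)}$ is obtained from the divergence-form elliptic equation, so that the linearized incompressibility $\nabla_Z\cdot(A^{(n)}\,\cdot)$ is preserved. (ii) Derive the uniform energy bound for $\|\p_tY\|_{H^a}^2+\|\p_{b_0^1}Y\|_{H^a}^2+\|\nabla Y\|_{H^a}^2+\int_0^t\|\nabla\p_tY\|_{H^a}^2$. To do this, test the $\p^\alpha$-differentiated equation with $\p^\alpha\p_tY$, as in Step 2 of Proposition \ref{prop3}. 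The commutators with $b_0^1(y(z))$ and with $A^\top A$ are controlled by $\|\nabla b_0\|_{H^a}$, and by Sobolev embedding since $a\ge3$. The pressure is bounded as in Lemma \ref{lem-p}. A Gronwall argument then gives a bound on a short interval $[0,T]$ depending only on the data. (iii) Show contraction in a lower norm, such as $H^{a-1}$, yielding existence and uniqueness, and recover time continuity in $H^a$ by the usual weak-continuity plus energy-equality argument.

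For the continuation criterion, I would redo the $H^a$ energy estimate for the actual solution, keeping track of which factors appear linearly. All top-order commutator and pressure terms should be bounded by $C(1+\|\nabla_ZY_t\|_{L^\infty}+\|\p_{b_0^1}Y\|_{L^\infty})\,E(t)$, where $E$ is the $H^a$ energy, via Kato--Ponce/Moser-type product estimates. Gronwall then shows that $E$ stays finite if the integral $\int_0^{T^*}\|\nabla_ZY_t\|_{L^\infty}+\|\p_{b_0^1}Y\|_{L^\infty}\,\rmd t$ is finite. Hence the solution can be continued past $T^*$ using the local existence time.

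The main obstacle is the highest-order pressure term together with the lack of dissipation on $Y$ itself: $b$ is merely transported, which corresponds to the hyperbolic part $\p_{b_0^1}^2Y$. This risks a loss of derivatives in $\nabla_Z\cdot(A^\top A\nabla_ZY_t)$ when $\p^\alpha$ hits $A$. I would handle this as in the estimate of $\tilde K_5$: integrate by parts in time, using $\p_tA\sim\nabla_ZY_t$, and use the nondivergence form \eqref{p2-z2} to avoid differentiating $\Pi$. This is exactly where $\|\nabla_ZY_t\|_{L^\infty}$ enters the blow-up criterion.
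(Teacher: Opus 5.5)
\textbf{The gap is in the continuation criterion.} The paper gives no proof of this proposition; it only calls it standard. Your existence sketch is therefore at the expected level of detail, but the blow-up alternative does not follow from your argument.

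\textbf{The viscous commutator.} Your claimed bound of all top-order terms by $C(1+\|\nabla_ZY_t\|_{L^\infty}+\|\partial_{b_0^1}Y\|_{L^\infty})E(t)$ already fails for the quasilinear viscous term. Testing $\partial^\alpha$ of \eqref{equ-Y}, $|\alpha|=a$, against $\partial^\alpha Y_t$ produces the piece $(\partial^\alpha(A^\top A)\nabla_ZY_t\,|\,\partial^\alpha\nabla_ZY_t)_{L^2}$. Its size is $C(\|\nabla Y\|_{L^\infty})\|\nabla Y\|_{H^a}\|\nabla_ZY_t\|_{L^\infty}\|\nabla Y_t\|_{H^a}$.
\begin{itemize}
\item The last factor is dissipation, not part of $E$.
\item No derivative can be moved off it without putting $a+2$ derivatives on $Y$.
\item Integrating by parts in time only trades the problem for $\nabla Y_{tt}$.
\end{itemize}
Absorbing it with Young's inequality leaves the Gronwall weight $\|\nabla_ZY_t\|_{L^\infty}^2$, not the first power. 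The usual Kato--Ponce bound for the intermediate commutators also brings in $\|\nabla(A^\top A)\|_{L^\infty}\sim\|\nabla^2Y\|_{L^\infty}$.

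\textbf{The $\tilde K_5$ device does not apply here.} In the paper it treats the cross term generated by the multiplier $\frac14\bar Y_H$, and it relies on the smallness $\mathcal H_a\le\delta$ and on \eqref{exp-ptp2Y2}. With the multiplier $\partial^\alpha Y_t$ the principal viscous term is coercive; the obstruction is the commutator above.

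\textbf{The pressure.} By \eqref{p2-z2}, equivalently $\Delta_xp=\partial_ib^j\partial_jb^i-\partial_iu^j\partial_ju^i$, the pressure is quadratic in $\nabla\partial_{b_0^1}Y$ and $\nabla Y_t$. A bound for $\|\nabla p\|_{H^a}$ linear in $E$ therefore needs $\|\nabla\partial_{b_0^1}Y\|_{L^\infty}$, which is the Lagrangian form of $\|\nabla_xb\|_{L^\infty}$. The quantity $\|\partial_{b_0^1}Y\|_{L^\infty}$ cannot play that role: it is already bounded by $C(\|\nabla Y(0)\|_{L^\infty}+\int_0^t\|\nabla_ZY_t\|_{L^\infty}\,\mathrm d\tau)$.

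\textbf{A workable route.} Use the Eulerian approach you mention only in passing. The $H^a$ estimate for $(u,b-(\xi,0)^\top)$ has no pressure contribution, since $\mathrm{div}_xu=0$, and its top-order magnetic terms cancel. It therefore closes with the weight $1+\|\nabla_xu\|_{L^\infty}+\|\nabla_xb\|_{L^\infty}$. Pull this back through \eqref{flow} and \eqref{mapyz}, using that $\|\nabla_yX\|_{L^\infty}$ and hence $\|A\|_{L^\infty}$ are controlled by $\int\|\nabla_ZY_t\|_{L^\infty}$. You obtain the alternative with $\|\nabla\partial_{b_0^1}Y\|_{L^\infty}$ in place of $\|\partial_{b_0^1}Y\|_{L^\infty}$. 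This is exactly how the paper argues in the proof of Theorem \ref{MTH}. The continuity argument for Theorem \ref{th1} needs only an existence time depending on the $H^a$ size of the data, not the blow-up alternative.

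\textbf{Secondary issues if you stay Lagrangian.}
\begin{itemize}
\item You cannot borrow Lemma \ref{lem-nazy} or Lemma \ref{lem-p}. They use smallness of $b_0^2$ and of $\mathcal H_a+\mathcal L_a$ to absorb $\nabla p_1$, and Proposition \ref{prop-local} assumes neither. You need a genuine variable-coefficient elliptic estimate instead.
\item Lemma \ref{pres-decomp} uses identities valid only for true solutions. The iterate's pressure must instead come from a linear Stokes-type problem enforcing $\nabla_{Y^{(n)}}\cdot\partial_tY^{(n+1)}=0$.
\end{itemize}
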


Now we prove Theorem \ref{th1}.
\begin{proof}[Proof of Theorem \ref{th1}:]
The local well-posedness of \eqref{linearsys} is stated in Proposition \ref{prop-local}.
To extend the local solution to a global one, we only need to prove the global \textit{a priori} estimate.
Indeed, according to Proposition \ref{prop3}, Proposition \ref{Fa-es} and Proposition \ref{Prop-m},
we get that if $\mathcal{ H}_{a}(t)+\mathcal{W}_0(t)+\mathcal{L}_a(t)\leq \delta$
for sufficiently small $\delta>0$,
 then
\begin{align}\label{HLW-es0}
\begin{split}
&\mathcal{ H}_{a}(t)+\mathcal{W}_0(t)+\mathcal{L}_a(t)\leq M_0\big(\mathcal{H}_a(0)+\mathcal{H}_a^3(0)+\mathcal{L}_a(0)\big)\\
&\quad+M_0\big(\mathcal{H}_a^\frac12(t)+\mathcal{H}_a^3(t)
+\mathcal{L}_a^\frac12(t)\big)\big(\mathcal{H}_a(t)+\mathcal{W}_0(t)+\mathcal{L}_a(t)\big)\\
&\leq
M_0\big(2\mathcal{H}_a(0)+\mathcal{L}_a(0)\big)
+M_0\big(\mathcal{H}_a(t)+\mathcal{W}_0(t)+\mathcal{L}_a(t)\big)^{\frac32}.
\end{split}	
\end{align}
In the sequel, we will complete the proof by using the standard  continuity argument.
Note that $\bar{Y}(0,z)=-\widetilde{Y}(z)$, $\p_t\bar{Y}(0,z)=u_0(y(z))$,
$\|u_0\|_{H^a}\leq \epsilon_0,\,\,\|b_0-(\xi,0)^\top\|_{H^{a+1}}\leq \epsilon_0$,
 thus by Lemma \ref{lem-p2Y2}, Lemma \ref{lemA}, Lemma \ref{lem-nazy}, we calculate
\begin{align}\label{IniEne}
\begin{split}
&\mathcal{H}_a(0)\leq \tilde{C} \|u_0\|^2_{H^a}+\tilde{C} \|b_0-(\xi,0)\|^2_{H^{a+1}}
 \leq C_1 \epsilon^2_0. \\
&\mathcal{L}_a(0)\leq C_2 \|u_0\|^2_{H^a} \leq C_2 \epsilon_0^2\,.
\end{split}
\end{align}
where $\tilde{C}$ and $C_1$ depend on $m, M, L, \epsilon_0$.
Let us take
$$ C=3M_0(2C_1+C_2),\,\, \epsilon_0\leq \f{1}{3\sqrt{2}}M_0^{-1}C^{-\f12}.$$
We assume the energy ansatz $\mathcal{H}_{a}(t)+\mathcal{W}_0(t)+\mathcal{L}_a(t)\leq C\epsilon_0^2$ for $t\in [0,T]$ where $T>0$.
By the continuity of the energy, there holds $\mathcal{H}_{a}(t)+\mathcal{W}_0(t)+\mathcal{L}_a(t)\leq 2C\epsilon_0^2$ in a slightly larger time interval which depends only on the initial energy. Then we deduce from \eqref{HLW-es0} and \eqref{IniEne}
that
\begin{align*}
\mathcal{H}_{a}(t)+\mathcal{W}_0(t)+\mathcal{L}_a(t)\leq C\epsilon_0^2.
\end{align*}
By the continuity argument, the above energy bound holds for all $t>0$
and the solution to \eqref{equ-Y} exists globally in time.
\end{proof}

\begin{proof}[Proof of Theorem \ref{MTH}]
Let $a\geq 2$. For the equation \eqref{MHD-shear-Euler} in Eulerian coordinates, a standard energy estimate yields
\begin{align*}
&\|u(t)\|_{H^a}^2+\|H(t)\|_{H^a}^2+\|\na_x u\|^2_{L^2(H^a)}
\leq \|u(0)\|_{H^a}^2+\|H(0)\|^2_{H^a} \\
&\quad +C\int_0^t \big( \|u(t)\|_{H^a}^2+\|H(t)\|_{H^a}^2\big)
\big( \|\na_x u(\tau)\|_{L^\infty}+\|\na_x b(\tau)\|_{L^\infty}
+\| \xi' \|_{H^{a}}\big)
 \rmd \tau.
\end{align*}
Then the Gronwall inequality yields
\begin{align*}
&\|u(t)\|_{H^a}^2+\|H(t)\|_{H^a}^2+\|\na_x u\|^2_{L^2H^a} \\
&\leq \big(\|u(0)\|_{H^a}^2+\|H(0)\|^2_{H^a}\big)
\exp{C\Big(  \int_0^t  \|\na_x  u(\tau)\|_{L^\infty}+\|\na_x  b(\tau)\|_{L^\infty}
\rmd \tau +t\| \xi' \|_{H^{a}}  \Big)}.
\end{align*}
To complete the proof of global existence in Theorem \ref{MTH}, it remains to prove that for any $t<T^\ast$, there holds
\begin{equation*}
\int_0^t  \|\na_x  u(\tau)\|_{L^\infty}+\|\na_x  b(\tau)\|_{L^\infty}
\rmd \tau<\infty.
\end{equation*}
  Indeed,   it
follows from the Lagrangian formulation \eqref{flow}, \eqref{eqbt} and the $a\ priori$ estimates in Theorem \ref{th1} that
\begin{align*}
&\int_0^t \left( \|\na_x  u(\tau)\|_{L^\infty}+\|\na_x  b(\tau)\|_{L^\infty}\right)
\rmd \tau \\
&\leq \int_0^t  \left(\|AB\na\p_t  Y(\tau,z)\|_{L^\infty}+\|AB\na  (b_0^1(y(z))\partial_1Y(\tau,z))\|_{L^\infty}\right)\rmd \tau \\
&\lesssim \int_0^t  \|\na \partial_t\bar Y_H(\tau,z)\|_{H^a}+  \| \partial_{b_0^1}\bar Y_H(\tau,z)\|_{ H^a}
+ \| \partial_{b_0^1}\widetilde Y(z)\|_{ H^a} \rmd \tau \\
&\lesssim \epsilon_0\big( (1+t)^{\f12}+t\big).
\end{align*}
Thus  Theorem \ref{MTH} is proved.
\end{proof}

\section*{Acknowledgment}
The authors would like to thank Professor Fang-Hua Lin for his invaluable insights and stimulating discussions
that helped improve the earlier version of this paper.
Cai was supported by NSFC grant (no. 12571247). Han was supported by the startup fund of Donghua University.  Zhao was supported in part by NSFC grants (no. 12301256 and no. 12471215).

\appendix

\titleformat{\section}[display]
{\LARGE\bfseries}{ }{11pt}{\LARGE}

\titleformat{\subsection}[display]
{\large\bfseries}{ }{10pt}{\large}

\renewcommand{\appendixname}{Appendix \, \Alph{section}}

\section{\appendixname}\label{append_B}

In this appendix, we discuss an alternative approach to introducing Lagrangian coordinates, see e.g. \cite{LXZ1,XZ,CHZ}.
To illustrate the main ideas, we only focus on the two-dimensional case. Let $(\xi(x_2),0)^\top$ be the background magnetic field bounded below by a positive constant.  Let $b_0=(b_0^1,b_0^2)^\top$ be close to $(\xi(x_2),0)^\top$. Define the trajectory $X(t,y)$ by
\begin{equation*}
\begin{cases}
\f{\rmd}{\rmd t} X(t,y)=u(t,X(t,y)),\\
X(0,y)=X_0(y),
\end{cases} y\in \bR^2.
\end{equation*}
Let $\phi(y)=\xi(X_0^2(y))$.  For $A=(\nabla_{y}X)^{-\top}$, one may choose $X_0(y)$ such that
\begin{align}\label{eq-com}
A^\top_0(y)b_0(X_0(y))=\phi(y) e_1.
\end{align}
Then by \eqref{eqbd}, we have
\begin{equation}\label{appd-b-exp}
b(t,X(t,y))=\phi(y)\nabla_y Xe_1=\phi(y)\p_{y_{_1}} X.
\end{equation}
The function $\phi(y)$ should be regarded as the perturbation of $\xi(y_2)$ since $X_0^2(y)$ is assumed to be close to $y_2$. In this way, it seems that one can  express $b$ in terms of the directional derivative of $X$ with respect to $y_1$. 
However, in the following lemma, we will show that \eqref{appd-b-exp} holds under
 a rather restrictive condition \eqref{rest-xi}.

The issue is the existence of an initial map $X_0(y)$ that satisfies \eqref{eq-com} together with the volume-preserving condition
$$\det(\nabla_y X_0(y))=1.$$
Inspired by \cite{LXZ1}, we define
\begin{align*}
\begin{split}
U_0(x)=\left(\begin{matrix}
\xi^{-1}b_0^1 & f^1\\\xi^{-1}b_0^2  & f^2
\end{matrix}\right),
\end{split}
\end{align*}
with $f(x)=( f^1(x), f^2(x))^\top$ being some smooth function defined in $\bR^2$. The initial map $X_0(y)$ and $U_0$ should satisfy
\begin{align}\label{app0}
U_0\circ X_0(y)=\na_y X_0(y),\quad \det U_0=1.
\end{align}
\begin{lemma}
Assume that $X_0(y),\,b_0,\,f,\,\xi$ are smooth and satisfy \eqref{app0}, and that
$\mathrm{div}\, b_0=0$, $\xi=\xi(x_2)$, $b_0^1>0$, $\xi>0$. Then
there holds
\begin{equation}\label{rest-xi}
b_0\cdot\na_x \xi= 0.
\end{equation}
Moreover, if $\xi$ is a nonzero constant, then there holds $$\mathrm{div}_x f= 0.$$
\end{lemma}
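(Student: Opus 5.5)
The plan is to turn the two conditions in \eqref{app0} into two pointwise statements about vector fields in the $x$-variables: a commutation condition and a constant-determinant condition. A planar vector identity then forces both fields to be divergence free.

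Set $V:=\xi^{-1}b_0$ and $W:=f$, so that $U_0=(V\,|\,W)$ column-wise. The relation $U_0\circ X_0=\nabla_yX_0$ says exactly that
\begin{align*}
\p_{y_1}X_0=V(X_0(y)),\qquad \p_{y_2}X_0=W(X_0(y)).
\end{align*}
Also $\det\nabla_yX_0=\det U_0\circ X_0=1$, so $X_0$ is a local diffeomorphism. I will assume, as is implicit in the construction, that $X_0$ is onto, so that identities established on the image of $X_0$ hold everywhere; otherwise the conclusion holds on the image.

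\textbf{Step 1 (commutation).} Differentiate the first relation in $y_2$ and the second in $y_1$, and use $\p_{y_1}\p_{y_2}X_0=\p_{y_2}\p_{y_1}X_0$ together with the chain rule. This gives
\begin{align*}
(W\cdot\nabla_x)V=(V\cdot\nabla_x)W\quad\text{at }X_0(y),
\end{align*}
that is, the Lie bracket $[V,W]$ vanishes.

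\textbf{Step 2 (planar identity).} For any planar fields $V,W$, a direct computation (the 2D version of the formula for $\mathrm{curl}(V\times W)$) gives
\begin{align*}
\nabla_x^\perp\big(V^1W^2-V^2W^1\big)=\pm\Big(V\,\mathrm{div}_xW-W\,\mathrm{div}_xV+(W\cdot\nabla_x)V-(V\cdot\nabla_x)W\Big).
\end{align*}
The overall sign is irrelevant here. Since $V^1W^2-V^2W^1=\det U_0\equiv1$ and $[V,W]=0$, this reduces to $V\,\mathrm{div}_xW=W\,\mathrm{div}_xV$. Because $\det(V,W)=1$, the vectors $V$ and $W$ are linearly independent at every point, so $\mathrm{div}_xV=\mathrm{div}_xW=0$.

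\textbf{Step 3 (conclusion).} From $\mathrm{div}\,b_0=0$ we get
\begin{align*}
0=\mathrm{div}_x(\xi^{-1}b_0)=b_0\cdot\nabla_x\xi^{-1}=-\xi^{-2}\,b_0\cdot\nabla_x\xi .
\end{align*}
Since $\xi>0$, this yields \eqref{rest-xi}. The second claim is immediate: $\mathrm{div}_xf=\mathrm{div}_xW=0$ always, and in particular when $\xi$ is a nonzero constant.

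The main obstacle is Step 2, namely getting the planar identity right with correct signs and recognising that $\det U_0=1$ is what kills the left-hand side. The hypothesis $b_0^1>0$ serves only to guarantee that $V$ is nonvanishing; the linear independence actually follows already from $\det U_0=1$.
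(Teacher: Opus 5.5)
Your proof is correct, and it takes a more structural route than the paper's.

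\textbf{How the paper argues.} It writes the commutation relation $[V,W]=0$ out in components. Using $\mathrm{div}\,b_0=0$ and $\partial_{x_1}\xi=0$, it recasts this as the system \eqref{appd-8}, then substitutes $b_0^1f^2-b_0^2f^1=\xi$. This produces two equations, $b_0^1\,\mathrm{div}_x(\xi f)=\xi\partial_{x_2}\xi$ and $b_0^2\,\mathrm{div}_x(\xi f)=0$. It then divides by $b_0^1>0$ to eliminate $\mathrm{div}_x(\xi f)$ and obtain $b_0^2\partial_{x_2}\xi=0$.

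\textbf{How your route differs.} Your planar identity $\nabla_x^\perp\det(V,W)=\pm\bigl(V\,\mathrm{div}_xW-W\,\mathrm{div}_xV-[V,W]\bigr)$ checks out componentwise. It is the coordinate-free packaging of that same computation. Normalizing the first column as $V=\xi^{-1}b_0$, rather than keeping $b_0$ as the paper does, makes the determinant constraint disappear cleanly. You are then left with $V\,\mathrm{div}_xW=W\,\mathrm{div}_xV$ and can conclude by linear independence alone.

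\textbf{What this buys you.} You never use $b_0^1>0$ or $\xi=\xi(x_2)$, only $\xi\neq 0$. You also get the stronger conclusion $\mathrm{div}_xf=0$ with no assumption that $\xi$ is constant. This is consistent with the paper's own equations. From $b_0^1(\xi\,\mathrm{div}_xf+\partial_{x_2}\xi\, f^2)=\xi\partial_{x_2}\xi$ and $b_0^1f^2-b_0^2f^1=\xi$ one gets $\xi b_0^1\,\mathrm{div}_xf=-f^1\,b_0^2\partial_{x_2}\xi=0$. So the paper's restriction to constant $\xi$ in the second claim is simply not needed.

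\textbf{A conceptual reading.} Your Step 2 is the Piola identity in disguise. $V$ and $W$ are the push-forwards of $\partial_{y_1},\partial_{y_2}$ under the volume-preserving map $X_0$, so they are automatically divergence free and commute.

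\textbf{The surjectivity caveat.} The caveat you flag about $X_0$ being onto is equally implicit in the paper. The paper drops the composition with $X_0$ after ``inserting the expression of $\nabla_y X_0$'' without comment.
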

\begin{proof}
Since $X_0(y)$ is smooth, we have
\begin{equation*}
\begin{cases}
\p_{y_1}\p_{y_2}X_0^1(y)=\p_{y_2}\p_{y_1}X_0^1(y),\\
\p_{y_1}\p_{y_2}X_0^2(y)=\p_{y_2}\p_{y_1}X_0^2(y).
\end{cases}
\end{equation*}
Due to \eqref{app0}, by the chain rule, we obtain
\begin{equation*}
\begin{cases}
\p_{x_1} (\xi^{-1}b^1_0)\circ X_0 \p_{y_2}  X_0^1+\p_{x_2}  (\xi^{-1}b^1_0)\circ X_0 \p_{y_2} X_0^2
=\p_{x_1}  f^1\circ X_0  \p_{y_1} X_0^1+\p_{x_2}  f^1\circ X_0  \p_{y_1} X_0^2,\\
\p_{x_1}  (\xi^{-1}b^2_0)\circ X_0 \p_{y_2} X_0^1+\p_{x_2}  (\xi^{-1}b^2_0)\circ X_0 \p_{y_2} X_0^2
=\p_{x_1} f^2\circ X_0  \p_{y_1} X_0^1+\p_{x_2} f^2\circ X_0  \p_{y_1} X_0^2.
\end{cases}
\end{equation*}
Inserting the expression of $\na_y X_0$, we derive
\begin{equation*}
\begin{cases}
\p_{x_1}  (\xi^{-1}b^1_0) f^1+\p_{x_2}  (\xi^{-1}b^1_0)f^2
=\p_{x_1}  f^1  (\xi^{-1}b^1_0)+\p_{x_2}  f^1 (\xi^{-1}b^2_0),\\
\p_{x_1}  (\xi^{-1}b^2_0) f^1+\p_{x_2}  (\xi^{-1}b^2_0)f^2
=\p_{x_1}  f^2  (\xi^{-1}b^1_0)+\p_{x_2}  f^2 (\xi^{-1}b^2_0).
\end{cases}
\end{equation*}
Since $\textrm{div}_x\, b_0=0$ and $\p_{x_1} \xi=0$, we thus obtain
\begin{equation}\label{appd-8}
\begin{cases}
\p_{x_2} (b_0^1f^2-b_0^2f^1)-b_0^1\mathrm{div}_x f =\xi^{-1}\p_{x_2} \xi b_0^1 f^2,\\-\p_{x_1} (b_0^1f^2-b_0^2f^1)-b_0^2\mathrm{div}_x f=\xi^{-1}\p_{x_2} \xi b_0^2 f^2.
\end{cases}
\end{equation}
Moreover, due to $\det  U_0=1$, one has
\begin{align}\label{detU0-1}
b^1_0f^2-b^2_0f^1=\xi.
\end{align}
Plugging \eqref{detU0-1} into \eqref{appd-8}, we derive that
\begin{align*}
\left\{
\begin{aligned}
&\p_{x_2} \xi b_0^1 f^2+\xi b_0^1\mathrm{div}_x f =\xi \p_{x_2} \xi,\\&\p_{x_2} \xi b_0^2 f^2+\xi b_0^2\mathrm{div}_x f =0,
\end{aligned}\right.
\end{align*}
which further gives
\begin{align}\label{b0xif-1}
\left\{
\begin{aligned}
&b_0^1\, \mathrm{div}_x(\xi f)=\xi \p_{x_2} \xi,\\&b_0^2 \,\mathrm{div}_x(\xi f) =0.
\end{aligned}\right.
\end{align}
Since $b_0^1>0,\,\xi>0,$
we deduce from \eqref{b0xif-1}$_1$ that $\mathrm{div}_x(\xi f)=\frac{1}{b_0^1}\xi \p_{x_2} \xi$. Inserting it into \eqref{b0xif-1}$_2$ yields $ b_0^2\p_{x_2} \xi=b_0\cdot\na_x\xi=0$.
Furthermore, if $\xi$ is a constant, then $\p_{x_2} \xi=0$.
Inserting it into \eqref{b0xif-1}$_1$ yields $\mathrm{div}_x f= 0.$
\end{proof}

\section{\appendixname }\label{append_opp}

In this appendix,
we first show the periodicity and even-odd symmetry of the unknowns with respect to $y_1$,
and then turn to their periodicity and even-odd symmetry with respect to $z_1$.


\textbf{Periodicity and even-odd symmetry with respect to $y_1$.}

For the velocity field, magnetic field and pressure in $\mathbb T\times \mathbb R$,
by using the flow map \eqref{flow} and \eqref{eqbt}, we see that
\begin{align*}
X(t,y_1+1, y_2) =X(t,y)+(1, 0),\, \forall \, t\geq 0,\, y\in\bR^2,
\end{align*}
and
$Y(t,y),\, b(t,X(t,y)),\,p(t,X(t,y))$ are periodic in $y_1$.

Let $Y(t,y)$ and $p(t,y)$ be the solution of \eqref{A14} with the initial data $\textbf{0}$ and $u_0(y)$. Define $W=(W^1,W^2)$, ${P}$ and $\mathcal{A}$  by
\begin{align*}
W^1(t,y_1,y_2)&=-Y^1(t,-y_1,y_2),\, W^2(t,y_1,y_2)=Y^2(t,-y_1,y_2),\\
P(t,y_1,y_2)&=p(t,-y_1,y_2),\,\, \mathcal{A}=\big(I+\na_y W\big)^{-\top}. \nonumber
\end{align*}
Then 
\begin{align*}
\begin{cases}
W^i_{tt}  - \hbox{div}_y\big( \mathcal{A}^\top \mathcal{A}\nabla_y W^i_t \big)  - \p_{b_0}^2 W^i-\p_{b_0} b_0 =-(\mathcal{A}\nabla_y {P})_i,\, i=1,2,\\
\det(I+\nabla_y W)=1,\\
W(0,y)={\bf {0}},\quad W_t(0,y)=u_0(y),
\end{cases}
\end{align*}
By using the uniqueness of the solution to system \eqref{A14}, we obtain
$Y(t,y)=W(t,y),\,P(t,y)=p(t,y)$.
Hence, we conclude that
\begin{align}\label{Y-oe}
\begin{split}
&Y^1(t,y)\text{ is odd with respect to\ } y_1,
\,Y^2(t,y)\text{ is even with respect to\ } y_1.
\end{split}
\end{align}

\textbf{ The even-odd symmetry with respect to $z_1$.}	

Let $y_2(z)$ be the solution of \eqref{ode}$_2$,
and then define $\widetilde y_2(z_1, z_2)=y_2(-z_1, z_2)$. Denote $\zeta(z_2)=\int_{-\f12}^{0} \big( \f{b_0^2}{b_0^1}\big) ( z_1',y_2(z_1',z_2)  )\rmd z_1'.$
By \eqref{assump-A1}, it is clear that $\f{b_0^2}{b_0^1}(y)$ is odd with respect to $y_1$. We then get from \eqref{ode}$_2$ that
\begin{equation*}
\begin{cases}
\f{\rmd  y_2(z)}{\rmd z_1 } =\big( \f{b_0^2}{b_0^1}\big) (z_1,y_2(z_1,z_2)),\\
y_2(z)\big|_{z_1=0}=z_2+\zeta(z_2),
\end{cases}
\end{equation*}
and
\begin{equation*}
\begin{cases}
\f{\rmd \widetilde y_2(z)}{\rmd z_1 } =-\big( \f{b_0^2}{b_0^1}\big) (-z_1,y_2(-z_1,z_2))
=\big( \f{b_0^2}{b_0^1}\big) (z_1,\widetilde y_2(z_1,z_2)),\\
\widetilde y_2(z)\big|_{z_1=0}=z_2+\zeta(z_2).
\end{cases}
\end{equation*}
The uniqueness of the solution implies that
$y_2(z_1,z_2)=y_2(-z_1,z_2)$. Then we have
\begin{align}\label{y2-oe}
y_2(z_1,z_2) \text{ is even with respect to\ } z_1.
\end{align}
As a result, by \eqref{assump-A1} and \eqref{y2-oe}, we have
\begin{align*}
b_0^1(y(z)) \text{ is even with respect to\ } z_1,\,
b_0^2(y(z)) \text{ is odd with respect to\ } z_1.
\end{align*}
Moreover, by \eqref{Y-oe} and \eqref{y2-oe}, we derive that
\begin{align}\label{Yz-oe}
\begin{split}
&Y^1(t,y(z))\, \text{ is odd with respect to\ } z_1,
\,Y^2(t,y(z))\, \text{ is even with respect to\ } z_1.
\end{split}
\end{align}

%


\textbf{ The periodicity of $y(z)$ and $Y(t,y(z))$ with respect to $z_1$.}			

Let $\bar{y}_2(z)=y_2(z_1+1,z_2)$, 
 we derive that
\begin{equation*}
\begin{cases}
\f{\rmd \bar{y}_2(z)}{\rmd z_1 } =\big( \f{b_0^2}{b_0^1}\big) ( z_1+1,y_2(z_1+1,z_2) )
=\big( \f{b_0^2}{b_0^1}\big) ( z_1,\bar{y}_2(z) ),\\
\bar{y}_2(z)\big|_{z_1=-\f12}=y_2(\f12,z_2)
=z_2+\int_{-\f12}^{\f12} \big( \f{b_0^2}{b_0^1}\big) ( z_1',y_2(z_1',z_2)  )\rmd z_1'=z_2.			
\end{cases}
\end{equation*}
By the uniqueness of the solution to \eqref{ode}$_2$, we have $\bar{y}_2(z)=y_2(z)$. Thus $y_2(z)$ is periodic
in $z_1$.
Hence we obtain that $b_0(y(z))$
and $Y(t,y(z))$ are periodic in $z_1$. Consequently, the system \eqref{A14} can be written as \eqref{equ-Y} in $z$-coordinates on the periodic domain $ \mathbb{T}\times\mathbb{R}$.


\textbf{The even-odd symmetry and periodicity of $\widetilde{Y}$ and $\bar{Y}$ with respect to $z_1$.}

By the definition of $\widetilde Y$ in \eqref{DefBaY},
it is easy to obtain that
$$\widetilde Y^1(z)\ \text{is odd with respect to\ } z_1\ \text{and}\ \widetilde Y^2(z)\ \text{is even with respect to\ } z_1.$$
and then it follows from \eqref{Yz-oe} that
$$\bar Y^1(t,y(z))\ \text{is odd with respect to\ } z_1\ \text{and}\ \bar Y^2(t,y(z))\ \text{is even with respect to\ } z_1.$$

Next, from the definition \eqref{DefBaY}, one can directly verify that $\widetilde{Y}$ is periodic with respect to $z_1$. Since $b_0(y(z))$ is periodic in $z_1$, by the definition of $\gamma(z_2)$, we derive from \eqref{DefBaY} that
\begin{align*}
\widetilde Y^1(z_1+1,z_2)
&=\gamma(z_2)\Big( \int_{0}^{z_1} \f{1}{b_0^1\big(y(\bar{z}_1,z_2)\big)}  \rmd \bar{z}_1
+\int_{\bT} \f{1}{b_0^1\big(y(\bar{z}_1,z_2)\big)}  \rmd \bar{z}_1\Big)-(1+z_1)
=\widetilde{Y}^1(z_1,z_2).
\end{align*}
For $\widetilde Y^2$, since $\f{b_0^2}{b_0^1} \big(y(z_1,z_2))$ is an odd periodic function in $z_1$, we derive
\begin{align*}
\widetilde{Y}^2(z_1+1,z_2)
&=-\int_0^{z_1} \f{b_0^2}{b_0^1} \big(y(\bar{z}_1,z_2)\big) \rmd \bar{z}_1
-\int_{\bT} \f{b_0^2}{b_0^1} \big(y(\bar{z}_1,z_2)\big) \rmd \bar{z}_1+\psi(z_2) 
=\widetilde{Y}^2(z_1,z_2),
\end{align*}
Thus, we conclude that
$$\bar Y(t,y(z)) \ \text{is periodic with respect to $z_1$.}$$
Hence
the system \eqref{equ-Y} reduces to \eqref{equ-bY} with periodic boundary conditions in $ \mathbb{T}\times\mathbb{R}$.

\section{\appendixname }\label{append_A}

In this appendix, we prove several useful lemmas. The first lemma is the estimate of the composite map which is
a generalized version of Lemma A.1 in \cite{LXZ1}.
\begin{lemma}\label{lemA}
Let $\Omega_1,\, \Omega_2\subseteq\bR^2$ be smooth Sobolev extension domains, with all embedding constants absorbed into the implicit constant and let $$\Gamma:\Omega_2\rightarrow\Omega_1,
	\quad
	y\mapsto x=\Gamma(y),
$$
be a smooth diffeomorphism satisfying $\det(\nabla_y \Gamma)\neq 0$, $ \nabla_y \Gamma\in L^\infty$ and $\frac{1}{ \det(\nabla_y \Gamma)}\in L^\infty$. Denote by $\Gamma^{-1}$  its inverse mapping.
Then for any smooth functions $f:\Omega_1 \rightarrow \bR$ and $g:\Omega_2 \rightarrow \bR$,
the norms $\|f\circ\Gamma\|_{H^s(\Omega_2)}$ and $\|g\circ\Gamma^{-1}\|_{H^s(\Omega_1)}$ satisfy
\begin{itemize}
\item[(i)] if $s=0$,
\begin{align}\label{lemA-0}
\begin{split}
\|f\circ\Gamma\|_{L^2(\Omega_2)}&\leq \|\frac{1}{\det(\nabla_y\Gamma)}\|_{L^\infty(\Omega_2)}^{\frac12}\|f\|_{L^2(\Omega_1)},\\ \|g\circ\Gamma^{-1}\|_{L^2(\Omega_1)}&\leq \|{\det(\nabla_y\Gamma)}\|_{L^\infty(\Omega_2)}^\frac12\|g\|_{L^2(\Omega_2)}.
\end{split}
\end{align}
\item[(ii)] if $s=1$,
\begin{align}\label{lemA-1}
\begin{split}
\|f\circ\Gamma\|_{{H}^1(\Omega_2)}&\lesssim \big(1+\|\nabla_y\Gamma\|_{L^\infty(\Omega_2)}\big)\|\frac{1}{\det(\nabla_y\Gamma)}\|_{L^\infty(\Omega_2)}^\frac12 \|f\|_{{H}^1(\Omega_1)},\\
\|g\circ\Gamma^{-1}\|_{{H}^1(\Omega_1)}&\lesssim \big(1+\|\frac{1}{\det(\nabla_y\Gamma)}\|_{L^\infty(\Omega_2)}\|\nabla_y\Gamma\|_{L^\infty(\Omega_2)}\big)\\&\qquad\times\|\det(\nabla_y\Gamma)\|_{L^\infty(\Omega_2)}^\frac12 \|g\|_{{H}^1(\Omega_2)}.	
\end{split}
\end{align}
\item[(iii)] if $s=2$,
\begin{align}\label{lemA-2}
\begin{split}
\|f\circ\Gamma\|_{{H}^2(\Omega_2)}\lesssim&\big(1+\|\nabla_y\Gamma\|_{{L}^{\infty}(\Omega_2)}^2+\|\nabla_y^2\Gamma\|_{{H}^{1}(\Omega_2)}^2\big)\\&\times\|\frac{1}{\det(\nabla_y\Gamma)}\|_{L^\infty(\Omega_2)}^\frac12 \|f\|_{{H}^2(\Omega_1)},	\\
\|g\circ\Gamma^{-1		}\|_{{H}^2(\Omega_1)}\lesssim&\big(1+\|\frac{1}{\det(\nabla_y\Gamma)}\|_{L^\infty(\Omega_2)}^3\big)\big(1+\|\nabla_y\Gamma\|_{{L}^{\infty}(\Omega_2)}^4+\|\nabla_y^2\Gamma\|_{{H}^{1}(\Omega_2)}^4\big)\\&\times\|\det(\nabla_y\Gamma)\|_{L^\infty(\Omega_2)}^\frac12\|g\|_{{H}^2(\Omega_2)}.
\end{split}
\end{align}
\item[(iv)] if $s\geq 3$,
\begin{align}\label{lemA-3}
\begin{split}
\|f\circ\Gamma\|_{{H}^s(\Omega_2)}\lesssim&\big(1+\|\nabla_y\Gamma\|_{{L}^{\infty}(\Omega_2)}^s+\|\nabla_y^2\Gamma\|_{{H}^{s-2}(\Omega_2)}^s\big)\\&\times\|\frac{1}{\det(\nabla_y\Gamma)}\|_{L^\infty(\Omega_2)}^\frac12 \|f\|_{{H}^s(\Omega_1)},\\
\|g\circ\Gamma^{-1}\|_{{H}^s(\Omega_1)}\lesssim& \big(1+\|\nabla_y\Gamma\|_{{L}^{\infty}(\Omega_2)}^{5s^3+4s^2+s}+\|\nabla_y^2\Gamma\|_{{H}^{s-2}(\Omega_2)}^{5s^3+4s^2+s}\big)\\&\times\big(1+\|\det{\nabla_y\Gamma}\|_{L^{\infty}(\Omega_2)}^{\frac{s}{2}}\big) \big(1+\|\frac{1}{\det(\nabla_y\Gamma)}\|_{L^\infty(\Omega_2)}^{5s^3+4s^2+s}\big)\\&\times \|\det(\nabla_y\Gamma)\|_{L^\infty(\Omega_2)}^\frac12 \|g\|_{{H}^s(\Omega_2)}.	
\end{split}
\end{align}
\end{itemize}
\end{lemma}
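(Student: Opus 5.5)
The approach is a change of variables combined with the chain rule and Faà di Bruno's formula, with the Jacobian factor controlling the measure. For (i), substitute $x=\Gamma(y)$, so $\rmd x=|\det\nabla_y\Gamma|\,\rmd y$. This gives
\[
\int_{\Omega_2}|f\circ\Gamma|^2\rmd y=\int_{\Omega_1}|f|^2\,\big|\det\nabla_y\Gamma\big|^{-1}\circ\Gamma^{-1}\,\rmd x\le \Big\|\tfrac{1}{\det\nabla_y\Gamma}\Big\|_{L^\infty(\Omega_2)}\|f\|_{L^2(\Omega_1)}^2 .
\]
Symmetrically, $\int_{\Omega_1}|g\circ\Gamma^{-1}|^2\rmd x=\int_{\Omega_2}|g|^2|\det\nabla_y\Gamma|\,\rmd y$. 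Every higher-order estimate is then reduced to these two $L^2$ bounds applied to products, after pulling the coefficients out in $L^\infty$ or in a Sobolev norm.

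For the forward direction with $s\ge1$, I would write $\p_y^\alpha(f\circ\Gamma)$ via Faà di Bruno as a sum of terms $(\p_x^\beta f)\circ\Gamma\cdot\prod_{j}\p_y^{\gamma_j}\Gamma$ with $|\beta|\le|\alpha|$ and $\sum|\gamma_j|=|\alpha|$. When all $|\gamma_j|=1$, the factors go into $L^\infty$ via $\|\nabla_y\Gamma\|_{L^\infty}$, and (i) handles $(\p^\beta f)\circ\Gamma$. This already gives (ii). For (iii) and (iv), the factors with $|\gamma_j|\ge2$ are derivatives of $\nabla_y^2\Gamma$ of order at most $s-2$. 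I would put the highest such factor in $L^2$ and the rest, together with $(\p^\beta f)\circ\Gamma$, in $L^\infty$ or $L^4$ via the 2D Sobolev embeddings $H^1\subset L^4$ and $H^2\subset L^\infty$ on extension domains. The quantity $\|(\p^\beta f)\circ\Gamma\|_{L^\infty}$ is simply $\|\p^\beta f\|_{L^\infty}\lesssim\|f\|_{H^{|\beta|+2}}$, and since $|\beta|\le s-2$ when some $|\gamma_j|\ge2$, this is affordable. Low-order $L^4$ quantities of $(\p^\beta f)\circ\Gamma$ are bounded by the already established $H^1$ composition estimate. Counting the factors gives the powers $\|\nabla\Gamma\|_{L^\infty}^s+\|\nabla^2\Gamma\|_{H^{s-2}}^s$.

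For the inverse direction, I would apply the same scheme with $\Gamma^{-1}$ and use (i) with the roles of $\det$ and $1/\det$ exchanged. The derivatives of $\Gamma^{-1}$ are computed from $\nabla_x\Gamma^{-1}=(\nabla_y\Gamma)^{-1}\circ\Gamma^{-1}=\frac{\mathrm{adj}(\nabla_y\Gamma)}{\det\nabla_y\Gamma}\circ\Gamma^{-1}$. This gives $\|\nabla\Gamma^{-1}\|_{L^\infty}\le\|1/\det\|_{L^\infty}\|\nabla\Gamma\|_{L^\infty}$, which is (ii). At higher orders, $\nabla^2_x\Gamma^{-1}$ is a composition of a rational function of $\nabla_y\Gamma$ (with denominator a power of $\det$) with $\Gamma^{-1}$. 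Its $H^{s-2}(\Omega_1)$ norm is therefore controlled by applying the inverse estimate recursively at order $s-2$ to the function $(\nabla_y\Gamma)^{-1}$, whose $H^{s-1}$ norm is bounded by products of $\nabla^2\Gamma$, $\nabla\Gamma$ and $1/\det$.

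The main obstacle is this recursion in the inverse case, because the norms of $\Gamma^{-1}$ appear inside the estimate being proved at lower order. I would set up an induction on $s$. First establish $s=2$ explicitly, which yields the stated cubic power of $1/\det$ and the fourth powers of $\nabla\Gamma$ and $\nabla^2\Gamma$. Then for $s\ge3$, bound $\|\nabla^2\Gamma^{-1}\|_{H^{s-2}}$ through the inductive hypothesis. Tracking the exponents crudely produces a polynomial degree like $5s^3+4s^2+s$. I would not optimize this degree; I would only verify that the degree stays finite at each step, and that the extra factor $(1+\|\det\nabla\Gamma\|_{L^\infty}^{s/2})$ comes from the $L^2$ change-of-variables step applied to the lower-order terms.
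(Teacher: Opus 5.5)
Your plan is sound and follows the paper's proof: change of variables for (i), the chain rule for (ii), and, for higher $s$, the inverse case obtained by applying the forward estimate to $\Gamma^{-1}$ with $\nabla_x\Gamma^{-1}=(\nabla_y\Gamma)^{-1}\circ\Gamma^{-1}$ plus an induction on $s$ controlling $\nabla_x^2\Gamma^{-1}$ in $H^{s-2}(\Omega_1)$; the only real difference is that you expand all derivatives at once by Fa\`a di Bruno, whereas the paper peels off one derivative at a time via $\nabla_y(f\circ\Gamma)=(\nabla_y\Gamma)^{\top}(\nabla_x f)\circ\Gamma$ and a product estimate.

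Three things to tighten:
\begin{itemize}
\item A factor with $|\gamma_j|\ge 2$ only forces $|\beta|\le s-1$, not $s-2$ (the extreme case is one factor $\nabla_y^2\Gamma$ and the rest first order). In that case $(\partial^\beta f)\circ\Gamma$ cannot go in $L^\infty$. It must be paired in $L^4$ with $\nabla_y^2\Gamma$, using $H^1\subset L^4$ and the $H^1$ composition bound.
\item The recursion should act at order $s-2$ on $\nabla_y\big((\nabla_y\Gamma)^{-1}\big)(\nabla_y\Gamma)^{-1}$, which composed with $\Gamma^{-1}$ gives $\nabla_x^2\Gamma^{-1}$ up to index contraction. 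It should not act on $(\nabla_y\Gamma)^{-1}$ itself, whose $L^2$ norm need not be finite (e.g.\ $\Gamma$ close to the identity on an unbounded domain).
\item Inserting the full order-$(s-2)$ inverse bound into the forward estimate, where $\|\nabla_x^2\Gamma^{-1}\|_{H^{s-2}(\Omega_1)}$ carries the power $s$, makes the degrees multiply. This does not yield the stated degree $5s^3+4s^2+s$ for large $s$. That is why the paper instead bounds $\nabla_x\big((\nabla_y\Gamma)^{-\top}\circ\Gamma^{-1}\big)$ in $H^{s-2}(\Omega_1)$ by a separate recursion that is linear in the lower-order norms, with degree only quadratic in $s$.
\end{itemize}
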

\begin{proof}
\textbf{Case (i): $s=0$.}
It is easy to check that
\begin{align}\label{lem-eq1}
\begin{split}
\nabla_x\Gamma^{-1}=(\nabla_y\Gamma)^{-1}\circ\Gamma^{-1},\ \det(\nabla_x\Gamma^{-1})=\frac{1}{\det\left((\nabla_y\Gamma)\circ\Gamma^{-1}\right)}.
\end{split}
\end{align}
We get from \eqref{lem-eq1} that
\begin{align*}
&\|f\circ\Gamma\|_{L^2(\Omega_2)}^2
=\int_{\Omega_2}|f(\Gamma(y))|^2\rmd y=\int_{\Omega_1}|f(x)|^2|\det(\nabla_x\Gamma^{-1})|\rmd x\\
&=\int_{\Omega_1}|f(x)|^2\frac{1}{|\det\left((\nabla_y\Gamma)\circ\Gamma^{-1}\right)|}\rmd x
\leq \|\frac{1}{\det(\nabla_y\Gamma)}\|_{L^\infty(\Omega_2)}\|f\|_{L^2(\Omega_1)}^2.
\end{align*}
Similarly,
\begin{align*}
&\|g\circ\Gamma^{-1}\|_{L^2(\Omega_1)}^2=\int_{\Omega_1}|g(\Gamma^{-1}(x))|^2\rmd x\\
&=\int_{\Omega_2}|g(y)|^2|\det(\nabla_y\Gamma)|\rmd y\leq \|{\det(\nabla_y\Gamma)}\|_{L^\infty(\Omega_2)}\|g\|_{L^2(\Omega_2)}^2.
\end{align*}
This proves \eqref{lemA-0}.

\textbf{Case (ii): $s=1$.}
Applying \eqref{lem-eq1} and using the definition of the inverse matrix, we have
\begin{align}\label{lem-eq4}
\|\nabla_x\Gamma^{-1}\|_{L^\infty(\Omega_1)}= \|(\nabla_y\Gamma)^{-1}\|_{L^\infty(\Omega_2)}\leq \|\frac{1}{\det(\nabla_y\Gamma)}\|_{L^\infty(\Omega_2)}\|\nabla_y\Gamma\|_{L^\infty(\Omega_2)}.
\end{align}
%
When $s=1$,
we denote
\begin{align*}
\mathbf{A}=(\mathbf{A}_{ij})_{i,j=1,2}=\nabla_y\Gamma,\quad \mathbf{B}=(\mathbf{B}_{ij})_{i,j=1,2}=(\nabla_y\Gamma)^{-\top}.
\end{align*}
By the chain rule, we have
\begin{align}\label{lem-eq6}
\begin{split}
&\p_{y_j}(f\circ\Gamma)=\sum_{i=1}^{2}\mathbf{A}_{ij} (\p_{x_i}f)\circ\Gamma,\\& \p_{x_j}(g\circ\Gamma^{-1})=\sum_{i=1}^{2}\mathbf{B}_{ji}\circ\Gamma^{-1}(\p_{y_i}g)\circ\Gamma^{-1}=\sum_{i=1}^{2}\big(\mathbf{B}_{ji}\p_{y_i}g\big)\circ\Gamma^{-1}.
\end{split}
\end{align}
We deduce from \eqref{lem-eq6} and (i) that
\begin{align*}
\|f\circ\Gamma\|_{\dot{H}^1(\Omega_2)}&\lesssim \|\mathbf{A}\|_{L^\infty(\Omega_2)}\|(\nabla_xf)\circ\Gamma\|_{L^2(\Omega_2)}\\&\lesssim \|\nabla_y\Gamma\|_{L^\infty(\Omega_2)}\|\frac{1}{\det(\nabla_y\Gamma)}\|_{L^\infty(\Omega_2)}^\frac12 \|f\|_{\dot{H}^1(\Omega_1)}.
\end{align*}
Similarly, by \eqref{lem-eq4} and (i), we get
\begin{align*}
\|g\circ\Gamma^{-1}\|_{\dot{H}^1(\Omega_1)}\lesssim& \|\mathbf{B}\|_{L^\infty(\Omega_2)}\|(\nabla_yg)\circ\Gamma^{-1}\|_{L^2(\Omega_1)}\\\lesssim& \|\frac{1}{\det(\nabla_y\Gamma)}\|_{L^\infty(\Omega_2)}\|\nabla_y\Gamma\|_{L^\infty(\Omega_2)}\|{\det(\nabla_y\Gamma)}\|_{L^\infty(\Omega_2)}^\frac12 \|g\|_{\dot{H}^1(\Omega_2)}.
\end{align*}
Hence, we complete the proof of \eqref{lemA-1}.

\textbf{Case (iii): $s=2$.}
Applying \eqref{lem-eq6}, we obtain
\begin{align*}
\begin{split}
\|f\circ\Gamma\|_{{H}^2(\Omega_2)}&\lesssim\|f\circ\Gamma\|_{{L}^2(\Omega_2)} +\|\mathbf{A}^\top(\nabla_xf)\circ\Gamma\|_{{H}^{1}(\Omega_2)}\\&\lesssim \|f\circ\Gamma\|_{{L}^2(\Omega_2)}+\big(\|\nabla_y\Gamma\|_{{L}^{\infty}(\Omega_2)}+\|\nabla_y^2\Gamma\|_{{H}^{1}(\Omega_2)}\big)\|(\nabla_xf)\circ\Gamma\|_{H^1(\Omega_2)}.
\end{split}
\end{align*}
We then use \eqref{lemA-0} and \eqref{lemA-1} to deduce
\begin{align*}
\begin{split}
\|f\circ\Gamma\|_{{H}^2(\Omega_2)}\lesssim&\big(1+\|\nabla_y\Gamma\|_{{L}^{\infty}(\Omega_2)}^2+\|\nabla_y^2\Gamma\|_{{H}^{1}(\Omega_2)}^2\big)\|\frac{1}{\det(\nabla_y\Gamma)}\|_{L^\infty(\Omega_2)}^\frac12 \|f\|_{{H}^2(\Omega_1)}.
\end{split}
\end{align*}
Similarly, by \eqref{lem-eq6}, \eqref{lemA-0} and \eqref{lemA-1}, we have
\begin{align}\label{lem-eq8-2}
\begin{split}
\|g\circ\Gamma^{-1			}\|_{{H}^2(\Omega_1)}\lesssim&\|g\circ\Gamma^{-1}\|_{{L}^2(\Omega_1)} +\|\big(\mathbf{B}\nabla_yg\big)\circ\Gamma^{-1}\|_{{H}^{1}(\Omega_1)}\\\lesssim&\big(1+\|\frac{1}{\det(\nabla_y\Gamma)}\|_{L^\infty(\Omega_2)}\|\nabla_y\Gamma\|_{L^\infty(\Omega_2)}\big)\|\det(\nabla_y\Gamma)\|_{L^\infty(\Omega_2)}^\frac12\\&\times\big(\|g\|_{{L}^2(\Omega_2)}+\|\mathbf{B}\nabla_yg\|_{{H}^1(\Omega_2)}\big) .
\end{split}
\end{align}
In two spatial dimensions, one has schematically
\begin{align*}
&\mathbf{B}\sim\frac{1}{\det (\nabla_y\Gamma)}\nabla_y\Gamma,\ \nabla_y\mathbf{B}\sim\frac{1}{(\det \nabla_y\Gamma)^2}(\nabla_y\Gamma)^2\nabla^2_y\Gamma+\frac{1}{\det (\nabla_y\Gamma)}\nabla^2_y\Gamma.
\end{align*}
Then,
\begin{align*}
&\|\mathbf{B}\|_{L^\infty(\Omega_2)}\lesssim \|\frac{1}{\det (\nabla_y\Gamma)}\|_{L^\infty(\Omega_2)}\|\nabla_y\Gamma\|_{L^\infty(\Omega_2)},\\&\|\nabla_y\mathbf{B}\|_{L^4(\Omega_2)}\lesssim\big(\|\frac{1}{\det (\nabla_y\Gamma)}\|_{L^\infty(\Omega_2)}+ \|\frac{1}{\det (\nabla_y\Gamma)}\|_{L^\infty(\Omega_2)}^2\|\nabla_y\Gamma\|_{L^\infty(\Omega_2)}^2\big)\|\nabla_y^2\Gamma\|_{H^1(\Omega_2)}.
\end{align*}
Hence,
\begin{align}\label{lem-eq8-3}
\begin{split}
\|\mathbf{B}\nabla_y g\|_{H^1(\Omega_2)}
\lesssim{}&
\|\mathbf{B}\|_{L^\infty(\Omega_2)}
\|\nabla_y g\|_{H^1(\Omega_2)}
+
\|\nabla_y\mathbf{B}\|_{L^4(\Omega_2)}
\|\nabla_y g\|_{L^4(\Omega_2)}\\
\lesssim{}&
\big(
\|\frac{1}{\det(\nabla_y\Gamma)}\|_{L^\infty(\Omega_2)}
\|\nabla_y\Gamma\|_{L^\infty(\Omega_2)}\\
&\quad+
\|\frac{1}{\det(\nabla_y\Gamma)}\|_{L^\infty(\Omega_2)}^2
\|\nabla_y\Gamma\|_{L^\infty(\Omega_2)}^2
\|\nabla_y^2\Gamma\|_{H^1(\Omega_2)}
\big)
\|g\|_{H^2(\Omega_2)}.
\end{split}
\end{align}
Plugging \eqref{lem-eq8-3} into \eqref{lem-eq8-2}, we have
\begin{align*}
\begin{split}
&\|g\circ\Gamma^{-1			}\|_{{H}^2(\Omega_1)}\lesssim\|g\circ\Gamma^{-1}\|_{{L}^2(\Omega_1)} +\|\big(\mathbf{B}\nabla_yg\big)\circ\Gamma^{-1}\|_{{H}^{1}(\Omega_1)}\\&\lesssim\big(1+\|\frac{1}{\det(\nabla_y\Gamma)}\|_{L^\infty(\Omega_2)}^3\big)\big(1+\|\nabla_y\Gamma\|_{L^\infty(\Omega_2)}^4+\|\nabla_y^2\Gamma\|_{H^1(\Omega_2)}^4\big)\|\det(\nabla_y\Gamma)\|_{L^\infty(\Omega_2)}^\frac12\|g\|_{{H}^2(\Omega_2)} .
\end{split}
\end{align*}
This completes the proof of \eqref{lemA-2}.

\textbf{Case (iv): $s\geq 3$.}
We first prove
\begin{align}\label{fHs}
\|f\circ\Gamma\|_{{H}^s(\Omega_2)}\lesssim&\big(1+\|\nabla_y\Gamma\|_{{L}^{\infty}(\Omega_2)}^s+\|\nabla_y^2\Gamma\|_{{H}^{s-2}(\Omega_2)}^s\big)\|\frac{1}{\det(\nabla_y\Gamma)}\|_{L^\infty(\Omega_2)}^\frac12 \|f\|_{{H}^s(\Omega_1)}.
\end{align}
In fact, when $s\geq 3$, by \eqref{lem-eq6}, we compute
\begin{align*}
\|f\circ\Gamma\|_{{H}^s(\Omega_2)}&\lesssim\|f\circ\Gamma\|_{{L}^2(\Omega_2)}+\|\mathbf{A}(\nabla_x f)\circ\Gamma\|_{{H}^{s-1}(\Omega_2)}\\&\lesssim\|f\circ\Gamma\|_{{L}^2(\Omega_2)}+\big(\|\mathbf{A}\|_{{L}^{\infty}(\Omega_2)}+\|\nabla_y\mathbf{A}\|_{{H}^{s-2}(\Omega_2)}\big)\|(\nabla_x f)\circ\Gamma\|_{{H}^{s-1}(\Omega_2)}.
\end{align*}
Using the result in (i) and (iii), we infer by induction that
\begin{align*}
\|f\circ\Gamma\|_{{H}^s(\Omega_2)}&\lesssim \|\frac{1}{\det(\nabla_y\Gamma)}\|_{L^\infty(\Omega_2)}^\frac12\|f\|_{L^2(\Omega_1)}+\big(\|\nabla_y\Gamma\|_{L^{\infty}(\Omega_2)}+\|\nabla_y^2\Gamma\|_{H^{s-2}(\Omega_2)}\big)\\&\qquad\times\big(1+\|\nabla_y\Gamma\|_{L^{\infty}(\Omega_2)}^{s-1}+\|\nabla_y^2\Gamma\|_{H^{s-3}(\Omega_2)}^{s-1}\big)\|\frac{1}{\det(\nabla_y\Gamma)}\|_{L^\infty(\Omega_2)}^\frac12 \|\nabla_x f\|_{{H}^{s-1}(\Omega_1)}\\&\lesssim\big(1+\|\nabla_y\Gamma\|_{L^{\infty}(\Omega_2)}^{s}+\|\nabla_y^2\Gamma\|_{H^{s-2}(\Omega_2)}^s\big)\|\frac{1}{\det(\nabla_y\Gamma)}\|_{L^\infty(\Omega_2)}^\frac12 \|f\|_{{H}^s(\Omega_1)}.
\end{align*}
This completes the proof of \eqref{fHs}.

For $\|g\circ\Gamma^{-1}\|_{{H}^s(\Omega_1)}$, we first apply \eqref{fHs} and \eqref{lem-eq1} to get
\begin{align}\label{lem-ghs}
	\begin{split}
	\|g\circ\Gamma^{-1}\|_{{H}^s(\Omega_1)}\lesssim& \big(1+\|\mathbf{B}\|_{L^{\infty}(\Omega_2)}^s+\|\nabla_x(\mathbf{B}\circ \Gamma^{-1})\|_{H^{s-2}(\Omega_1)}^s\big)\\&\times\|\det(\nabla_y\Gamma)\|_{L^\infty(\Omega_2)}^\frac12 \|g\|_{{H}^s(\Omega_2)}.	
	\end{split}
\end{align}
Next we estimate $\|\nabla_x(\mathbf{B}\circ \Gamma^{-1})\|_{H^{s-2}(\Omega_1)}$. For $s\geq 3$, we have
\begin{align*}
\begin{split}
&\|\nabla_x(\mathbf{B}\circ \Gamma^{-1})\|_{H^{s-2}(\Omega_1)}\lesssim \|\mathbf{B}\circ\Gamma^{-1}(\nabla_y\mathbf{B})\circ\Gamma^{-1}\|_{{H}^{s-2}(\Omega_1)}\\
&\lesssim \sum_{j=1}^{s-3}\|\mathbf{B}\|_{L^\infty(\Omega_2)}^{j}\|(\nabla_y^j\mathbf{B})\circ\Gamma^{-1}\|_{L^{2}(\Omega_1)}\\&\quad+ \sum_{j=1}^{s-3}\|\mathbf{B}\|_{L^\infty(\Omega_2)}^{j-1}\|\nabla_y^j\mathbf{B}\|_{L^\infty(\Omega_2)}\|\nabla_x(\mathbf{B}\circ\Gamma^{-1})\|_{H^{s-2-j}(\Omega_1)}\\&\quad+\|\mathbf{B}\|_{L^\infty(\Omega_2)}^{s-2}\|(\nabla_y^{s-2}\mathbf{B})\circ\Gamma^{-1}\|_{H^1(\Omega_1)}\big(1+(\|\nabla_y\mathbf{B})\circ\Gamma^{-1}\|_{H^{1}(\Omega_1)}\big).
\end{split}
\end{align*}
Hence, we can prove by induction that
\begin{align}\label{lem-Bphi}
\begin{split}
&\|\nabla_x(\mathbf{B}\circ \Gamma^{-1})\|_{H^{s-2}(\Omega_1)}\\&\lesssim\|\nabla_y\mathbf{B}\|_{H^{s-2}}\big(\|\mathbf{B}\|_{L^{\infty}(\Omega_2)}+\|\nabla_y\mathbf{B}\|_{H^{s-2}(\Omega_2)}\big)\Big(1+\big(\|\mathbf{B}\|_{L^{\infty}(\Omega_2)}+\|\nabla_y\mathbf{B}\|_{H^{s-2}(\Omega_2)}\big)^{s}\big)\\&\quad\times \big(1+\|{\det(\nabla_y\Gamma)}\|_{L^\infty(\Omega_2)}^{\frac{3}{2}(s-2)}\Big)\big(1+\|\frac{1}{\det(\nabla_y\Gamma)}\|_{L^\infty(\Omega_2)}^{(s-1)(s-2)}\|{\det(\nabla_y\Gamma)}\|_{L^\infty(\Omega_2)}^{(s-1)(s-2)}\big).
\end{split}
\end{align}
It remains to estimate $\|\nabla_y\mathbf{B}\|_{{H}^{s-2}(\Omega_2)}$ for $s\geq 3$. In fact, by computing $\nabla_y^j\mathbf{B}$ with $1\leq j\leq s-1$, we can obtain that
\begin{align}\label{lem-BHs}
\begin{split}
\|\nabla_y\mathbf{B}\|_{{H}^{s-2}(\Omega_2)}\lesssim &\|\frac{1}{\det(\nabla_y\Gamma)}\|_{L^\infty(\Omega_2)} \|\nabla_y^2\Gamma\|_{{H}^{s-2}(\Omega_2)}\\&\times\big(1+\|\frac{1}{\det(\nabla_y\Gamma)}\|_{L^\infty(\Omega_2)}^{3s}+\|\nabla_y\Gamma\|_{{L}^{\infty}(\Omega_2)}^{3s}+\|\nabla_y^2\Gamma\|_{{H}^{s-2}(\Omega_2)}^{3s}\big).
\end{split}
\end{align}
Plugging \eqref{lem-BHs} into \eqref{lem-Bphi}, we get
\begin{align}\label{lem-Bhs}
\begin{split}
\|\nabla_x(\mathbf{B}\circ \Gamma^{-1})\|_{H^{s-2}(\Omega_1)}&\lesssim\big(\|\nabla_y\Gamma\|_{L^{\infty}(\Omega_2)}+\|\nabla_y^2\Gamma\|_{H^{s-2}(\Omega_2)}\big)\\&\qquad\times\Big(1+\big(\|\nabla_y\Gamma\|_{L^{\infty}(\Omega_2)}+\|\nabla_y^2\Gamma\|_{H^{s-2}(\Omega_2)}\big)^{5s^2+s+5}\Big)\\&\qquad\times\big(1+\|\det{\nabla_y\Gamma}\|_{L^{\infty}(\Omega_2)}^{\frac{3}{2}s-2}\big) \big(1+\|\frac{1}{\det(\nabla_y\Gamma)}\|_{L^\infty(\Omega_2)}^{5s^2+s+5}\big).
\end{split}
\end{align}
Combining \eqref{lem-ghs} with \eqref{lem-Bhs}, we can finish the proof of \eqref{lemA-3}.
\end{proof}

We now estimate the quotient of two functions.
\begin{lemma}\label{lem-g1g2}
Let $g_1$ and $g_2$ be two functions defined on a smooth domain $\Omega\subseteq\bR^2$, and assume $g_1>m>0$ for some positive constant $m$. Then for all integer $s\geq 0$, there hold
\begin{align}
&\|\nabla\frac{1}{g_1}\|_{H^s}\leq C(\frac1m) \|\nabla g_1\|_{H^{s}}(1+\|\nabla g_1\|_{H^{s}}^{s}),\label{gHs}\\
&\|\frac{g_2}{g_1}\|_{H^s}\leq C(\frac1m) \|g_2\|_{H^s}(1+\|\nabla g_1\|_{H^{s}}^{s}),\ \text{for}\ s=0,1;\label{g1g2Hs-1} \\
&\|\frac{g_2}{g_1}\|_{H^s}\leq C(\frac1m) \|g_2\|_{H^s}(1+\|\nabla g_1\|_{H^{s-1}}^{s}),\ \text{for}\ s\geq 2,\label{g1g2Hs-2}
\end{align}
where $C(\frac1m)$ is an increasing function of $\frac1m$.
\end{lemma}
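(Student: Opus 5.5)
The plan is to prove \eqref{gHs} first, since the quotient estimates follow from it by the product rule. Write $\nabla\frac{1}{g_1}=-\frac{\nabla g_1}{g_1^2}$. The case $s=0$ is immediate: $\|\nabla\frac1{g_1}\|_{L^2}\le m^{-2}\|\nabla g_1\|_{L^2}$.

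For $s\ge1$, apply $\p^\beta$ with $|\beta|\le s$ to $\nabla g_1\, g_1^{-2}$ and expand with the Leibniz and Fa\`a di Bruno formulas. A typical term has the form
$$
g_1^{-2-k}\,\p^{\beta_0}\nabla g_1\prod_{j=1}^k\p^{\beta_j}g_1,
$$
where $\sum_j|\beta_j|=|\beta|$ and $|\beta_j|\ge1$ for $j\ge1$. Each factor $\p^{\beta_j}g_1$ with $|\beta_j|\ge1$ is a derivative of order $|\beta_j|-1$ of $\nabla g_1$. The factors $g_1^{-2-k}$ are bounded by $m^{-2-k}$ in $L^\infty$.

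The product of $k+1$ derivatives of $\nabla g_1$, of total order $|\beta|-k$, is then estimated in $L^2$ on the two-dimensional domain. Put the highest-order factor in $L^2$ and the remaining factors in $L^4$ or $L^\infty$. These are controlled through the Gagliardo--Nirenberg inequality and Sobolev embeddings $H^{1}\subset L^4$ and $H^{2}\subset L^\infty$, the latter available since lower-order factors carry at most $s-2$ derivatives when the count allows, with $\Omega$ an extension domain. This gives
$$
\|\p^\beta(\nabla g_1\, g_1^{-2})\|_{L^2}\le C(\tfrac1m)\,\|\nabla g_1\|_{H^s}^{k+1},
$$
and since $k\le s$, summing over $\beta$ yields \eqref{gHs}.

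\textbf{Main obstacle.} The delicate step is distributing the Lebesgue exponents when $s$ is small ($s=1,2$), where $H^{s}$ does not embed in $L^\infty$. Consider the worst case $s=1$ with $k=1$, which produces $\nabla g_1\otimes\nabla g_1$ in $L^2$. I would control it through $\|\nabla g_1\|_{L^4}^2\lesssim\|\nabla g_1\|_{H^1}^2$. For general $s$, I would argue by induction on $s$, using the identity $\nabla\frac1{g_1}=-\frac{1}{g_1}\cdot\frac{\nabla g_1}{g_1}$ so that lower-order bounds feed into higher-order ones.

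For \eqref{g1g2Hs-1}, the case $s=0$ is trivial: $\|g_2/g_1\|_{L^2}\le m^{-1}\|g_2\|_{L^2}$. For $s=1$ write
$$
\nabla\frac{g_2}{g_1}=\frac{\nabla g_2}{g_1}+g_2\nabla\frac1{g_1}.
$$
The second term is bounded by $\|g_2\|_{L^4}\|\nabla\frac1{g_1}\|_{L^4}\lesssim\|g_2\|_{H^1}\,\|\nabla g_1\|_{H^1}(1+\|\nabla g_1\|_{H^1})$ using \eqref{gHs}. This accounts for the weaker power structure required when $s=0,1$.

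For \eqref{g1g2Hs-2} with $s\ge2$, view $g_2/g_1=g_2\cdot\frac1{g_1}$ with $\frac1{g_1}=\frac1{g_1}$ bounded in $L^\infty$ and $\nabla\frac1{g_1}\in H^{s-1}$. The standard algebra estimate
$$
\|fh\|_{H^s}\lesssim\|f\|_{H^s}\bigl(\|h\|_{L^\infty}+\|\nabla h\|_{H^{s-1}}\bigr)
$$
holds in two dimensions for $s\ge2$. Applying it with $f=g_2$ and $h=\frac1{g_1}$, and bounding $\|\nabla\frac1{g_1}\|_{H^{s-1}}$ by \eqref{gHs} at level $s-1$, gives \eqref{g1g2Hs-2}.
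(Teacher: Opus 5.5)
Your overall plan matches the paper's. You prove \eqref{gHs} by expanding derivatives of $\nabla g_1/g_1^2$ and using H\"older and two-dimensional Sobolev embeddings; the paper likewise only sketches this induction. You prove \eqref{g1g2Hs-2} by the product law $\|fh\|_{H^s}\lesssim\|f\|_{H^s}(\|h\|_{L^\infty}+\|\nabla h\|_{H^{s-1}})$ combined with \eqref{gHs} at level $s-1$. This does give the power $s$, since $x\le 1+x^s$.

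\textbf{The $s=1$ case of \eqref{g1g2Hs-1} has a concrete slip.} Your chain bounds $\|g_2\nabla\frac{1}{g_1}\|_{L^2}$ by $\|g_2\|_{H^1}\|\nabla g_1\|_{H^1}(1+\|\nabla g_1\|_{H^1})$. That is a factor $\|\nabla g_1\|_{H^1}+\|\nabla g_1\|_{H^1}^2$, which is not bounded by $C(1+\|\nabla g_1\|_{H^1})$ when $\|\nabla g_1\|_{H^1}$ is large. So as written you only prove the $s=1$ estimate with a quadratic factor, not the stated linear one. Your remark that this ``accounts for the weaker power structure'' has it backwards.

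\textbf{Where the loss comes from and how to fix it.} The loss comes from passing through $\|\nabla\frac{1}{g_1}\|_{H^1}$ via \eqref{gHs}. You only need the $L^4$ norm, and pointwise $|\nabla\frac{1}{g_1}|=|\nabla g_1|/g_1^2\le m^{-2}|\nabla g_1|$. Hence
\begin{align*}
\|g_2\nabla\tfrac{1}{g_1}\|_{L^2}\le m^{-2}\|g_2\|_{L^4}\|\nabla g_1\|_{L^4}\lesssim m^{-2}\|g_2\|_{H^1}\|\nabla g_1\|_{H^1},
\end{align*}
which is linear in $\|\nabla g_1\|_{H^1}$. This is exactly how the paper handles it: it writes $\nabla\frac{g_2}{g_1}=\frac{\nabla g_2}{g_1}-\frac{g_2\nabla g_1}{g_1^2}$ and applies H\"older directly. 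With this fix your argument is complete.
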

\begin{proof}
We begin by establishing \eqref{gHs}.
For $j=1,2$ a direct computation gives
\begin{align*}
&\p_{j}\frac{1}{g_1}=-\frac{\p_{j}g_1}{g_1^2},\quad \nabla\p_{j}\frac{1}{g_1}=-\frac{\nabla\p_{j}g_1}{g_1^2}+\frac{2\nabla g_1\p_{j}g_1}{g_1^3},
\end{align*}
By using the H\"older inequality, this yields \eqref{gHs} for $s=0,\,1$.
For $s\ge 2$, one argues by induction, repeatedly applying
$\nabla$ to the above identities and invoking the
H\"older inequality again; the details are omitted.

Next we show \eqref{g1g2Hs-1} and \eqref{g1g2Hs-2}. Notice that
\begin{align*}
\nabla\frac{g_2}{g_1}=\frac{\nabla g_2}{g_1}+g_2\nabla \frac{1}{g_1}=\frac{\nabla g_2}{g_1}- \frac{g_2\nabla g_1}{g_1^2}.
\end{align*}
By using H\"older inequality, we have
\begin{align*}
\|\frac{g_2}{g_1}\|_{L^2}\leq C(\frac1m) \|g_2\|_{L^2},\quad
\|\nabla\frac{g_2}{g_1}\|_{L^2}\leq C(\frac1m) \|g_2\|_{H^1}(1+\|\nabla g_1\|_{H^1}).
\end{align*}
This proves the case $s=0, 1$.
For $s\geq 2$, it follows from the product laws and \eqref{gHs} that
\begin{align*}
\|\frac{g_2}{g_1}\|_{{H}^s}&\leq C(\frac1m)\|{ g_2}\|_{{H}^s}\big(\|\frac{1}{ g_1}\|_{L^\infty}+\|\nabla\frac{1}{g_1} \|_{{H}^{s-1}}\big)\leq C(\frac1m) \|{ g_2}\|_{{H}^s}(1+\|\nabla { g_1}\|_{{H}^{s-1}}^s).
\end{align*}
This completes the proof of \eqref{g1g2Hs-2}.
\end{proof}

We now present the proof of Lemma \ref{lem-nazy}, Lemma \ref{lem-p2Y2}, Lemma \ref{lem-equiva} and Lemma \ref{lem-tldna}.
\begin{proof}[Proof of Lemma \ref{lem-nazy}]
\textbf{Estimate of \eqref{nazyLinf-es}.}
To begin with, we recall 
from \eqref{pz2y2} that
\begin{align*}
\p_{z_2}y_2=\rme^{-h},\,h(z)=-\int_{-\f12}^{z_1} \big(\frac{\p_{y_2}b_0^2}{b_0^1}-\frac{b_0^2\p_{y_2}b_0^1}{(b_0^1)^2}\big)( z_1',y_2(z_1',z_2) ) \rmd z_1' \,.
\end{align*}
Note that for $a>0$, there holds $\rme^a-1\leq a \rme^a.$
Hence, by \eqref{b01-cond}, we derive
\begin{align}\label{pzy2-es1}
\begin{split}
&\|h\|_{L^\infty}\lesssim\frac{1}{2m}(1+\frac{1}{m}\|\p_{y_2} b_0^1\|_{H^2})\| b_0^2\|_{H^3}\ll 1,\\
& \| \p_{z_2}y_2-1\|_{L^\infty},\,\| \rme^{h}-1\|_{L^\infty}\leq  \rme^{\| h \|_{L^\infty}}-1
\leq C(\frac1m,\|\nabla_y b_0\|_{H^2})\| b_0^2\|_{H^3}.
\end{split}
\end{align}
On the other hand, it is easy to see
\begin{align}\label{pzy2-es2}
\|\f{b_0^2}{b_0^1}(y(z))\|_{L^\infty}= \|\f{b_0^2}{b_0^1}\|_{L^\infty}\leq C(\frac1m) \|b_0^2\|_{H^2}.
\end{align}
By \eqref{pz2y2}, \eqref{nazy} and \eqref{def-B}, using \eqref{pzy2-es1} and \eqref{pzy2-es2}, one has \eqref{nazyLinf-es} and \eqref{ehLinfty-es}.

\textbf{Estimate of  $\|\nabla_z^2y\|_{H^{s-2}}$, $\|\nabla\rme^{-h}\|_{H^{s-2}}$ and $\|\nabla \big(\frac{b_0^2}{b_0^1}(y(z))\big)\|_{H^{s-2}}$ for $s= 3$.}
By \eqref{nazy}, we get
\begin{align}\label{na2x-1}
\|\nabla_z^2y\|_{H^{s-2}}\lesssim 	\|\nabla\rme^{-h}\|_{H^{s-2}}+\|\nabla \big(\frac{b_0^2}{b_0^1}(y(z))\big)\|_{H^{s-2}}.
\end{align}
According to \eqref{pz2y2}, we get
\begin{align}
\p_{1}\rme^{-h}&=\rme^{-h}\p_{y_2}(\frac{b_0^2}{b_0^1})(y(z)),\label{px1mu}\\
\p_{2}\rme^{-h}&=\rme^{-h}\int_{-\f12}^{z_1}\p_2\big(\p_{y_2}(\frac{b_0^2}{b_0^1})(y(z_1',z_2))\big)\mathrm{d}z_1'.\label{p2}
\end{align}
Invoking Lemma \ref{lemA}, \eqref{g1g2Hs-1} and \eqref{g1g2Hs-2}, we obtain
\begin{align}
&\|\p_{1}\rme^{-h}\|_{L^2}\leq	\|\rme^{-h}\|_{L^\infty}\|\p_{y_2}(\frac{b_0^2}{b_0^1})(y(z))\|_{L^2}\leq C(\frac1m,\| b_0^2\|_{H^3},\|\nabla_y b_0\|_{H^2})\|b_0^2\|_{H^1},\nonumber\\
\label{p2L2}
&\|\p_{2}\rme^{-h}\|_{{L}^{2}}\lesssim \|\rme^{-h}\|_{{L}^{\infty}}\|\p_{y_2}(\frac{b_0^2}{b_0^1})(y(z))\|_{{H}^{1}}\leq C(\frac1m,\| b_0^2\|_{H^3},\|\nabla_y b_0\|_{H^2})\|b_0^2\|_{H^2}.
\end{align}
Hence,
\begin{align}\label{pmux2-1}
\begin{split}
\|\nabla \rme^{-h}\|_{L^{2}}\leq C(\frac1m,\| b_0^2\|_{H^3},\|\nabla_y b_0\|_{H^2})\| b_0^2\|_{H^2}.
\end{split}
\end{align}
Due to \eqref{px1mu}, using Lemma \ref{lemA}, \eqref{pmux2-1} and \eqref{g1g2Hs-1}, we have
\begin{align}\label{pmux2-2}
\begin{split}
\|\p_1 \rme^{-h}\|_{{H}^{1}}\leq&\|\p_1 \rme^{-h}\|_{{L}^{2}}+ \|\rme^{-h}\|_{L^\infty}\|\p_{y_2}(\frac{b_0^2}{b_0^1})(y(z))\|_{H^1}+\|\nabla\rme^{-h}\|_{L^2}\|\p_{y_2}(\frac{b_0^2}{b_0^1})\|_{L^\infty}\\\leq&  C(\frac1m,\| b_0^2\|_{H^3},\|\nabla_y b_0\|_{H^2})\|b_0^2\|_{H^3}.
\end{split}
\end{align}
Based on \eqref{p2}, we compute
\begin{align}
\p_2\rme^{-h}&=-\rme^{-h}\int_{-\f12}^{z_1}\p_{y_2}^2(\frac{b_0^2}{b_0^1})(y(z_1',z_2))\rme^{-h(z_1',z_2)}\mathrm{d}z_1'.\label{p2-1}
\end{align}
According to \eqref{px1mu} and \eqref{p2-1}, we get
\begin{align}\label{p2L4}
\begin{split}
\|\nabla \rme^{-h}\|_{{L}^{4}}	&\leq C(\|\frac{1}{\det(\nabla_zy)}\|_{L^\infty})\big(\|\rme^{-h}\|_{L^\infty}+\|\rme^{-h}\|_{L^\infty}^2\big)\|\frac{b_0^2}{b_0^1}\|_{{H}^{3}}\\&\leq  C(\frac1m,\| b_0^2\|_{H^3},\|\nabla_y b_0\|_{H^2})\|b_0^2\|_{H^3}.
\end{split}
\end{align}
Hence,
\begin{align}\label{pmux2-3}
\begin{split}
\|\p_2\rme^{-h}\|_{{H}^{1}}\lesssim&\|\p_2\rme^{-h}\|_{{L}^{2}}+\|\p_1\p_2\rme^{-h}\|_{{L}^{2}}+ \|\rme^{-h}\|_{L^\infty}^2\|\p_{y_2}^2(\frac{b_0^2}{b_0^1})(y(z))\|_{H^1}\\&+\|\rme^{-h}\|_{L^\infty}\|\nabla \rme^{-h}\|_{{L}^4}\|\p_{y_2}^2(\frac{b_0^2}{b_0^1})(y(z))\|_{L^4}\\\leq&  C(\frac1m,\| b_0^2\|_{H^3},\|\nabla_y b_0\|_{H^2})\|b_0^2\|_{H^3}.
\end{split}
\end{align}
Combining \eqref{pmux2-2} with \eqref{pmux2-3}, one has
\begin{align}\label{naeh-es1}
\|\nabla \rme^{-h}\|_{H^1}\leq  C(\frac1m,\| b_0^2\|_{H^3},\|\nabla_y b_0\|_{H^2})\| b_0^2\|_{H^3}.
\end{align}
Next we estimate $\|\nabla \big(\frac{b_0^2}{b_0^1}(y(z))\big)\|_{H^{1}}$.
Direct computation shows that
\begin{align*}
\p_1\big(\frac{b_0^2}{b_0^1}(y(z))\big)&=\p_{y_1}\big(\frac{b_0^2}{b_0^1}\big)(y(z))+\p_{y_2}\big(\frac{b_0^2}{b_0^1}\big)(y(z))\frac{b_0^2}{b_0^1}(y(z)),\\	\p_2\big(\frac{b_0^2}{b_0^1}(y(z))\big)&=\p_{y_2}\big(\frac{b_0^2}{b_0^1}\big)(y(z))\rme^{-h}=\p_1\rme^{-h}.
\end{align*}
Thus,
\begin{align}\label{pmux2-4}
\begin{split}
\|\nabla \big(\frac{b_0^2}{b_0^1}(y(z))\|_{H^{1}}\leq& C(\|\na_zy\|_{{L}^{\infty}},\|\frac{1}{\det(\nabla_zy)}\|_{L^\infty})\|\frac{b_0^2}{b_0^1}\|_{H^2}(1+\|\frac{b_0^2}{b_0^1}\|_{H^3})+\|\p_1\rme^{-h}\|_{H^1}\\\leq&  C(\frac1m,\| b_0^2\|_{H^3},\|\nabla_y b_0\|_{H^2})\|b_0^2\|_{H^3}.
\end{split}
\end{align}
By using the same method, we have
\begin{align}\label{pmux22}
\begin{split}
\|\nabla\Big(\p_{y_2}\big(\frac{b_0^2}{b_0^1}\big)(y(z))\Big)\|_{H^{1}}\leq& C(\|\na_zy\|_{{L}^{\infty}},\|\frac{1}{\det(\nabla_zy)}\|_{L^\infty},\|\nabla_y b_0^1\|_{H^2},\| b_0^2\|_{H^3})\|b_0^2\|_{H^3}\\\leq & C(\frac1m,\| b_0^2\|_{H^3},\|\nabla_y b_0\|_{H^2})\| b_0^2\|_{H^3}.
\end{split}
\end{align}
Plugging \eqref{naeh-es1} and \eqref{pmux2-4} into \eqref{na2x-1}, we have
\begin{align}\label{xh1}
\|\nabla_z^2y\|_{H^{1}}\leq C(\frac1m,\| b_0^2\|_{H^3},\|\nabla_y b_0\|_{H^2})\| b_0^2\|_{H^3}.
\end{align}

\textbf{Estimate of  $\|\nabla_z^2y\|_{H^{s-2}}$, $\|\nabla\rme^{-h}\|_{H^{s-2}}$ and $\|\nabla \big(\frac{b_0^2}{b_0^1}(y(z))\big)\|_{H^{s-2}}$ for $s\geq 4$.}
It is clear that
\begin{align}\label{nx2hs-0}
\|\nabla \rme^{-h}\|_{H^{s-2}}\lesssim \|\nabla \rme^{-h}\|_{L^{2}}+\|\nabla^{s-2}\p_{1} \rme^{-h}\|_{L^{2}}+\|\p_{2}^{s-1}\rme^{-h}\|_{L^{2}}.
\end{align}
Using \eqref{px1mu} and \eqref{p2}, we obtain
\begin{align}
\nabla^{s-2}\p_{1}\rme^{-h}&=\sum_{s_1=0}^{s-2}C_{s-2}^{s_1}\nabla^{s_1}\rme^{-h}\,
\nabla^{s-s_1-2}\Big(\p_{y_2}(\frac{b_0^2}{b_0^1})(y(z))\Big),
\label{psx2}\\
\p_{2}^{s-1}\rme^{-h}&=\sum_{s_1=0}^{s-2}C_{s-2}^{s_1}\p_{2}^{s_1}
\rme^{-h}\int_{-\f12}^{z_1}\p_{2}^{s-s_1-1}\Big(\p_{y_2}(\frac{b_0^2}{b_0^1})(y(z_1',z_2))\Big)\mathrm{d}z_1'.
\nonumber
\end{align}
Now we estimate $\|\p_{2}^{s-1}\rme^{-h}\|_{L^2}$. Applying H\"older and Sobolev inequalities, one has
\begin{align}\label{ps-1}
\begin{split}			
&\|\p_{2}^{s-1}\rme^{-h}\|_{{L}^{2}}\\
&\lesssim\|\p_{y_2}(\frac{b_0^2}{b_0^1})(y(z))\|_{{H}^{s-1}}
\Big(\|\rme^{-h}\|_{{L}^{\infty}}+\|\p_2\rme^{-h}\|_{{L}^{4}}
+\|\p_2\rme^{-h}\|_{{L}^{2}}+\|\p_{2}^{s-2}\rme^{-h}\|_{{L}^{2}}\Big).
\end{split}
\end{align}
Applying \eqref{ps-1} repeatedly, we obtain
\begin{align}\label{ps-4}
\begin{split}
\|\p_2^{s-1}\rme^{-h}\|_{{L}^{2}}\lesssim&\|\p_{y_2}(\frac{b_0^2}{b_0^1})(y(z))\|_{{H}^{s-1}}\Big(1+\|\p_{y_2}(\frac{b_0^2}{b_0^1})(y(z))\|_{{H}^{s-1}}^{s-3}\Big)\\&\times\Big(\|\rme^{-h}\|_{{L}^{\infty}}+\|\p_2\rme^{-h}\|_{{L}^{4}}+\|\p_2\rme^{-h}\|_{{L}^{2}}\Big) .
\end{split}
\end{align}
Noticing that $s\geq 4$, we get from (iv) of Lemma \ref{lemA} and \eqref{g1g2Hs-2} that
\begin{align}\label{ps-5}
\begin{split}
\|\p_{y_2}(\frac{b_0^2}{b_0^1})(y(z))\|_{{H}^{s-1}}&\lesssim \big(1+\|\nabla_zy\|_{{L}^{\infty}}^{s-1}+\|\nabla_z^2y\|_{{H}^{s-3}}^{s-1}\big)\|\frac{1}{\det(\nabla_zy)}\|_{L^\infty}^\frac12 \|\p_{y_2}(\frac{b_0^2}{b_0^1})\|_{{H}^{s-1}}\\&\leq  C(\frac1m,\| b_0^2\|_{H^s},\|\nabla_y b_0\|_{H^{s-1}})\big(1+\|\nabla_z^2y\|_{{H}^{s-3}}^{s-1}\big)\|b_0^2\|_{H^{s}}.
\end{split}
\end{align}
Plugging \eqref{pzy2-es1}, \eqref{p2L2}, \eqref{p2L4} and \eqref{ps-5} into \eqref{ps-4}, we get for $s\geq 4$ that
\begin{align}\label{ps-6}
\begin{split}
\|\p_2^{s-1}\rme^{-h}\|_{{L}^{2}}\leq C(\frac1m,\| b_0^2\|_{H^s},\|\nabla_y b_0\|_{H^{s-1}})\big(1+\|\nabla_z^2y\|_{{H}^{s-3}}^{(s-1)(s-2)}\big)\|b_0^2\|_{H^{s}}.
\end{split}
\end{align}
Combining \eqref{pmux2-3} with the above inequality, we have for $s\geq 4$ that
\begin{align}\label{ps-7}
\begin{split}
\|\p_{2}^{s-2}\rme^{-h}\|_{{L}^{2}}\leq C(\frac1m,\| b_0^2\|_{H^{s-1}},\|\nabla_y b_0\|_{H^{s-2}})\big(1+\|\nabla_z^2y\|_{{H}^{s-3}}^{(s-2)(s-3)}\big)\|b_0^2\|_{H^{s-1}}.
\end{split}
\end{align}

Now let us estimate $\|\nabla^{s-2}\p_{1}\rme^{-h}\|_{L^{2}}$ with $s\geq 4$. Applying H\"older inequality in \eqref{psx2}, we obtain
\begin{align}\label{ps-8}
\begin{split}
&\|\nabla^{s-2}\p_{1}\rme^{-h}\|_{{L}^{2}}\\&\lesssim\|\p_{y_2}(\frac{b_0^2}{b_0^1})(y(z))\|_{{H}^{s-2}}\Big(\|\rme^{-h}\|_{{L}^{\infty}}+\|\nabla\rme^{-h}\|_{{L}^{4}}+\|\nabla\rme^{-h}\|_{{L}^{2}}+\|\nabla^{s-2}\rme^{-h}\|_{{L}^{2}}\Big)\\&\lesssim\|\p_{y_2}(\frac{b_0^2}{b_0^1})(y(z))\|_{{H}^{s-2}}\Big(\|\rme^{-h}\|_{{L}^{\infty}}+\|\nabla\rme^{-h}\|_{{H}^{1}}+\|\nabla^{s-3}\p_1\rme^{-h}\|_{{L}^{2}}+\|\p_2^{s-2}\rme^{-h}\|_{{L}^{2}}\Big) .
\end{split}
\end{align}
By \eqref{pmux22}, we get
\begin{align*}
\begin{split}
\|\p_{y_2}\big(\frac{b_0^2}{b_0^1}\big)(y(z))\|_{{H}^{2}}&\leq \|\p_{y_2}\big(\frac{b_0^2}{b_0^1}\big)(y(z))\|_{{L}^{2}}+\|\nabla\Big(\p_{y_2}\big(\frac{b_0^2}{b_0^1}\big)(y(z))\Big)\|_{{H}^{1}}\\&\leq C(\frac1m,\| b_0^2\|_{H^3},\|\nabla_y b_0\|_{H^2})\| b_0^2\|_{H^3}.
\end{split}
\end{align*}
Combining the above inequality with \eqref{ps-5}, we get for $s\geq 4$,
\begin{align}\label{ps-55}
\begin{split}
&\|\p_{y_2}\big(\frac{b_0^2}{b_0^1}\big)(y(z))\|_{{H}^{s-2}}\leq C(\frac1m,\| b_0^2\|_{H^{s-1}},\|\nabla_y b_0\|_{H^{s-2}})\big(1+\|\nabla_z^2y\|_{{H}^{s-3}}^{s-2}\big)\|b_0^2\|_{H^{s-1}}.
\end{split}
\end{align}
Plugging \eqref{pzy2-es1}, \eqref{pmux2-2}, \eqref{p2L4}, \eqref{pmux2-3}, \eqref{ps-7} and \eqref{ps-55} into \eqref{ps-8}, we have
\begin{align}\label{ps-9}
\begin{split}
\|\nabla^{s-2}\p_{1}\rme^{-h}\|_{{L}^{2}}&\leq C(\frac1m,\| b_0^2\|_{H^{s-1}},\|\nabla_y b_0\|_{H^{s-2}},\|\nabla_z^2y\|_{{H}^{s-3}})\\&\quad\times\big(1+\|\nabla^{s-3}\p_{1}\rme^{-h}\|_{{L}^{2}}\big)\|b_0^2\|_{H^{s-1}}.
\end{split}
\end{align}
We deduce from \eqref{ps-9} by induction that
\begin{align}\label{ps-10}
\begin{split}
&\|\nabla^{s-2}\p_{1}\rme^{-h}\|_{{L}^{2}}\\&\leq C(\frac1m,\| b_0^2\|_{H^{s-1}},\|\nabla_y b_0\|_{H^{s-2}},\|\nabla_z^2y\|_{{H}^{s-3}})\big(1+\|\nabla\p_{1}\rme^{-h}\|_{{L}^{2}}\big)\|b_0^2\|_{H^{s-1}}\\&\leq C(\frac1m,\| b_0^2\|_{H^{s-1}},\|\nabla_y b_0\|_{H^{s-2}},\|\nabla_z^2y\|_{{H}^{s-3}})\|b_0^2\|_{H^{s-1}}.
\end{split}
\end{align}
In conclusion, it follows from \eqref{pmux2-1}, \eqref{nx2hs-0}, \eqref{ps-6} and \eqref{ps-10} that for $s\geq 4$,
\begin{align}\label{nx2hs-00}
\|\nabla \rme^{-h}\|_{H^{s-2}}\leq C(\frac1m,\| b_0^2\|_{H^{s-1}},\|\nabla_y b_0\|_{H^{s-2}},\|\nabla_z^2y\|_{{H}^{s-3}})\|b_0^2\|_{H^{s}}.
\end{align}

It remains to estimate $\|\nabla \big(\frac{b_0^2}{b_0^1}(y(z))\big)\|_{H^{s-2}}$ with $s\geq 4$. In fact, by using (iv) of Lemma \ref{lemA} and \eqref{g1g2Hs-2}, we have
\begin{align}\label{nx2hs-1}
\begin{split}
&\|\nabla \big(\frac{b_0^2}{b_0^1}(y(z))\big)\|_{H^{s-2}}\leq \|\frac{b_0^2}{b_0^1}(y(z))\|_{H^{s-1}}\\&\leq C(\frac1m,\| b_0^2\|_{H^{s-1}},\|\nabla_y b_0\|_{H^{s-2}},\|\nabla_z^2y\|_{{H}^{s-3}})\|b_0^2\|_{H^{s-1}}.
\end{split}
\end{align}
Plugging \eqref{nx2hs-00} and \eqref{nx2hs-1} into \eqref{na2x-1}, one has for $s\geq 4$,
\begin{align}\label{nx2hs-3}
\|\nabla_z^2y\|_{H^{s-2}}\leq C(\frac1m,\| b_0^2\|_{H^{s-1}},\|\nabla_y b_0\|_{H^{s-2}},\|\nabla_z^2y\|_{{H}^{s-3}})\|b_0^2\|_{H^{s}}.
\end{align}
Applying \eqref{nx2hs-3} repeatedly, one can deduce that for $s\geq 4$,
\begin{align}\label{naz2y-es4}
\|\nabla_z^2y\|_{H^{s-2}}\leq C(\frac1m,\| b_0^2\|_{H^{s-1}},\|\nabla_y b_0\|_{H^{s-2}},\|\nabla_z^2y\|_{{H}^{1}})\|b_0^2\|_{H^{s}}.
\end{align}
Combining \eqref{xh1} with the above inequality \eqref{naz2y-es4}, we get \eqref{naz2yHs-es}.
Moreover, it follows from \eqref{nx2hs-00},  \eqref{naz2yHs-es} and \eqref{naeh-es1} that \eqref{naeh-es} holds for $s\geq 3$. The combination of \eqref{nx2hs-1} and \eqref{naz2yHs-es} gives \eqref{b12Hs-es}.

Finally, for \eqref{B-I-es}, by using \eqref{nazyLinf-es}-\eqref{b12Hs-es}, we derive
\begin{align*}
\|B-I\|_{H^s}
&\lesssim \|\f{b_0^2}{b_0^1}(y(z))\|_{H^s}\big(\|\rme^h\|_{L^\infty}+\|\nabla\rme^h\|_{H^{s-1}}\big)
+\| \na \rme^h\|_{H^{s-1}}+\|h\|_{L^2}\\
&\lesssim C(\frac1m,\| b_0^2\|_{H^{s+1}},\|\nabla_y b_0\|_{H^{s}})\|b_0^2\|_{H^{s+1}}.
\end{align*}
\end{proof}

\begin{proof}[Proof of Lemma \ref{lem-p2Y2}]
Let us first estimate $\nabla b_0^1(y(z))$. Note that
\begin{align*}
\begin{split}
\p_1b_0^1(y(z))&=(\p_{y_1}b_0^1)(y(z))+ \big(\f{b_0^2}{b_0^1} \p_{y_2}b_0^1\big)(y(z)),\\\p_2b_0^1(y(z))&=\big( \p_{y_2}b_0^1\big)(y(z))\rme^{-h}.	
\end{split}
\end{align*}
Using Lemma \ref{lemA}, Lemma \ref{lem-nazy} and \eqref{g1g2Hs-2} from Lemma \ref{lem-g1g2} in  Appendix \ref{append_A}, one has
\begin{align*}
\|\p_1 {b_0^1(y(z))}\|_{H^a}&\leq C(\frac1m,\| b_0^2\|_{H^{a}},\|\nabla_y b_0\|_{H^{a-1}})\big(\|\p_{y_1}b_0^1\|_{H^a}+\|\f{b_0^2}{b_0^1}\|_{H^a}\| \p_{y_2}b_0^1\|_{H^a}\big)\\&\leq C(\frac1m,\| b_0^2\|_{H^{a}},\|\nabla_y b_0\|_{H^{a}})\big(\|\p_{y_1}b_0^1\|_{H^a}+\|b_0^2\|_{H^a}\big),
\end{align*}
and
\begin{align*}
\|\p_2 {b_0^1(y(z))}\|_{H^a}&\lesssim \|\big( \p_{y_2}b_0^1\big)(y(z))\|_{H^a}\big(\|\rme^{-h}\|_{L^\infty}+\|\nabla \rme^{-h}\|_{H^{a-1}}\big) \\&\leq C(\frac1m,\| b_0^2\|_{H^{a+1}},\|\nabla_y b_0\|_{H^{a}})\|\p_{y_2}b_0^1\|_{H^a}.
\end{align*}
According to \eqref{b01-cond}, we derive that for $a\geq 3$,
\begin{align*}
&\|\p_1 {b_0^1(y(z))}\|_{H^a}\leq C(\frac1m,\epsilon_0,L)\epsilon_0,
\\
&\|\p_2 {b_0^1(y(z))}\|_{H^a}\leq C(\frac1m,\epsilon_0,L)L,\notag
\end{align*}
which yields \eqref{p1b01-es} and \eqref{nab01-es}.

Next, by the definition of $\widetilde{Y}$ (see \eqref{DefBaY}), we have
\begin{align}\label{natldY}
\begin{split}
    &\p_1\widetilde{Y}(z)=-\frac{\Phi(z)}{b_0^1(y(z))},\ \p_2\widetilde{Y}^1(z)=-\int_0^{z_1} \p_2\big(\f{\Phi^1(\bar{z}_1,z_2)}{b_0^1(y(\bar{z}_1,z_2))}\big) \rmd \bar{z}_1.
\end{split}
\end{align}
For $\p_2\widetilde{Y}^2(z)$, by using the fact that
\begin{align*}
    \p_1\rme^{-h(z)}=\rme^{-h(z)}\p_{y_2}(\frac{b_0^2}{b_0^1})(y(z)),
\end{align*}
we derive from \eqref{DefBaY} and \eqref{def-psi} that
\begin{align}\label{p2tldY2-exp}
\begin{split}
   \p_2\widetilde{Y}^2(z)&=-\int_0^{z_1} \p_{y_2}(\frac{b_0^2}{b_0^1})(y(\bar{z}_1,z_2))\rme^{-h(\bar{z}_1,z_2)}\rmd \bar z_1+\psi'(z_2)=\rme^{-h(z)}\frac{\Phi^1(z)}{\gamma(z_2)}.
\end{split}
\end{align}
Let us first show the smallness of $\Phi$. By the definition of $\Phi$ in \eqref{def-Phi}, it is clear that
\begin{align}\label{Phi1-exp}
\Phi^1(z)=\big(b_0^1(y(z))-\xi(z_2)\big)-\big(\gamma(z_2)-\xi(z_2)\big).
\end{align}
For the first part on the right hand side of \eqref{Phi1-exp}, using the fact that
\begin{align*}
\p_{z_1}\big(\xi(y_2(z))-\xi(z_2)\big)=\big(\xi'\p_{z_1}y_2\big)(y(z))
=\big(\xi'\frac{b_0^2}{b_0^1}\big)(y(z)),
\end{align*}
and $y_2(-\frac12,z_2)=z_2$, we have
\begin{align*}
b_0^1(y(z))-\xi(z_2)=\int_{-\frac12}^{z_1} \big( \xi'\f{b_0^2}{b_0^1}\big) ( z_1',y_2(z_1',z_2))\rmd z_1'+(b_0^1-\xi)(y(z)).
\end{align*}
Applying (iv) of Lemma \ref{lemA}, Lemma \ref{lem-nazy} and \eqref{g1g2Hs-2} in Lemma \ref{lem-g1g2}, one has
\begin{align*}
\begin{split}
&\|b_0^1(y(z))-\xi(z_2)\|_{H^{a+1}}\lesssim \|(\xi'\f{b_0^2}{b_0^1})(y(z))\|_{H^{a+1}}+\|(b_0^1-\xi)(y(z))\|_{H^{a+1}}\\
&\leq C(\frac1m,\| b_0^2\|_{H^{a+1}},\|\nabla_y b_0\|_{H^{a}})\big(\|\xi'\|_{H^{a+1}}\|b_0^2\|_{H^{a+1}}+\|b_0^1-\xi\|_{H^{a+1}}\big).
\end{split}
\end{align*}		
It follows from \eqref{xi-cond0}, \eqref{asmp} and \eqref{b01-cond} that
\begin{align}\label{b01-xi-small}
\|b_0^1(y(z))-\xi(z_2)\|_{H^{a+1}}\leq C(\frac1m,\epsilon_0,L) \|b_0-(\xi,0)^\top\|_{H^{a+1}}.
\end{align}

Next we study $\gamma(z_2)-\xi(z_2)$ in \eqref{Phi1-exp}. By the definition of $\gamma(z_2)$ (see \eqref{gamma-def}), we compute
\begin{align*}
\gamma(z_2)-\xi(z_2)&=\big(\int_{\bT} \f{1}{b_0^1(y(z_1,z_2))} \rmd z_1\big)^{-1}\int_{\bT} \f{b_0^1(y(z_1,z_2))-\xi(z_2)}{b_0^1(y(z_1,z_2))} \rmd z_1.
\end{align*}
Since $\int_{\bT} \f{1}{b_0^1(y(z_1,z_2))} \rmd z_1\geq \frac{1}{2M}$ by \eqref{b01-cond}, using \eqref{gHs} and \eqref{g1g2Hs-2} from Lemma \ref{lem-g1g2}, one has
\begin{align}\label{gm-xi-es1}
\begin{split}
\|\gamma(z_2)-\xi(z_2)\|_{H^{a+1}}&\leq C(M)\big\|\f{b_0^1(y(z))-\xi(z_2)}{b_0^1(y(z))}\big\|_{H^{a+1}}(1+\| \nabla \big(\f{1}{b_0^1(y(z))}\big)\|_{H^{a}}^{a+1}) \\&\leq C(M,\frac1m,\|\nabla b_0^1(y(z))\|_{H^a})\|b_0^1(y(z))-\xi(z_2)\|_{H^{a+1}}.
\end{split}
\end{align}
Plugging \eqref{nab01-es} and \eqref{b01-xi-small}  into \eqref{gm-xi-es1}, one has
\begin{align}\label{gm-xi-small}
\|\gamma(z_2)-\xi(z_2)\|_{H^{a+1}}\leq C(M,\frac1m,\epsilon_0,L)\|b_0-(\xi,0)^\top\|_{H^{a+1}}.
\end{align}

It then follows from \eqref{Phi1-exp}, \eqref{b01-xi-small} and \eqref{gm-xi-small} that
\begin{align}\label{Phi1-small}
\|\Phi^1\|_{H^{a+1}}\leq C(M,\frac1m,\epsilon_0,L)\|b_0-(\xi,0)^\top\|_{H^{a+1}}.
\end{align}
On the other hand, by \eqref{def-Phi}, applying (iv) of Lemma \ref{lemA}, one has
\begin{align}\label{Phi2-small}
\begin{split}
&\|\Phi^2\|_{H^{a+1}}=\|b_0^2(y(z))\|_{H^{a+1}}\\
&\leq C(\frac1m,\| b_0^2\|_{H^{a+1}},\|\nabla_y b_0\|_{H^{a}})\|b_0^2\|_{H^{a+1}}\leq C(\frac1m,\eps_0,L)\|b_0-(\xi,0)^\top\|_{H^{a+1}}.
\end{split}
\end{align}
The combination of \eqref{Phi1-small} and \eqref{Phi2-small} yields \eqref{phi-small}.

Now we are ready to estimate $\|\nabla\widetilde{Y}\|_{H^a}$. Due to \eqref{natldY} and \eqref{p2tldY2-exp}, by \eqref{g1g2Hs-2}, \eqref{ehLinfty-es}, \eqref{naeh-es}, \eqref{gm-xi-small}, \eqref{nab01-es} and  \eqref{phi-small}, one has
\begin{align*}
\|\na\widetilde{Y}\|_{H^a}\lesssim&\|\frac{\Phi(z)}{b_0^1(y(z))}\|_{H^{a+1}}+\|\frac{\rme^{-h(z)}\Phi^1(z)}{\gamma(z_2)}\|_{H^{a}}\\\lesssim& \|\Phi\|_{H^{a+1}}(1+\|\rme^{-h(z)}\|_{L^{\infty}}+\|\nabla \rme^{-h(z)}\|_{H^{a-1}})(1+\|\nabla b_0^1(y(z))\|_{H^{a}}^{a+1}+\|\gamma'(z_2)\|_{H^{a-1}}^{a})\\\leq& \tilde{C}\|b_0-(\xi,0)^\top\|_{H^{a+1}},
\end{align*}
where $\tilde{C}$ is a positive constant depending on $M, \frac1m, \epsilon_0, L$.
This yields \eqref{nawideY}.
\end{proof}

\begin{proof}[Proof of Lemma \ref{lem-tldna}]
For $s=0$, we have
\begin{align*}
\|\widetilde{\nabla}g\|_{L^2}
&\leq \|B-I\|_{L^\infty}\|\na g\|_{L^2}
\lesssim  \|b_0^2\|_{H^{3}} \|\nabla g\|_{L^2},\\
\|\nabla_Zg\|_{H^s}
&=\|B\|_{L^\infty}\|\nabla g\|_{L^2}\lesssim (1+\|b_0^2\|_{H^{3}}) \|\nabla g\|_{L^2} .
\end{align*}

On the other hand, by using Lemma \ref{lem-nazy}, we have
\begin{align*}
\|B-I\|_{H^a}
&\lesssim C(\frac1m,\| b_0^2\|_{H^{a+1}},\|\nabla_y b_0\|_{H^{a}})\|b_0^2\|_{H^{a+1}}.
\end{align*}
For $s=a-1,\, a$, we have
\begin{align*}
\|\widetilde{\nabla}g\|_{H^s}
&\leq \|B-I\|_{L^\infty}\|\na g\|_{H^s}+\|B-I\|_{H^s}\|\na g\|_{L^\infty} \\
&\lesssim \big(\|B-I\|_{L^\infty}+\|B-I\|_{H^a}\big) \|\na g\|_{H^s}
\lesssim C \|b_0^2\|_{H^{a+1}} \|\nabla g\|_{H^s},
\end{align*}
and
\begin{align*}
\|\nabla_Zg\|_{H^s}=\|B\nabla g\|_{H^s}\lesssim \|\nabla g\|_{H^s}\big(\|B\|_{L^\infty}+\|\nabla B\|_{H^{a-1}}\big)
\leq C(\frac1m,\epsilon_0,L)	\|\nabla g\|_{H^s}.
\end{align*}

Next, note that
\begin{align*}
\nabla g=B^{-1}\nabla_Zg=(\nabla_zy)^\top\nabla_Zg.
\end{align*}
Then, for $s=0,\,a-1,\, a$, by Lemma \ref{lem-nazy},
\begin{align*}
\|\nabla g\|_{H^s}&\leq \big(\|\nabla y\|_{L^\infty}+\|\nabla^2y\|_{H^{a-1}}\big) \|\nabla_Z g\|_{H^s}\leq C(\frac1m,\epsilon_0,L)	\|\nabla_Z g\|_{H^s}.
\end{align*}
This completes the proof of \eqref{tldna}.
\end{proof}

\begin{proof}[Proof of Lemma \ref{lem-equiva}]
If $b=0$, by using \eqref{b01-cond}, we have
\begin{align*}
&\|\p_{b_0^1} g\|_{L^2}\leq \|b_0^1\|_{L^\infty}\|\p_{1} g\|_{L^2}
\leq 2M\|\p_{1} g\|_{L^2},\\
&\|\p_{1} g\|_{L^2}\leq
\|\frac{1}{b_0^1}\|_{L^\infty}\|\p_{b_0^1} g\|_{L^2}\leq \f{2}{m}\|\p_{b_0^1} g\|_{L^2}.
\end{align*}
Next, for $1\leq b\leq a$, by Lemma \ref{lem-p2Y2} and \eqref{gHs}, there hold
\begin{align*}
\|b_0^1(y(z)) \p_1g\|_{H^b}
\lesssim  (\|b_0^1\|_{L^\infty}+\|\na  b_0^1(y(z))\|_{H^{a-1}}) \|\p_1g\|_{H^b} \lesssim  \|\p_1g\|_{H^b},
\end{align*}
and
\begin{align*}
&\|\p_{1} g\|_{H^b}=\|\frac{1}{b_0^1(y(z))}\p_{b_0^1} g\|_{H^b}
\lesssim \|\nabla \frac{1}{b_0^1(y(z))}\|_{H^{a-1}}\|\p_{b_0^1} g\|_{H^b}+\|\frac{1}{b_0^1}\|_{L^\infty}\|\p_{b_0^1} g\|_{H^b}\\
&\lesssim
\big( \|\nabla b_0(y(z))\|_{H^{a-1}}+\f{2}{m}\big)
\|\p_{b_0^1} g\|_{H^b}\lesssim \|\p_{b_0^1} g\|_{H^b}.
\end{align*}
Thus \eqref{equiva-1} is proved.
\end{proof}




\end{document}